\numberwithin{equation}{section}
\newtheorem{theorem}{Theorem}[section]
\newtheorem{lemma}[theorem]{Lemma}
\newtheorem{proposition}[theorem]{Proposition}
\newtheorem{corollary}[theorem]{Corollary}
\theoremstyle{definition}
\newtheorem{definition}[theorem]{Definition}
\newtheorem{remark}[theorem]{Remark}
\newenvironment{assumption}[1]
  {\innercustomthm}
  {\endinnercustomthm}
\def\E{{\mathbb E}}
\def\R{{\mathbb R}}
\def\N{{\mathbb N}}
\def\FF{{\mathbb F}}
\def\PP{{\mathbb P}}
\def\QQ{{\mathbb Q}}
\def\P{{\mathcal P}}
\def\V{{\mathcal V}}
\def\X{{\mathcal X}}
\def\L{{\mathcal L}}
\def\W{{\mathcal W}}
\def\AB{{\mathbb A}}
\def\A{{\mathcal A}}
\def\AM{\mathcal{AM}}
\def\RC{\mathcal{R}}
\def\RCM{\mathcal{RM}}
\def\F{{\mathcal F}}
\def\C{{\mathcal C}}
\def\CP{{C([0,T];\P(\R^d))}}
\def\CE{{C([0,T];E)}}
\title[Closed-loop mean field game convergence]{On the convergence of closed-loop Nash equilibria to the mean field game limit}
\author{Daniel Lacker}
\address{Department of Industrial Engineering \& Operations Research, Columbia University}
\email{daniel.lacker@columbia.edu}
\begin{document}

\begin{abstract}
This paper continues the study of the mean field game (MFG) convergence problem: In what sense do the Nash equilibria of $n$-player stochastic differential games converge to the mean field game as $n\rightarrow\infty$? Previous work on this problem took two forms. First, when the $n$-player equilibria are open-loop, compactness arguments permit a characterization of all limit points of $n$-player equilibria as weak MFG equilibria, which contain additional randomness compared to the standard (strong) equilibrium concept. On the other hand, when the $n$-player equilibria are closed-loop, the convergence to the MFG equilibrium is known only when the MFG equilibrium is unique and the associated ``master equation" is solvable and sufficiently smooth. This paper adapts the compactness arguments to the closed-loop case, proving a convergence theorem that holds even when the MFG equilibrium is non-unique. Every limit point of $n$-player equilibria is shown to be the same kind of weak MFG equilibrium as in the open-loop case. Some partial results and examples are discussed for the converse question, regarding which of the weak MFG equilibria can arise as the limit of $n$-player (approximate) equilibria.
\end{abstract}

\maketitle

\tableofcontents

\section{Introduction}

The goal of this paper is to deepen the study of the $n\rightarrow\infty$ limit theory for $n$-player stochastic differential games of mean field type. To briefly summarize the problem, specified in full detail in Section \ref{se:mainresults}, suppose $n$ players have private state processes $\bm{X}=(X^1,\ldots,X^n)$ governed by the stochastic differential equation (SDE) system
\begin{align*}
dX^i_t &= b(t,X^i_t,\mu^n_t,\alpha^i(t,\bm{X}_t))dt + dW^i_t, \quad\quad \mu^n_t = \frac{1}{n}\sum_{k=1}^n\delta_{X^k_t},
\end{align*}
where $W^1,\ldots,W^n$ are independent Brownian motions, and $X^1_0,\ldots,X^n_0$ are i.i.d.
Note that the drift function $b$ is the same for each player, but the dynamics of player $i$'s state process depend only on $X^i$ itself, the empirical probability measure $\mu^n_t$ of all players' states, and the control $\alpha^i$ of player $i$. Each player chooses $\alpha^i$ from the set $\A$ of measurable functions from $[0,T] \times (\R^d)^n$ to the set $A$ of admissible actions. That is, each player's control is chosen as a (deterministic) function of time and the current states of all players. The goal of player $i$ is to maximize the expected payoff
\[
J^n_i(\alpha^1,\ldots,\alpha^n) = \E\left[\int_0^Tf(t,X^i_t,\mu^n_t,\alpha^i(t,\bm{X}_t))dt + g(X^i_T,\mu^n_T)\right],
\]
which takes the same symmetric form as the drift. The primary object of study is a \emph{closed-loop Markovian Nash equilibrium}, defined as any vector $(\alpha^1,\ldots,\alpha^n) \in \A^n$ such that
\[
J^n_i(\alpha^1,\ldots,\alpha^n) \ge \sup_{\beta \in \A}J^n_i(\alpha^1,\ldots,\alpha^{i-1},\beta,\alpha^{i+1},\ldots,\alpha^n).
\]
As is well known, Markovian Nash equilibria can be constructed by solving a parabolic PDE system, representing the value functions of each of the $n$ players, under suitable assumptions on the coefficients $(b,f,g)$; see \cite[Section 2.1.4]{CarmonaDelarue_book_I}.
For a more thorough introduction to stochastic differential games and mean field games, refer to the recent books \cite{CarmonaDelarue_book_I,CarmonaDelarue_book_II}.

A fundamental problem in mean field game (MFG) theory is to characterize the limiting behavior of Nash equilibrium as $n\rightarrow\infty$. More specifically, if $(\alpha^{n,1},\ldots,\alpha^{n,n})$ is a Nash equilibrium for each $n$, how does the associated empirical measure process $\mu^n=(\mu^n_t)_{t \in [0,T]}$ behave as $n\rightarrow\infty$? The heuristic put forth in the foundational work of \cite{lasrylions,lasry2006jeux1,lasry2006jeux2} and \cite{huang2006large,huang2007large} suggests that the limiting behavior should be captured by what we call in this paper the \emph{strong mean field equilibria}. A strong mean field equilibrium (or \emph{strong MFE}, defined precisely in Definition \ref{def:strongMFE-strict}) is a flow of probability measures $m=(m_t)_{t \in [0,T]}$ such that $m_t=\mathrm{Law}(X^*_t)$ for all $t \in [0,T]$, where $X^*$ is the optimal state process for the following stochastic control problem, in which $m$ is treated as fixed:
\begin{align}
\begin{split}
&\sup_{\alpha}\E\left[\int_0^Tf(t,X_t,m_t,\alpha(t,X_t))dt + g(X_T,m_T)\right], \\
&dX_t = b(t,X_t,m_t,\alpha(t,X_t))dt + dW_t.
\end{split}  \label{intro:MFE}
\end{align}
The majority of the MFG literature focuses on questions of existence and uniqueness of equilibria, though there is by now a decent understanding of this convergence problem. Early results \cite{lasrylions,lasry2006jeux1,feleqi-MFGderivation} confirmed the MFE as the relevant limiting concept but imposed strong restrictions  on the controls $\alpha^{n,i}$, requiring them to be of the distributed form $\alpha^{n,i}(t,x_1,\ldots,x_n) = \widehat\alpha^{n,i}(t,x_i)$.

The first comprehensive results came in \cite{lacker2016general,fischer2017connection}, but notably working with the distinct (and typically simpler to analyze) concept of \emph{open-loop} controls. In an open-loop equilibrium, each player specifies a control as a function of the noises $(W^1,\ldots,W^n)$ rather than the states $(X^1,\ldots,X^n)$, and this results in completely different equilibria. See \cite[pp. 72-76]{CarmonaDelarue_book_I} for a careful discussion of the differences between the open-loop and closed-loop regimes, which we will review briefly in Section \ref{se:closedloopvsopenloop}. For open-loop equilibria, the results of \cite{lacker2016general} give a rather complete picture of the $n\rightarrow\infty$ behavior: Even when the MFG equilibrium is non-unique, we can still characterize all subsequential limits of $\mu^n$ as MFG equilibria, as long as we work with a suitable \emph{weak equilibrium} concept. Conversely, each of these weak equilibria can arise as the limit of $\mu^n$, for a suitable choice of approximate $n$-player open-loop equilibria.

Our understanding of the $n\rightarrow\infty$ behavior of closed-loop equilibria is much less complete in the closed-loop regime, but a major breakthrough came with the work of Cardaliaguet et al. \cite{cardaliaguet-delarue-lasry-lions} on the \emph{master equation}, an infinite-dimensional PDE that describes the value function of the mean field game.
It was shown in \cite[Section 6]{cardaliaguet-delarue-lasry-lions} how to use a smooth solution of the \emph{master equation} to prove that $\mu^n$ converges to the (unique, in their setting) MFG equilibrium $\mu=(\mu_t)_{t \in [0,T]}$; see also \cite[Section 6.3]{CarmonaDelarue_book_II}.
More recently, these ideas were refined in \cite{DelarueLackerRamananCLT,DelarueLackerRamananLDP} to derive a central limit theorem and a large deviation principle for $\mu^n$, as well as nonasymptotic bounds on various distances between $\mu^n$ and its limit $\mu$. The idea of using the master equation to prove limit theorems has proven to be powerful and fairly versatile, with \cite{bayraktar2017analysis,cecchin-pelino} adapting the idea to models with finite state space. 

The master equation approach, however, is limited in several ways. The most fundamental shortcoming is that it requires the MFG equilibrium to be unique. In game theory, uniqueness is of course the exception, not the rule, and the aforementioned papers leave open the intriguing question of how to describe the limiting behavior of $\mu^n$ when there are multiple MFG equilibria.
Furthermore, it is very challenging to produce a classical solution of the master equation, and this has been accomplished so far only in quite restricted settings \cite{cardaliaguet-delarue-lasry-lions,chassagneux2014probabilistic,gangbo2015existence}.

This paper fills the gap between the open-loop and closed-loop regimes by proving (in Theorem \ref{th:mainlimit})
a limit theorem for closed-loop equilibrium which is general enough to accommodate non-unique MFG equilibrium. Under suitable assumptions, we show that the sequence of empirical measure flows $(\mu^n)$ is tight in a suitable space, and every limit in distribution is what we call a \emph{weak semi-Markov mean field equilibrium}, or \emph{weak MFE} for short. 
This equilibrium concept (given precisely in Definition \ref{def:MFEsemimarkov-strict}) differs from the standard strong MFE described above in three key respects:
\begin{itemize}
\item The deterministic measure flow $(m_t)_{t \in [0,T]}$ is replaced by a stochastic one $\mu=(\mu_t)_{t \in [0,T]}$.
\item The controls $\alpha$ in \eqref{intro:MFE} are \emph{semi-Markov}, meaning $\alpha=\alpha(t,X_t,\mu)$, where the dependence on the path $\mu=(\mu_t)_{t \in [0,T]}$ is nonanticipative.
\item The consistency condition $m_t=\mathrm{Law}(X_t)$ becomes conditional, $\mu_t = \mathrm{Law}(X_t \, | \, (\mu_s)_{s \le t})$.
\end{itemize}
The philosophy behind the weak MFE is no different from strong MFE: Each individual player treats the mean field $\mu$ as given, describing the distribution of states among an infinite (continuum) population of competing players. Each player reacts optimally to $\mu$, in a way that is consistent with (i.e., reproduces) the mean field $\mu$ when aggregated over the infinity of players. The stochastic measure flow $\mu$ should be thought of as an endogeneous common noise, in the sense that its randomness is felt equally by all of the players. An economist might refer to this as \emph{aggregate uncertainty} \cite{bergin1992anonymous}, as opposed to (exogeneous) \emph{aggregate shocks}, which one might produce by allowing correlations between the driving Brownian motions $W^1,\ldots,W^n$ as in \cite{carmona-delarue-lacker,cardaliaguet-delarue-lasry-lions}.

In fact, our main limit theorem applies also to \emph{closed-loop path-dependent equilibria}, in which each player can choose a control $\alpha^i=\alpha^i(t,(\bm{X}_s)_{s \le t})$ depending on the entire history of the $n$ state processes. Under modest convexity assumptions, we show that every Markovian equilibrium for the $n$-player game is also a path-dependent equilibrium, which allows us to study the two simultaneously. This seems to be the first MFG limit theorem for path-dependent equilibria.

In the special case where the weak MFE is unique, our main limit theorem becomes a proper convergence result: Every sequence of $n$-player (closed-loop) Nash equilibria converges to the unique weak MFE. In particular, we show that the well known monotonicity condition of Lasry-Lions \cite{lasrylions} is sufficient (Corollary \ref{co:uniqueness-propagation}), and this recovers and generalizes the aspects of the limit theorems of \cite{cardaliaguet-delarue-lasry-lions} pertaining to empirical measures.

The proof of our main limit theorem is based on probabilistic weak convergence and compactness arguments, as well as judicious use of Markovian projection arguments which allow one to ``mimick"  the time-$t$ marginal laws of a general It\^o process by a Markovian diffusion (see Theorem \ref{th:markovprojection}, quoted from \cite{gyongy1986mimicking,brunick2013mimicking}). In a sense, the techniques build on those developed in \cite{lacker2016general} for the open-loop regime, but the adaptation to the closed-loop case is highly non-trivial. The central difficulty of the closed-loop regime comes from the fact that a single player's change in strategy can have an outsized impact on the empirical measure due to the feedback through the other controls.
If we heuristically consider such a player to be \emph{influential}, the key idea behind our proof is that, in a certain averaged sense, \emph{not too many players can be simultaneously influential}. 
See Section \ref{se:proofideas} for an informal discussion of the proof.

Our notion of \emph{weak MFE} turns out to be equivalent in a certain sense to the notion of \emph{weak MFG solution} introduced in \cite{carmona-delarue-lacker,lacker2016general}. In particular, we encounter here the same interesting phenomenon explored in \cite[Section 3]{lacker2016general}, which is that \emph{not all weak MFE are mixtures of strong MFE}. 
That is, if we let $S$ denote the set of strong MFE $m = (m_t)_{t \in [0,T]}$ in the usual sense described in \eqref{intro:MFE} above, then there can exist weak MFE $\mu$ such that $\PP(\mu \in S) <  1$.
In particular, in settings with multiple MFG equilibria, the usual strong MFE concept is inadequate for describing the limiting behavior of $n$-player equilibria.
Notably, this phenomenon does not appear in McKean-Vlasov systems (i.e., uncontrolled systems) 
or in static mean field games (i.e., one-shot games with no time component).

The converse to our main limit theorem turns out to be challenging to address, and we have only partial results. We show in Theorem \ref{th:converselimit-strongMFE} that, under reasonable assumptions, every \emph{strong MFE} does indeed arise as the $n\rightarrow\infty$ limit of some sequence of approximate $n$-player (closed-loop) equilibria (see also \cite[Section 6.1]{CarmonaDelarue_book_II}). The question of if or when this is true for \emph{weak MFE} remains open.\footnote{The recent work \cite{nutz2018convergence} provides a remarkably detailed analysis of nearly the same questions in the context of a specific MFG of optimal stopping, showing  that certain MFE arise as the limits of $n$-player equilibria while others do not; while \cite{nutz2018convergence} notably does not consider approximate equilibria for the $n$-player games, it is still a good source of intuition for what can go wrong.} We give some examples of weak MFE which are not strong but which do arise as limits of approximate $n$-player equilibria by exploiting an interesting connection with the \emph{regularization-by-noise} or \emph{Peano phenomenon}, which can be described as follows: Adding $\epsilon dW_t$ can turn a  non-unique ODE into a well-posed SDE, and in certain cases the limit in distribution of the SDE solution as $\epsilon \downarrow 0$ is a particular mixture of the ODE solutions \cite{bafico1982small}.

On the other hand, in the open-loop regime, it is by now well known that strong MFE do arise as the limit of $n$-player equilibria \cite{huang2007large,carmona2013probabilistic}, and it was shown in \cite{lacker2016general} that the same is true for any weak MFE.
In this sense our results support the folklore that open-loop and closed-loop should ``converge together" as $n\rightarrow\infty$.\footnote{Interestingly, however, this connection can break down if the interactions are not sufficiently continuous; the forthcoming \cite{carmona-cerenzia-palmer} studies an explicitly solvable MFG with singular interactions \`a la Dyson Brownian motion in which the $n\rightarrow\infty$ limits of the open-loop and closed-loop equilibria of the $n$-player games are different.} This is reminiscent of the study in discrete time in \cite{fudenberg-levine}, though the precise use of terminology therein is different.

The paper is organized as follows. We begin in Section \ref{se:mainresults} by specifying notation and assumption, precisely defining the $n$-player and mean field games, and stating clearly the main results. Notably, Section \ref{se:proofideas} sketches the key ideas of the proof of the main result, Theorem \ref{th:mainlimit}. Section \ref{se:mainresults-generalized} makes a first step toward proving the main theorems by relaxing the notions of equilibrium, leading to somewhat more general forms of the main theorems which are interesting in their own right. The heart of the paper is Section \ref{se:mainlimitproof}, devoted to the proof of Theorem \ref{th:mainlimit}. The comparison between open-loop and closed-loop equilibria is developed further in Section \ref{se:closedloopvsopenloop-proofs}, with proofs of several statements from Section \ref{se:closedloopvsopenloop}. Lastly, Section \ref{se:examples} contains some proofs and examples surrounding the partial converse to the main limit theorem, discussed in the previous paragraph.

\section{Setup and main results} \label{se:mainresults}

We begin by fixing some commonly used notation.
We are given a time horizon $T > 0$ and a dimension $d \in \N$, and we write $\C^d$ for the space of continuous paths,
\[
\C^d := C([0,T];\R^d),
\]
equipped with the sup-norm. We use boldface for vectors, such as $\bm{x}=(x_1,\ldots,x_n) \in (\R^d)^n$.

For a complete separable metric space $(E,d)$, let $\P(E)$ denote the set of Borel probability measures.
We always endow $\P(E)$ with the topology of weak convergence and its corresponding Borel $\sigma$-field. Although we will not explicitly use it, to fix ideas we suppose throughout the paper that $\P(E)$ is equipped with the Wasserstein metric
\begin{align}
(m,m') \mapsto \inf_\gamma \int_{E \times E} 1 \wedge d(x,y)\, \gamma(dx,dy), \label{def:wasserstein}
\end{align}
where the infimum is over all $\gamma \in \P(E \times E)$ with marginals $m$ and $m'$. This is known to (completely) metrize weak convergence \cite[Theorem 7.12]{villani2003topics}. In particular, we will make frequent use of the space $\CP$, implicitly equipped with the sup-metric.

For any random variable $X$ we write $\L(X)$ for its law, or $\L(X \, | \, Y)$ for a version of the conditional law of $X$ given another random variable $Y$, which is always well-defined up to almost sure equality when the random variables take values in Polish spaces. We write $X \stackrel{d}{=} Y$ when two random variables have the same law, and we write $X \sim \lambda$ to mean that $\L(X)=\lambda$.

We are given a time horizon $T > 0$, a control space $A$, an initial state distribution $\lambda \in \P(\R^d)$, and the following functions:
\begin{align*}
(b,f) &: [0,T] \times \R^d \times \P(\R^d) \times A \rightarrow \R^d \times \R, \\ 
g &: \R^d \times \P(\R^d) \rightarrow \R.
\end{align*}
The following assumption is in force throughout the paper:

\begin{assumption}{\textbf{A}} \label{assumption:A}
{\ }
\begin{enumerate}
\item[(A.1)] $A$ is a compact convex subset of normed vector space.
\item[(A.2)] The functions $b$, $f$, and $g$ are bounded and jointly continuous. 
\end{enumerate}
\end{assumption}

Occasionally, we will also need the following convexity assumption, which dates back to the work of Filippov \cite{filippov-convexity} and Roxin \cite{roxin1962existence}. It holds, for example, if $b=b(t,x,m,a)$ is affine in $a$ and $f=f(t,x,m,a)$ concave in $a$, for each $(t,x,m)$.

\begin{assumption}{\textbf{B}} \label{assumption:B}
For each $(t,x,m) \in [0,T] \times \R^d \times \P(\R^d)$, the following set is convex:
\[
K(t,x,m) = \left\{(b(t,x,m,a),z) : a \in A, \ z \le f(t,x,m,a)\right\} \subset \R^d \times \R.
\]
\end{assumption}

\subsection{The $n$-player games} \label{se:nplayergame}

Let $n \in \N$. 
In the $n$-player game, an \emph{admissible control} is a progressively measurable function $\alpha : [0,T] \times (\C^d)^n \rightarrow A$.\footnote{Here, we may define progressive measurability simply to mean that $\alpha$ is Borel measurable and satisfies $\alpha(t,\bm{x})=\alpha(t,\bm{x}')$ whenever $t\in [0,T]$ and $\bm{x},\bm{x}' \in (\C^d)^n$ satisfy $\bm{x}_s=\bm{x}'_s$ for all $s \le t$.} Let $\A_n$ denote the set of admissible controls. A \emph{Markovian control} is an admissible control $\alpha \in \A_n$ of the form $\alpha(t,x) = \tilde\alpha(t,x_t)$, where $\tilde\alpha : [0,T] \times (\R^d)^n \rightarrow A$ is Borel measurable. Let $\AM_n \subset \A_n$ denote the set of Markovian controls. Accepting a mild abuse of notation, we will identify $\AM_n$ with the set of Borel measurable functions from $[0,T] \times (\R^d)^n$ to $A$.

The state processes in the $n$-player game are described as follows. For any $\bm{\alpha}=(\alpha^1,\ldots,\alpha^n) \in \A^n_n$, by Girsanov's theorem the following SDE system has a unique in law solution $\bm{X}=(X^1,\ldots,X^n)$:
\begin{align*}
dX^i_t &= b(t,X^i_t,\mu^n_t,\alpha^i(t,\bm{X}))dt + dW^i_t, \quad\quad \mu^n_t = \frac{1}{n}\sum_{k=1}^n\delta_{X^k_t},
\end{align*}
where $W^1,\ldots,W^n$ are independent $d$-dimensonal Brownian motions, and $X^1_0,\ldots,X^n_0$ are i.i.d.\ with law $\lambda$, independent of $(W^1,\ldots,W^n)$.\footnote{We could allow a constant invertible volatility coefficient $\sigma \in \R^{d \times d}$, but by redefining the state variables there is no loss of generality in taking $\sigma$ to be the identity matrix.}
We may write $\bm{X}[\bm{\alpha}] = (X^1[\bm{\alpha}],\ldots,X^n[\bm{\alpha}])$ in place of $\bm{X}=(X^1,\ldots,X^n)$ to stress which controls are being applied, and similarly $\mu^n[\bm{\alpha}]=\mu^n$.

When $(\alpha^1,\ldots,\alpha^n)$ are Markovian, the solution of the above SDE is \emph{strong}, thanks to a result of Veretennikov \cite{veretennikov1981strong} (see also Krylov-R\"ockner \cite[Theorem 2.1]{krylov-rockner}). 
In particular, we can in that case assume the solution processes are all defined on the same probability space. In general, however, we work with weak solutions of SDEs, and keep in mind that for a different control we may need to construct the state process $\bm{X}$ on a different probability space.

Player $i \in \{1,\ldots,n\}$ chooses $\alpha^i$ to try to maximize
\begin{align*}
J^n_i(\alpha^1,\ldots,\alpha^n) := \E\left[\int_0^Tf(t,X^i_t,\mu^n_t,\alpha^i(t,\bm{X}))dt + g(X^i_T,\mu^n_T)\right].
\end{align*}

\begin{definition} \label{def:nplayer-eq}
Let $\epsilon \ge 0$. A \emph{closed-loop (path-dependent) $\epsilon$-Nash equilibrium} is a tuple $(\alpha^1,\ldots,\alpha^n) \in \A_n^n$ such that
\begin{align*}
J^n_i(\alpha^1,\ldots,\alpha^n) \ge \sup_{\beta \in \A_n}J^n_i(\alpha^1,\ldots,\alpha^{i-1},\beta,\alpha^{i+1},\ldots,\alpha^n) - \epsilon, \quad \text{for }  i=1,\ldots,n.
\end{align*}
A \emph{Markovian $\epsilon$-Nash equilibrium} is a tuple $(\alpha^1,\ldots,\alpha^n) \in \AM_n^n$ such that\footnote{
Alternative terminology is common in the engineering literature: Instead of ``closed-loop path-dependent" and ``Markovian" one sometimes encounters ``closed loop perfect state" and ``feedback perfect state," respectively.
}
\begin{align*}
J^n_i(\alpha^1,\ldots,\alpha^n) \ge \sup_{\beta \in \AM_n}J^n_i(\alpha^1,\ldots,\alpha^{i-1},\beta,\alpha^{i+1},\ldots,\alpha^n) - \epsilon, \quad \text{for } i=1,\ldots,n.
\end{align*}
\end{definition}

Note that the notion of Markovian Nash equilibrium involves a supremum only over $\AM_n$, so a priori there is no clear relationship between these two equilibrium concepts. Nonetheless, using Assumption \ref{assumption:B} we prove in Section \ref{se:comparisonofequilibria} that Markovian equilibria form a subset of closed-loop equilibria, which allows us to study both types of equilibrium simultaneously:

\begin{proposition} \label{pr:np-eq-inclusion0}
Suppose Assumptions \ref{assumption:A} and \ref{assumption:B} hold, and let $\epsilon \ge 0$. Then any Markovian $\epsilon$-Nash equilibrium is also a closed-loop $\epsilon$-Nash equilibrium. 
\end{proposition}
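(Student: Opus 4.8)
The plan is to fix a Markovian $\epsilon$-Nash equilibrium $\bm{\alpha}=(\alpha^1,\dots,\alpha^n)\in\AM_n^n$ and show that no player $i$ can improve by more than $\epsilon$ by switching to an arbitrary path-dependent control $\beta\in\A_n$, given that all the other players continue to use their Markovian controls. Since the other players' controls $\alpha^j$ ($j\ne i$) are Markovian, the joint state process $\bm{X}=\bm{X}[\alpha^1,\dots,\beta,\dots,\alpha^n]$ is driven by coefficients that depend on the full state vector $\bm{X}_t$ only for the non-$i$ coordinates, while player $i$'s own drift uses the path-dependent $\beta$. The key point is that, for the fixed competitor profile $(\alpha^j)_{j\ne i}$, the dynamics of the $n-1$ other players and of $\mu^n$ are adapted functionals of the state trajectory, so player $i$ faces a genuine stochastic control problem with path-dependent (in fact Markovian-in-the-enlarged-state) dynamics.

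First I would reduce the question to a single-player control problem. Freezing $(\alpha^j)_{j\ne i}$, define the controlled system in which player $i$ uses a generic $\beta\in\A_n$; the resulting law of $\bm{X}$ is well defined by Girsanov as in the setup. The value player $i$ can obtain, $\sup_{\beta\in\A_n}J^n_i(\dots,\beta,\dots)$, is then the value of an optimal control problem over $(C^d)^n$-path-dependent controls. Next, I would invoke Assumption \ref{assumption:B}: the Filippov–Roxin convexity of $K(t,x,m)$ is exactly what is needed to ensure that this path-dependent control problem admits an optimizer within the class of Markovian (feedback) controls — more precisely, that the optimal value over $\A_n$ equals the optimal value over $\AM_n$. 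The standard route is a compactness/relaxation argument: pass to relaxed controls, where an optimal relaxed control exists by weak-convergence compactness (using (A.1)–(A.2)), then use the convexity of $K$ to "purify" the relaxed optimizer back to an ordinary control, and finally use a mimicking/Markovian-projection argument (Theorem \ref{th:markovprojection}) to replace it by a Markovian feedback control of the enlarged state $\bm{X}_t$ without changing the time-$t$ marginals of $(X^i_t,\mu^n_t)$, hence without changing $J^n_i$. This yields
\[
\sup_{\beta\in\A_n}J^n_i(\alpha^1,\dots,\beta,\dots,\alpha^n)=\sup_{\beta\in\AM_n}J^n_i(\alpha^1,\dots,\beta,\dots,\alpha^n).
\]

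Once this identity is established, the proposition follows immediately: since $\bm{\alpha}$ is a Markovian $\epsilon$-Nash equilibrium, $J^n_i(\bm{\alpha})\ge\sup_{\beta\in\AM_n}J^n_i(\alpha^1,\dots,\beta,\dots,\alpha^n)-\epsilon$, and the right-hand supremum equals the supremum over all of $\A_n$, giving the closed-loop $\epsilon$-equilibrium condition for each $i$. The main obstacle is the purification/mimicking step: one must be careful that the Markovian projection is performed on the correct enlarged state variable (so that the competitors' Markovian drifts, which read off coordinates of $\bm{X}_t$, and the empirical measure $\mu^n_t$ are all genuine functions of the mimicked marginal) and that the convexity hypothesis is used at the right place to guarantee that the relaxed optimizer corresponds to an ordinary $A$-valued control with the same reward. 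I expect the bulk of the technical work — and this is presumably deferred to Section \ref{se:comparisonofequilibria} — to be exactly this combination of relaxed-control existence, Filippov–Roxin purification, and the Brunick–Shreve/Gyöngy mimicking theorem.
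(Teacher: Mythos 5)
Your plan rests on the same two pillars as the paper's proof---the Filippov--Roxin convexity of $K(t,x,m)$ combined with a measurable selection, and the Gy\"ongy--Brunick--Shreve mimicking theorem (Theorem \ref{th:markovprojection}) applied to the full $n$-dimensional state vector so that the competitors' Markovian feedbacks and $\mu^n_t$ remain functions of the mimicked state---so the core idea is right. The architecture differs, though, and the difference matters. The paper never needs an optimizer: for an \emph{arbitrary} path-dependent deviation $\beta$ of player $i$ it conditions the drift on the current state $\bm{Y}_t$, observes that $\E[\,(b,f)(t,Y^i_t,\nu^n_t,\beta(t,\bm{Y}))\mid\bm{Y}_t\,]$ lies in the convex set $K(t,Y^i_t,\nu^n_t)$, selects a Markovian $\widetilde\beta$ realizing that averaged drift while dominating the averaged running reward, and then mimics; this gives $J^n_i(\ldots,\beta,\ldots)\le J^n_i(\ldots,\widetilde\beta,\ldots)$ directly, and the $\epsilon$-Nash property over $\AM_n$ finishes the argument. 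Your detour through existence of an optimal relaxed control is therefore unnecessary, and it is also the weakest link of your proposal: proving such existence in the closed-loop $n$-player setting is a nontrivial compactification argument that the proposition does not require, since comparing suprema only needs a construction that improves (weakly) on each fixed deviation.

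A second, more substantive point is the placement of the convexity hypothesis. In your ordering (relax, purify the relaxed optimizer, then project), the final Markovian projection of an ordinary path-dependent control produces only the conditional-expectation drift $\E[b(t,Y^i_t,\nu^n_t,\beta(t,\bm{Y}))\mid\bm{Y}_t]$, which is generally \emph{not} of the form $b(t,y_i,L_n(\bm{y}),a)$ for a single $a\in A$; you must invoke Assumption \ref{assumption:B} and a measurable selection again at this step (together with the one-sided inequality for $f$, so the value is preserved only up to a favorable inequality, not exactly as you state). This is precisely where the paper uses convexity, in one stroke, and using it only to purify a relaxed optimizer upstream does not cover it. If you instead reversed the last two steps---project the (relaxed) control to a Markovian relaxed control via conditional mean measures, then purify---you would recover the route the paper takes for Propositions \ref{pr:np-eq-inclusion1} and \ref{pr:np-eq-inclusion3}, again without needing any optimizer.
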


It is well known that a Markovian Nash equilibrium for the $n$-player game can be constructed from a classical solution (if one exists) of the corresponding \emph{Nash system}, a system of $n$ parabolic PDEs representing the value functions of the $n$ players. This observation, which goes back to \cite{bensoussan1983nonlinear,bensoussan2013regularity}, is discussed in contexts closer to ours in \cite[Section 2.1.4]{CarmonaDelarue_book_I} and \cite[Section 1.1]{cardaliaguet-delarue-lasry-lions}. Note also that closed-loop and Markovian equilibria can be constructed using a form of the stochastic maximum principle \cite[Section 2.2.2]{CarmonaDelarue_book_I}.

\subsection{The mean field game} \label{se:MFG}

We next define the limiting (mean field) game, beginning with the usual notion of equilibrium, which we call a \emph{strong} equilibrium. In the following, for $m \in \CP$ and measurable functions $\alpha : [0,T] \times \R^d \rightarrow A$ we will encounter SDEs which we will write in the form
\[
d X_t = b(t, X_t,m_t,\alpha(t,X_t))dt + dW_t, \quad X_0 \sim\lambda.
\]
When we say ``$X$ is the unique solution" of this SDE, we mean implicitly that $X=(X_t)_{t \in [0,T]}$ and $W=(W_t)_{t \in [0,T]}$ are continuous stochastic processes defined on some common filtered probability space $(\Omega,\F,\FF,\PP)$ on which $X$ is $\FF$-adapted, $W$ is an $\FF$-Brownian motion, the initial state $X_0$ has law $\PP \circ X_0^{-1} = \lambda$ and is independent of $W$, and the above SDE is satisfied. We avoid making explicit mention of the probability space, as we work exlusively with distributional properties of $X$.
Recall in the following that we write $\L(Y)$ for the law of a random variable $Y$. 

\begin{definition} \label{def:strongMFE-strict}
We say that  $m=(m_t)_{t \in [0,T]} \in C([0,T];\P(\R^d))$ is a \emph{strong mean field equilibrium (MFE)} if there exists a measurable function $\alpha^* : [0,T] \times \R^d \rightarrow A$ such that the unique  solution of the SDE
\[
dX^*_t = b(t,X^*_t,m_t,\alpha^*(t,X^*_t))dt + dW_t, \quad X^*_0 \sim \lambda
\]
satisfies the following:
\begin{enumerate}
\item The consistency condition holds: $m_t=\L(X_t)$ for all $t \in [0,T]$.
\item For any measurable function $\alpha : [0,T] \times \R^d \rightarrow A$, we have
\begin{align*}
\E&\left[\int_0^T f(t,X^*_t,m_t,\alpha^*(t,X^*_t)) dt + g(X^*_T,m_T)\right] \\
	&\ge \E\left[\int_0^T f(t,X_t,m_t,\alpha(t,X_t)) dt + g(X_T,m_T)\right],
\end{align*}
where $  X$ is the unique solution of 
\[
d X_t = b(t, X_t,m_t,\alpha(t,X_t))dt + dW_t, \quad X_0 \sim\lambda.
\]
\end{enumerate}
\end{definition}

It was shown in \cite[Theorem 6.2]{lacker2015mean} that a strong MFE exists under Assumptions \ref{assumption:A} and \ref{assumption:B}, but we will not make use of this fact.
We next define our weak equilibrium concept, after first introducing a useful terminology:

\begin{definition} \label{def:semiMarkovFunction}
For a Polish space $E$, we say a function $F : [0,T] \times \R^d \times \CP \rightarrow E$ is semi-Markov if it is Borel measurable and satisfies $F(t,x,m)=F(t,x,m')$ whenever $(t,x) \in [0,T] \in \R^d$ and $m,m' \in \CP$ satisfy $m_s=m'_s$ for all $s \le t$.
\end{definition}

We use the term \emph{semi-Markov} because the control $\alpha^*(t,x,m)$ depends on the state process only at its current time (Markovian) but on the entire history of the measure flow (non-Markovian). It is important to notice that the dependence on $m$ is \emph{nonanticipative}.

\begin{definition} \label{def:MFEsemimarkov-strict}
A \emph{weak semi-Markov mean field equilibrium} (or simply a \emph{weak MFE}) is a tuple $(\Omega,\F,\FF,\PP,W,\alpha^*,X^*,\mu)$, where $(\Omega,\F,\FF,\PP)$ is a complete filtered probability space and:
\begin{enumerate}
\item $\mu$ is a continuous $\FF$-adapted $\P(\R^d)$-valued process, $W$ is a $\FF$-Brownian motion, and $X^*$ is a continuous $\R^d$-valued $\FF$-adapted process with $\PP \circ (X^*_0)^{-1} = \lambda$.
\item $\alpha^* : [0,T] \times \R^d \times \CP  \rightarrow A$ is semi-Markov.
\item $X^*_0$, $\mu$, and $W$ are independent.
\item The state equation holds:
\begin{align*}
dX^*_t = b(t,X^*_t,\mu_t,\alpha^*(t,X^*_t,\mu))dt + dW_t.
\end{align*}
\item For every alternative semi-Markov $\alpha : [0,T] \times \R^d \times \CP \rightarrow A$  we have
\begin{align*}
\E&\left[\int_0^T f(t,X^*_t,\mu_t,\alpha^*(t,X^*_t,\mu) dt + g(X^*_T,\mu_T)\right] \\
	&\ge \E\left[\int_0^T f(t, X_t,\mu_t,\alpha(t, X_t,\mu)) dt + g( X_T,\mu_T)\right],
\end{align*}
where $ X$ is the solution (see Remark \ref{re:strongsolutions} below) of
\begin{align}
d X_t = b(t, X_t,\mu_t,\alpha(t, X_t,\mu))dt + dW_t, \quad X_0 = X^*_0.  \label{def:SDE-semimarkov-strong}
\end{align}
\item The consistency condition holds: $\mu_t = \PP(X^*_t \in \cdot \, | \, \F^\mu_t)$ a.s.\ for each $t \in [0,T]$, where $\F^\mu_t = \sigma(\mu_s : s \le t)$.
\end{enumerate}
We refer also to the $\P(\R^d)$-valued process $\mu$ itself as a weak (semi-Markov) MFE. In this way, if $\mu$ is deterministic, then it is a weak MFE if and only if  it is a strong MFE. In other words, a strong MFE is always a weak MFE.
\end{definition}

\begin{remark} \label{re:strongsolutions}
The SDE \eqref{def:SDE-semimarkov-strong} admits a unique strong solution (in particular, defined on the same probability space $\Omega$), as we discuss in detail in Appendix \ref{ap:SDErandomcoeff}. 
When $\mu$ is deterministic, this follows immediately from the main results of \cite{veretennikov1981strong,krylov-rockner}. Appendix \ref{ap:SDErandomcoeff} extends this to cover stochastic $\mu$, as long as $X_0$, $W$, and $\mu$ are independent. In particular, the solution $X$ of \eqref{def:SDE-semimarkov-strong} is adapted to the complete filtration generated by the process $(X^*_0,W_s,\mu_s)_{s \le t}$, and so  is $X^*$.
\end{remark}

\subsection{Limit theorems}

The following is the main result of the paper: 

\begin{theorem} \label{th:mainlimit}
Suppose Assumptions \ref{assumption:A} and \ref{assumption:B} hold.
Fix a sequence $\epsilon_n \ge 0$ with $\epsilon_n \rightarrow 0$. For each $n$, suppose $\bm{\alpha}^n=(\alpha^{n,1},\ldots,\alpha^{n,n}) \in \A_n^n$ is a closed-loop $\epsilon_n$-Nash equilibrium. Then the associated empirical measure flow sequence $\mu^n=\mu^n[\bm{\alpha}^n]$ is tight as a family of $\CP$-valued random variables, and every limit in distribution is a weak  MFE.
\end{theorem}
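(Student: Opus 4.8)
The plan is to follow the weak-convergence and compactness strategy pioneered in \cite{lacker2016general} for the open-loop case: prove tightness of $(\mu^n)$, extract a subsequential limit in distribution, and then verify one by one the six defining properties of a weak MFE in Definition \ref{def:MFEsemimarkov-strict}, the optimality property (v) being the crux. \textbf{Tightness.} Since $b$ is bounded by (A.2), each $X^i$ is a $d$-dimensional Brownian motion plus a drift of total variation at most $\|b\|_\infty T$, so the laws $\{\L(X^i_t) : n,i,t\}$ are tight in $\R^d$ and the paths $X^i$ admit a modulus of continuity that is uniform in $n$ and $i$ in probability. Because $1\wedge W_1(\mu^n_s,\mu^n_t) \le \tfrac1n\sum_k (1\wedge|X^k_s-X^k_t|)$ and $\E[\mu^n_t] = \tfrac1n\sum_k\L(X^k_t)$, the standard compactness criterion for $\CP$ yields tightness of $(\mu^n)$. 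I would simultaneously prove tightness of the enriched empirical measures
\[
\Theta^n := \frac1n\sum_{i=1}^n \delta_{(X^i,\,\Lambda^{n,i})} \in \P\big(\C^d\times\P([0,T]\times A)\big), \qquad \Lambda^{n,i}(dt,da) := \delta_{\alpha^{n,i}(t,\bm X)}(da)\,dt,
\]
which record each player's realized state path and (relaxed) control; tightness of the control component is free since $A$ is compact by (A.1). As $\mu^n$ is a continuous functional of $\Theta^n$, a subsequential limit $\Theta^n\Rightarrow\Theta$ carries along $\mu^n\Rightarrow\mu$.

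\textbf{Reduction to semi-Markov controls and identification of the limit.} From the martingale problems satisfied by the $X^i$, the convergence $\Theta^n\Rightarrow\Theta$, and standard relaxed-control limit arguments, one produces on an enlarged space $(\Omega,\F,\FF,\PP)$ a process $X^*$ solving $dX^*_t = \big(\int_A b(t,X^*_t,\mu_t,a)\,\Lambda^*_t(da)\big)dt + dW_t$ for a limiting nonanticipative relaxed control $\Lambda^*$, with $X^*_0\sim\lambda$ and $X^*_0,\mu,W$ independent. Here I would invoke Assumption B, which makes $K(t,x,m)$ convex, together with a Filippov--Roxin measurable selection and the mimicking Theorem \ref{th:markovprojection}: this de-relaxes $\Lambda^*$ and forces the drift into the form $b(t,X^*_t,\mu_t,\alpha^*(t,X^*_t,\mu))$ for a genuine \emph{semi-Markov} $\alpha^*$, while preserving the time-$t$ marginal laws of $X^*$ (which is what keeps the consistency condition intact); this gives properties (i)--(iv). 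For the consistency condition (vi), the key point is a \emph{conditional propagation of chaos}: conditionally on $\mu^n$, the $n$ players are asymptotically i.i.d., which follows from a Girsanov/relative-entropy estimate comparing the true law of $\bm X$ with the product of the players' $\mu^n$-conditional laws; hence $\mu^n_t \approx \tfrac1n\sum_k \PP(X^k_t\in\cdot\mid\mu^n)$, and in the limit $\mu_t = \PP(X^*_t\in\cdot\mid\F^\mu_t)$ (with a small extra argument that the conditioning may be taken with respect to $\mu$ rather than the richer limit of $\Theta^n$).

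\textbf{Optimality --- the main obstacle.} It remains to establish property (v): for every semi-Markov deviation $\beta(t,x,m)$, $X^*$ does at least as well as the solution of \eqref{def:SDE-semimarkov-strong} driven by $\beta$. The device is to let player $i$ in the $n$-player game deviate to $\beta(t,X^i_t,\mu^n)$, use the equilibrium inequality $J^n_i(\bm\alpha^n)\ge J^n_i(\bm\alpha^{n,-i},\beta)-\epsilon_n$, average over $i=1,\dots,n$, and pass to the limit. Averaging the left-hand side converges, via $\Theta^n\Rightarrow\Theta$ and continuity of $f,g$, to the left-hand side of (v). The difficulty --- precisely what separates the closed-loop regime from the open-loop one --- is the right-hand side: when player $i$ deviates, $\mu^n$ is perturbed, and since $b$ is only continuous (not Lipschitz) in the measure argument, this perturbation can be amplified through the other players' closed-loop controls into an $O(1)$ change of $\mu^n$ for certain ``influential'' players, which would prevent $J^n_i(\bm\alpha^{n,-i},\beta)$ from converging to the deviated mean field cost. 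Following Section \ref{se:proofideas}, the plan is to show that in an averaged sense \emph{not too many players can be simultaneously influential}: a Girsanov/change-of-measure bound controls the aggregate distortion $\tfrac1n\sum_{i=1}^n \E\big[d_{\CP}\big(\mu^n[\bm\alpha^{n,-i},\beta],\,\mu^n[\bm\alpha^n]\big)\big]$, so that for all but a vanishing fraction of indices $i$ the deviated empirical measure still converges to $\mu$ and the deviated state process to the solution of \eqref{def:SDE-semimarkov-strong} against $\mu$, while the influential players contribute only $o(1)$ to the average because $f$ and $g$ are bounded. Sending $n\to\infty$ then gives (v). I expect this averaging/entropy estimate for the measure-feedback of a unilateral deviation to be by far the most delicate ingredient, and the one place where the boundedness in (A.2), the convexity in B, and the mimicking theorem must all be used together.
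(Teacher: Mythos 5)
Your overall architecture matches the paper's (tightness of the enriched empirical measures, a Markovian-projection construction of a semi-Markov control, averaging the Nash inequalities over the deviating player, and a Girsanov device to tame the closed-loop feedback), and you have correctly located the crux. But the decisive step is asserted rather than proved, and as stated it is stronger than what the Girsanov computation actually delivers. You claim that a change-of-measure bound controls the aggregate pathwise distortion $\tfrac1n\sum_i\E\bigl[d_{\CP}(\mu^n[\bm\alpha^{n,-i},\beta],\mu^n[\bm\alpha^n])\bigr]$, so that ``for all but a vanishing fraction of indices $i$'' the deviated empirical measure still converges to $\mu$. Girsanov gives no such thing: the density $\zeta^{n,k}_T$ relating the deviated system of player $k$ to the original one is \emph{not} close to $1$ for any fixed $k$ (each $\E|\zeta^{n,k}_T-1|$ is of order one, reflecting exactly the possibility of influential players), and consequently neither $\|\L(\mu^{n,k})-\L(\mu^n)\|_{\mathrm{TV}}$ nor any coupled distance between $\mu^{n,k}$ and $\mu^n$ is small, even for ``most'' $k$. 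What the quadratic-variation computation gives is only that $\tfrac1n\sum_k\zeta^{n,k}_T\to1$ in probability, i.e.\ the \emph{averaged reweighted laws} $\tfrac1n\sum_k\L(\mu^{n,k})$ and $\L(\mu^n)$ merge in total variation; the paper explicitly works at this weaker level and never establishes (or needs) your per-index statement. Moreover, even granting the averaged statement, you still must identify the limit of $\tfrac1n\sum_k\L(Y^{k,k},\mu^{n,k})$ (the deviating player's state jointly with the perturbed measure flow) as $\L(X[\beta],\mu)$; in the paper this requires tracking the auxiliary $(d+1)$-dimensional particle system $(X^{n,k},\zeta^{n,k})$, passing to the limit in a martingale problem, representing the limit via martingale measures (El Karoui--M\'el\'eard), applying Girsanov again at the limit, and invoking the appendix results on SDEs with random coefficients to see that the conditional law of $X[\beta]$ given $\mu$ is $P^\mu$. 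None of this is in your sketch, and it is precisely this machinery that substitutes for the per-index closeness you assume. A related omission: the limit arguments require the deviation $\beta$ to be continuous, so one first needs the reduction of the supremum over semi-Markov (relaxed) controls to continuous strict ones (chattering plus a.e.\ approximation by continuous functions), which you do not address.

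A second, smaller problem is your route to the consistency condition (vi). You propose a ``conditional propagation of chaos'' via a relative-entropy comparison between $\L(\bm X)$ and the product of the players' $\mu^n$-conditional laws. For arbitrary measurable closed-loop controls there is no reason for such an estimate to hold (all players' feedbacks may depend strongly on a single player's path, destroying any conditional product structure), and no such estimate appears in the paper. Consistency is instead obtained by construction: one shows that any limit $\bm{\overline\mu}$ of the enriched empirical measures a.s.\ solves a randomized Fokker--Planck identity, projects the drift onto $(t,X_t,(\mu_s)_{s\le t})$ to get the semi-Markov $\Lambda^*$, and then uses the superposition/uniqueness results for SDEs with random coefficients to build $X^*$ with $\mu_t=\L(X^*_t\mid\F^\mu_t)$ automatically. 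I would replace your entropy argument by this projection-plus-superposition construction; as written, that step of your proposal would not go through.
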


The proof is given between Sections \ref{se:comparisonofequilibria} and \ref{se:mainlimitproof}.
Recalling Proposition \ref{pr:np-eq-inclusion0}, we immediately deduce that Theorem \ref{th:mainlimit} remains true if instead $\bm{\alpha}^n$ is a \emph{Markovian} $\epsilon_n$-Nash equilibrium.
It is well known that a suitable monotonicity condition on the payoff functions ensures that the mean field equilibrium is unique, and we adapt these ideas to our weaker equilibrium concept. The following theorem, inspired by the early uniqueness result of Lasry-Lions \cite{lasrylions}, is proven in Section \ref{se:uniqueness-proof}.

\begin{theorem} \label{th:uniqueness}
Suppose Assumption \ref{assumption:A} holds, along with the following:
\begin{enumerate}[(i)]
\item $b(t,x,m,a)=b(t,x,a)$ has no mean field term.
\item $f(t,x,m,a) = f_1(t,x,m) + f_2(t,x,a)$, for some measurable functions $f_1$ and $f_2$.
\item The action space $A$ is a convex, compact subset of $\R^k$ for some $k$.
\item For each $(t,m) \in [0,T] \times \P(\R^d)$, $b=b(t,x,a)$ is affine in $(x,a)$, $g=g(x,m)$ is concave in $x$, and $f=f(t,x,m,a)$ is strictly concave in $(x,a)$.
\item The monotonicity condition holds: For each $m_1,m_2 \in \P(\R^d)$, we have
\begin{align*}
\int_{\R^d}(f_1(t,x,m_1) - f_1(t,x,m_2))(m_1-m_2)(dx) &\le 0, \\
\int_{\R^d}(g(x,m_1) - g(x,m_2))(m_1-m_2)(dx) &\le 0.
\end{align*}
\end{enumerate}
Then there exists a unique weak MFE, and it is in fact a strong MFE. 
\end{theorem}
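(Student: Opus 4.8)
The plan is to establish uniqueness of the weak MFE by a Lasry--Lions monotonicity argument, suitably adapted to the stochastic measure flow. First I would observe that assumptions (i) and (iv) make the individual player's control problem a strictly concave stochastic control problem with no mean field term in the drift: since $b$ is affine in $(x,a)$ and the running reward $f_2$ together with the terminal reward $g$ are concave (in fact $f$ is strictly concave in $(x,a)$), for any fixed measure flow $\mu$ the optimal control in Definition \ref{def:MFEsemimarkov-strict}(5) is \emph{unique} up to $dt\otimes d\PP$-null sets, and moreover the optimal state process $X^*$ (given $X^*_0$, $W$, $\mu$) is pathwise unique. The key point enabled by (i) is that the drift does not see $\mu$, so the state equation $dX_t = b(t,X_t,\alpha(t,X_t,\mu))dt + dW_t$ is the same equation driven only by the control; the mean field only enters the objective through the additively separable term $f_1(t,x,\mu_t)$ and $g(x,\mu_T)$.

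Next, suppose $(\Omega,\F,\FF,\PP,W,\alpha^{*,1},X^{*,1},\mu^1)$ and $(\Omega,\F,\FF,\PP,W,\alpha^{*,2},X^{*,2},\mu^2)$ are two weak MFE; by enlarging the probability space I would realize both on a common space with a common Brownian motion $W$ and common initial condition $X_0\sim\lambda$, but with possibly different (endogenous) noises $\mu^1,\mu^2$ that are each independent of $(X_0,W)$. The standard monotonicity trick is to plug $\alpha^{*,2}$ (viewed as a semi-Markov control, but reacting to $\mu^1$) into player 1's problem against $\mu^1$, and symmetrically $\alpha^{*,1}$ into player 2's problem against $\mu^2$, then add the two resulting inequalities from Definition \ref{def:MFEsemimarkov-strict}(5). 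After taking conditional expectations given $\F^{\mu^1}_T$ and $\F^{\mu^2}_T$ respectively and using the consistency condition $\mu^i_t=\PP(X^{*,i}_t\in\cdot\mid\F^{\mu^i}_t)$, the $f_2$ and $g$-concavity-in-$x$ terms and the control terms should combine (because the state dynamics are decoupled from the measure and $f_2$ does not depend on $m$) to leave exactly
\[
\E\left[\int_0^T\!\!\int_{\R^d}\big(f_1(t,x,\mu^1_t)-f_1(t,x,\mu^2_t)\big)(\mu^1_t-\mu^2_t)(dx)\,dt + \int_{\R^d}\big(g(x,\mu^1_T)-g(x,\mu^2_T)\big)(\mu^1_T-\mu^2_T)(dx)\right]\ge 0,
\]
whereas assumption (v) forces every integrand to be $\le 0$ pointwise. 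Hence all the inequalities are equalities; strict concavity of $f$ in $(x,a)$ then forces $X^{*,1}=X^{*,2}$ a.s.\ (the two controls and trajectories coincide), and therefore $\mu^1_t=\PP(X^{*,1}_t\in\cdot\mid\F^{\mu^1}_t)$ and $\mu^2_t=\PP(X^{*,1}_t\in\cdot\mid\F^{\mu^2}_t)$ are conditional laws of the \emph{same} random variable with respect to two filtrations; combined with $\mu^1$ being $\F^{\mu^1}$-measurable and $\mu^2$ being $\F^{\mu^2}$-measurable, a short argument (conditioning also removes the extra randomness, since $X^{*}$ is driven only by $X_0$ and $W$ once the controls agree, which are $\F^{X_0,W}$-adapted) shows $\mu^1=\mu^2$ and that this common flow is deterministic; this establishes both uniqueness and that the unique weak MFE is strong.

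The main obstacle I anticipate is the bookkeeping in the ``add the two inequalities and cancel'' step: because the competitors are semi-Markov (they react to the whole path of $\mu$) while the state SDE only carries the control, one must be careful about which filtration conditional expectations are taken against, and must check that plugging $\alpha^{*,2}(\cdot,\cdot,\mu^1)$ into player 1's problem is an admissible semi-Markov perturbation and yields a well-posed state equation via Remark \ref{re:strongsolutions}. A secondary subtlety is deducing, after the inequalities have all become equalities, that the endogenous noise $\mu$ actually degenerates to a deterministic flow: this should follow because once the optimal control is the (unique) minimizer of a deterministic concave problem, the optimal state process is a measurable functional of $(X_0,W)$ alone, so its conditional law given $\F^\mu_t$ equals its unconditional law, which is deterministic, and the consistency condition then pins $\mu$ to that deterministic flow. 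The rest — existence of the optimizer, measurable selection of $\alpha^*$, and strict concavity giving uniqueness of the optimal trajectory — is routine given Assumptions \ref{assumption:A}, \ref{assumption:B} and (iii)--(iv), and I would invoke \cite{lacker2015mean} or a standard stochastic control argument for it.
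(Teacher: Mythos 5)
There is a genuine gap, and it sits exactly where you wave your hands: the claim that after swapping controls ``the $f_2$ and $g$-concavity terms and the control terms should combine to leave exactly'' the Lasry--Lions expression. In the deterministic argument this works because, the drift having no mean field term, plugging equilibrium 2's control into player 1's problem reproduces equilibrium 2's state process, so the cross terms are $\int f_1(t,x,m^1_t)\,m^2_t(dx)$ and the consistency conditions give the monotonicity integrand. In the weak setting this fails: the only admissible deviations in Definition \ref{def:MFEsemimarkov-strict}(5) are semi-Markov functions of $(t,x,\mu^1)$, so the transplanted control is $\alpha^{*,2}(t,x,\mu^1)$, fed the \emph{wrong} measure flow. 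Conditionally on $\mu^1=m$, the resulting state solves $dX_t=b(t,X_t,\alpha^{*,2}(t,X_t,m))dt+dW_t$ (Lemma \ref{le:ap:SDErandom-uniqueness}), whose time-marginals are the marginals of the law $P^{2,m}$ of that SDE; these equal $m_t$ only when $m$ solves the McKean--Vlasov equation \eqref{def:intro:MKVeq3} for $\alpha^{*,2}$, i.e.\ lies in the support of $\L(\mu^2)$, and they are in general neither $\mu^1_t$ nor $\mu^2_t$. Consequently the sum of the two optimality inequalities produces terms $\E[\langle P^{2,\mu^1}_t,f_1(t,\cdot,\mu^1_t)\rangle]$ (and symmetrically), not $\E[\langle \mu^2_t,f_1(t,\cdot,\mu^1_t)\rangle]$, and the monotonicity hypothesis (v) cannot be applied. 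To genuinely transplant equilibrium 2 into player 1's problem you must allow deviations carrying extra randomness (e.g.\ depending on an independent copy of $\mu^2$), and then you must prove that a weak MFE is optimal against this strictly larger class of randomized, merely compatible controls --- which is not part of Definition \ref{def:MFEsemimarkov-strict} and is itself a nontrivial Markovian-projection argument.

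That missing step is precisely the content of the paper's Theorem \ref{th:weaksemiMarkov-to-weakMFGsolution}, and the paper's proof of Theorem \ref{th:uniqueness} is in fact a two-line reduction: every weak (R)MFE induces a weak MFG solution in the sense of Definition \ref{def:weakMFGsolution}, and uniqueness of weak MFG solutions under hypotheses (i)--(v) is quoted from \cite[Theorem 6.2]{carmona-delarue-lacker}, whose proof handles the cross-term issue by exploiting optimality against compatible randomized controls. A direct monotonicity proof along your lines is possible (the paper says as much), but only after you have upgraded the optimality in Definition \ref{def:MFEsemimarkov-strict}(5) to this richer class; as written, your cancellation step does not go through. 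A secondary, smaller point: your concluding claim that the optimal state is a functional of $(X_0,W)$ alone ``once the controls agree'' is circular before you know $\mu^1=\mu^2$, since the common control function is still fed two different random measure flows; this too is repaired only after the main gap is closed.
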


Noting that condition (iv) of Theorem \ref{th:uniqueness} implies Assumption \ref{assumption:B}, we may combine the Theorems \ref{th:uniqueness} and \ref{th:mainlimit} to get the following propagation of chaos result:

\begin{corollary} \label{co:uniqueness-propagation}
Suppose  the assumptions of Theorem \ref{th:uniqueness} hold.
For each $n$, suppose $\bm{\alpha}^n=(\alpha^{n,1},\ldots,\alpha^{n,n}) \in \A_n^n$ is a closed-loop $\epsilon_n$-Nash equilibrium. Then $\mu^n=\mu^n[\bm{\alpha}^n]$ converges in probability in $\CP$ to the unique strong MFE.
\end{corollary}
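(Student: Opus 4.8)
The plan is to combine Theorems~\ref{th:mainlimit} and \ref{th:uniqueness} via a standard tightness-plus-uniqueness argument. First I would invoke Theorem~\ref{th:mainlimit}: since Assumptions~\ref{assumption:A} and \ref{assumption:B} hold (the latter because condition~(iv) of Theorem~\ref{th:uniqueness}, with $b$ affine in $a$ and $f$ concave in $a$, implies the Filippov--Roxin convexity of $K(t,x,m)$), the sequence of empirical measure flows $\mu^n = \mu^n[\bm\alpha^n]$, viewed as $\CP$-valued random variables, is tight, and every subsequential limit in distribution is a weak MFE. Next I would apply Theorem~\ref{th:uniqueness}, whose hypotheses are exactly those assumed in the corollary, to conclude that there is a \emph{unique} weak MFE, call it $m^\ast \in \CP$, and that $m^\ast$ is deterministic (it is in fact a strong MFE). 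Therefore every subsequential limit in distribution of $(\mu^n)$ equals the single point mass $\delta_{m^\ast}$ on $\CP$.

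Then I would upgrade ``tight with unique subsequential limit'' to ``convergent in distribution.'' This is the routine fact: in a metric space, if a sequence of laws is tight and every weakly convergent subsequence has the same limit $\nu$, then the whole sequence converges weakly to $\nu$; one argues by contradiction, extracting from any subsequence staying at Prokhorov-distance $\ge \delta$ from $\nu$ a further convergent subsequence, whose limit must be $\nu$, a contradiction. Applying this with $\nu = \delta_{m^\ast}$ gives $\mu^n \to m^\ast$ in distribution in $\CP$. Finally, since the limit $m^\ast$ is a \emph{constant} (deterministic element of $\CP$), convergence in distribution to a constant is equivalent to convergence in probability: for every $\delta > 0$, $\PP(\|\mu^n - m^\ast\|_{\CP} > \delta) \to 0$, where $\|\cdot\|_{\CP}$ denotes the sup-metric on $\CP$ (using that $x \mapsto 1\wedge d(x,m^\ast)$ is bounded continuous). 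This yields the claimed convergence in probability to the unique strong MFE.

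I do not expect a genuine obstacle here: the corollary is essentially a formal consequence of the two preceding theorems plus the elementary topological lemmas about tightness and convergence to a constant. The only points requiring a word of care are (a) checking that condition~(iv) of Theorem~\ref{th:uniqueness} indeed entails Assumption~\ref{assumption:B}, which the paper has already noted in the sentence preceding the corollary, so I would just cite that remark; and (b) being precise that the relevant ``weak MFE'' object in Theorem~\ref{th:mainlimit}'s conclusion is the $\P(\R^d)$-valued process $\mu$ itself (per the last sentence of Definition~\ref{def:MFEsemimarkov-strict}), so that uniqueness of the weak MFE in the process sense forces all subsequential limits of $(\mu^n)$ to coincide in law with the deterministic $m^\ast$. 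With these identifications in place the three steps---tightness/characterization, uniqueness collapsing the limit set to a point, and constant-limit convergence in probability---complete the proof.
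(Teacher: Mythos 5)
Your proposal is correct and follows exactly the route the paper intends: it notes (as the paper does just before the corollary) that condition (iv) of Theorem \ref{th:uniqueness} gives Assumption \ref{assumption:B}, combines Theorem \ref{th:mainlimit} (tightness and identification of every limit as a weak MFE) with Theorem \ref{th:uniqueness} (uniqueness, with the unique weak MFE being a deterministic strong MFE), and then uses the standard facts that tightness with a unique subsequential limit gives convergence in law and that convergence in law to a constant is convergence in probability. Nothing is missing.
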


Corollary \ref{co:uniqueness-propagation} is worth comparing to the results of \cite[Section 2.4.4]{cardaliaguet-delarue-lasry-lions}, the only previous limit theorem for closed-loop $n$-player equilibria. Assuming a unique (strong) MFE and a smooth solution of the master equation,
a comparable limit theorem for $\mu^n$ follows from \cite[Theorem 2.15]{cardaliaguet-delarue-lasry-lions}, though it is not stated explicitly. Aside from the fact that they treat common noise, our Corollary \ref{co:uniqueness-propagation} holds under much weaker assumptions. Moreover, our main result, Theorem \ref{th:mainlimit}, holds even when the MFE is non-unique, which seems completely out of reach of the techniques of \cite{cardaliaguet-delarue-lasry-lions}.
Of course, the smooth regime they work with affords a more refined and quantitative description of the limit theorem including convergence of value functions; see also \cite{DelarueLackerRamananCLT,DelarueLackerRamananLDP}.

\begin{remark}
Instead of Definition \ref{def:MFEsemimarkov-strict}, one might propose a more natural \emph{fully-Markov} equilibrium concept, in which the control is of the form $\alpha^*(t,X_t,\mu_t)$, depending only on the present value of the measure flow. It is not clear if this smaller class of equilibria is sufficient to catch all limit points of $n$-player equilibria, and we suspect not.
The issue is likely the mode of convergence, and the method of proof suggests the following conjecture: In the setting of Theorem \ref{th:mainlimit}, every limit point of the pre-compact sequence $(\L(\mu^n_t))_{t \in [0,T]}$ in $C([0,T];\P(\P(\R^d)))$ can be written as $(\L(\mu_t))_{t \in [0,T]}$ for some fully-Markov equilibrium, in the sense just described. To prove this would likely require a Markovian projection argument for measure-valued processes, and such technology does not seem to be available at this time.
\end{remark}

\subsection{A partial converse to the main limit theorem} \label{se:selectionoflimits}

Theorem \ref{th:mainlimit} ensures that all subsequential limits of closed-loop $n$-player approximate equilibria are weak MFE. The natural followup question is: Are all weak MFE subsequential limits of closed-loop $n$-player approximate equilibria? This remains unclear in general, but this section discusses a partial result and a sketch of how to build interesting examples, carried out in more detail in Section \ref{se:examples}. (Notably, if the $n$-player equilibria are open-loop rather than closed-loop, then the results of \cite{lacker2016general} provide an affirmative answer to this question, and we will return to this point in Section \ref{se:closedloopvsopenloop}.)

\begin{assumption}{\textbf{C}} \label{assumption:C}
The drift $b$ is Lipschitz with respect to total variation, in the following sense:
There exists $c > 0$ such that, for each $(t,x,a) \in [0,T] \times \R^d \times A$ and $m,m' \in \P(\R^d)$, we have
\begin{align*}
\frac{1}{c}|b(t,x,m,a) - b(t,x,m',a)| \le  \|m-m'\|_{\mathrm{TV}} := \sup_f \int_{\R^d} f\,d(m-m'),
\end{align*}
where the supremum is over all measurable functions $f : \R^d \rightarrow [-1,1]$.
\end{assumption}

Note that the metric $\|m-m'\|_{\mathrm{TV}}$  dominates the Wasserstein metric defined in \eqref{def:wasserstein}, and thus Assumption \ref{assumption:C} is weaker in a sense than the Wasserstein-Lipschitz assumptions that appear more often in the literature.

We prove the following in Section \ref{se:ex:weakMFEconverse}, which shows that every strong MFE arises as the limit of $n$-player approximate equilibria. 
The only prior result of this nature seems to be the recent \cite[Theorem 6.9]{CarmonaDelarue_book_II}, which operates under different and mostly stronger assumptions. The same conclusion is also implicit in \cite[Proposition 6.3]{cardaliaguet-delarue-lasry-lions}, under even heavier assumptions.

\begin{theorem} \label{th:converselimit-strongMFE}
Suppose Assumptions \ref{assumption:A}, \ref{assumption:B}, and \ref{assumption:C} hold.
Suppose $m \in \CP$ is a strong MFE. Then there exist $\epsilon_n \ge 0$ with $\epsilon_n \rightarrow 0$ and, for each $n$, a Markovian $\epsilon_n$-Nash equilibrium $\bm{\alpha}^n \in \AM_n^n$ such that $\mu^n[\bm{\alpha}^n]$ converges in law to $m$ in $\CP$.
\end{theorem}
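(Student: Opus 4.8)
The plan is to use the strong MFE $m$ together with its optimal feedback control $\alpha^*:[0,T]\times\R^d\to A$ to construct the $n$-player strategies, and then show that (i) the empirical measure converges to $m$ and (ii) the profile is an approximate equilibrium. The naive candidate is the \emph{distributed} strategy $\alpha^{n,i}(t,\bm{x}) = \alpha^*(t,x_i)$, which does not depend on the other players at all; with this choice the state processes $X^i$ are i.i.d.\ solutions of the McKean–Vlasov-type SDE $dX^i_t = b(t,X^i_t,m_t,\alpha^*(t,X^i_t))dt + dW^i_t$, whose common law is exactly $m$ by the consistency condition. A standard propagation-of-chaos / law-of-large-numbers argument for the empirical measure of i.i.d.\ continuous processes then gives $\mu^n[\bm{\alpha}^n]\to m$ in $\CP$, in probability hence in law. (Here one uses that the SDE with frozen flow $m$ has a unique strong solution, as noted in Remark \ref{re:strongsolutions}, so the $X^i$ are genuinely i.i.d.) The delicate point is the $\epsilon_n$-equilibrium property: when player $i$ deviates to some $\beta\in\AM_n$, the empirical measure $\mu^n$ seen by player $i$ is no longer close to $m$ a priori, because $\beta$ can depend on all states and could in principle be chosen to move $\mu^n$. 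But player $i$'s deviation changes only \emph{one} coordinate of $\bm{X}$; since $b,f,g$ are bounded, the drift of each of the other $n-1$ players is unchanged, and one can show via Assumption \ref{assumption:C} (total-variation Lipschitz dependence of $b$ on the measure) together with Girsanov/Gronwall estimates that $\|\mu^n_t - m_t\|_{\mathrm{TV}}$, or at least $\sup_t W_1(\mu^n_t,m_t)$, is $O(1/\sqrt n)$ uniformly over the deviating player's choice of $\beta$.

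Concretely, I would proceed as follows. \textbf{Step 1:} Freeze the flow and set $\alpha^{n,i}(t,\bm{x})=\alpha^*(t,x_i)$; verify using Remark \ref{re:strongsolutions} that the resulting $X^1,\dots,X^n$ are i.i.d.\ with law equal to the law of the strong MFE state process, and hence $\L(X^i)$ has time-marginals $m_t$. \textbf{Step 2:} Prove $\mu^n[\bm{\alpha}^n]\to m$ in $\CP$: by the i.i.d.\ structure and Varadarajan's theorem the empirical measures on path space converge a.s.\ to $\L(X^1)$ in $\P(\C^d)$, which projects to convergence of the flow in $\CP$. \textbf{Step 3 (the crux):} Fix $i$ and a deviation $\beta\in\AM_n$ (or $\A_n$, which by Proposition \ref{pr:np-eq-inclusion0} one may reduce to); let $\tilde{\bm X}$ denote the state process under the deviated profile and $\tilde\mu^n$ its empirical measure. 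Using that only the $i$th drift has changed and $b,f,g$ are bounded, plus Assumption \ref{assumption:C}, derive a bound $\E\big[\sup_{t\le T}\|\tilde\mu^n_t - m_t\|_{\mathrm{TV}}\big]\le \delta_n$ with $\delta_n\to 0$, uniformly in $i$ and $\beta$. The mechanism: the $n-1$ non-deviating players solve, conditionally on the $i$th path, a McKean–Vlasov system with frozen $i$th coordinate; a fixed-point/Gronwall argument in $\sup_t\|\cdot\|_{\mathrm{TV}}$ using the TV-Lipschitz constant $c$ controls the discrepancy between their empirical law and $m$, with the only genuinely $n$-dependent error being the $1/n$ weight of the deviating player and the $O(1/\sqrt n)$ i.i.d.\ fluctuation. \textbf{Step 4:} Conclude: since $b,f$ are bounded and (jointly) continuous, and $m$ is fixed, the payoff $J^n_i$ under the deviation differs from the frozen-flow control problem \eqref{intro:MFE} value by at most $o(1)$ (uniformly in the deviation), and the latter is maximized by $\alpha^*$; combined with Step 2 showing $J^n_i(\bm{\alpha}^n)$ itself converges to that optimal value, we get the $\epsilon_n$-Nash inequality with $\epsilon_n := 2\,(\text{modulus of continuity of }f,g)(\delta_n) + o(1)\to 0$.

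I expect \textbf{Step 3} to be the main obstacle. The subtlety is exactly the one flagged in the introduction's discussion of "influential" players: a single deviator in a closed-loop game can, through the feedback $\alpha^*(t,X^k_t)$ in the \emph{other} players' drifts, have an amplified effect on $\mu^n$. Here, however, the other players' controls $\alpha^*(t,x_k)$ do \emph{not} depend on $\mu^n$ or on $x_i$ — they are distributed — so the feedback loop is broken, and the deviator's influence on each other player is only through... in fact there is \emph{no} direct influence, since player $k\neq i$'s dynamics depend on $x_i$ only via $\mu^n_t$, whose sensitivity to one coordinate is $O(1/n)$ in TV. This is what makes the distributed construction work and is why Assumption \ref{assumption:C} (rather than mere weak continuity) is invoked: it converts the $O(1/n)$ perturbation of $\mu^n$ into an $O(1/n)$ perturbation of every other player's drift, which a Gronwall argument then keeps under control over $[0,T]$. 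Writing this cleanly will require care with the weak-solution framework (different probability spaces for different deviations) and with making the bound uniform over all $\beta\in\A_n$, but no new ideas beyond Girsanov, Gronwall, and the boundedness in Assumption \ref{assumption:A} should be needed.
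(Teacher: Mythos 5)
Your overall strategy (give every player the distributed control $\alpha^*(t,x_i)$, show $\mu^n\to m$, then show a single deviation cannot gain much) is the same as the paper's, but the execution has genuine gaps. First, Step 1 misidentifies the $n$-player dynamics: in the game the drift of player $i$ is $b(t,X^i_t,\mu^n_t,\alpha^*(t,X^i_t))$ with the \emph{empirical} measure $\mu^n_t$, not the frozen flow $m_t$. So even with distributed controls the particles interact through $\mu^n$ and are exchangeable but not i.i.d.; the Varadarajan/LLN argument of Step 2 does not apply. One needs a genuine propagation-of-chaos result for this interacting system, and since $\alpha^*$ is only measurable (possibly discontinuous), this is exactly where Assumption \ref{assumption:C} is used in the paper, via the strong propagation of chaos of \cite{lacker2018strong}: it yields not only $\mu^n\to m$ but also $\int\varphi\,d\mu^n_t\to\int\varphi\,dm_t$ for bounded \emph{measurable} $\varphi$, which is needed to pass to the limit in $J^n_i(\bm{\alpha}^n)$ because the integrand involves the discontinuous $\alpha^*$. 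Second, the quantitative bound you propose in Step 3, $\E[\sup_t\|\tilde\mu^n_t-m_t\|_{\mathrm{TV}}]\le\delta_n\to 0$, cannot hold: $\tilde\mu^n_t$ is purely atomic while $m_t$ has a density for $t>0$, so their total variation distance is maximal for every $n$. Assumption \ref{assumption:C} enters through path-space/Girsanov-type comparisons, not through a Gronwall estimate on TV distances of empirical measures. The paper instead handles the deviated system by a Girsanov change of measure whose density has moments bounded uniformly in $n$ (using only boundedness of $b$), which transfers the already-established convergence $\mu^n\to m$ to the deviated empirical measure $\nu^n\to m$ in probability; and by defining $\epsilon_n$ as the supremum gap and choosing a near-optimal deviation sequence $\beta^n$, it avoids any need for bounds uniform over all deviations.

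Third, Step 4 is missing the key argument that a deviation which may depend on time, all players' paths, and hence on information beyond the deviator's own current state cannot outperform the Markovian feedback optimum of the frozen-flow problem \eqref{intro:MFE}. The optimality in Definition \ref{def:strongMFE-strict} is only over feedback controls $\alpha(t,X_t)$, so this extension requires an argument: the paper compactifies the deviations as relaxed controls, extracts a limit $(Y,\beta,W)$, and applies the Markovian projection (mimicking) Theorem \ref{th:markovprojection} to replace the limiting relaxed, possibly non-Markovian control by a Markovian one with the same time-marginals and payoff, at which point the MFE optimality applies. Without this (or an equivalent verification/conditioning argument), the inequality "deviation payoff $\le$ MFE value $+o(1)$" in your Step 4 is unjustified.
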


The strategy in proving this is standard: Let $\alpha^*(t,x)$ be the corresponding optimal control from Definition \ref{def:strongMFE-strict}. The state process $X$ in Definition \ref{def:strongMFE-strict} is then the solution of
\begin{align}
dX_t = b(t,X_t,m_t,\alpha^*(t,X_t))dt + dW_t, \quad m_t = \L(X_t), \ \forall t \in [0,T]. \label{def:intro:MKVeq1}
\end{align}
Then, we tell each player in the $n$-player game to adopt the control $\alpha^*(t,X^i_t)$. This results in the $n$-particle system
\[
dX^i_t = b(t,X^i_t,\mu^n_t,\alpha^*(t,X^i_t))dt + dW^i_t.
\]
We expect from McKean-Vlasov limit theory that $\mu^n$ converges in law to $m$. The inequality of the optimality condition (2) of Definition \ref{def:strongMFE-strict} should then translate to the approximate Nash property  in the pre-limit. The precise form of Assumption \ref{assumption:C} is inspired from the recent \cite{lacker2018strong}, which proves a strong form of propagation of chaos that allows us to avoid imposing continuity assumptions on the control $\alpha^*$.

It is not clear when we can expect Theorem \ref{th:converselimit-strongMFE} to extend to weak MFE.
To explain what can go wrong, suppose that $(\Omega,\F,\FF,\PP,W,\alpha^*,X^*,\mu)$ is a weak MFE in the sense of Definition \ref{def:MFEsemimarkov-strict}. We then have
\begin{align}
dX^*_t = b(t,X^*_t,\mu_t,\alpha^*(t,X^*_t,\mu))dt + dW_t, \quad \mu_t = \L(X^*_t \, | \, \F^\mu_t), \ \ t \in [0,T]. \label{def:intro:MKVeq2}
\end{align}
Because $X^*_0$, $W$, and $\mu$ are independent, the law of $(X^*_0,W)$ remains unchanged if we condition on $\mu$; it is then intuitively clear (and follows from Lemma \ref{le:ap:SDErandom-uniqueness}) that the $\CP$-valued random variable $\mu$ belongs almost surely to the set $S^*$, consisting of those $m \in \CP$ which solve the McKean-Vlasov equation deterministically,
\begin{align}
dX^m_t = b(t,X^m_t,m_t,\alpha^*(t,X^m_t,m))dt + dW_t, \quad m_t = \L(X^m_t), \ \ t \in [0,T]. \label{def:intro:MKVeq3}
\end{align}
The key point is that if $\mu$ is a weak but not strong MFE, then this McKean-Vlasov equation \eqref{def:intro:MKVeq3} is necessarily non-unique; i.e., $S^*$ is not a singleton. In other words, a weak MFE can always be expressed as a mixture of solutions of a non-unique McKean-Vlasov equation.
As a consequence, we cannot expect propagation of chaos to hold for the corresponding particle system. 
That is, if we proceed as before by letting the players in the $n$-player game use the (path-dependent) controls $\bm{\alpha}^n=(\alpha^{n,1},\ldots,\alpha^{n,n}) \in \A_n^n$ given by
\[
\alpha^{n,i}(t,\bm{x}) = \alpha^*\left(t,x^i_t,\frac{1}{n}\sum_{k=1}^n\delta_{x^k}\right), \quad\quad \bm{x}=(x^1,\ldots,x^n) \in (\C^d)^n,
\]
then there is no way to know if $\mu^n[\bm{\alpha}^n]$ converges to the given $\mu$. For non-unique McKean-Vlasov equations, one can often show that the sequence $\mu^n[\bm{\alpha}^n]$ is tight and that every limit point is supported on $S^*$. But  when $S^*$ is not a singleton, there is no way in general to know which mixture(s) will be ``picked out" by the limit $n\rightarrow\infty$.

We will discuss these ideas further in Section \ref{se:examples}, which includes examples of weak MFE which are not strong MFE but which do arise as the limits of $n$-player (approximate) Nash equilibria. Section  \ref{se:ex:weakMFEconverse}, in particular, gives an example of an interesting kind of weak MFE, discussed also in \cite[Section 3]{lacker2016general}: If $S \subset \CP$ denotes the set of strong MFE, then there can exist weak semi-Markov MFE $\mu$ with $\PP(\mu \in S) < 1$. 
But we do not address an intriguing open problem: Can one construct a weak MFE $\mu$ satisfying $\PP(\mu \in S) < 1$ which arises as the limit of $n$-player approximate equilibria? In the examples we give in Section \ref{se:examples} of weak MFE which arise as the limits of $n$-player approximate equilibria, the weak MFE are always mixtures of strong MFE; that is, they satisfy $\PP(\mu \in S)=1$. Note, on the other hand, that it is known that all weak MFE do indeed arise as limits of \emph{open-loop} $n$-player approximate equilibria; see Theorem \ref{th:openloopconverse} below, essentially quoted from \cite{lacker2016general}.

\subsection{Closed-loop versus open-loop equilibria} \label{se:closedloopvsopenloop}

The parallel limit theory for open-loop $n$-player equilibria is better understood and allows for some interesting comparisons between the two regimes. First, we recall the definition of open-loop equilibrium. In this section, we impose stronger continuity assumptions on $b$ and $f$, so that we may apply the results of \cite{lacker2016general}:

\begin{assumption}{\textbf{D}} \label{assumption:D}
There exist $c > 0$ such that $\int_{\R^d}|x|^2 \, \lambda(dx) < \infty$ and, for each $t \in [0,T]$, $a \in A$,  $x,x' \in \R^d$, and $m,m' \in \P(\R^d)$, we have
\[
|b(t,x,m,a) - b(t,x',m',a)| \le c(|x-x'| + \W_1(m,m')),
\]
where $\W_1$ denotes the Wasserstein metric, defined by $\W_1(m,m') = \inf_\gamma \int_{\R^d \times \R^d}|x-y|\gamma(dx,dy)$, where the infimum is over all probability measures $\gamma$ on $\R^d \times \R^d$ with marginals $m$ and $m'$. Moreover, the objective function $f=f(t,x,m,a)$ satisfies the uniform continuity condition
\begin{align*}
\lim_{(x',m') \rightarrow (x,m)}\sup_{a \in A}|f(t,x',m',a)-f(t,x,m,a)| = 0,
\end{align*}
for all $(t,x,m) \in [0,T] \times \R^d \times \P(\R^d)$.
\end{assumption}

The open-loop $n$-player game is defined on a fixed filtered probability space $(\Omega^n,\F^n,\FF^n,\PP^n)$, supporting independent $\FF^n$-Brownian motions and i.i.d.\ $\F^n_0$-measurable initial states $(X^1_0,\ldots,X^n_0)$ with law $\lambda$.\footnote{The filtration $\FF^n$ does not need to be the minimal one generated by the initial states and Brownian motions.} Let $\AB_n$ denote the set of $\FF^n$-adapted $A$-valued processes. For $\bm{\alpha}=(\alpha^1,\ldots,\alpha^n) \in \AB_n^n$, define the expected payoff
\[
J^n_i(\bm{\alpha}) = \E\left[\int_0^Tf(t,X^i_t,\mu^n_t,\alpha^i_t)dt + g(X^i_T,\mu^n_T)\right],
\]
where  $(X^1,\ldots,X^n)$ is the unique strong solution (recalling Assumption \ref{assumption:D}) of the SDE 
\begin{align*}
dX^i_t = b(t,X^i_t,\mu^n_t,\alpha^i_t)dt + dW^i_t, \quad\quad \mu^n_t = \frac{1}{n}\sum_{k=1}^n\delta_{X^k_t}.
\end{align*}
We may again write $\mu^n=\mu^n[\bm{\alpha}]$ to emphasize the dependence on the choice of control.
For $\epsilon \ge 0$, an \emph{open-loop $\epsilon$-equilibrium} is a tuple $\bm{\alpha}^n=(\alpha^1,\ldots,\alpha^n) \in \AB_n^n$ such that
\begin{align*}
J^n_i(\bm{\alpha}) \ge \sup_{\beta \in \AB_n}J^n_i(\alpha^1,\ldots,\alpha^{i-1},\beta,\alpha^{i+1},\ldots,\alpha^n) - \epsilon.
\end{align*}
It cannot be stressed enough that open-loop and closed-loop equilibria can be very different. See  \cite{carmona-fouque-sun} for an example of an $n$-player game in which the unique (and explicit) open-loop and closed-loop equilibria are distinct, although they converge to the same limit as $n\rightarrow\infty$. Open-loop equilibria are most often found using the stochastic maximum principle \cite[Section 2.2.1]{CarmonaDelarue_book_I}.

We will prove in Section \ref{se:closedloopvsopenloop-proofs} a correspondence between our notion of weak MFE and the equilibrium concept used in \cite{lacker2016general}.
Then,  \cite[Theorems 3.4]{lacker2016general} rewrites as follows:

\begin{theorem} \label{th:openloopconverse}
Suppose Assumptions \ref{assumption:A}, \ref{assumption:B}, and \ref{assumption:D} hold. If, for each $n$, we are given an open-loop $\epsilon_n$-Nash equilibrium  $\bm{\alpha}^n=(\alpha^{n,1},\ldots,\alpha^{n,n}) \in \AB_n^n$ for some $\epsilon_n \ge 0$ with $\epsilon_n \rightarrow 0$, then $\mu^n[\bm{\alpha}^n]$ is tight in $\CP$, and every limit in distribution is a weak MFE.
Conversely, for every weak MFE $\mu$, we may find, for each $n$, $\epsilon_n \ge 0$ and an open-loop $\epsilon_n$-Nash equilibrium $\bm{\alpha}^n=(\alpha^{n,1},\ldots,\alpha^{n,n})$ such that $\epsilon_n \rightarrow 0$ and $\mu^n[\bm{\alpha}^n]$ converges in law to $\mu$ in $\CP$.
\end{theorem}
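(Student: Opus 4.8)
The plan is to obtain both halves of the statement from the corresponding results of \cite{lacker2016general} for open-loop games, once a precise dictionary is in place relating the weak MFG solutions of that paper to the weak semi-Markov MFE of Definition \ref{def:MFEsemimarkov-strict}. Establishing this dictionary is exactly the content of Section \ref{se:closedloopvsopenloop-proofs}, so the proof here is mostly an assembly of pieces; I describe how they fit and where the real work lies.

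For the direct statement, fix open-loop $\epsilon_n$-Nash equilibria $\bm{\alpha}^n$ with $\epsilon_n \to 0$. Under Assumptions \ref{assumption:A}, \ref{assumption:B}, and \ref{assumption:D} the hypotheses of \cite{lacker2016general} are met: the coefficients are bounded and continuous, the Wasserstein--Lipschitz bound on $b$ together with the uniform continuity of $f$ provide the stability estimates needed for compactness of the empirical measures, and the Filippov--Roxin convexity Assumption \ref{assumption:B} is what lets relaxed-control limit points be realized by genuine $A$-valued controls. Hence \cite[Theorem 3.4]{lacker2016general} gives that $(\mu^n[\bm{\alpha}^n])_n$ is tight in $\CP$ and that every subsequential limit in distribution is a weak MFG solution in the sense of \cite{lacker2016general}. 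The dictionary of Section \ref{se:closedloopvsopenloop-proofs} then upgrades any such weak MFG solution to a weak semi-Markov MFE. The one nontrivial point is to replace the (a priori merely adapted, or relaxed) equilibrium control by a semi-Markov feedback $\alpha^*(t,X^*_t,\mu)$; this is done by a Markovian projection argument (Theorem \ref{th:markovprojection}) carried out conditionally on the common noise $\mu$, mimicking the time-$t$ conditional law of $X^*$ given $\F^\mu_t$ by a diffusion whose drift is the appropriate conditional expectation of $b(t,X^*_t,\mu_t,\cdot)$, with care that the mimicking coefficient depends measurably on $\mu$. Optimality is not lost in this replacement because, for the control problem faced by a representative player with $\mu$ frozen, the optimal value over semi-Markov feedback controls equals that over general adapted controls (again a mimicking fact), while the independence and conditional consistency conditions of Definition \ref{def:MFEsemimarkov-strict} transcribe directly from those of a weak MFG solution.

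For the converse, start from a weak semi-Markov MFE $(\Omega,\F,\FF,\PP,W,\alpha^*,X^*,\mu)$. Running the dictionary in the opposite direction, it is in particular a weak MFG solution in the sense of \cite{lacker2016general}: the semi-Markov control is an adapted $A$-valued process, the state equation, the independence of $(X^*_0,W)$ from $\mu$, and the consistency condition $\mu_t = \L(X^*_t \,|\, \F^\mu_t)$ are precisely what is required, and optimality over general adapted controls is equivalent to optimality over semi-Markov feedbacks as above. Then the converse half of \cite[Theorem 3.4]{lacker2016general} provides, for each $n$, an open-loop $\epsilon_n$-Nash equilibrium $\bm{\alpha}^n \in \AB_n^n$ with $\epsilon_n \to 0$ and $\mu^n[\bm{\alpha}^n] \to \mu$ in law in $\CP$.

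The main obstacle is the dictionary itself, and within it the equivalence of optimality over semi-Markov feedback controls and over general adapted controls in the presence of the endogenous common noise $\mu$. With no common noise this is classical, but here $\mu$ is a random, only nonanticipatively measurable path in $\P(\R^d)$; one must argue that conditioning on $\mu$ turns the representative player's problem into an ordinary control problem with measurable bounded time-dependent coefficients, apply the mimicking theorem there to extract a feedback control at least as good as any adapted competitor, and then reassemble a single semi-Markov function $\alpha^*(t,x,m)$ measurably in $m$ over all of $\CP$. This measurable-selection-plus-conditional-mimicking step, together with checking that the resulting object satisfies all six clauses of Definition \ref{def:MFEsemimarkov-strict}, is the technical heart; the remainder is bookkeeping and a direct appeal to \cite{lacker2016general}.
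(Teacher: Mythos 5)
Your proposal is correct and follows essentially the same route as the paper: both halves are reduced to \cite[Theorem 3.4]{lacker2016general} via the equivalence between weak MFG solutions and weak (relaxed) semi-Markov MFE, i.e.\ Theorems \ref{th:weaksemiMarkov-to-weakMFGsolution} and \ref{th:weakMFGsolution-to-weaksemiMarkov} together with Proposition \ref{pr:RMFE-to-MFE} (where Assumption \ref{assumption:B} removes the relaxation). Your identification of the technical heart — the conditional Markovian projection and measurable reassembly of a semi-Markov control, and the equivalence of optimality over adapted versus semi-Markov controls — matches exactly what those dictionary results carry in the paper's own (very short) proof.
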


A proof is given at the end of Section \ref{se:closedloopvsopenloop-proofs}.
Combining Theorems \ref{th:mainlimit} and \ref{th:openloopconverse}, we immediately deduce that closed-loop equilibria can be approximated by open-loop equilibria:

\begin{corollary} \label{co:openloopconverse}
Suppose Assumptions \ref{assumption:A}, \ref{assumption:B}, and \ref{assumption:D} hold.
Let $\epsilon_n \ge 0$ with $\epsilon_n \rightarrow 0$. For each $n$, let $\bm{\alpha}^n=(\alpha^{n,1},\ldots,\alpha^{n,n}) \in \A_n^n$ be a closed-loop $\epsilon_n$-Nash equilibrium. Then there exist $\delta_n \ge 0$ with $\delta_n \rightarrow 0$ and, for each $n$,  an open-loop $\delta_n$-Nash equilibrium $\bm{\beta}^n=(\beta^{n,1},\ldots,\beta^{n,n}) \in \AB_n^n$ such that $\mu^n[\bm{\alpha}^n]$ and $\mu^n[\bm{\beta}^n]$ converge together in law  in $\CP$. Precisely, for each $\varphi \in C_b(\CP)$ we have
\[
\lim_{n \rightarrow\infty} \E[\varphi(\mu^n[\bm{\alpha}^n])] - \E[\varphi(\mu^n[\bm{\beta}^n])] = 0.
\]
\end{corollary}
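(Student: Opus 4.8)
The proof of Corollary \ref{co:openloopconverse} is essentially a soft consequence of the two limit theorems, Theorems \ref{th:mainlimit} and \ref{th:openloopconverse}, combined via a tightness-and-subsequence argument together with a diagonalization. Here is the plan.

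\medskip

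\textbf{Step 1: Reduce to subsequences.} By Theorem \ref{th:mainlimit}, the sequence $(\mu^n[\bm{\alpha}^n])_n$ is tight in $\CP$, and every subsequential limit in distribution is a weak MFE. It suffices to show that for \emph{every} subsequence along which $\mu^n[\bm{\alpha}^n]$ converges in law, say to some weak MFE $\mu$, one can extract a further subsequence and open-loop $\delta_n$-equilibria $\bm{\beta}^n$ with $\delta_n \to 0$ such that $\mu^n[\bm{\beta}^n]$ converges in law to the \emph{same} $\mu$ along that further subsequence. Indeed, if this holds, a standard subsequence-of-subsequences argument (applied to the real sequence $\E[\varphi(\mu^n[\bm{\alpha}^n])]-\E[\varphi(\mu^n[\bm{\beta}^n])]$ for each fixed $\varphi \in C_b(\CP)$, noting $\CP$ is separable so countably many $\varphi$ suffice to characterize weak convergence) yields that the difference tends to $0$ along the full sequence, but one must be careful: the $\bm{\beta}^n$ must be defined for \emph{all} $n$, not just along subsequences. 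This is handled by diagonalization, described in Step 3.

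\medskip

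\textbf{Step 2: Realize each limiting weak MFE by open-loop equilibria.} Fix a subsequence $(n_k)$ with $\mu^{n_k}[\bm{\alpha}^{n_k}] \to \mu$ in law, where $\mu$ is a weak MFE. By the converse direction of Theorem \ref{th:openloopconverse}, there exist $\eta_n \ge 0$ with $\eta_n \to 0$ and, for each $n$, an open-loop $\eta_n$-Nash equilibrium $\bm{\gamma}^{n}=(\gamma^{n,1},\dots,\gamma^{n,n}) \in \AB_n^n$ such that $\mu^n[\bm{\gamma}^n]$ converges in law to $\mu$ in $\CP$ as $n \to \infty$. In particular $\mu^{n_k}[\bm{\gamma}^{n_k}] \to \mu$ in law, matching the target along $(n_k)$. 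Here one should note that Assumptions \ref{assumption:A}, \ref{assumption:B}, \ref{assumption:D} are in force throughout this corollary, so Theorem \ref{th:openloopconverse} applies; and the correspondence between weak MFE in the sense of Definition \ref{def:MFEsemimarkov-strict} and the equilibrium concept of \cite{lacker2016general} (to be established in Section \ref{se:closedloopvsopenloop-proofs}) guarantees the ``weak MFE'' produced by Theorem \ref{th:mainlimit} is of the type accepted as input to the converse part of Theorem \ref{th:openloopconverse}.

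\medskip

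\textbf{Step 3: Diagonalize.} The remaining point is purely topological. Since $\CP$ is a complete separable metric space, so is $\P(\CP)$ (with the weak topology), and convergence in law of $\CP$-valued random variables is metrized by a metric $\rho$ on $\P(\CP)$. Let $\nu^n := \L(\mu^n[\bm{\alpha}^n]) \in \P(\CP)$. For each fixed $m \in \N$, the family $\{\L(\mu^n[\bm{\gamma}^{n}_{(\text{for the weak MFE arising on some cluster of }\nu^\cdot)}])\}$ needs to be organized; concretely, enumerate a countable dense family and use the following device. Because $(\nu^n)$ is tight (hence relatively compact in $\P(\CP)$), for each $n$ choose, among all weak MFE $\mu$ that are subsequential limits of $(\mu^j[\bm{\alpha}^j])_{j \ge n}$, one whose law is within $1/n$ (in $\rho$) of $\nu^n$ — such a choice exists since $\nu^n$ is itself within $1/n$ of the set of cluster-point laws for $n$ large, by relative compactness and the fact that every cluster point is a weak-MFE law. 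Apply Step 2 to obtain open-loop approximate equilibria realizing that chosen weak MFE; take $n$ itself large enough within the sequence produced by Step 2 that the corresponding $\mu^n[\bm{\gamma}^n]$-law is within $1/n$ of the chosen weak-MFE law, and within $1/n$ of $\eta_n$. Set $\bm{\beta}^n := \bm{\gamma}^n$ and let $\delta_n$ be the associated approximate-equilibrium parameter. Then by the triangle inequality $\rho(\L(\mu^n[\bm{\alpha}^n]), \L(\mu^n[\bm{\beta}^n])) \to 0$, which is exactly the assertion that $\E[\varphi(\mu^n[\bm{\alpha}^n])] - \E[\varphi(\mu^n[\bm{\beta}^n])] \to 0$ for all $\varphi \in C_b(\CP)$, and $\delta_n \to 0$.

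\medskip

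\textbf{Main obstacle.} The analytic content is entirely contained in the two cited limit theorems; the only delicate part is the bookkeeping in Step 3, ensuring the open-loop equilibria are defined for every $n$ and that the parameters $\delta_n$ genuinely vanish while the laws track each other. The cleanest way to phrase this is via the metric $\rho$ on $\P(\CP)$ and the observation that $\overline{\{\L(\mu^n[\bm{\alpha}^n]) : n \ge N\}}$ shrinks toward the (closed) set of weak-MFE laws as $N \to \infty$; I would isolate this as a short general lemma (``if a tight sequence has all cluster points in a closed set $K$ on which one can, for each point, produce an approximating sequence indexed by $n$, then one can produce a single $n$-indexed sequence tracking the original''). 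No genuinely hard estimate is needed.
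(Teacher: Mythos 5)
The paper gives no argument at all for this corollary beyond the sentence preceding it: it is treated as an immediate combination of Theorems \ref{th:mainlimit} and \ref{th:openloopconverse}. Your Steps 1 and 2 are exactly that combination, so your overall route coincides with the paper's. The only content beyond the two cited theorems is the tracking/diagonalization step, and that is where your write-up has a genuine problem.

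In Step 3 the move ``take $n$ itself large enough within the sequence produced by Step 2'' is not available: $n$ is fixed at the moment you must produce $\bm{\beta}^n$, the weak MFE $\mu^{(n)}$ you selected depends on $n$, and the converse half of Theorem \ref{th:openloopconverse} only gives $\L(\mu^j[\bm{\gamma}^j]) \rightarrow \L(\mu^{(n)})$ as $j\rightarrow\infty$ at a rate that is not uniform over weak MFE; nothing guarantees that the $j=n$ term is within $1/n$ of $\L(\mu^{(n)})$, nor that its Nash parameter is small at stage $n$. As written the diagonalization is circular. The ``general lemma'' you state at the end is indeed the right repair, but it requires an argument, which you do not supply. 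The quickest fix: let $\rho$ metrize weak convergence on $\P(\CP)$ and set $a_n := \inf\{\rho(\L(\mu^n[\bm{\alpha}^n]),\L(\mu^n[\bm{\beta}])) + \delta\}$, the infimum over $\delta \ge 0$ and open-loop $\delta$-Nash equilibria $\bm{\beta} \in \AB_n^n$ (this set is nonempty for every $n$, since any tuple is a $\delta$-equilibrium for $\delta = 2(T\|f\|_\infty + \|g\|_\infty)$). If $a_n \not\rightarrow 0$, extract a subsequence with $a_{n_k} \ge \epsilon$ and, using Theorem \ref{th:mainlimit}, a further subsequence along which $\L(\mu^{n_k}[\bm{\alpha}^{n_k}]) \rightarrow \L(\mu)$ for a single weak MFE $\mu$; the open-loop approximate equilibria furnished by Theorem \ref{th:openloopconverse} for this one $\mu$ then force $a_{n_k}\rightarrow 0$, a contradiction. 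Choosing $\bm{\beta}^n,\delta_n$ within $1/n$ of the infimum concludes. (Your lemma can alternatively be proved by noting the cluster set $K$ is compact and $k \mapsto \inf_{k' \in K}\bigl[\rho(k,\sigma^n(k')) + \eta_n(k')\bigr]$ is $1$-Lipschitz and tends to $0$ pointwise, hence uniformly on $K$; but some such argument must appear.) A minor further slip: ``$\nu^n$ is within $1/n$ of the set of cluster-point laws'' should be ``within $\mathrm{dist}(\nu^n,K)+1/n$,'' since tightness gives no rate.
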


\subsection{Ideas of the proof of the main limit theorem} \label{se:proofideas}

In this section we informally explain some of the main ideas of the rather lengthy proof of Theorem \ref{th:mainlimit}, which comes in Section \ref{se:mainlimitproof}. Tightness is straightforward here and fairly standard, so we mostly focus on the two bigger challenges of identifying the dynamics at the limit (properties (1-4) and (6) of Definition \ref{def:MFEsemimarkov-strict}) and proving the optimality of the limiting control (property (5) of Definition \ref{def:MFEsemimarkov-strict}).

A key tool in identifying the limiting dynamics is (a special case of) the Markovian projection theorem, due originally to Gy\"ongy \cite[Theorem 4.6]{gyongy1986mimicking} and later generalized in \cite[Corollary 3.7]{brunick2013mimicking}:

\begin{theorem}[Markovian projection] \label{th:markovprojection}
Let $(\Omega,\F,\FF,\PP)$ be a filtered probability space supporting an $\FF$-adapted continuous process $X$ and an $\FF$-Brownian motion $W$. Suppose $b=(b_t)_{t \in [0,T]}$ is a bounded $\FF$-progressively measurable process such that, almost surely,
\[
X_t = X_0 + \int_0^tb_s\,ds + W_t, \quad t \in [0,T].
\]
Then there exists a bounded measurable function $\widehat{b} : [0,T] \times \R^d \rightarrow \R^d$ such that
\begin{align*}
\widehat{b}(t,X_t) = \E[b_t \, | \, X_t], \ \ a.s., \ t \in [0,T],
\end{align*}
and, moreover, the unique strong solution of the SDE
\[
dY_t = \widehat{b}(t,Y_t)dt + dW_t, \quad Y_0=X_0,
\]
satisfies $Y_t \stackrel{d}{=} X_t$ for each $t \in [0,T]$.
\end{theorem}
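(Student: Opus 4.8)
The plan is to follow the classical mimicking strategy of Gy\"ongy \cite{gyongy1986mimicking}, specialized to the case where the diffusion coefficient is already the identity and streamlined as in \cite{brunick2013mimicking}. Write $\mu_t=\L(X_t)$. First I would construct $\widehat b$ as a jointly measurable version of the conditional expectations $t\mapsto\E[b_t\,|\,X_t]$: letting $\tau$ be uniform on $[0,T]$ and independent of everything else, disintegrate the law $\Theta$ of the triple $(\tau,X_\tau,b_\tau)$ on $[0,T]\times\R^d\times\R^d$ over its first two coordinates to obtain a Borel kernel $(t,x)\mapsto\Theta_{t,x}$, and set $\widehat b(t,x)=\int y\,\Theta_{t,x}(dy)$. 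Boundedness of $\widehat b$ by $\sup|b|$ is immediate, and by construction $\widehat b(t,X_t)=\E[b_t\,|\,X_t]$ for a.e.\ $t$, almost surely (and for every $t$ after a further modification on a null set of times, as in \cite{brunick2013mimicking}); this is the standard measurable-selection step.

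Next I would derive the Fokker--Planck equation for $\mu$. Applying It\^o's formula to $\varphi(X_t)$ for $\varphi\in C^2_b(\R^d)$, noting that $\int_0^\cdot\nabla\varphi(X_s)\cdot dW_s$ is a true martingale since $\nabla\varphi$ is bounded, and then taking expectations and using the tower property together with the previous step, gives
\begin{align*}
\int\varphi\,d\mu_t=\int\varphi\,d\mu_0+\int_0^t\!\!\int_{\R^d}\Big(\widehat b(s,x)\cdot\nabla\varphi(x)+\tfrac12\Delta\varphi(x)\Big)\mu_s(dx)\,ds .
\end{align*}
So $(\mu_t)_{t\in[0,T]}$ is a continuous solution of the Fokker--Planck equation with coefficients $(\widehat b,I_d)$ and initial datum $\mu_0$. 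Since $\widehat b$ is bounded and the diffusion coefficient is constant and nondegenerate, the SDE $dY_t=\widehat b(t,Y_t)\,dt+dW_t$, $Y_0=X_0$, is well posed --- it has a unique strong solution by \cite{veretennikov1981strong,krylov-rockner}, and in particular it is unique in law, the law being produced by Girsanov's theorem. Repeating the It\^o computation for $Y$ --- where no conditioning is needed, the drift being already a function of the current state --- shows that $\nu_t:=\L(Y_t)$ solves the \emph{same} Fokker--Planck equation with the same initial datum.

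The final and hardest step is to conclude $\nu_t=\mu_t$ for every $t$, i.e.\ that this Fokker--Planck equation admits at most one continuous $\P(\R^d)$-valued solution issued from $\mu_0$; uniqueness of marginal flows for equations with merely bounded measurable drift is genuinely delicate and is the heart of the matter. I would obtain it via a superposition principle: any continuous solution of the Fokker--Planck equation is the time-marginal flow of \emph{some} weak solution of the SDE above, and since that SDE is unique in law every such solution coincides with $\nu$, forcing $\mu=\nu$. (An alternative, closer to Gy\"ongy's original argument, is to prove the theorem first when $\widehat b$ is additionally smooth, where classical parabolic theory yields Fokker--Planck uniqueness, and then remove the extra regularity by mollifying $\widehat b$, solving the approximate SDEs, extracting a weakly convergent subsequence, and checking that the conditional-expectation structure survives the limit --- the limit passage being the subtle point in that approach.) Either route delivers $Y_t\stackrel{d}{=}X_t$ for all $t\in[0,T]$, which is the assertion.
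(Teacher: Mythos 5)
Your proof is correct, but note that the paper does not prove this statement at all: Theorem \ref{th:markovprojection} is quoted as a known result, citing Gy\"ongy \cite[Theorem 4.6]{gyongy1986mimicking} and Brunick--Shreve \cite[Corollary 3.7]{brunick2013mimicking}, so there is no internal proof to compare against. That said, your argument is a legitimate self-contained derivation, and it is pleasantly consistent with the machinery the paper builds elsewhere: the randomization-by-an-independent-uniform-time device for producing a jointly measurable version of $(t,x)\mapsto\E[b_t\,|\,X_t=x]$ is exactly the construction of Lemma \ref{le:measurableversion-marginalprojection} (and Lemma \ref{le:conditional-marginal-meanmeasure}) in the appendices, and your resolution of the crucial uniqueness step --- Fokker--Planck uniqueness via the superposition principle of \cite{figalli2008existence,trevisan2016well} combined with uniqueness in law of the SDE with bounded measurable drift from \cite{veretennikov1981strong,krylov-rockner} --- is precisely how the paper handles the analogous issue in Lemma \ref{le:forwardeq}. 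Compared with Gy\"ongy's original mollification argument, the superposition route avoids the delicate limit passage you flag and gives the statement directly at the stated generality. One small caveat: your parenthetical claim that a ``further modification on a null set of times'' upgrades the identity $\widehat b(t,X_t)=\E[b_t\,|\,X_t]$ from a.e.\ $t$ to every $t$ is glib --- choosing versions arbitrarily for each exceptional $t$ can destroy joint measurability of $\widehat b$ --- but this is harmless, since the mimicking conclusion only uses the identity up to $dt$-null sets, and indeed the paper itself invokes the theorem in the ``a.s., a.e.\ $t$'' form (e.g.\ in \eqref{pf:np-inclusion1-2}).
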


\subsubsection{Limiting dynamics} \label{se:proofsketch-limitingdynamics}

We want to show that, for any weak limit $\mu$ of $(\mu^n)$, we may construct a tuple $(\Omega,\F,\FF,\PP,W,\alpha^*,X^*,\mu)$ and such that properties (1-4) and (6) of Definition \ref{def:MFEsemimarkov-strict} hold. Much of this argument is an embellishment of a well-established martingale approach for deriving the McKean-Vlasov limit for interacting diffusions, developed for instance in \cite{oelschlager1984martingale,gartner1988mckean}. A first difference is that here we work with the extended empirical measure
\[
\bm{\overline\mu}^n = \frac{1}{n}\sum_{k=1}^n\delta_{(X^{k},\alpha^{n,k})}.
\]
Here we view $X^{k}$ as a $\C^d$-valued random variable and $\alpha^{n,k}=\alpha^{n,k}(t,\bm{X})$ as a random variable taking values in the space $\V$ of \emph{relaxed} or \emph{measure-valued} controls, defined in Section \ref{se:relaxedcontrols}; the space $\V$ is essentially a convenient compactification of the space $L^0([0,T];A)$ of measurable $A$-valued paths. First, we show that every weak limit $\bm{\overline\mu}$ of $(\bm{\overline\mu}^n)$ satisfies
\[
\int_{\R^d}\varphi\,d(\mu_t-\mu_0) = \int_{\C^d\times\V} \int_0^t\int_A \left(b(s,x_s,\mu_s,a) \cdot \nabla\varphi(x_s) + \frac12\Delta\varphi(x_s)\right) q_s(da)ds  \bm{\overline\mu}(dx,dq)
\]
almost surely, for each smooth test function $\varphi$ on $\R^d$, where $\mu_t = \bm{\overline\mu} \circ [(x,q) \mapsto x_t]^{-1}$ is the marginal flow associated to the $x$ variable.

The above integral equation closely resembles the weak or integrated form of a Fokker-Planck equation. Instead of an integral $\int_0^t\int_{\R^d} \, ... \, \mu_s(dx)ds$ appearing on the right-hand side, we have a more complicated expression involving the integral with the respect to $\bm{\overline\mu}$. Drawing intuition from the Markovian Projection Theorem \ref{th:markovprojection}, we would like to condition on the marginal flow $(\mu_t)_{t \in [0,T]}$, in order to ``project away the extra randomness" in some sense. Ultimately, we build (cf. Lemma \ref{le:projection-semiMarkov}) a semi-Markov control $\alpha^* : [0,T] \times \R^d \times \CP \rightarrow A$ such that
\[
\int_{\R^d}\varphi\,d(\mu_t-\mu_0) = \int_0^t \int_{\R^d} \left(b(s,x,\mu_s,\alpha^*(t,x,\mu)) \cdot \nabla\varphi(x) + \frac12\Delta\varphi(x)\right) \mu_s(dx) ds,
\]
almost surely, for each $\varphi$, and such that the expected value of objective function is preserved in a suitable sense. This now says that $(\mu_t)_{t\in [0,T]}$ almost surely solves a Fokker-Planck equation, which we can identify with the solution of an SDE. In fact, this SDE is of McKean-Vlasov type, because $\mu$ itself appears nonlinearly in the coefficients $b$ and $\alpha^*$, and this line of reasoning eventually leads us to properties (1-4) and (6) of Definition \ref{def:MFEsemimarkov-strict}.

\subsubsection{Optimality at the limit} \label{se:proofsketch-optimality} 

Suppose now that we have proven the claimed tightness of Theorem \ref{th:mainlimit} and also that for any limit point $\mu$ of $(\mu^n)$ we may construct a tuple $(\Omega,\F,\FF,\PP,W,\alpha^*,X^*,\mu)$ such that properties (1-4) and (6) of Definition \ref{def:MFEsemimarkov-strict} hold. The final and most difficult step is to check that this tuple satisfies the optimality property (5) of Definition \ref{def:MFEsemimarkov-strict}. In the following, we work with a relabeled convergent subsequence and assume $\mu^n$ converges in law to $\mu$.

The general strategy, reminiscent of Gamma-convergence arguments, is to choose an arbitrary alternative control $\alpha : [0,T] \times \R^d \times \CP \rightarrow A$, and give it to each of the players in the $n$-player game.
Precisely, for each $n$ and each $k=1,\ldots,n$, define the $n$ state processes $\bm{Y}^{k} = (Y^{k,1},\ldots,Y^{k,n})$ by
\begin{align*}
dY^{k,k}_t &= b(t,Y^{k,k}_t,\mu^{n,k}_t,\alpha(t,Y^{k,k}_t,\mu^{n,k}))dt + dW^k_t, \quad\quad \mu^{n,k} = \frac{1}{n}\sum_{j=1}^n\delta_{Y^{n,k,j}} \\
dY^{k,i}_t &= b(t,Y^{k,i}_t,\mu^{n,k}_t,\alpha^{n,i}(t,\bm{Y}^{k}))dt + dW^i_t, \quad i \neq k, 
\end{align*}
with initial states $Y^{k,i}_0=X^{i}_0$. The state process $\bm{Y}^{k}$ differes from the equilibrium state process $\bm{X}[(\alpha^{n,1},\ldots,\alpha^{n,n})]$ only in that we switched player $k$'s control from $\alpha^{n,k}$ to $\alpha$.

The assumed $\epsilon_n$-Nash equilibrium property of $(\alpha^{n,1},\ldots,\alpha^{n,n})$ then implies that
\begin{align}
\frac{1}{n}&\sum_{k=1}^n\E\left[\int_0^Tf(t,X^{k}_t,\mu^n_t,\alpha^{n,k}(t,\bm{X}^n))dt + g(X^{k}_T,\mu^n_T)\right] \nonumber \\
	&\ge -\epsilon_n + \frac{1}{n}\sum_{k=1}^n\E\left[\int_0^Tf(t,Y^{k,k}_t,\mu^{n,k}_t,\alpha(t,Y^{k,k}_t,\mu^{n,k}))dt + g(Y^{k,k}_T,\mu^{n,k}_T)\right]. \label{pf:sketch-limitoptimality1}
\end{align}
We then wish to take limits on both sides. First, the arguments of Section \ref{se:proofsketch-limitingdynamics} allow us to identify the limit of the left-hand side of \eqref{pf:sketch-limitoptimality1} as precisely the left-hand side of the inequality in (5) of Definition \ref{def:MFEsemimarkov-strict}.
What remains is to show that the right-hand side of \eqref{pf:sketch-limitoptimality1} along the same subsequence converges to the right-hand side of the inequality in (5) of Definition \ref{def:MFEsemimarkov-strict}.

This last point is the technical crux of the argument. It is not obvious at first how to approach this, because we know very little about the controls $\alpha^{n,1},\ldots,\alpha^{n,n}$. Intuitively, one is tempted claim that, because we have only switched one single agent's control, $\mu^{n,k}$ should be  close in some sense to $\mu^n$, for each $k$. The challenge comes from the closed-loop nature of the controls; if one player switches controls, then all of the other players controls react to the change in the state process. It could be the case that all of the controls  $\alpha^{n,1},\ldots,\alpha^{n,n}$ depend very heavily on, say, player $1$'s state process, in which case a change in control from this player $1$ would have a strong influence on the empirical measure.

While we cannot show that $\mu^{n,k}$ and $\mu^n$ have the same limiting behavior for each $k$, we are able to show that $\L(\mu^n)$ and $\frac{1}{n}\sum_{k=1}^n\L(\mu^{n,k})$ have the same limiting behavior, in the sense that the total variation distance between these two measures converges to zero as $n\rightarrow\infty$.
Indeed, supposing the state process $\bm{X}$ is defined on the probability space $(\Omega^n,\F^n,\FF^n,\PP^n)$, we may define an equivalent probability measure $\QQ^{n,k}$ by setting $d\QQ^{n,k}/d\PP^n := \zeta^{n,k}_T$, where the positive martingale $(\zeta^{n,k}_t)_{t \in [0,T]}$ is given as the unique solution of the SDE
\[
d\zeta^{n,k}_t = \zeta^{n,k}_t \Big(b(t,X^{k}_t,\mu^n_t,\alpha(t,X^{k}_t,\mu^n)) - b(t,X^{k}_t,\mu^n_t,\alpha^{n,k}(t,\bm{X})) \Big) \cdot dW^k_t, \quad \zeta^{n,k}_0=1.
\]
By Girsanov's theorem and uniqueness of the SDEs, we have $\L(\bm{Y}^{k})=\QQ^{n,k} \circ \bm{X}^{-1}$. Hence, for any bounded measurable function $h$,
\begin{align}
\frac{1}{n}\sum_{k=1}^n\E[h(\mu^{n,k})] &= \frac{1}{n}\sum_{k=1}^n\E[\zeta^{n,k}_Th(\mu^n)]. \label{pf:sketch-limitoptimality2}
\end{align}
Because the Brownian motions $W^k$ are independent, the process $\frac{1}{n}\sum_{k=1}^n\zeta^{n,k}_t$ is a martingale with quadratic variation up to time $s$ given by
\begin{align*}
\frac{1}{n^2}\sum_{k=1}^n\int_0^s\Big|b(t,X^{k}_t,\mu^n_t,\alpha(t,X^{k}_t,\mu^n)) - b(t,X^{n,k}_t,\mu^n_t,\alpha^{n,k}(t,\bm{X})) \Big|^2dt,
\end{align*}
which is of order $1/n$ because $b$ is bounded.
Hence, $\frac{1}{n}\sum_{k=1}^n\zeta^{n,k}_T \rightarrow 1$ in probability, and from \eqref{pf:sketch-limitoptimality2} we deduce that
\begin{align}
\lim_{n\rightarrow\infty} \frac{1}{n}\sum_{k=1}^n\E[h(\mu^{n,k})] - \E[h(\mu^{n})] = 0. \label{pf:sketch-limitoptimality3}
\end{align}

Most of the intuition behind this proof is contained in this argument that $\L(\mu^n)$ and $\frac{1}{n}\sum_{k=1}^n\L(\mu^{n,k})$ have the same limiting behavior, but one important additional point is worth mentioning: The right-hand side of \eqref{pf:sketch-limitoptimality1} can be written as the integral of a fixed ($n$-independent) function with respect to the measure $\frac{1}{n}\sum_{k=1}^n\L(Y^{k,k},\mu^{n,k})$, and it is this measure whose limiting behavior we should identify, not just $\frac{1}{n}\sum_{k=1}^n\L(\mu^{n,k})$. To this end, for any bounded measurable function $h$, write
\begin{align*}
\frac{1}{n}\sum_{k=1}^n\E[h(Y^{k,k},\mu^{n,k})] &= \frac{1}{n}\sum_{k=1}^n\E[\zeta^{n,k}_Th(X^{k},\mu^n)].
\end{align*}
The limiting behavior of this expression can be identified by studying the $(d+1)$-dimensional particle system $(X^{k},\zeta^{n,k})_{k=1}^n$, following the classical martingale approach for McKean-Vlasov systems mentioned in Section \ref{se:proofsketch-limitingdynamics}.

\section{Relaxed equilibria} \label{se:mainresults-generalized}

Our proofs will make heavy use of \emph{relaxed} or \emph{randomized controls}, essentially replacing $A$-valued controls with $\P(A)$-valued controls, which by now have a long history in stochastic optimal control theory \cite{fleming1976generalized,elkaroui-compactification} for their useful compactness properties. Relaxed controls were employed in an MFG context \cite{lacker2015mean,carmona-delarue-lacker,lacker2016general}, and we will use them in the same way. It is worth noting, however, that while they are certainly mathematically convenient, relaxed controls also admit a natural interpretation in a game-theoretic context as \emph{mixed strategies}.

\subsection{Relaxed $n$-player games} \label{se:nplayergame-relaxed}

We begin by extending the equilibrium concepts for $n$-player games of Section \ref{se:nplayergame}.
Write $\RC_n$ for the set of progressively measurable functions $\Lambda : [0,T] \times (\C^d)^n \rightarrow \P(A)$, and let $\RCM_n$ denote the subset of functions of the form $\Lambda(t,x) = \tilde\Lambda(t,x_t)$ for some measurable function $\tilde\Lambda : [0,T] \times (\R^d)^n \rightarrow \P(A)$. Via the embedding $A \ni a \mapsto \delta_a \in \P(A)$, we may view $\A_n$ and $\AM_n$ as subsets of $\RC_n$, and we have the following natural inclusions:
\[
\AM_n \subset \A_n \subset \RC_n, \quad\quad \AM_n \subset \RCM_n \subset \RC_n.
\]
The state process and objective functions are defined for relaxed controls $\bm{\Lambda}=(\Lambda^1,\ldots,\Lambda^n) \in \RC_n^n$ as follows:
\begin{align*}
dX^i_t &= \int_{A}b(t,X^i_t,\mu^n_t,a)\Lambda^i(t,\bm{X})(da)dt + dW^i_t, \\
J^n_i(\Lambda^1,\ldots,\Lambda^n) &= \E\left[\int_0^T\int_Af(t,X^i_t,\mu^n_t,a)\Lambda^i(t,\bm{X})(da)dt + g(X^i_T,\mu^n_T)\right].
\end{align*}
We may write $\bm{X}[\bm{\Lambda}] = (X^1[\bm{\Lambda}],\ldots,X^n[\bm{\Lambda}])$ in place of $\bm{X}=(X^1,\ldots,X^n)$ to stress which controls are being applied, and similarly we may write $\mu^n[\bm{\Lambda}]$ in place of $\mu^n$.

\begin{definition} \label{def:nplayer-eq-relaxed}
Let $\epsilon \ge 0$. A \emph{relaxed closed-loop $\epsilon$-Nash equilibrium} is a tuple $(\Lambda^1,\ldots,\Lambda^n) \in \RC_n^n$ such that
\begin{align*}
J^n_i(\Lambda^1,\ldots,\Lambda^n) \ge \sup_{\beta \in \RC_n}J^n_i(\Lambda^1,\ldots,\Lambda^{i-1},\beta,\Lambda^{i+1},\ldots,\Lambda^n) - \epsilon, \quad \text{for } i=1,\ldots,n.
\end{align*}
A \emph{relaxed Markovian $\epsilon$-Nash equilibrium} is a tuple $(\Lambda^1,\ldots,\Lambda^n) \in \RCM_n^n$ such that 
\begin{align*}
J^n_i(\Lambda^1,\ldots,\Lambda^n) \ge \sup_{\beta \in \RCM_n}J^n_i(\Lambda^1,\ldots,\Lambda^{i-1},\beta,\Lambda^{i+1},\ldots,\Lambda^n) - \epsilon, \quad \text{for } i=1,\ldots,n.
\end{align*}
\end{definition}

The following trio of propositions, along with Proposition \ref{pr:np-eq-inclusion0}, will show that the four equilibrium concepts described in Definitions \ref{def:nplayer-eq} and \ref{def:nplayer-eq-relaxed} are roughly equivalent, if we accept both assumptions \ref{assumption:A} and \ref{assumption:B}. The proofs are given in Section \ref{se:comparisonofequilibria}.

\begin{proposition} \label{pr:np-eq-inclusion1}
Suppose Assumption \ref{assumption:A} holds, and let $\epsilon \ge 0$. Then any relaxed Markovian $\epsilon$-Nash equilibrium is also a relaxed closed-loop $\epsilon$-Nash equilibrium.  
\end{proposition}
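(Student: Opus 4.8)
The plan is to show that if $(\Lambda^1,\ldots,\Lambda^n) \in \RCM_n^n$ is a relaxed Markovian $\epsilon$-Nash equilibrium, then no player can do better by $\epsilon$ even allowing a deviation $\beta$ ranging over the larger class $\RC_n$ (of progressively measurable, path-dependent relaxed controls). Fix player $i$ and a deviation $\beta \in \RC_n$. The key observation is that when players $j \ne i$ use Markovian controls $\Lambda^j(t,\bm{x}) = \tilde\Lambda^j(t,\bm{x}_t)$, the state SDE for $\bm{X}[(\Lambda^1,\ldots,\beta,\ldots,\Lambda^n)]$ has drift coefficients that, apart from coordinate $i$, depend on $\bm{X}$ only through its current value; this makes the whole system "semi-Markov in $\bm{X}$ driven by the $\beta$-component." The idea is then to replace $\beta$ by a Markovian control with no loss to player $i$, via a mimicking argument.

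Concretely, I would argue as follows. Let $\bm{X}$ solve the SDE system with player $i$ using $\beta$ and the others using their Markovian $\Lambda^j$, and write $B^i_t := \int_A b(t,X^i_t,\mu^n_t,a)\,\beta(t,\bm{X})(da)$ for the realized drift of the $i$th coordinate, a bounded progressively measurable process. The remaining $n-1$ coordinates have drifts that are functions of $(t,\bm{X}_t)$ only. Applying the Markovian projection theorem (Theorem \ref{th:markovprojection}) to the $d$-dimensional process $X^i$ — or, more precisely, the enhanced version for controls from \cite{brunick2013mimicking}, which yields a measurable $\P(A)$-valued $\widehat\Lambda(t,\bm{x}_t)$ such that the joint time-$t$ marginal law of $(\bm{X}_t)$ together with the conditional control law is reproduced by the Markovian SDE driven by $\widehat\Lambda^i := \widehat\Lambda$ in coordinate $i$ and $\tilde\Lambda^j$ in the others — one obtains a relaxed Markovian control $\widehat\Lambda^i \in \RCM_n$ for which the full state process $\widehat{\bm{X}}$ has $\widehat{\bm{X}}_t \stackrel{d}{=} \bm{X}_t$ for each $t$, and moreover $J^n_i(\Lambda^1,\ldots,\widehat\Lambda^i,\ldots,\Lambda^n) = J^n_i(\Lambda^1,\ldots,\beta,\ldots,\Lambda^n)$ because the objective $J^n_i$ depends on the law of $(X^i_t,\mu^n_t)$ and the conditional control law only through one-dimensional time-marginals (both $f$ and $g$ are evaluated pointwise in time, $\mu^n_t$ is a function of $\bm{X}_t$, and the running cost integrand is linear in the control). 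Then the relaxed Markovian equilibrium inequality gives $J^n_i(\bm{\Lambda}) \ge J^n_i(\Lambda^1,\ldots,\widehat\Lambda^i,\ldots,\Lambda^n) - \epsilon = J^n_i(\Lambda^1,\ldots,\beta,\ldots,\Lambda^n) - \epsilon$; taking the supremum over $\beta \in \RC_n$ finishes the proof.

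The main obstacle is the correct application of the mimicking theorem: the paper's Theorem \ref{th:markovprojection} as stated mimics only the marginal law of $X_t$ for a single process, whereas here I need to mimic the joint marginal of the whole vector $\bm{X}_t$ \emph{and} reproduce (the conditional law of) the control process, so that both the empirical measure $\mu^n_t$ and the running-cost term $\int_A f(t,X^i_t,\mu^n_t,a)\beta(t,\bm{X})(da)$ are matched in distribution at each time $t$. This requires the stronger version of the mimicking result for It\^o processes with controls — which is exactly \cite[Corollary 3.7]{brunick2013mimicking} (the generalization cited in the text) — applied to the $(nd)$-dimensional process $\bm{X}$ with the $\P(A)$-valued "control coordinate" attached. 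One must check that the non-$i$ coordinates of $\bm{X}$, whose drifts are already Markovian, are faithfully preserved under this projection (they are, since conditioning on $\bm{X}_t$ leaves those drifts unchanged), and that uniqueness in law of the resulting Markovian system holds (it does, by Girsanov as in Section \ref{se:nplayergame}). Everything else — boundedness of $b$, continuity of $f,g$, measurability bookkeeping — is routine.
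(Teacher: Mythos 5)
Your proposal is correct and is essentially the paper's own argument: replace the path-dependent deviation by its projection onto the current state, i.e.\ the conditional mean measure $\widetilde\beta(t,\bm{x}) = \E[\beta(t,\bm{Y}) \mid \bm{Y}_t = \bm{x}]$, use a mimicking theorem on the full $nd$-dimensional state system to match time-marginals, observe that $J^n_i$ depends only on these marginals (and is linear in the control), and then invoke the relaxed Markovian $\epsilon$-Nash inequality. The only difference is that the ``enhanced'' Brunick--Shreve mimicking of the control jointly with the state, which you flag as the main obstacle, is not actually needed: since both $b$ and $f$ depend on the path only through $(Y^i_t,\nu^n_t)$, hence through $\bm{Y}_t$, one first replaces $\beta$ by $\widetilde\beta$ inside both the drift and the running reward by the tower property (this is the paper's Lemma \ref{le:conditional-marginal-meanmeasure}), after which the basic Theorem \ref{th:markovprojection} applied to the state vector $\bm{Y}$ already yields $\bm{\widetilde{Y}}_t \stackrel{d}{=} \bm{Y}_t$ and the payoff identity.
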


\begin{proposition} \label{pr:np-eq-inclusion2}
Suppose Assumptions \ref{assumption:A} and \ref{assumption:B} hold, and let $\epsilon \ge 0$. Then:
\begin{enumerate}[(a)]
\item Any Markovian $\epsilon$-Nash equilibrium is also a relaxed Markovian $\epsilon$-Nash equilibrium.
\item Any closed-loop $\epsilon$-Nash equilibrium is also a relaxed closed-loop $\epsilon$-Nash equilibrium.
\end{enumerate}
\end{proposition}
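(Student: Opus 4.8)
The plan is to reduce both claims to a single fact: against a fixed profile $\bm\alpha$ of the other players' controls, allowing player $i$ to deviate with a \emph{relaxed} control gives no larger value of $\sup J^n_i$ than allowing only ordinary $A$-valued deviations. Granting this, statement (b) follows because the $\epsilon$-equilibrium inequality for $\bm\alpha\in\A_n^n$ reads $J^n_i(\bm\alpha)\ge\sup_{\beta\in\A_n}J^n_i(\alpha^1,\ldots,\beta,\ldots,\alpha^n)-\epsilon$, the right-hand supremum equals the one over $\RC_n$ by the fact just stated, and the relaxed and ordinary payoffs of $\bm\alpha$ itself coincide (since $\delta_{\alpha^i(t,\bm x)}$ integrates $b$ and $f$ to their values at $\alpha^i(t,\bm x)$); statement (a) is identical with $\AM_n,\RCM_n$ in place of $\A_n,\RC_n$.

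The tool is a measurable selection built from the Filippov--Roxin Assumption \ref{assumption:B}. First I would check that for every $(t,x,m)$ and $q\in\P(A)$ the point $\bigl(\int_A b(t,x,m,a)q(da),\int_A f(t,x,m,a)q(da)\bigr)$ lies in $K(t,x,m)$: the set $\{(b(t,x,m,a),f(t,x,m,a)):a\in A\}$ is compact (continuous image of compact $A$), so $K(t,x,m)$ is closed; being convex by Assumption \ref{assumption:B}, it contains the barycenter of the push-forward of $q$ under $a\mapsto(b(t,x,m,a),f(t,x,m,a))$, which is exactly that point. Hence the set-valued map sending $(t,x,m,q)$ to $\{a\in A:\ b(t,x,m,a)=\int_A b(t,x,m,a')q(da'),\ f(t,x,m,a)\ge\int_A f(t,x,m,a')q(da')\}$ is nonempty- and closed-valued and is a measurable correspondence (the defining inequalities are Carath\'eodory in $(t,x,m,q,a)$). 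A standard measurable selection theorem then furnishes a measurable $\psi:[0,T]\times\R^d\times\P(\R^d)\times\P(A)\to A$ with $b(t,x,m,\psi(t,x,m,q))=\int_A b(t,x,m,a)q(da)$ and $f(t,x,m,\psi(t,x,m,q))\ge\int_A f(t,x,m,a)q(da)$ for all $(t,x,m,q)$.

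Given any relaxed deviation $\beta\in\RC_n$ (resp.\ $\RCM_n$), set $\tilde\beta(t,\bm x):=\psi\bigl(t,x^i_t,\tfrac1n\sum_{k=1}^n\delta_{x^k_t},\beta(t,\bm x)\bigr)$; this is progressively measurable, and Markovian whenever $\beta$ is, since $\psi$ reads $\bm x$ only through its time-$t$ value. By the defining property of $\psi$, the drift $b(t,X^i_t,\mu^n_t,\tilde\beta(t,\bm X))$ that $\tilde\beta$ induces in the $X^i$-equation equals $\int_A b(t,X^i_t,\mu^n_t,a)\beta(t,\bm X)(da)$ \emph{as a functional of the path} $\bm X$, and the equations for $X^j$, $j\neq i$, are untouched; hence the SDE system under $(\alpha^1,\ldots,\tilde\beta,\ldots,\alpha^n)$ has the same coefficients as under $(\alpha^1,\ldots,\beta,\ldots,\alpha^n)$, so by Girsanov uniqueness in law the two state processes $\tilde{\bm X}$ and $\bm X$ have the same law. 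Then, using $f(t,\tilde X^i_t,\mu^n_t,\tilde\beta(t,\tilde{\bm X}))\ge\int_A f(t,\tilde X^i_t,\mu^n_t,a)\beta(t,\tilde{\bm X})(da)$ pathwise, equality of the terminal $g$-terms in expectation, and $\L(\tilde{\bm X})=\L(\bm X)$, one obtains $J^n_i(\alpha^1,\ldots,\tilde\beta,\ldots,\alpha^n)\ge J^n_i(\alpha^1,\ldots,\beta,\ldots,\alpha^n)$; the reverse inequality of suprema is trivial from $\A_n\subset\RC_n$.

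I expect the main obstacle to be the joint measurability underlying $\psi$: one must verify that $(t,x,m,q)\mapsto\int_A b(t,x,m,a)q(da)$ (and its $f$-analogue) is Borel — this follows from joint continuity of $b,f$ and continuity of barycenters in the weak topology — and that the resulting closed-valued correspondence genuinely admits a measurable selection (Kuratowski--Ryll-Nardzewski, or a Filippov-type implicit-function lemma). A secondary but essential subtlety is that the substitution must preserve the law of the \emph{entire} state vector $\bm X$, not merely that of $X^i$, which is why $\tilde\beta$ must reproduce the $X^i$-drift exactly rather than in some averaged or approximate sense.
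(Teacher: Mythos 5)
Your proposal is correct and follows essentially the same route as the paper: use the Filippov--Roxin convexity of $K(t,x,m)$ together with a measurable selection to replace a relaxed deviation by an ordinary one that reproduces the drift exactly (hence preserves the law of the whole state vector $\bm{X}$) while weakly increasing the running reward, then invoke the ordinary $\epsilon$-Nash inequality. The only cosmetic difference is that you build a universal selector $\psi$ on $[0,T]\times\R^d\times\P(\R^d)\times\P(A)$ and compose it with $\beta$, whereas the paper applies the selection theorem directly to the composed map $(t,\bm{x})\mapsto\beta(t,\bm{x})$; both are valid.
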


\begin{proposition} \label{pr:np-eq-inclusion3}
Suppose Assumptions \ref{assumption:A} and \ref{assumption:B} hold, and let $\epsilon \ge 0$. Then:
\begin{enumerate}[(a)]
\item For any relaxed Markovian $\epsilon$-Nash equilibrium $\bm{\Lambda}=(\Lambda^1,\ldots,\Lambda^n) \in \RCM_n^n$, there exists a Markovian $\epsilon$-Nash equilibrium $\bm{\alpha}=(\alpha^1,\ldots,\alpha^n) \in \AM_n^n$ such that $\bm{X}[\bm{\Lambda}] \stackrel{d}{=} \bm{X}[\bm{\alpha}]$.
\item For any relaxed closed-loop $\epsilon$-Nash equilibrium $\bm{\Lambda}=(\Lambda^1,\ldots,\Lambda^n) \in \RC_n^n$, there exists a closed-loop $\epsilon$-Nash equilibrium $\bm{\alpha}=(\alpha^1,\ldots,\alpha^n) \in \A_n^n$ such that $\bm{X}[\bm{\Lambda}] \stackrel{d}{=} \bm{X}[\bm{\alpha}]$.
\end{enumerate}
\end{proposition}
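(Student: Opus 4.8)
## Proof Proposal for Proposition \ref{pr:np-eq-inclusion3}

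The plan is to purify the relaxed controls coordinate by coordinate, combining Assumption \ref{assumption:B} with a measurable selection in the spirit of Filippov \cite{filippov-convexity} and Roxin \cite{roxin1962existence}. The key ingredient is a Borel-measurable map $S : [0,T] \times \R^d \times \P(\R^d) \times \P(A) \rightarrow A$ such that, writing $\overline b(t,x,m,q) = \int_A b(t,x,m,a)\,q(da)$ and $\overline f(t,x,m,q) = \int_A f(t,x,m,a)\,q(da)$, one has
\[
b(t,x,m,S(t,x,m,q)) = \overline b(t,x,m,q), \qquad f(t,x,m,S(t,x,m,q)) \ge \overline f(t,x,m,q)
\]
for all $(t,x,m,q)$. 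To see that the relevant selection set $\{a \in A : b(t,x,m,a) = \overline b(t,x,m,q),\ f(t,x,m,a) \ge \overline f(t,x,m,q)\}$ is nonempty, note that under Assumption \ref{assumption:A} the set $\{(b(t,x,m,a),f(t,x,m,a)) : a \in A\}$ is compact in $\R^{d+1}$, so the barycenter of its image under $q$, namely $(\overline b(t,x,m,q),\overline f(t,x,m,q))$, lies in its convex hull, which in turn is contained in $K(t,x,m)$ by convexity of $K$ (Assumption \ref{assumption:B}); this produces the required action. The selection set is compact-valued with Borel graph, by joint continuity of $b$ and $f$, so a Borel-measurable selection $S$ exists by a standard measurable selection theorem; modification on a null set is harmless since we work with weak solutions of SDEs.

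Given a relaxed closed-loop $\epsilon$-Nash equilibrium $\bm{\Lambda} = (\Lambda^1,\ldots,\Lambda^n) \in \RC_n^n$, I would define
\[
\alpha^i(t,\bm{x}) := S\Big(t,x^i_t,\tfrac1n\sum_{k=1}^n\delta_{x^k_t},\Lambda^i(t,\bm{x})\Big).
\]
Because $\Lambda^i$ is progressively measurable while $x^i_t$ and $\frac1n\sum_k\delta_{x^k_t}$ depend on $\bm{x}$ only through $\bm{x}|_{[0,t]}$, each $\alpha^i$ lies in $\A_n$; moreover if $\bm{\Lambda} \in \RCM_n^n$ then $\alpha^i \in \AM_n$, since in that case the right-hand side depends on $\bm{x}$ only through $\bm{x}_t$. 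The defining property of $S$ gives, for every $\bm{x}$ and $i$, the pointwise identity $b(t,x^i_t,\mu^n_t,\alpha^i(t,\bm{x})) = \int_A b(t,x^i_t,\mu^n_t,a)\,\Lambda^i(t,\bm{x})(da)$, so the SDE system defining $\bm{X}[\bm{\alpha}]$ has exactly the same drift functionals as the one defining $\bm{X}[\bm{\Lambda}]$; by Girsanov's theorem and uniqueness in law (as in Section \ref{se:nplayergame-relaxed}) we get $\bm{X}[\bm{\alpha}] \stackrel{d}{=} \bm{X}[\bm{\Lambda}]$. Combining this with the analogous pointwise inequality for $f$ yields $J^n_i(\bm{\alpha}) \ge J^n_i(\bm{\Lambda})$ for each $i$.

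It remains to verify the $\epsilon$-Nash inequality. Fix $i$ and an arbitrary deviation $\beta \in \A_n$, regarded as an element of $\RC_n$ via $a \mapsto \delta_a$. Replacing $\Lambda^j$ by $\alpha^j$ for each $j \ne i$ does not alter the drift, by the identity above, while player $i$'s relaxed control $\delta_\beta$ already produces the drift $b(t,\cdot,\cdot,\beta(t,\cdot))$; hence $\bm{X}[(\alpha^1,\ldots,\beta,\ldots,\alpha^n)] \stackrel{d}{=} \bm{X}[(\Lambda^1,\ldots,\delta_\beta,\ldots,\Lambda^n)]$, and since player $i$'s running and terminal costs are the same path functional in both cases, $J^n_i(\alpha^1,\ldots,\beta,\ldots,\alpha^n) = J^n_i(\Lambda^1,\ldots,\delta_\beta,\ldots,\Lambda^n)$. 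The relaxed closed-loop equilibrium property of $\bm{\Lambda}$ bounds the latter by $J^n_i(\bm{\Lambda}) + \epsilon \le J^n_i(\bm{\alpha}) + \epsilon$; taking the supremum over $\beta \in \A_n$ shows $\bm{\alpha}$ is a closed-loop $\epsilon$-Nash equilibrium, which is part (b). Part (a) is identical with $(\A_n,\RC_n)$ replaced by $(\AM_n,\RCM_n)$ throughout. The only genuinely delicate step is the measurable selection underlying $S$; everything else is bookkeeping with Girsanov's theorem and uniqueness in law.
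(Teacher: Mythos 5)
Your proposal is correct and follows essentially the same route as the paper: a Filippov-type measurable selection, justified by the convexity of $K(t,x,m)$ from Assumption \ref{assumption:B}, produces pure controls matching the relaxed drift pointwise while weakly improving the running reward, after which uniqueness in law identifies the state processes and the deviation comparison transfers the $\epsilon$-Nash property. The only difference is cosmetic: you build a universal selection map $S(t,x,m,q)$ and compose it with $(t,x^i_t,L_n(\bm{x}_t),\Lambda^i(t,\bm{x}))$, whereas the paper applies the selection theorem of Haussmann--Lepeltier directly to the composed functions.
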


Some notation helps to summarize the above propositions. Fix $\epsilon \ge 0$, let $\A^{*,\epsilon}_n \subset \A_n^n$ denote the set of closed-loop $\epsilon$-Nash equilibria. Similarly, define $\AM^{*,\epsilon}_n$, $\RC^{*,\epsilon}_n$, and $\RCM^{*,\epsilon}_n$ respectively as the sets of Markovian, relaxed closed-loop, and relaxed Markovian $\epsilon$-Nash equilibria. We may summarize the relations of Propositions \ref{pr:np-eq-inclusion0}, \ref{pr:np-eq-inclusion1}, and \ref{pr:np-eq-inclusion2} by writing
\[
\AM^{*,\epsilon}_n \subset \A^{*,\epsilon}_n \subset \RC^{*,\epsilon}_n, \quad\quad \AM^{*,\epsilon}_n \subset \RCM^{*,\epsilon}_n  \subset \RC^{*,\epsilon}_n.
\]
Moreover, we can think of Proposition \ref{pr:np-eq-inclusion3}(a) (resp. (b)) as reducing $\AM^{*,\epsilon}_n \subset \RCM^{*,\epsilon}_n$ (resp. $\A_n^{*,\epsilon} \subset \RC_n^{*,\epsilon}$) to equality, if we are content to focus only on the law of the state process $\bm{X}$.
Precisely, under Assumptions \ref{assumption:A} and \ref{assumption:B}, we have the following relationships between subsets of $\P((\C^d)^n)$:
\begin{align}
\begin{split}
\{\L(\bm{X}[\bm{\Lambda}]) : \bm{\Lambda} \in \AM^{*,\epsilon}_n\} &= \{\L(\bm{X}[\bm{\Lambda}]) : \bm{\Lambda} \in \RCM^{*,\epsilon}_n\} \\
	&\subset \{\L(\bm{X}[\bm{\Lambda}]) : \bm{\Lambda} \in \RC^{*,\epsilon}_n\} \\
	&= \{\L(\bm{X}[\bm{\Lambda}]) : \bm{\Lambda} \in \A^{*,\epsilon}_n\}.
\end{split} \label{eq:lawrelations-differentcontrols}
\end{align}
Recall that our main result, Theorem \ref{th:mainlimit}, involves only the law of the state process $\bm{X}$. Thanks to the above propositions, we may simultaneously cover all four of these possibilities by focusing solely on the laws of closed-loop Markovian equilibria, i.e., $\A_n^{*,\epsilon}$.

We will make no claims throughout the paper regarding existence of equilibria for $n$-player games, but we provide some references.  As we have mentioned, Markovian Nash equilibria (the set $\AM_n^{*,0}$, in the notation of the previous paragraph) are by the most commonly studied in the literature can be found by solving a system of $n$ Hamilton-Jacobi-Bellman (HJB) equations. 
Relaxed Markovian equilibria are far less common, but the notable paper of Borkar and Ghosh \cite{borkar-ghosh} has several theorems on existence (i.e., $\RCM_n^{*,0} \neq \emptyset$). While their discussion of finite horizon problems is limited to the final sentence of the paper, it is clear that the techniques they develop for infinite-horizon problems can be easily adapted.
Closed-loop path-dependent equilibria have appeared with some frequency in the literature on two-player stochastic differential games \cite{cardaliaguet2009stochastic,hamadene1995zero}. They arise quite naturally in the BSDE-based weak formulation of Hamadene-Lepeltier \cite{hamadene1995zero}, which reduces the existence of Nash equilibria to the solution of a BSDE (which is nothing but the stochastic representation of the corresponding HJB equation). The extension to the $n$-player setting is written in the lecture notes \cite[Section 5.3.2]{carmona2016lectures}, but be careful that our notion of closed-loop equilibrium is called ``open-loop" therein. Lastly, we are unaware of any discussion of \emph{relaxed closed-loop} equilibria in prior literature, but it is useful at the very least as an intermediary in establishing the relations in \eqref{eq:lawrelations-differentcontrols}.

\subsection{Relaxed mean field equilibria} \label{se:MFG-relaxed}

We next extend the MFG equilibrium concepts (Definitions \ref{def:strongMFE-strict} and \ref{def:MFEsemimarkov-strict}) of Section \ref{se:MFG} to the relaxed setting:

\begin{definition} \label{def:strongMFE}
We say that  $m=(m_t)_{t \in [0,T]} \in C([0,T];\P(\R^d))$ is a \emph{strong relaxed mean field equilibrium} (or simply a \emph{strong RMFE}) if there exists a measurable function $\Lambda^* : [0,T] \times \R^d \rightarrow \P(A)$ such that the unique solution of the SDE
\[
dX^*_t = \int_Ab(t,X^*_t,m_t,a)\Lambda^*(t,X^*_t)(da)dt + dW_t, \quad X^*_0 \sim \lambda
\]
satisfies the following:
\begin{enumerate}
\item The consistency condition holds: $m_t=\L(X_t)$ for all $t \in [0,T]$.
\item For any measurable function $\Lambda : [0,T] \times \R^d \rightarrow \P(A)$, we have
\begin{align*}
\E&\left[\int_0^T\int_A f(t,X^*_t,m_t,a)\Lambda^*(t,X^*_t)(da) dt + g(X^*_T,m_T)\right] \\
	&\ge \E\left[\int_0^T\int_A f(t,  X_t,m_t,a) \Lambda(t,  X_t)(da) dt + g(  X_T,m_T)\right],
\end{align*}
where $X$ is the unique solution of 
\[
d X_t = \int_A b(t,  X_t,m_t,a) \Lambda(t,X_t)(da)dt + dW_t, \quad X_0 \sim\lambda.
\]
\end{enumerate}
\end{definition}

It was shown in \cite[Theorem 6.2]{lacker2015mean} that a strong MFE exists under Assumption \ref{assumption:A}, though we will not need this fact. Recall from Definition \ref{def:semiMarkovFunction} the notion of a \emph{semi-Markov function}.

\begin{definition} \label{def:MFEsemimarkov}
A \emph{weak semi-Markov relaxed mean field equilibrium} (or simply a \emph{weak RMFE}) is a tuple $(\Omega,\F,\FF,\PP,W,\Lambda^*,X^*,\mu)$, where $(\Omega,\F,\FF,\PP)$ is a complete filtered probability space and:
\begin{enumerate}
\item $\mu$ is a continuous $\FF$-adapted $\P(\R^d)$-valued process, $W$ is a $\FF$-Brownian motion, and $X^*$ is a continuous $\R^d$-valued $\FF$-adapted process with $\PP \circ (X_0^*)^{-1} = \lambda$.
\item $\Lambda^* : [0,T] \times \R^d \times \CP  \rightarrow \P(A)$ is semi-Markov.
\item $X^*_0$, $\mu$, and $W$ are independent.
\item The state equation holds:
\begin{align}
dX^*_t = \int_A b(t,X^*_t,\mu_t,a)\Lambda^*(t,X^*_t,\mu)(da)dt + dW_t. \label{def:SDE-MFEsemimarkov}
\end{align}
\item For every alternative $\Lambda : [0,T] \times \R^d \times \CP \rightarrow \P(A)$ satisfying (2), we have
\begin{align*}
\E&\left[\int_0^T\int_A f(t,X^*_t,\mu_t,a)\Lambda^*(t,X^*_t,\mu)(da) dt + g(X^*_T,\mu_T)\right] \\
	&\ge \E\left[\int_0^T\int_A f(t, X_t,\mu_t,a)\Lambda(t, X_t,\mu)(da) dt + g( X_T,\mu_T)\right],
\end{align*}
where $ X$ is the solution (recall Remark \ref{re:strongsolutions}) of
\[
d X_t = \int_A b(t, X_t,\mu_t,a) \Lambda(t, X_t,\mu)(da)dt + dW_t, \quad X_0 \sim \lambda.
\]
\item The consistency condition holds: $\mu_t = \PP(X^*_t \in \cdot \, | \, \F^\mu_t)$ a.s.\ for each $t \in [0,T]$, where $\F^\mu_t = \sigma(\mu_s : s \le t)$.
\end{enumerate}
We refer also to the $\P(\R^d)$-valued process $\mu$ itself as a weak RMFE.
\end{definition}

Similar to the relationships of Section \ref{se:nplayergame-relaxed}, under Assumptions \ref{assumption:A} and \ref{assumption:B} we prove in Section \ref{se:comparisonofequilibria} that MFE and relaxed MFE induce the same measure flows:

\begin{proposition} \label{pr:RMFE-to-MFE}
Suppose Assumptions \ref{assumption:A} and \ref{assumption:B} hold. Then every strong RMFE is a strong MFE, and every strong MFE is a strong RFME. Similarly, on the level of the measure flow $\mu$, every weak RMFE is a weak MFE, and every weak MFE is a RMFE.
\end{proposition}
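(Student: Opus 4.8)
\textbf{Proof proposal for Proposition \ref{pr:RMFE-to-MFE}.}

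The plan is to prove the two implications for the strong case first, and then explain how the same arguments transfer to the weak case with essentially no change. The direction ``every strong MFE is a strong RMFE'' (and similarly ``every weak MFE is a weak RMFE'') is trivial: given a measurable $\alpha^* : [0,T]\times\R^d\to A$ (or semi-Markov, in the weak case), set $\Lambda^*(t,x) = \delta_{\alpha^*(t,x)}$ (resp.\ $\Lambda^*(t,x,m)=\delta_{\alpha^*(t,x,m)}$); the state SDE is unchanged since $\int_A b(t,x,m,a)\,\delta_{\alpha^*(t,x)}(da) = b(t,x,m,\alpha^*(t,x))$, and the same identity for $f$ shows the objective is preserved. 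The only slightly nontrivial point is that the \emph{relaxed} optimality condition (2) in Definition \ref{def:strongMFE} quantifies over all $\P(A)$-valued $\Lambda$, which is a strictly larger class of competitors than the $A$-valued ones in Definition \ref{def:strongMFE-strict}(2); but this is exactly where Assumption \ref{assumption:B} enters.

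So the heart of the matter is the direction ``every strong RMFE is a strong MFE.'' Let $m$ be a strong RMFE with relaxed optimal control $\Lambda^*$. The key tool is a \emph{mimicking / selection} argument: I want to replace $\Lambda^*$ by an honest $A$-valued $\alpha^*$ that produces the same time-marginals and at least as large an objective value. Concretely, the optimal state process $X^*$ under $\Lambda^*$ satisfies $dX^*_t = \widehat b(t,X^*_t)\,dt + dW_t$ where $\widehat b(t,x) := \int_A b(t,x,m_t,a)\Lambda^*(t,x)(da)$, and the consistency condition gives $m_t = \L(X^*_t)$. Applying the Markovian projection theorem (Theorem \ref{th:markovprojection}) is not even needed here since $\widehat b$ is already a function of $(t,X^*_t)$; the real work is to select a \emph{single action}. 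For each fixed $(t,x)$, Assumption \ref{assumption:B} says the set $K(t,x,m_t) = \{(b(t,x,m_t,a),z): a\in A,\ z\le f(t,x,m_t,a)\}$ is convex, hence it contains the averaged point $\big(\int_A b(t,x,m_t,a)\Lambda^*(t,x)(da),\ \int_A f(t,x,m_t,a)\Lambda^*(t,x)(da)\big)$. Therefore there exists $a^*=a^*(t,x)\in A$ with $b(t,x,m_t,a^*) = \widehat b(t,x)$ and $f(t,x,m_t,a^*) \ge \int_A f(t,x,m_t,a)\Lambda^*(t,x)(da)$. A measurable selection theorem (e.g.\ the Filippov-type implicit function theorem, or Kuratowski--Ryll-Nardzewski; this is standard under Assumption \ref{assumption:A}, see \cite{filippov-convexity,roxin1962existence}) yields a measurable $\alpha^* : [0,T]\times\R^d \to A$ realizing this pointwise. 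Then $X^*$ also solves $dX^*_t = b(t,X^*_t,m_t,\alpha^*(t,X^*_t))\,dt + dW_t$, so consistency (1) still holds, and the objective under $\alpha^*$ dominates that under $\Lambda^*$. Finally, optimality (2): any $A$-valued competitor $\alpha$ embeds as the relaxed competitor $\delta_{\alpha}$, whose objective equals that of $\alpha$ and is $\le$ the $\Lambda^*$-objective $\le$ the $\alpha^*$-objective; hence $\alpha^*$ beats every $A$-valued competitor, which is exactly Definition \ref{def:strongMFE-strict}(2). Thus $m$ is a strong MFE.

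For the weak case, the argument is structurally identical but carried out ``$\mu$-pathwise.'' Given a weak RMFE $(\Omega,\F,\FF,\PP,W,\Lambda^*,X^*,\mu)$, define $\widehat b(t,x,m) := \int_A b(t,x,m_t,a)\Lambda^*(t,x,m)(da)$, which is semi-Markov in $(t,x,m)$, and apply the same convexity-plus-measurable-selection step with the parameter $m\in\CP$ carried along, producing a semi-Markov $\alpha^* : [0,T]\times\R^d\times\CP\to A$ with $b(t,x,m,\alpha^*(t,x,m)) = \widehat b(t,x,m)$ and $f(t,x,m,\alpha^*(t,x,m)) \ge \int_A f(t,x,m_t,a)\Lambda^*(t,x,m)(da)$. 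By strong uniqueness for the state SDE with random coefficients (Remark \ref{re:strongsolutions}), the process $X^*$ is unchanged when $\Lambda^*$ is swapped for $\delta_{\alpha^*}$, so properties (1)--(4) and the consistency condition (6) of Definition \ref{def:MFEsemimarkov-strict} transfer verbatim; property (5) follows as before by embedding $A$-valued semi-Markov competitors as Dirac-valued relaxed competitors. I expect the only genuinely delicate point to be ensuring the measurable selection $\alpha^*$ can be taken \emph{semi-Markov} (i.e.\ depending on $m$ only nonanticipatively) — but this is automatic since $\widehat b$, $b$, and $f$ already depend on $m$ only through $(m_s)_{s\le t}$, so the graph of the selection problem is a semi-Markov-measurable correspondence and the selection inherits the property. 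Care with the measurability/joint-continuity hypotheses of Assumption \ref{assumption:A} in invoking the selection theorem is the main thing to get right, but no new ideas are needed.
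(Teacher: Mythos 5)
Your argument is correct and follows essentially the same route as the paper: under Assumption \ref{assumption:B} the averaged point $\int_A(b,f)\,d\Lambda$ lies in the convex set $K(t,x,m_t)$, and a Filippov/Haussmann--Lepeltier measurable selection produces an $A$-valued (semi-Markov) control matching the drift and dominating the running reward, so the state process and consistency are untouched while the payoff can only increase. The only thing to make explicit is the converse direction's optimality over \emph{relaxed} competitors: apply the very same selection to an arbitrary competitor $\Lambda$ (not just to $\Lambda^*$) to dominate its payoff by that of an $A$-valued competitor, then invoke the strict optimality of $\alpha^*$ — exactly the chain of inequalities the paper writes out.
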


\begin{remark} \label{re:Xcompatiblewithmu}
Recall from Remark \ref{re:strongsolutions} that the SDEs in \eqref{def:SDE-MFEsemimarkov} admit unique strong solutions, and in particular $X^*$ is necessarily adapted to the complete filtration generated by the process $(X^*_0,W_s,\mu_s)_{s \le t}$. With this and property (3) of Definition \ref{def:MFEsemimarkov}, we easily deduce that $\L(X^*_t \, | \, \F^\mu_t) = \L(X^*_t \, | \, \mu)$ a.s., for each $t \in [0,T]$, which will be useful later.
\end{remark}

\subsection{Extensions of the limit theorems} \label{se:extensions-relaxed}

This section collects some generalizations of the main results announced in Section \ref{se:mainresults}, which do not require the convexity Assumption \ref{assumption:B}. The results of the previous two subsections show how the various equilibrium concepts related to each other if we impose Assumption \ref{assumption:B}, and this is how we will deduce the results of Section \ref{se:mainresults} from those announced here.

\begin{theorem} \label{th:mainlimit-relaxed}
Suppose Assumption \ref{assumption:A} holds.
Fix a sequence $\epsilon_n \ge 0$ with $\epsilon_n \rightarrow 0$. For each $n$, suppose $\bm{\alpha}^n=(\alpha^{n,1},\ldots,\alpha^{n,n}) \in \A_n^n$ is a closed-loop $\epsilon_n$-Nash equilibrium. Then the associated empirical measure flow sequence $\mu^n=\mu^n[\bm{\alpha}^n]$ is tight as a family of $\CP$-valued random variables, and every limit in distribution is a weak RMFE.
\end{theorem}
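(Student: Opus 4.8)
The plan is to follow the two-part roadmap sketched in Section \ref{se:proofideas}: first establish tightness and identify the limiting dynamics (properties (1)--(4) and (6) of Definition \ref{def:MFEsemimarkov}), then verify optimality (property (5)).

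\textbf{Step 1: Relaxed reformulation and tightness.} Given a closed-loop $\epsilon_n$-Nash equilibrium $\bm\alpha^n \in \A_n^n$, view each $\alpha^{n,k}(t,\bm X)$ as a $\V$-valued random variable (relaxed control), and work with the extended empirical measures $\bm{\overline\mu}^n = \frac1n\sum_{k=1}^n \delta_{(X^{n,k},\alpha^{n,k})}$ on $\C^d \times \V$, together with the marginal flow $\mu^n_t = \frac1n\sum_k\delta_{X^{n,k}_t}$. Since $b$ is bounded and the noise is additive Brownian, each $X^{n,k}$ satisfies a uniform Kolmogorov/Aldous-type moment estimate, so $(\mu^n)$ is tight in $\CP$; compactness of $\P(A)$ (hence of $\V$ in its stable/Young topology) gives tightness of $(\bm{\overline\mu}^n)$ as a family of $\P(\C^d\times\V)$-valued random variables. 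Pass to a convergent subsequence, realize the limit $(\bm{\overline\mu},\mu)$ on a common probability space (Skorokhod), and note $\mu$ is the $x$-marginal flow of $\bm{\overline\mu}$.

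\textbf{Step 2: Limiting Fokker--Planck equation and Markovian projection.} Using It\^o's formula applied to $\varphi(X^{n,k}_t)$ for smooth $\varphi$, averaging over $k$, and a standard martingale/exchangeability argument (the cross terms vanish because the $W^k$ are independent, contributing $O(1/n)$ to the quadratic variation), I obtain in the limit the integrated equation
\[
\int_{\R^d}\varphi\,d(\mu_t-\mu_0) = \int_{\C^d\times\V}\int_0^t\int_A\Big(b(s,x_s,\mu_s,a)\cdot\nabla\varphi(x_s)+\tfrac12\Delta\varphi(x_s)\Big)q_s(da)\,ds\,\bm{\overline\mu}(dx,dq),
\]
a.s.\ for all $\varphi$. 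The right side is not yet in closed (Fokker--Planck) form because of the extra randomness in $\bm{\overline\mu}$ beyond $\mu$. The key device, as in Section \ref{se:proofsketch-limitingdynamics}, is to condition on the history of $\mu$: invoking the Markovian projection Theorem \ref{th:markovprojection} (more precisely a conditional version built in the analogue of Lemma \ref{le:projection-semiMarkov}), I produce a semi-Markov relaxed control $\Lambda^* : [0,T]\times\R^d\times\CP\to\P(A)$ with
\[
\int_A b(s,x,\mu_s,a)\,\Lambda^*(s,x,\mu)(da) = \E\Big[\ \cdot\ \Big| \text{conditioning on }x,(\mu_r)_{r\le s}\Big],
\]
so that $(\mu_t)$ almost surely solves the McKean--Vlasov Fokker--Planck equation with coefficient $b$ and control $\Lambda^*$; the same projection is arranged to preserve the conditional expectations of the running and terminal costs. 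Identifying this Fokker--Planck solution with an SDE on an enlarged space (building $W$, $X^*$ with $X^*_0\sim\lambda$ independent of $\mu$ and $W$) yields properties (1)--(4) and (6) of Definition \ref{def:MFEsemimarkov}; independence and the conditional law identity $\mu_t=\PP(X^*_t\in\cdot\,|\,\F^\mu_t)$ come from the construction.

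\textbf{Step 3: Optimality at the limit.} This is the crux. Fix an arbitrary alternative semi-Markov $\Lambda:[0,T]\times\R^d\times\CP\to\P(A)$, hand it to every player $k$ one at a time (with the other $n-1$ players keeping $\alpha^{n,j}$), producing the perturbed systems $\bm Y^k$ as in Section \ref{se:proofsketch-optimality}. The $\epsilon_n$-Nash inequalities, averaged over $k$, give \eqref{pf:sketch-limitoptimality1}. The left side converges to the left side of the inequality in (5) by Step 2 (the running/terminal costs are continuous and bounded, and the projection was built to preserve their limits). For the right side, the decisive estimate is the Girsanov change-of-measure bound: writing $d\QQ^{n,k}/d\PP^n = \zeta^{n,k}_T$, independence of the $W^k$ forces $\frac1n\sum_k\zeta^{n,k}_T\to1$ in probability (quadratic variation is $O(1/n)$ since $b$ is bounded), so $\frac1n\sum_k\L(Y^{k,k},\mu^{n,k})$ and $\frac1n\sum_k\L(X^{n,k},\mu^n)$ have the same limit. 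Studying the $(d+1)$-dimensional particle system $(X^{n,k},\zeta^{n,k})$ by the same martingale method as in Step 2 identifies that common limit, and it equals $\E[\,\cdot\,]$ evaluated along the solution of \eqref{def:SDE-MFEsemimarkov} driven by $\Lambda$ against the limiting $\mu$. Passing to the limit in \eqref{pf:sketch-limitoptimality1} then yields exactly the inequality in property (5). Assembling Steps 1--3 shows every limit point of $(\mu^n)$ is a weak RMFE.

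\textbf{Main obstacle.} The hardest part is Step 3, specifically the passage to the limit on the right-hand side of \eqref{pf:sketch-limitoptimality1}: one must control the perturbed empirical measures $\mu^{n,k}$ despite the feedback through the other players' closed-loop controls, and the only reason this works is the averaged cancellation $\frac1n\sum_k\zeta^{n,k}_T\to1$, which must be combined carefully with the martingale-problem identification of the joint limit of $(X^{n,k},\zeta^{n,k})_{k}$ and with the consistency between the semi-Markov control $\Lambda^*$ obtained by projection and the cost functionals. Making the conditional Markovian projection (the analogue of Lemma \ref{le:projection-semiMarkov}) simultaneously compatible with the drift and with both cost terms is the other delicate technical point.
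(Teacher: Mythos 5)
Your outline reproduces the paper's strategy faithfully: tightness of the extended empirical measures, identification of the limit dynamics through the integrated equation and the semi-Markov projection (Lemma \ref{le:projection-semiMarkov}), and optimality via averaged one-player deviations, Girsanov densities $\zeta^{n,k}$ with $\frac1n\sum_k\zeta^{n,k}_T\to 1$, and a martingale-problem identification of the limit of $(X^{n,k},\zeta^{n,k},W^k,\Lambda^{n,k})_k$. Steps 1 and 2 are essentially the paper's proof.

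There is, however, a genuine gap in your Step 3: you hand an \emph{arbitrary} semi-Markov relaxed control $\Lambda\in\RC_{\mathrm{semi}}$ to each player and then claim the limit passage goes through by the martingale method. For a merely measurable $\Lambda$ this fails at two points. First, the martingale-problem functionals used to identify the limit of the $(X,\zeta,W,\Lambda)$-system contain the deviating control inside the drift difference $b(t,x,m_t,\beta(t,x,m))-b(t,x,m_t,a)$, and their continuity on $\P(\C^d\times\C^1_+\times\C^d\times\V)$ — which is what lets you transfer the martingale property to weak limit points — requires $\beta$ to be continuous in $(t,x,m)$; with a discontinuous $\Lambda$ the functionals $F[h,\varphi,s,t]$ are not continuous and the identification of limit points collapses. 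Second, the convergence of the perturbed rewards $\E[\int_0^T f(t,Y^{k,k}_t,\mu^{n,k}_t,\beta(t,Y^{k,k}_t,\mu^{n,k}))dt+\cdots]$ likewise uses continuity of the control, since only weak convergence of $(\mu^{n,k},Y^{k,k})$ is available. The paper resolves this by an extra, non-trivial reduction step before Proposition \ref{pr:1playerdeviation}: it proves \eqref{pf:optimality-goal1-2}, i.e.\ that the supremum of $J$ over $\RC_{\mathrm{semi}}$ equals the supremum over \emph{continuous strict} semi-Markov controls $\A^c_{\mathrm{semi}}$ (Lemma \ref{le:approx-optimizer-nice}), using the chattering lemma to pass from relaxed to strict controls, an a.e.\ approximation of measurable $A$-valued functions by continuous ones with respect to a carefully chosen measure on $[0,T]\times\R^d\times\CP$ respecting the semi-Markov (nonanticipativity) constraint, and the stability results for SDEs with random coefficients to conclude $J(\beta^n)\to J(\Lambda)$. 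Only after this reduction is the one-player deviation argument applied, and only to continuous $\beta$. Your proposal needs this approximation step (or a substitute for it); as written, Step 3 does not yield property (5) for general alternative semi-Markov controls.
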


If we impose both Assumption \ref{assumption:A} and \ref{assumption:B}, then Proposition \ref{pr:RMFE-to-MFE} tells us that weak RMFE and weak MFE are one and the same. Thus, our main result, Theorem \ref{th:mainlimit}, follows from Theorem \ref{th:mainlimit-relaxed}.
Recall also the relations summarized in \eqref{eq:lawrelations-differentcontrols}. 
Under Assumptions \ref{assumption:A} and \ref{assumption:B}, we deduce that Theorem \ref{th:mainlimit-relaxed} remains valid when $\bm{\alpha}^n$ is instead assumed to be any of the four types of equilibrium described in Definitions \ref{def:nplayer-eq} and \ref{def:nplayer-eq-relaxed}.

Similarly, we may deduce the converse Theorem \ref{th:converselimit-strongMFE} from Proposition \ref{pr:np-eq-inclusion3}(a) the following generalization to relaxed equilibria:

\begin{theorem} \label{th:converselimit-strongMFE-relaxed}
Suppose Assumptions \ref{assumption:A} and \ref{assumption:C} hold.
Suppose $m \in \CP$ is a strong RMFE. Then there exist $\epsilon_n \ge 0$ with $\epsilon_n \rightarrow 0$ and, for each $n$, a relaxed Markovian $\epsilon_n$-Nash equilibrium $\bm{\Lambda}^n \in \RCM_n^n$ such that $\mu^n[\bm{\Lambda}^n]$ converges in law to $m$ in $\CP$.
\end{theorem}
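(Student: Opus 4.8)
The plan is to implement precisely the "standard strategy" sketched after the statement of Theorem \ref{th:converselimit-strongMFE}, but carried out carefully at the level of relaxed controls and using a strong propagation-of-chaos estimate in total variation to sidestep any continuity hypotheses on the equilibrium relaxed control $\Lambda^*$. Let $\Lambda^* : [0,T]\times\R^d \to \P(A)$ be the optimal relaxed feedback control associated to the strong RMFE $m$ as in Definition \ref{def:strongMFE}, and let $X^*$ solve the McKean--Vlasov SDE $dX^*_t = \int_A b(t,X^*_t,m_t,a)\Lambda^*(t,X^*_t)(da)\,dt + dW_t$ with $m_t = \L(X^*_t)$. In the $n$-player game, assign every player the Markovian relaxed control $\Lambda^{n,i}(t,\bm{x}) = \Lambda^*(t,x^i_t)$, giving the particle system $dX^i_t = \int_A b(t,X^i_t,\mu^n_t,a)\Lambda^*(t,X^i_t)(da)\,dt + dW^i_t$ with $\mu^n_t = \frac1n\sum_k\delta_{X^k_t}$. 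Write $\bm{\Lambda}^n$ for this tuple and $\mu^n = \mu^n[\bm{\Lambda}^n]$.

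First I would establish the propagation of chaos: $\mu^n[\bm{\Lambda}^n] \to m$ in law in $\CP$. The clean route is to compare, on a common probability space carrying the $W^i$, the particle system to $n$ i.i.d.\ copies $\widetilde X^i$ of $X^*$ driven by the same Brownian motions, and to bound $\E\sup_{t\le T}|X^i_t - \widetilde X^i_t|$ or a total-variation analogue. Here the bounded drift $b$ plus Assumption \ref{assumption:C} (Lipschitz in total variation, hence in $\W_1$) give, via Girsanov/Pinsker-type estimates or the synchronous-coupling argument of \cite{lacker2018strong}, a bound of order $n^{-1/2}$ on $\E\|\mu^n_t - m_t\|$ uniformly in $t$, crucially \emph{without} assuming $\Lambda^*$ continuous. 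Since total variation dominates Wasserstein, this yields $\mu^n \to m$ in probability in $\CP$, and in particular in law.

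Next I would verify the approximate Nash property. Fix player $i$ and an arbitrary deviation $\beta \in \RCM_n$ (by Proposition \ref{pr:np-eq-inclusion3} and the relations \eqref{eq:lawrelations-differentcontrols} it suffices to handle relaxed Markovian deviations, but in fact one argues directly with any $\beta\in\RC_n$). Let $\bm{Y}$ be the state process where player $i$ uses $\beta$ and the others use $\Lambda^*(t,\cdot)$; the point is that the empirical measure $\nu^n := \mu^n[\beta,\bm{\Lambda}^{n,-i}]$ is close to $m$ as well: switching one player changes the drift of a single coordinate, so by the same Girsanov martingale computation as in \eqref{pf:sketch-limitoptimality2}--\eqref{pf:sketch-limitoptimality3} (the relevant Radon--Nikodym density has quadratic variation of order $1/n$ since $b$ is bounded), $\E\|\nu^n_t - \mu^n_t\|\to 0$ uniformly in $t$, hence $\nu^n_t \to m_t$ too. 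Then
\begin{align*}
J^n_i(\beta,\bm{\Lambda}^{n,-i}) &= \E\Big[\int_0^T\!\!\int_A f(t,Y^i_t,\nu^n_t,a)\beta(t,\bm{Y})(da)\,dt + g(Y^i_T,\nu^n_T)\Big] \\
&\le \E\Big[\int_0^T\!\!\int_A f(t,Y^i_t,m_t,a)\beta(t,\bm{Y})(da)\,dt + g(Y^i_T,m_T)\Big] + \rho_n,
\end{align*}
where $\rho_n\to0$ uses the uniform continuity of $f$ and $g$ in the measure argument (Assumption \ref{assumption:A}, (A.2), $f,g$ bounded continuous) together with $\E\|\nu^n_t-m_t\|\to0$ and dominated convergence. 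Now $Y^i$ driven by $\beta$ with frozen mean field $m_t$ is exactly an admissible competitor in the single-agent problem of Definition \ref{def:strongMFE}(2), so the right-hand side is $\le \E[\int_0^T\!\int_A f(t,X^*_t,m_t,a)\Lambda^*(t,X^*_t)(da)\,dt + g(X^*_T,m_T)]$; and by the matching argument (propagation of chaos plus uniform continuity) this equals $\lim_n J^n_i(\bm{\Lambda}^n)$. Collecting the estimates, $J^n_i(\bm{\Lambda}^n) \ge \sup_{\beta} J^n_i(\beta,\bm{\Lambda}^{n,-i}) - \epsilon_n$ with $\epsilon_n := \rho_n + |J^n_i(\bm{\Lambda}^n) - \lim_n J^n_i(\bm{\Lambda}^n)| + o(1) \to 0$, and by symmetry $\epsilon_n$ can be taken independent of $i$.

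The main obstacle is making the deviation step genuinely uniform over \emph{all} admissible deviations $\beta$ simultaneously, so that a single $\epsilon_n\to0$ works: one must control $\sup_\beta$ of the discrepancy between the $\nu^n$-payoff and the $m$-payoff. This is where boundedness of $b,f,g$ and the uniform (in $a$) modulus of continuity of $f$ in the measure variable are essential — they convert the $\E\|\nu^n_t-m_t\|\to0$ bound, which itself holds uniformly over $\beta$ because the quadratic-variation estimate $\lesssim 1/n$ depends on $\beta$ only through the bounded $b$, into a deviation-uniform error bound. A secondary technical point is the well-posedness of all SDEs involved (strong existence/uniqueness under merely measurable $\Lambda^*$), which is covered by Veretennikov/Krylov--Röckner as invoked in Section \ref{se:nplayergame} and Remark \ref{re:strongsolutions}. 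Finally, one converts the relaxed Markovian equilibrium to a genuine Markovian one (if desired) by Proposition \ref{pr:np-eq-inclusion3}(a), which preserves the law of the state process and hence the convergence $\mu^n[\bm{\Lambda}^n]\to m$; this is exactly how Theorem \ref{th:converselimit-strongMFE} is deduced from the present statement.
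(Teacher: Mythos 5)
There is a genuine gap at the optimality-transfer step. After replacing $\nu^n$ by $m$ in $f$ and $g$, you assert that ``$Y^i$ driven by $\beta$ with frozen mean field $m_t$ is exactly an admissible competitor in the single-agent problem of Definition \ref{def:strongMFE}(2).'' This is false as written, for two reasons. First, the process $Y^i$ you are actually working with still has drift $\int_A b(t,Y^i_t,\nu^n_t,a)\beta(t,\bm{Y}_t)(da)$, with the random empirical measure $\nu^n_t$ and not $m_t$; your estimate only swaps the measure argument inside $f,g$, not inside the dynamics, so some additional argument (coupling, Girsanov, or a limit identification) is needed before the state process can be compared with the frozen-$m$ problem. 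Second, and more fundamentally, the deviation $\beta(t,\bm{Y}_t)$ depends on the \emph{other} players' states, so even with $m$ frozen in the drift the resulting control for player $i$ is adapted to extra randomness and is not of the Markov feedback form $\Lambda(t,X_t)$ against which Definition \ref{def:strongMFE}(2) asserts optimality. Closing this gap requires a mimicking step: condition the control on the player's own current state via Lemma \ref{le:conditional-marginal-meanmeasure} and apply the Markovian projection Theorem \ref{th:markovprojection} to produce a feedback-controlled process with the same time-marginals (hence the same payoff), and only then invoke the optimality in Definition \ref{def:strongMFE}(2). This projection is entirely absent from your argument.

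For comparison, the paper's proof follows the same architecture up to that point (same assignment $\Lambda^{n,i}(t,\bm{x})=\Lambda^*(t,x_i)$, propagation of chaos via \cite{lacker2018strong} under Assumption \ref{assumption:C}, and the Girsanov/$L^p$-density argument showing $\nu^n\to m$ in probability), but then picks near-optimal deviations $\beta^n\in\RCM_n$ (within $1/n$ of the supremum, which also removes your worry about uniformity over all deviations), views $(Y^{n,1},\beta^n(\cdot,\bm{Y}^n),W^1)$ as a tight sequence in $\C^d\times\V\times\C^d$, passes to a subsequential limit $(Y,\beta,W)$ in which the drift is automatically driven by the frozen $m$ (this is where your first issue is resolved, using continuity of $b$), and then performs the Markovian projection on the limiting relaxed control before appealing to the RMFE optimality. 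A side remark: your reduction of deviations via Proposition \ref{pr:np-eq-inclusion3} and \eqref{eq:lawrelations-differentcontrols} invokes Assumption \ref{assumption:B}, which is not available in this theorem; it is also unnecessary, since the relaxed Markovian $\epsilon$-Nash property only requires comparison against $\RCM_n$ in the first place.
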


Sections \ref{se:mainlimitproof} and \ref{se:converse-strongMFE-proof} are devoted to the proofs of Theorems \ref{th:mainlimit-relaxed} and \ref{th:converselimit-strongMFE-relaxed}, respectively.
We might lastly state a form of Theorem \ref{th:openloopconverse} without Assumption \ref{assumption:B}, as long as we use weak RMFE instead of weak MFE, but we opt not to write this out explicitly.

\section{Relating the various equilibrium concepts} \label{se:comparisonofequilibria}

This section proves the various relationships between different equilibrium concepts of Definitions \ref{def:nplayer-eq} and \ref{def:nplayer-eq-relaxed}, announced in Propositions \ref{pr:np-eq-inclusion0}, \ref{pr:np-eq-inclusion1}, \ref{pr:np-eq-inclusion2}, and \ref{pr:np-eq-inclusion3}. We also prove Proposition \ref{pr:RMFE-to-MFE}, which relates MFE to RMFE.

\subsection{Proof of Proposition \ref{pr:np-eq-inclusion1}}

Let $\epsilon \ge 0$, and fix a relaxed Markovian $\epsilon$-Nash equilibrium $\bm{\Lambda}=(\Lambda^1,\ldots,\Lambda^n) \in \RCM_n^n$. The goal is to show that $\bm{\Lambda}$ is also a relaxed closed-loop $\epsilon$-Nash equilibrium. The state processes $\bm{X}=\bm{X}[\bm{\Lambda}]$ solve the SDE system
\begin{align*}
dX^i_t &= \int_{A}b(t,X^i_t,\mu^n_t,a)\Lambda^i(t,\bm{X}_t)(da)dt + dW^i_t, \quad \mu^n_t = \frac{1}{n}\sum_{k=1}^n\delta_{X^k_t}.
\end{align*}
Let $\beta \in \RC_n$ be an alternative relaxed closed-loop control. We will focus on player $1$, showing that
\begin{align}
J^n_1(\Lambda^1,\ldots,\Lambda^n) \ge J^n_1(\beta,\Lambda^2,\ldots,\Lambda^n) - \epsilon. \label{pf:np-inclusion1-1}
\end{align}
The argument for other players $i \neq 1$ is identical. To proceed, let $\bm{Y}=(Y^1,\ldots,Y^n) := \bm{X}[\beta,\Lambda^2,\ldots,\Lambda^n]$ be the state processes in which players $i\neq 2$ still use $\Lambda^i$:
\begin{align*}
dY^1_t &= \int_A b(t,Y^1_t,\nu^n_t,a)\beta(t,\bm{Y})(da)dt + dW^1_t, \\
dY^i_t &= \int_A b(t,Y^i_t,\nu^n_t,a)\Lambda^i(t,\bm{Y}_t)(da)dt + dW^i_t, \quad \ i \neq 1 \\
\nu^n_t &= \frac{1}{n}\sum_{k=1}^n\delta_{Y^k_t}.
\end{align*}
Define $\widetilde\beta \in \RCM_n$ by setting $\widetilde\beta(t,\bm{x}) = \E[ \beta(t,\bm{Y}) \, | \, \bm{Y}_t=\bm{x}]$, noting that the existence of a jointly measurable version of this conditional mean measure is demonstrated by Lemma \ref{le:conditional-marginal-meanmeasure}. More precisely, this defines a Borel measurable function $\widetilde\beta : [0,T] \times (\R^d)^n \rightarrow \P(A)$ such that
\begin{align}
\int_A \varphi(t,\bm{Y}_t,a)\widetilde\beta(t,\bm{Y}_t)(da) = \E\left[ \int_A \varphi(t,\bm{Y}_t,a) \beta(t,\bm{Y})(da) \, \Big| \, \bm{Y}_t\right], \ \ a.s., \ a.e. \ t \in [0,T]. \label{pf:np-inclusion1-2}
\end{align}
By Theorem \ref{th:markovprojection}, the unique solution $\bm{\widetilde{Y}}=(\widetilde{Y}^1,\ldots,\widetilde{Y}^n)$ of the SDE system
\begin{align*}
d\widetilde{Y}^1_t &= \int_A b(t,\widetilde{Y}^1_t,\widetilde{\nu}^n_t,a)\widetilde\beta(t,\bm{\widetilde{Y}}_t)(da)dt + d{W}^1_t, \\
d\widetilde{Y}^i_t &= \int_A b(t,\widetilde{Y}^i_t,\widetilde{\nu}^n_t,a)\Lambda^i(t,\bm{\widetilde{Y}}_t)(da)dt + d{W}^i_t, \quad \ i \neq 1 \\
\widetilde{\nu}^n_t &= \frac{1}{n}\sum_{k=1}^n\delta_{\widetilde{Y}^k_t}.
\end{align*}
satisfies $\bm{\widetilde{Y}}_t \stackrel{d}{=} \bm{Y}_t$ for all $t \in [0,T]$. Using Fubini's theorem and \eqref{pf:np-inclusion1-2}, we find
\begin{align*}
J^n_1(\beta,\Lambda^2,\ldots,\Lambda^n) &= \E\left[\int_0^T\int_Af(t,Y^1_t,\nu^n_t,a)\beta(t,\bm{Y})(da)dt + g(Y^1_T,\nu^n_T)\right] \\
	&= \E\left[\int_0^T\int_Af(t,Y^1_t,\nu^n_t,a)\widetilde\beta(t,\bm{Y}_t)(da)dt + g(Y^1_T,\nu^n_T)\right] \\
	&= \E\left[\int_0^T \int_A f(t,\widetilde{Y}^1_t,\widetilde{\nu}^n_t, a)\widetilde\beta(t,\bm{\widetilde{Y}}_t)(da)dt + g(\widetilde{Y}^1_T,\widetilde{\nu}^n_T)\right] \\
	&= J^n_1(\widetilde\beta,\Lambda^2,\ldots,\Lambda^n) \\
	&\le J^n_1(\Lambda^1,\ldots,\Lambda^n)  + \epsilon.
\end{align*}
Indeed, the last inequality follows from the assumption that $(\Lambda^1,\ldots,\Lambda^n)$ is a relaxed Markovian $\epsilon$-Nash equilibrium.
\hfill\qedsymbol

\subsection{Proof of Proposition \ref{pr:np-eq-inclusion2}}

{\ } \\
\noindent\textit{Proof of (a):} 
Let $\epsilon \ge 0$, and fix a Markovian $\epsilon$-Nash equilibrium $\bm{\alpha}=(\alpha^1,\ldots,\alpha^n) \in \AM_n^n$. The goal is to show that $\bm{\alpha}$ is also a relaxed Markovian $\epsilon$-Nash equilibrium. The state processes $\bm{X}=\bm{X}[\bm{\alpha}]$ solve the SDE system
\begin{align*}
dX^i_t &= b(t,X^i_t,\mu^n_t,\alpha^i(t,\bm{X}_t))dt + dW^i_t, \quad \mu^n_t = \frac{1}{n}\sum_{k=1}^n\delta_{X^k_t}.
\end{align*}
Let $\beta \in \RCM_n$ be an alternative relaxed control. We will focus on player $1$, showing that
\begin{align}
J^n_1(\alpha^1,\ldots,\alpha^n) \ge J^n_1(\beta,\alpha^2,\ldots,\alpha^n) - \epsilon. \label{pf:np-inclusion2-1}
\end{align}
The argument for other players $i \neq 1$ is identical. To proceed, let $\bm{Y}=(Y^1,\ldots,Y^n) = \bm{X}[\beta,\alpha^2,\ldots,\alpha^n]$ be the state processes in which players $i\neq 2$ still use $\alpha^i$:
\begin{align*}
dY^1_t &= \int_A b(t,Y^1_t,\nu^n_t,a)\beta(t,\bm{Y}_t)(da)dt + dW^1_t, \\
dY^i_t &= b(t,Y^i_t,\nu^n_t,\alpha^i(t,\bm{Y}_t))dt + dW^i_t, \quad \ i \neq 1 \\
\nu^n_t &= \frac{1}{n}\sum_{k=1}^n\delta_{Y^k_t}.
\end{align*}
Recalling the definition of the convex set $K(t,x,m)$ from Assumption \ref{assumption:B}, we have
\begin{align*}
\int_A \big(b(t,Y^1_t,\nu^n_t,a),f(t,Y^1_t,\nu^n_t,a)\big)\beta(t,\bm{Y}_t)(da) \in K(t,Y^1_t,\nu^n_t).
\end{align*}
Let $L_n : (\R^d)^n \rightarrow \P(\R^d)$ denote the empirical measure map, $L_n(\bm{x}) = \frac{1}{n}\sum_{k=1}^n\delta_{x_k}$.
Using a measurable selection theorem \cite[Theorem A.9]{haussmannlepeltier-existence}, we may find a measurable function $\widetilde\alpha : [0,T] \times (\R^d)^n \rightarrow A$ such that, for each $\bm{x}=(x_1,\ldots,x_n) \in (\R^d)^n$,
\begin{align}
b(t,x_1,L_n(\bm{x}),\widetilde\alpha(t,\bm{x})) &= \int_A  b(t,x_1,L_n(\bm{x}),a) \beta(t,\bm{x})(da) \label{pf:np-inclusion2-2}
\end{align}
and
\begin{align*}
\int_A  f(t,x_1,L_n(\bm{x}),a) \beta(t,\bm{x})(da)  &\le f(t,x_1,L_n(\bm{x}),\widetilde\alpha(t,\bm{x})).
\end{align*}
The first of these identities implies that in fact
\begin{align*}
dY^1_t &= b(t,Y^1_t,\nu^n_t,\widetilde\alpha(t,\bm{Y}_t))dt + dW^1_t,
\end{align*}
and in particular  $\bm{Y} \stackrel{d}{=} \bm{X}[\widetilde\alpha,\alpha^2,\ldots,\alpha^n]$,
while the second implies
\begin{align*}
J^n_1(\beta,\alpha^2,\ldots,\alpha^n) &= \E\left[\int_0^T \int_A f(t,Y^1_t,\nu^n_t,a)\beta(t,\bm{Y}_t)(da)dt + g(Y^1_T,\nu^n_T)\right] \\
	&\le \E\left[\int_0^T f(t,Y^1_t,\nu^n_t,\widetilde\alpha(t,\bm{Y}_t))dt + g(Y^1_T,\nu^n_T)\right] \\
	&= J^n_1(\widetilde\alpha,\alpha^2,\ldots,\alpha^n) \\
	&\le J^n_1(\alpha^1,\ldots,\alpha^n) + \epsilon.
\end{align*}
Indeed, the last inequality follows from the assumption that $(\alpha^1,\ldots,\alpha^n)$ is a Markovian $\epsilon$-Nash equilibrium.

{\ } \\
\noindent\textit{Proof of (b):} This proof is identical to that of part (a), except that all of the controls involved (namely, $\alpha^i$ and $\beta$) are closed-loop (path-dependent) instead of Markovian.

\subsection{Proof of Proposition \ref{pr:np-eq-inclusion0}}
{\ } \\
This combines ideas of both of the previous proofs.
Let $\epsilon \ge 0$, and fix a Markovian $\epsilon$-Nash equilibrium $\bm{\alpha}=(\alpha^1,\ldots,\alpha^n) \in \AM_n^n$. The goal is to show $\bm{\alpha}$ is a closed-loop $\epsilon$-Nash equilibrium. The state processes $\bm{X}=\bm{X}[\bm{\alpha}]$ solve the SDE system
\begin{align*}
dX^i_t &= b(t,X^i_t,\mu^n_t,\alpha^i(t,\bm{X}_t))dt + dW^i_t, \quad \mu^n_t = \frac{1}{n}\sum_{k=1}^n\delta_{X^k_t}.
\end{align*}
Let $\beta \in \A_n$ be an alternative closed-loop control. We will focus on player $1$, showing that
\begin{align}
J^n_1(\alpha^1,\ldots,\alpha^n) \ge J^n_1(\beta,\alpha^2,\ldots,\alpha^n) - \epsilon. \label{pf:np-inclusion2-3}
\end{align}
Let $\bm{Y}=(Y^1,\ldots,Y^n)= \bm{X}[\beta,\alpha^2,\ldots,\alpha^n]$ be the state processes in which players $i\neq 2$ still use $\alpha^i$:
\begin{align*}
dY^1_t &= b(t,Y^1_t,\nu^n_t,\beta(t,\bm{Y}))dt + dW^1_t, \\
dY^i_t &= b(t,Y^i_t,\nu^n_t,\alpha^i(t,\bm{Y}_t))dt + dW^i_t, \quad \ i \neq 1 \\
\nu^n_t &= \frac{1}{n}\sum_{k=1}^n\delta_{Y^k_t}.
\end{align*}
Recalling the definition of the convex set $K(t,x,m)$ from Assumption \ref{assumption:B}, notice that
\begin{align*}
\E\left[b(t,Y^1_t,\nu^n_t,\beta(t,\bm{Y})) \, | \, \bm{Y}_t\right] \in K(t,Y^1_t,\nu^n_t), \ \ a.s.,
\end{align*}
for each $t \in [0,T]$. 
Using a measurable selection theorem \cite[Theorem A.9]{haussmannlepeltier-existence}, we may find a measurable function $\widetilde\beta : [0,T] \times (\R^d)^n \rightarrow A$ such that
\begin{align}
b(t,Y^1_t,\nu^n_t,\widetilde\beta(t,\bm{Y}_t)) &= \E\left[b(t,Y^1_t,\nu^n_t,\beta(t,\bm{Y})) \, | \, \bm{Y}_t\right], \ \ a.s.  \label{pf:np-inclusion2-4} \\
f(t,Y^1_t,\nu^n_t,\widetilde\beta(t,\bm{Y}_t)) &\ge \E\left[f(t,Y^1_t,\nu^n_t,\beta(t,\bm{Y})) \, | \, \bm{Y}_t\right], \ \ a.s., \label{pf:np-inclusion2-5} 
\end{align}
for each $t \in [0,T]$.
By Theorem \ref{th:markovprojection}, the unique solution $\bm{\widetilde{Y}}=(\widetilde{Y}^1,\ldots,\widetilde{Y}^n)$ of the SDE system
\begin{align*}
d\widetilde{Y}^1_t &= b(t,\widetilde{Y}^1_t,\widetilde{\nu}^n_t,\widetilde\beta(t,\bm{\widetilde{Y}}_t))dt + d{W}^1_t, \\
d\widetilde{Y}^i_t &= b(t,\widetilde{Y}^i_t,\widetilde{\nu}^n_t,\alpha^i(t,\bm{\widetilde{Y}}_t))dt + d{W}^i_t, \quad \ i \neq 1 \\
\widetilde{\nu}^n_t &= \frac{1}{n}\sum_{k=1}^n\delta_{\widetilde{Y}^k_t}.
\end{align*}
satisfies $\bm{\widetilde{Y}}_t \stackrel{d}{=} \bm{Y}_t$ for all $t \in [0,T]$. Note that $\bm{\widetilde{Y}} \stackrel{d}{=} \bm{X}[\widetilde\beta,\alpha^2,\ldots,\alpha^n]$. Use Fubini's theorem and \eqref{pf:np-inclusion2-5} to get
\begin{align*}
J^n_1(\beta,\alpha^2,\ldots,\alpha^n) &= \E\left[\int_0^T f(t,Y^1_t,\nu^n_t,\beta(t,\bm{Y}))dt + g(Y^1_T,\nu^n_T)\right] \\
	&\le \E\left[\int_0^T f(t,Y^1_t,\nu^n_t, \widetilde\beta(t,\bm{Y}_t))dt + g(Y^1_T,\nu^n_T)\right] \\
	&= \E\left[\int_0^T f(t,\widetilde{Y}^1_t,\widetilde{\nu}^n_t,\widetilde\beta(t,\bm{\widetilde{Y}}_t))dt + g(\widetilde{Y}^1_T,\widetilde{\nu}^n_T)\right] \\
	&= J^n_1(\widetilde\beta,\alpha^2,\ldots,\alpha^n) \\
	&\le J^n_1(\alpha^1,\ldots,\alpha^n)  + \epsilon.
\end{align*}
Indeed, the last inequality follows from the assumption that $(\alpha^1,\ldots,\alpha^n)$ is a Markovian $\epsilon$-Nash equilibrium.

\hfill\qedsymbol

\subsection{Proof of Proposition \ref{pr:np-eq-inclusion3}}

{\ } \\
\noindent\textit{Proof of (a):} 
Let $\epsilon \ge 0$, and fix a relaxed Markovian $\epsilon$-Nash equilibrium $\bm{\Lambda}=(\Lambda^1,\ldots,\Lambda^n) \in \RCM_n^n$. The state processes $\bm{X}=\bm{X}[\bm{\Lambda}]$ solve the SDE system
\begin{align*}
dX^i_t &= \int_{A}b(t,X^i_t,\mu^n_t,a)\Lambda^i(t,\bm{X}_t)(da)dt + dW^i_t, \quad \mu^n_t = \frac{1}{n}\sum_{k=1}^n\delta_{X^k_t}.
\end{align*}
Let $L_n : (\R^d)^n \rightarrow \P(\R^d)$ denote the empirical measure map, $L_n(\bm{x}) = \frac{1}{n}\sum_{k=1}^n\delta_{x_k}$.
Recalling the definition of $K(t,x,m)$ from Assumption \ref{assumption:B}, it holds for each $t \in [0,T]$, $\bm{x}=(x_1,\ldots,x_n) \in (\R^d)^n$, and $i \in \{1,\ldots,n\}$ that
\begin{align*}
\int_A \big(b(t,x_i,L_n(\bm{x}),a),f(t,x_i,L_n(\bm{x}),a)\big)\Lambda^i(t,\bm{x})(da) \in K(t,x_i,L_n(\bm{x})).
\end{align*}
Using a measurable selection theorem \cite[Theorem A.9]{haussmannlepeltier-existence}, we may find a measurable function $\alpha^i : [0,T] \times (\R^d)^n \rightarrow A$  such that, for each $\bm{x}=(x_1,\ldots,x_n) \in (\R^d)^n$ and $t \in [0,T]$,
\begin{align}
b(t,x_i,L_n(\bm{x}),\alpha^i(t,\bm{x})) &= \int_A  b(t,x_i,L_n(\bm{x}),a) \Lambda^i(t,\bm{x})(da) \label{pf:np-inclusion3-1}
\end{align}
and
\begin{align*}
\int_A  f(t,x_i,L_n(\bm{x}),a) \Lambda^i(t,\bm{x})(da)  &\le f(t,x_i,L_n(\bm{x}),\alpha^i(t,\bm{x})).
\end{align*}
The first of these identities implies that in fact $\bm{X}$ solves the SDE system
\begin{align*}
dX^i_t &= b(t,X^i_t,\mu^n_t,\alpha^i(t,\bm{X}_t))dt + dW^i_t,
\end{align*}
i.e., $\bm{X} = \bm{X}[\alpha^1,\ldots,\alpha^n]$, while the second implies
\begin{align}
J^n_i(\Lambda^1,\ldots,\Lambda^n) &= \E\left[\int_0^T\int_Af(t,X^i_t,\mu^n_t,a)\Lambda^i(t,\bm{X}_t)(da)dt + g(X^i_T,\mu^n_T)\right] \nonumber \\
	&\le \E\left[\int_0^T f(t,X^i_t,\mu^n_t, \alpha^i(t,\bm{X}_t))dt + g(X^i_T,\mu^n_T)\right] \nonumber \\
	&= J^n_i(\alpha^1,\ldots,\alpha^n).  \label{pf:np-inclusion3-3}
\end{align}

Now, let us show that $(\alpha^1,\ldots,\alpha^n)$ is an $\epsilon$-Nash equilibrium. Fix an alternative Markovian control $\beta \in \AM_n$. We will focus on player $1$, showing that
\begin{align}
J^n_1(\alpha^1,\ldots,\alpha^n) \ge J^n_1(\beta,\alpha^2,\ldots,\alpha^n) - \epsilon. \label{pf:np-inclusion3-2}
\end{align}
To proceed, let $\bm{Y}=(Y^1,\ldots,Y^n)=\bm{X}[\beta,\Lambda^2,\ldots,\Lambda^n]$ be the state processes in which players $i\neq 2$ still use the relaxed controls $\Lambda^i$:
\begin{align*}
dY^1_t &= b(t,Y^1_t,\nu^n_t,\beta(t,\bm{Y}_t))dt + dW^1_t, \\
dY^i_t &= \int_{A}b(t,Y^i_t,\nu^n_t,a)\Lambda^i(t,\bm{Y}_t)(da)dt + dW^i_t, \quad \ i \neq 1 \\
\nu^n_t &= \frac{1}{n}\sum_{k=1}^n\delta_{Y^k_t}.
\end{align*}
Using \eqref{pf:np-inclusion3-1} we may write
\begin{align*}
dY^i_t &= b(t,Y^i_t,\nu^n_t,\alpha^i(t,\bm{Y}_t))dt + dW^i_t, \quad \ i \neq 1,
\end{align*}
i.e., $\bm{Y} \stackrel{d}{=} \bm{X}[\beta,\alpha^2,\ldots,\alpha^n]$.
Hence, since $(\Lambda^1,\ldots,\Lambda^n)$ is $\epsilon$-Nash, we may use \eqref{pf:np-inclusion3-3} to get
\begin{align*}
J^n_1(\beta,\alpha^2,\ldots,\alpha^n) &= J^n_1(\beta,\Lambda^2,\ldots,\Lambda^n) \le J^n_1(\Lambda^1,\ldots,\Lambda^n) + \epsilon \\
	&\le J^n_1(\alpha^1,\ldots,\alpha^n) + \epsilon.
\end{align*}

{\ } \\
\noindent\textit{Proof of (b):} This proof is identical to that of part (a), except that all of the controls involved (namely, $\Lambda^i$, $\alpha^i$, and $\beta$) are closed-loop (path-dependent) instead of Markovian.
\hfill\qedsymbol

\subsection{Proof of Proposition \ref{pr:RMFE-to-MFE}}
{\ } \\
We prove the claims only for weak MFE, as the strong MFE is a special case of a deterministic weak MFE.
We begin with a preparatory argument. Let $\Lambda : [0,T] \times \R^d \times \CP \rightarrow \P(A)$ be any semi-Markov function (recall Definition \ref{def:semiMarkovFunction}).
 Recalling the definition of the convex set $K(t,x,m)$ from Assumption \ref{assumption:B}, note that for $(t,x,m) \in [0,T] \times \R^d \times \CP$ we have
\begin{align*}
\int_A \big(b(t,x,m_t,a),f(t,x,m_t,a)\big)\Lambda(t,x,m)(da) \in K(t,x,m_t).
\end{align*}
Using a measurable selection theorem \cite[Theorem A.9]{haussmannlepeltier-existence}, we may find a semi-Markov function $\alpha^\Lambda : [0,T] \times \R^d \times \CP \rightarrow A$  such that, for each $(t,x,m)$,
\begin{align*}
b(t,x,m_t,\alpha^\Lambda(t,x,m)) &= \int_A  b(t,x,m,a) \Lambda (t,x,m)(da)
\end{align*}
and
\begin{align*}
\int_A  f(t,x,m,a) \Lambda (t,x,m)(da)  &\le f(t,x,m,\alpha^\Lambda(t,x,m)).
\end{align*}
In particular, if $X$ solves the SDE
\begin{align}
dX_t = \int_A b(t,X_t,\mu_t,a)\Lambda(t,X_t,\mu)(da)dt + dW_t, \label{pf:RMFE-to-MFE-2}
\end{align}
where $W$ is a Brownian motion, $\mu$ is a continuous $\P(\R^d)$-valued process, and $(X_0,\mu,W)$ are independent, then $X$ also solves the SDE
\[
dX_t = b(t,X_t,\mu_t,\alpha^\Lambda(t,X_t,\mu)) dt + dW_t,
\]
and we have the inequality
\begin{align}
\E &\left[\int_0^T\int_A f(t,X_t,\mu_t,a)\Lambda(t,X_t,\mu)(da)dt + g(X_T,\mu_T)\right] \nonumber \\
	&\le \E\left[\int_0^T f(t,X_t,\mu_t,\alpha^\Lambda(t,X_t,\mu))dt + g(X_T,\mu_T)\right].  \label{pf:RMFE-to-MFE-3}
\end{align}

With this construction the proof is straightforward. We first show that a weak RMFE is a  weak MFE. Let $(\Omega,\F,\FF,\PP,W,\Lambda^*,X^*,\mu)$ be a weak RMFE. It is then easy to check using the above facts that $(\Omega,\F,\FF,\PP,W,\alpha^{\Lambda^*},X^*,\mu)$ is a weak MFE. Conversely, let $(\Omega,\F,\FF,\PP,W,\alpha^*,X^*,\mu)$ be a weak MFE. Define $\Lambda^*(t,x,m) := \delta_{\alpha^*(t,x,m)}$. It is clear that $(\Omega,\F,\FF,\PP,W,\Lambda^*,X^*,\mu)$ satisfies properties (1-4) and (6) of Definition \ref{def:MFEsemimarkov}. To prove (5), let $\Lambda : [0,T] \times \R^d \times \CP \rightarrow \P(A)$ denote any semi-Markov function, and let $X$ solve the corresponding SDE \eqref{pf:RMFE-to-MFE-2}. Combine property (5) of the definition of weak MFE (Definition \ref{def:MFEsemimarkov-strict}) with \eqref{pf:RMFE-to-MFE-3} to get
\begin{align*}
\E &\left[\int_0^T\int_A f(t,X^*_t,\mu_t,a)\Lambda^*(t,X^*_t,\mu)(da)dt + g(X^*_T,\mu_T)\right] \\
	&= \E\left[\int_0^T f(t,X^*_t,\mu_t,\alpha^*(t,X^*_t,\mu))dt + g(X^*_T,\mu_T)\right] \\
	&\ge \E\left[\int_0^T f(t,X_t,\mu_t,\alpha^\Lambda(t,X_t,\mu))dt + g(X_T,\mu_T)\right] \\
	&\ge \E\left[\int_0^T\int_A f(t,X_t,\mu_t,a)\Lambda(t,X_t,\mu)(da)dt + g(X_T,\mu_T)\right].
\end{align*}
This completes the proof.

\section{Proof of the main limit theorem} \label{se:mainlimitproof}

This section is devoted to the proof of Theorem \ref{th:mainlimit-relaxed}, from which Theorem \ref{th:mainlimit} follows (see Section \ref{se:extensions-relaxed}). We break this up into three major steps. First, we show tightness, which is straightforward in the present context. Next, we identify the limiting dynamics, in the sense that we prove that properties (1-4) and (6) of Definition \ref{def:MFEsemimarkov} hold at the limit. Lastly, we address the optimality condition (5).

In fact, before we prove Theorem \ref{th:mainlimit-relaxed}, we will carry out the bulk of the analysis is without using the fact that the $n$-player controls are given as $\epsilon_n$-Nash equilibria. That is, much of the work of characterizing the limiting behavior can and should be done independently of the Nash property. Only at the end will we use the Nash property to produce an inequality, which is then passed to the limit to obtain the desired optimality condition.

In the following, we work with an arbitrary sequence of controls $(\alpha^{n,1},\ldots,\alpha^{n,n}) \in \A_n^n$. We write $\bm{X}^n=(X^{n,1},\ldots,X^{n,n})$ to denote the corresponding state process in the $n$-player game, which we now index by $n$ for clarity, and which is determined as the unique in law solution of the SDE
\begin{align}
dX^{n,i}_t &= b(t,X^{n,i}_t,\mu^n_t,\alpha^{n,i}(t,\bm{X}^n))dt + dW^i_t, \quad\quad \mu^n_t = \frac{1}{n}\sum_{k=1}^n\delta_{X^{n,k}_t}. \label{def:stateprocesses-limitsection}
\end{align}
As usual, $X^{n,1}_0,\ldots,X^{n,n}_0$ are i.i.d.\ with law $\lambda$.

It is convenient in the following to work with probability measures on the path space rather the flows of probability measures on $\R^d$. To distinguish between the two, we will reserve bold font for the former. For $\bm{m} \in \P(\C^d)$, define for each $t \in [0,T]$ the marginal law $m_t = \bm{m} \circ [x \mapsto x_t]^{-1}$, and note that the map
\[
\P(\C^d) \ni \bm{m} \mapsto m:=(m_t)_{t \in [0,T]} \in \CP 
\]
is continuous. Given $\bm{m} \in \P(\C^d)$, we will refer to this $m=(m_t)_{t \in [0,T]}$ as the \emph{induced} or \emph{corresponding measure flow}. To keep track of notation, we stick to the following rules:
\begin{itemize}
\item We use the Latin $m$ for a deterministic measure and the Greek $\mu$ for a random measure.
\item We use boldface for a measure on path space, $\bm{m} \in \P(\C^d)$, to distinguish it from a measure flow, written as $m=(m_t)_{t \in [0,T]} \in \CP$.
\item Starting in Section \ref{se:mainproof-limitingdynamics}, we will encounter probability measures on the extended path space $\C^d \times \V$, with $\V$ defined in Section \ref{se:relaxedcontrols}. We denote such measures as $\bm{\overline{m}} \in \P(\C^d \times \V)$.
\end{itemize}
Define $\bm{\mu}^n$, a random element of $\P(\C^d)$, by
\[
\bm{\mu}^n = \frac{1}{n}\sum_{k=1}^n\delta_{X^{n,k}}.
\]
In light of the previous discussion, if $\bm{\mu}^n$ converges in law in $\P(\C^d)$ to some $\bm{\mu}$, then the marginal flow $\mu^n=(\mu^n_t)_{t \in [0,T]}$ converges in law in $\CP $ to the corresponding marginal flow $\mu=(\mu_t)_{t \in [0,T]}$.

Let $C^\infty_c(\R^d)$ denote the set of smooth functions of compact support. We define the infinitesimal generator of the controlled process as follows: For $\varphi \in C^\infty_c(\R^d)$, define
\begin{align}
L\varphi(t,x,m,a) := b(t,x,m,a) \cdot \nabla\varphi(x) + \frac12\Delta\varphi(x), 
\end{align}
for $(t,x,m,a) \in [0,T] \times \R^d \times \P(\R^d) \times A$.

\subsection{Tightness}

We first prove that $(\bm{\mu}^n)$ is tight.

\begin{lemma} \label{le:tightness-main}
The sequence $(\bm{\mu}^n)$ is a tight family of $\P(\C^d)$-valued random variables.
\end{lemma}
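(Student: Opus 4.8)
The goal is to show that the laws of $\bm{\mu}^n = \frac1n\sum_{k=1}^n \delta_{X^{n,k}}$ form a tight family in $\P(\P(\C^d))$. The standard criterion (see e.g. Sznitman's course) is that a sequence of random empirical measures on a Polish space is tight if and only if the sequence of mean measures (``intensity measures'') is tight. More precisely, I would invoke the fact that for $\P(\C^d)$-valued random variables $\bm{\mu}^n$, the family $(\L(\bm{\mu}^n))_n$ is tight in $\P(\P(\C^d))$ provided the mean measures $\bar{\bm{m}}^n := \E[\bm{\mu}^n] = \frac1n\sum_{k=1}^n \L(X^{n,k}) \in \P(\C^d)$ form a tight family in $\P(\C^d)$. (One direction of this is elementary via Markov's inequality applied to $\bm{\mu}^n(K^c)$; the other direction is not needed here.) So the problem reduces to showing that $(\bar{\bm{m}}^n)_n$, equivalently the collection of all laws $\{\L(X^{n,k}) : n \in \N,\ 1 \le k \le n\}$, is tight in $\P(\C^d)$.

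For the latter, I would use a uniform moment/modulus-of-continuity estimate coming directly from the SDE \eqref{def:stateprocesses-limitsection} and the boundedness of $b$ (Assumption (A.2)). Each $X^{n,k}$ satisfies
\[
X^{n,k}_t = X^{n,k}_0 + \int_0^t b(s,X^{n,k}_s,\mu^n_s,\alpha^{n,k}(s,\bm{X}^n))\,ds + W^k_t,
\]
with $X^{n,k}_0 \sim \lambda$ and $W^k$ a Brownian motion. Since $\|b\|_\infty =: C_b < \infty$, the drift term is Lipschitz in $t$ with constant $C_b$ uniformly in everything, so for $s \le t$,
\[
|X^{n,k}_t - X^{n,k}_s| \le C_b (t-s) + |W^k_t - W^k_s|.
\]
This gives a uniform bound of the form $\E[|X^{n,k}_t - X^{n,k}_s|^4] \le C(|t-s|^4 + |t-s|^2) \le C'|t-s|^2$ for $|t-s|\le T$, where $C'$ depends only on $C_b$ and $T$; combined with $\E[|X^{n,k}_0|] \le \int |x|\lambda(dx)$ being finite (or, if $\lambda$ has no moments, simply noting tightness of the single measure $\lambda$ separately), the Kolmogorov--Chentsov tightness criterion for $C([0,T];\R^d)$ applies uniformly in $(n,k)$. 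Hence $\{\L(X^{n,k})\}$ is tight in $\P(\C^d)$, so $(\bar{\bm{m}}^n)$ is tight, and therefore $(\bm{\mu}^n)$ is tight as claimed.

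\textbf{Main obstacle.} There is essentially no deep difficulty here — this is why the excerpt calls tightness ``straightforward.'' The only points requiring a little care are: (i) correctly citing/recalling the equivalence between tightness of random empirical measures and tightness of their mean measures (I would make sure to state the precise version used, since the converse direction is the nontrivial one and here only the ``mean measures tight $\Rightarrow$ empirical measures tight'' implication is needed, which follows from a one-line Markov inequality argument: if $K$ is compact with $\bar{\bm{m}}^n(K^c) < \delta\epsilon$ for all $n$, then $\PP(\bm{\mu}^n(K^c) > \epsilon) \le \bar{\bm{m}}^n(K^c)/\epsilon < \delta$, and one then upgrades this to genuine tightness in $\P(\P(\C^d))$ by a standard diagonal argument over a sequence $\epsilon \downarrow 0$); and (ii) handling the initial distribution $\lambda$ without moment assumptions, which is fine since $\{\lambda\}$ is trivially tight and the Brownian increment plus bounded-drift estimate controls the modulus of continuity regardless. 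The uniformity in both $n$ and $k$ of all estimates is automatic because $b$ is bounded by a single constant and the $X^{n,k}_0$ are identically distributed.
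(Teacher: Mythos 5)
Your proposal is correct. The first reduction is exactly the one the paper makes: both you and the paper invoke the fact (Sznitman's course, eq.\ (2.5)) that tightness of the mean measures $\bm{m}^n = \frac1n\sum_{k=1}^n \L(X^{n,k})$ in $\P(\C^d)$ yields tightness of $(\bm{\mu}^n)$ in $\P(\P(\C^d))$, and your Markov-inequality justification of the direction actually needed is fine. Where you diverge is in how you prove tightness of the particle laws $\{\L(X^{n,k})\}$ in $\C^d$: you estimate increments directly from the SDE, $|X^{n,k}_t - X^{n,k}_s| \le \|b\|_\infty|t-s| + |W^k_t - W^k_s|$, obtain a uniform fourth-moment bound $\E|X^{n,k}_t - X^{n,k}_s|^4 \le C'|t-s|^2$, and apply the Kolmogorov--Chentsov moment criterion together with tightness of the (fixed) initial law $\lambda$. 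The paper instead notes that $|L\varphi|\le C_\varphi$ for each $\varphi \in C^\infty_c(\R^d)$, so $(\varphi(X^{n,k}_t)+C_\varphi t)_t$ is a submartingale by It\^o's formula, and cites the Stroock--Varadhan submartingale tightness criterion (Theorem 1.4.6 there). Both routes use only boundedness of $b$ and are equally valid; yours is more elementary and self-contained (Gaussian increment moments plus a standard criterion for $C([0,T];\R^d)$), while the paper's is a one-line appeal to martingale-problem machinery already in its toolkit. Your handling of the initial condition without moment assumptions on $\lambda$ is the right observation, and the uniformity in $(n,k)$ is indeed automatic from the single bound $\|b\|_\infty$.
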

\begin{proof}
According to \cite[(2.5)]{sznitman1991topics}, it suffices to show that the sequence of mean measures $(\bm{m}^n) \subset \P(\C^d)$ is tight, where we define $\bm{m}^n$ for Borel sets $B \subset \C^d$ by
\[
\bm{m}^n(B) = \E[\bm{\mu}^n(B)] = \frac{1}{n}\sum_{k=1}^n\PP(X^{n,k} \in B).
\]
Letting $\|b\|_\infty$ denote the minimal uniform bound on $|b|$, note that $|L\varphi|$ is pointwise bounded by the constant
\[
C_\varphi := \|b\|_\infty\|\nabla\varphi\|_\infty + \frac12\|\Delta\varphi\|_\infty.
\]
By It\^o's formula, for every $\varphi \in C^\infty_c(\R^d)$ the process $(\varphi(X^k_t) + C_\varphi t)_{t \in [0,T]}$ is a submartingale. It follows from \cite[Theorem 1.4.6]{stroock-varadhan} that $\{X^{n,k} : n \in \N, \, k =1,\ldots,n\}$ is a tight family of $\C^d$-valued random variables. Hence, $(\bm{m}^n)$ is tight.
\end{proof}

\subsection{Relaxed controls} \label{se:relaxedcontrols}
Before we proceed to identify the dynamics of limit points of $(\mu^n)$, we must first discuss a convenient topological space in which to view the controls.
Let $\V$ denote the set of measures $q$ on $[0,T] \times A$ with first marginal equal to Lebesgue measure. Equip $\V$ with the topology of weak convergence, and note that $\V$ is a compact metric space because $A$ is.See \cite[Appendix A]{lacker2016general} for a summary of basic facts about this space and references.

Each $q \in \V$ may be identified with a measurable function $[0,T] \ni t \mapsto q_t \in \P(A)$, determined uniquely (up to a.e. equality) by $dtq_t(da) = q(dt,da)$.
Similarly, a measurable $\P(A)$-valued process $(\Lambda_t)_{t \in [0,T]}$ can be identified with the random element $\Lambda = dt\Lambda_t(da)$ of $\V$. 
It is known that one can construct a measurable version of the \emph{canonical process} on $\V$. More precisely, suppose $\FF^\V=(\F^\V_t)_{t \in [0,T]}$ denotes the natural filtration, where for each $t \in [0,T]$ we define $\F^\V_t$ as the $\sigma$-field generated by the functions $\V \ni q \mapsto q(B) \in \R$, for Borel sets $B \subset [0,t] \times A$. Then there exists (see \cite[Lemma 3.2]{lacker2015mean}) an $\FF^\V$-predictable process
\begin{align}
\widehat{q} : [0,T] \times \V \rightarrow \P(A), \quad \text{such that} \quad \widehat{q}(t,q)=q_t, \ a.e. \ t, \ \forall q \in \V. \label{def:predictablecanonical-V}
\end{align}
In particular, the filtration generated by the process $(\widehat{q}(t,\cdot))_{t \in [0,T]}$ is precisely $\FF^\V$.
With this in mind, we are free to identify $\P(A)$-valued processes and $\V$-valued random variables.

\subsection{Projection lemmas} \label{se:mainproof-projections}

As a preparation for the next step of identifying the dynamics of the limiting measure flows, we begin with two projection arguments that will be useful again in later sections. The first is straightforward but worth summarizing, while the second hides some delicate measurability questions which are largely outsourced to the appendix. 
In the following, it is convenient to use the usual duality notation for integration:
\[
\langle m,\varphi\rangle  = \int \varphi\,dm.
\]

\begin{lemma} \label{le:conditional-exp-derivatives}
Suppose $(Y_t)_{t \in [0,T]}$ is a continuous stochastic process taking values in a Polish space $E$ and defined on some probability space $(\Omega,\F,\PP)$. Suppose $h : E \rightarrow \R$ is continuous, and suppose it holds almost surely that
\[
h(Y_t) = h(Y_0) + \int_0^ta_sds, \ \ a.s., \ \text{ for a.e. } t \in [0,T],
\]
where $(a_t)_{t \in [0,T]}$ is some bounded measurable real-valued process. Suppose $\hat{a} : [0,T] \times \CE \rightarrow \R$ is a progressively measurable function satisfying
\[
\hat{a}(t,Y) = \E[a_t \, | \, \F^Y_t], \ \ a.s., \ \text{ for a.e. } t \in [0,T],
\]
where $\F^Y_t = \sigma(Y_s : s \le t)$.
Then 
\[
h(Y_t) = h(Y_0) + \int_0^t\hat{a}(s,Y)ds, \ \ \text{ for all } t \in [0,T], \ \ a.s.
\]
\end{lemma}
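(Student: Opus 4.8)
\textbf{Proof plan for Lemma \ref{le:conditional-exp-derivatives}.}

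The plan is to pass the integrated identity through a conditional expectation and then upgrade the resulting ``a.e.\ $t$, a.s.''\ statement to a ``for all $t$, a.s.''\ statement using path continuity. First I would fix $t \in [0,T]$ and condition the given identity on $\F^Y_t$: since $h(Y_t)$ and $h(Y_0)$ are $\F^Y_t$-measurable (the latter because $\F^Y_0 \subset \F^Y_t$), and since the process $(a_s)$ is bounded so Fubini applies, I get
\[
h(Y_t) = h(Y_0) + \E\left[\int_0^t a_s\,ds \, \Big| \, \F^Y_t\right] = h(Y_0) + \int_0^t \E[a_s \, | \, \F^Y_t]\,ds, \quad a.s.
\]
The next point is to replace $\E[a_s \, | \, \F^Y_t]$ by $\E[a_s \, | \, \F^Y_s] = \hat a(s,Y)$ for $s \le t$ inside the integral. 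This is a tower-property argument: for $s \le t$ and any bounded $\F^Y_s$-measurable $Z$, one has $\E[Z\,\E[a_s|\F^Y_t]] = \E[Z a_s] = \E[Z\,\E[a_s|\F^Y_s]]$, but to move this inside the time integral one argues at the level of the progressively measurable version, noting $(s,\omega)\mapsto \E[a_s|\F^Y_s]$ and $(s,\omega)\mapsto\E[a_s|\F^Y_t]$ (for fixed $t$) agree for a.e.\ $s\le t$, a.s., by a monotone-class/Fubini argument on $[0,t]\times\Omega$; alternatively just invoke that $\int_0^t\E[a_s|\F^Y_t]\,ds = \E[\int_0^t a_s\,ds | \F^Y_t]$ and that $\int_0^\cdot a_s\,ds$ is $\F^Y_\cdot$-adapted with continuous paths, so its optional projection onto $\FF^Y$ is itself, giving $\E[\int_0^t a_s\,ds|\F^Y_t] = \int_0^t \hat a(s,Y)\,ds$ directly. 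Either way, the conclusion is that for each fixed $t$,
\[
h(Y_t) = h(Y_0) + \int_0^t \hat a(s,Y)\,ds, \quad a.s.
\]

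The final step is the passage from ``for each fixed $t$'' to ``for all $t$ simultaneously, a.s.'' Take a countable dense set $D \subset [0,T]$ containing $T$; the identity above holds a.s.\ simultaneously for all $t \in D$ (countable intersection of full-measure events). Both sides are continuous in $t$: the left side because $Y$ has continuous paths and $h$ is continuous, the right side because $\hat a$ is bounded (it is a conditional expectation of the bounded process $a$) so $t \mapsto \int_0^t \hat a(s,Y)\,ds$ is (absolutely) continuous. Two continuous functions agreeing on a dense set agree everywhere, so the identity holds for all $t \in [0,T]$ on that same full-measure event.

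I expect the only genuinely delicate point to be the justification that $\int_0^t \E[a_s \,|\, \F^Y_t]\,ds = \int_0^t \hat a(s,Y)\,ds$ a.s.\ — i.e.\ that one may replace conditioning on $\F^Y_t$ by conditioning on $\F^Y_s$ inside the integral — which is really a statement about optional projections and requires a small Fubini/monotone-class argument to make rigorous; everything else (conditioning, boundedness, the density argument) is routine. This is precisely the kind of measurability issue the paper says is ``largely outsourced to the appendix,'' so I would state the optional-projection identity cleanly and defer its proof if a self-contained reference is not convenient.
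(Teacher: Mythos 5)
There is a genuine gap at the step you yourself flag as delicate, and the two justifications you sketch for it do not work. The tower-property computation $\E[Z\,\E[a_s|\F^Y_t]] = \E[Z a_s] = \E[Z\,\E[a_s|\F^Y_s]]$ for bounded $\F^Y_s$-measurable $Z$ only says that $\E[a_s|\F^Y_s]$ is the $\F^Y_s$-projection of $\E[a_s|\F^Y_t]$; no monotone-class or Fubini argument can upgrade this to the claim that the two agree for a.e.\ $s\le t$, and in general they do not (take $a_s\equiv\xi$ with $\xi$ bounded and $\F^Y_T$-measurable but not $\F^Y_s$-measurable). Likewise, the observation that the optional projection of $V_t:=\int_0^t a_s\,ds$ onto $\FF^Y$ is $V$ itself only restates that $V_t$ is $\F^Y_t$-measurable (your first conditioning step is in fact vacuous for the same reason); it does not ``directly'' give $V_t=\int_0^t\hat a(s,Y)\,ds$, because the optional projection of an integral is not the integral of the projected integrand in general --- the discrepancy $\E[V_t|\F^Y_t]-\int_0^t\E[a_s|\F^Y_s]\,ds$ is a martingale, not zero. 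The hypothesis that $\int_0^t a_s\,ds$ coincides with the adapted continuous process $h(Y_t)-h(Y_0)$ must enter precisely at this point, and in your write-up it never does: your argument, if valid, would prove the interchange for an arbitrary bounded process $a$, which is false.

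Two ways to close the gap. The paper's route is more elementary: after using continuity to get $h(Y_t)-h(Y_0)=\int_0^t a_s\,ds$ for all $t$ a.s., Lebesgue differentiation gives, a.s.\ for a.e.\ $t$, that $a_t=\lim_{\delta\downarrow 0}\delta^{-1}\bigl(h(Y_t)-h(Y_{t-\delta})\bigr)$, which exhibits $a_t$ as (a.s.\ equal to) an $\F^Y_t$-measurable variable; hence $\hat a(t,Y)=\E[a_t|\F^Y_t]=a_t$ a.s.\ for a.e.\ $t$, and integrating finishes the proof. Alternatively, your projection strategy can be salvaged by the martingale argument: the tower property shows $M_t:=h(Y_t)-h(Y_0)-\int_0^t\hat a(s,Y)\,ds$ is an $\FF^Y$-martingale (here you use that $h(Y_t)-h(Y_0)=\E[\int_0^t a_s\,ds\,|\,\F^Y_t]$ because the left side is $\F^Y_t$-measurable), and $M$ is continuous and of finite variation (both terms are Lipschitz with constant $\|a\|_\infty$), hence $M\equiv M_0=0$. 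Your final density-plus-continuity step for passing from fixed $t$ to all $t$ is fine.
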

\begin{proof}
By continuity, we have
\[
h(Y_t) = h(Y_0) + \int_0^ta_sds, \ \ a.s., \ \ \text{ for all } t \in [0,T], \ \ a.s.
\]
Hence, it holds a.s.\ that for almost every $t \in (0,T)$ we have
\begin{align*}
a_t &= \lim_{\delta \downarrow 0}\frac{1}{\delta}\left(h(Y_t)-h(Y_{t-\delta})\right).
\end{align*}
In particular, $a_t$ is $\F^Y_t$-measurable, and so $a_t = \E[a_t \, | \, \F^Y_t] = \hat{a}(t,Y)$, a.s., for each $t \in [0,T]$. Complete the proof by integrating this identity and using continuity of $h$ and $Y$ to interchange the order of quantifiers as needed.
%Define for each $t$
%\[
%Z_t := h(Y_0) + \int_0^t\hat{a}(s,Y)ds.
%\]
%Then, since $\E |a_s - \hat{a}(s,Y)| = 0$ for almost every $s \in (0,T)$, Fubini's theorem yields
%\begin{align*}
%\E\int_0^T|h(Y_t)-Z_t|dt &\le \E\int_0^T\int_0^t|a_s - \hat{a}(s,Y)|dsdt = 0.
%\end{align*}
%This shows that $h(Y_t)=Z_t$ for $dt \times d\PP$-almost every $(t,\omega)$. Complete the proof by noting that both $h(Y_t)$ and $Z_t$ are continuous in $t$.
\end{proof}

In the following lemma, we show that a solution of a certain kind of randomized Fokker-Planck equation can be realized as the conditional law of the state process under a semi-Markov control. Recall the notion of \emph{semi-Markov function} from Definition \ref{def:semiMarkovFunction}.

\begin{lemma} \label{le:projection-semiMarkov}
Suppose $\bm{\overline{\mu}}$ is a $\P(\C^d \times \V)$-valued random variable, and let $\mu=(\mu_t)_{t\in [0,T]}$ denote the corresponding measure flow.\footnote{That is, the $\CP$-valued random variable $\mu$ is defined by $\mu_t := \bm{\overline\mu} \circ [(x,q) \mapsto x_t]^{-1}$, for $t \in [0,T]$.} Suppose it holds with probability $1$ that for every $t \in [0,T]$ and $\varphi \in C_c^\infty(\R^d)$ we have
\begin{align}
\langle\mu_t,\varphi\rangle &= \langle\mu_0,\varphi\rangle + \int_{\C^d\times\V}\left[\int_0^t\int_AL\varphi(s,x_s,\mu_s,a)q_s(da)ds \right] \bm{\overline\mu}(dx,dq). \label{ass:le:projection-semiMarkov}
\end{align}
Then there exists a semi-Markov function $\Lambda^* : [0,T] \times \R^d \times \CP \rightarrow \P(A)$ such that the following hold:
\begin{enumerate}[(a)]
\item It holds with probability $1$ that for every $t \in [0,T]$ and $\varphi \in C_c^\infty(\R^d)$ we have
\begin{align*}
\langle \mu_t,\varphi\rangle = \langle \mu_0,\varphi\rangle + \int_0^t\left\langle \mu_s, \, \int_A L\varphi(s,\cdot,\mu_s,a) \Lambda^*(t,\cdot,\mu)(da)\right\rangle ds.
\end{align*}
\item For each bounded measurable function $\psi$ on $[0,T] \times \R^d \times \P(\R^d) \times A$, we have
\begin{align*}
\E&\left[\int_{\C^d \times \V}\int_0^T\int_A \psi(t,x_t,\mu_t,a)q_t(da)dt\,\bm{\overline\mu}(dx,dq)\right] \\
 &= \E\left[\int_0^T\int_{\R^d}\int_A \psi(t,x,\mu_t,a)\,\Lambda^*(t,x,\mu)(da)\,\mu_t(dx)\,dt\right].
\end{align*}
\item By enlarging the probability space, we may construct continuous $d$-dimensional processes $X$ and $W$ such that:
\begin{enumerate}[(i)]
\item $X_0$, $W$, and $\mu$ are independent.
\item $W$ is a Brownian motion with respect to the complete filtration $\FF =(\F_t)_{t \in [0,T]}$ generated by the process $(X_0,\mu_t,W_t)_{t \in [0,T]}$.
\item $X$ is a continuous process with $X_0 \sim \lambda$, adapted to the completion of $\FF$.
\item The state equation holds,
\begin{align*}
dX_t = \int_A b(t,X_t,\mu_t,a)\Lambda^*(t,X_t,\mu)(da)dt + dW_t.
\end{align*}
\item For each $t$, it holds a.s.\ that $\mu_t = \L(X_t \, | \, \F^{\mu}_t)$, where $\F^{\mu}_t = \sigma(\mu_s : s \le t)$.
\end{enumerate}
\end{enumerate}
\end{lemma}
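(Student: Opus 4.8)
\emph{Overview and setup.}
The plan is to obtain $\Lambda^*$ as a nonanticipative version of the conditional law of the relaxed control given the current state and the past of the mean field, and then to realize $\mu$ as a conditional law by adjoining an independent Brownian motion and solving an SDE with the now-known bounded measurable drift. Concretely, let $(\Omega_0,\F_0,\PP_0)$ carry $\bm{\overline\mu}$, and on $\Omega_0\times\C^d\times\V$ put $\Q(d\omega_0,dx,dq):=\PP_0(d\omega_0)\,\bm{\overline\mu}(\omega_0)(dx,dq)$, with canonical coordinates $(\xi,\rho)$; keep $\bm{\overline\mu}$ and its marginal flow $\mu=(\mu_t)_{t\in[0,T]}$ as $\F_0$-measurable objects, and use the predictable process $\widehat q$ of \eqref{def:predictablecanonical-V}, which agrees with $\rho_t$ for a.e.\ $t$. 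By construction $(\xi,\rho)\sim\bm{\overline\mu}$ conditionally on $\F_0$; hence $\L_\Q(\xi_s\,|\,\F_0)=\mu_s$, and since $\mu_s$ is $\F^\mu_s$-measurable, also $\L_\Q(\xi_s\,|\,\F^\mu_s)=\mu_s$. Recall as well that $\mu_0=\lambda$ a.s.

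\emph{Construction of $\Lambda^*$, and the main difficulty.}
I would take $\Lambda^*:[0,T]\times\R^d\times\CP\to\P(A)$ to be a semi-Markov function (Definition~\ref{def:semiMarkovFunction}) such that
$\Lambda^*(t,\xi_t,\mu)=\E_\Q[\,\widehat q(t,\rho)\mid\sigma(\xi_t)\vee\F^\mu_t\,]$
for a.e.\ $t$, $\Q$-a.s. The delicate point — the one I expect to be the main obstacle — is purely measure-theoretic: one must realize this entire $t$-indexed family of conditional expectations simultaneously as a single jointly measurable map that, moreover, depends on its $\CP$-argument only through the restriction to $[0,t]$. I would carry this out by a regular-conditional-distribution argument of the same type used for Lemma~\ref{le:conditional-marginal-meanmeasure}, conditioning $\widehat q(t,\rho)$ on the pair $(\xi_t,(\mu_{s\wedge t})_{s\le T})$ and choosing a version of the regular conditional distributions measurable in $t$, and I would relegate this bookkeeping to the appendix.

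\emph{Proof of (b), then (a).}
Part (b) is then almost immediate: take $\Q$-expectations, condition first on $\sigma(\xi_t)\vee\F^\mu_t$ and then on $\F^\mu_t$, using the defining property of $\Lambda^*$ together with $\L_\Q(\xi_t\,|\,\F^\mu_t)=\mu_t$. For (a), apply Fubini to the right side of \eqref{ass:le:projection-semiMarkov} to rewrite it as $\langle\mu_0,\varphi\rangle+\int_0^t\E_\Q[\Phi_s\mid\F_0]\,ds$, where $\Phi_s:=\langle\widehat q(s,\rho),\,L\varphi(s,\xi_s,\mu_s,\cdot)\rangle$ is bounded by $C_\varphi:=\|b\|_\infty\|\nabla\varphi\|_\infty+\tfrac12\|\Delta\varphi\|_\infty$. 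Then invoke Lemma~\ref{le:conditional-exp-derivatives} with $Y=\mu$ and $h(m)=\langle m,\varphi\rangle$ to replace $\E_\Q[\Phi_s\mid\F_0]$ inside the integral by $\E_\Q[\Phi_s\mid\F^\mu_s]$, valid for all $t$ at once. Finally, the tower property through $\sigma(\xi_s)\vee\F^\mu_s$, the defining property of $\Lambda^*$, and $\L_\Q(\xi_s\,|\,\F^\mu_s)=\mu_s$ give $\E_\Q[\Phi_s\mid\F^\mu_s]=\big\langle\mu_s,\ \int_A L\varphi(s,\cdot,\mu_s,a)\,\Lambda^*(s,\cdot,\mu)(da)\big\rangle$. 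This yields (a) for each $\varphi$ in a countable determining subset of $C_c^\infty(\R^d)$, simultaneously for all $t$, and then for all $\varphi\in C_c^\infty(\R^d)$ by uniform approximation and dominated convergence.

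\emph{Proof of (c).}
Enlarge the space by an independent factor carrying a Brownian motion $W$ and an initial state $X_0\sim\lambda$, with $(X_0,W)$ independent of $\mu$; then (i) and (ii) hold by construction. Put $b^{\Lambda^*}(t,x,m):=\int_A b(t,x,m_t,a)\,\Lambda^*(t,x,m)(da)$, a bounded measurable drift, and let $X$ be the unique strong solution of the state equation in (iv) with this drift; by Remark~\ref{re:strongsolutions} and Appendix~\ref{ap:SDErandomcoeff} it is adapted to the completed filtration generated by $(X_0,W_s,\mu_s)_{s\le t}$, giving (iii) and (iv). For (v), note that $X_t$ is a measurable function of $(X_0,(W_s)_{s\le t},(\mu_s)_{s\le t})$ while $(X_0,W)\perp\mu$, so $\L(X_t\,|\,\F^\mu_T)=\L(X_t\,|\,\F^\mu_t)$; and conditionally on $\{\mu=m\}$ the process $X$ is distributed as the unique strong solution of the SDE with coefficient frozen to $m$ (Lemma~\ref{le:ap:SDErandom-uniqueness}), whose time-$t$ marginal is the unique weakly continuous solution of the Fokker--Planck equation with drift $b^{\Lambda^*}(\cdot,\cdot,m)$ and initial law $\lambda$ (superposition principle together with well-posedness of the martingale problem for a bounded measurable drift and nondegenerate constant diffusion). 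Since by part (a) the flow $m=\mu$ itself a.s.\ solves that same Fokker--Planck equation from $\lambda$, uniqueness forces $\L(X_t\,|\,\F^\mu_t)=\mu_t$ a.s.\ for each $t$, which is exactly (v).
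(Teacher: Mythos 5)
Your proposal is correct and follows essentially the same route as the paper: your single conditioning of $\widehat q(t,\rho)$ on $\sigma(\xi_t)\vee\F^\mu_t$ under the sampled measure $\Q=\PP_0\otimes\bm{\overline\mu}$ is equivalent to the paper's two-stage construction (disintegration $\bm{\overline\mu}_{t,x}$ via Lemma \ref{le:measurableversion-marginalprojection}, then projection onto $\F^\mu_t$ via Corollary \ref{le:conditional-marginal-meanmeasure-2}), and your proofs of (a)--(b) use the same Fubini/Lemma \ref{le:conditional-exp-derivatives}/tower-property steps, with the measurable semi-Markov version of the conditional expectation deferred to appendix-type arguments exactly as the paper does. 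Your part (c) simply inlines the content of Corollary \ref{co:forwardeq-random} (strong solvability with independent random coefficient plus Fokker--Planck/superposition uniqueness), which is the paper's appendix argument verbatim in substance.
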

\begin{proof}
We first justify (c), assuming we have already found $\Lambda^*$ such that (a) and (b) hold. In fact, the claimed processes $X$ and $W$ come from the observation that property (a) is simply a randomized version of a Fokker-Planck equation. Corollary \ref{co:forwardeq-random} works out the details and shows that we can construct $X$ and $W$ satisfying properties (i-v).

To construct $\Lambda^*$ satisfying (a) and (b), we note first that \eqref{ass:le:projection-semiMarkov} rewrites as
\begin{align*}
\langle\mu_t,\varphi\rangle &= \langle\mu_0,\varphi\rangle + \int_{\C^d\times\V}\left[\int_0^t\int_AL\varphi(s,x_s,\mu_s,a)\widehat{q}(s,q)(da)ds \right] \bm{\overline\mu}(dx,dq),
\end{align*}
where $\widehat{q}$ is the ``nice version" of the process $[0,T] \times \V \ni (t,q) \mapsto q_t \in \P(A)$ described in \eqref{def:predictablecanonical-V}.

Suppose for concreteness that the random variable $\bm{\overline\mu}$ is defined on a probability space $(\Omega,\F,\PP)$, and we may assume without loss of generality that $\Omega$ is a Polish space and $\F$ its Borel $\sigma$-field. We now use Lemma \ref{le:measurableversion-marginalprojection} to construct a jointly measurable version of the regular condition law of $(x_t,q)$ given $x_t$ under the random probability measure $\bm{\overline\mu}(dx,dq)$; precisely, there exists a jointly measurable map $[0,T] \times \R^d \times \Omega \ni (t,x,\omega) \mapsto \bm{\overline\mu}_{t,x}(\omega) \in \P(\C^d \times \V)$ such that it holds a.s.\ that for every bounded measurable function $h : [0,T] \times \R^d \rightarrow \R$ and $F : [0,T] \times \C^d \times \V \rightarrow \R$ we have
\begin{align*}
\int_{\C^d \times \V}\int_0^T h(t,x_t) F(t,x,q) dt\bm{\overline\mu}(dx,dq) &= \int_{\C^d \times \V}\int_0^T h(t,x_t)\langle \bm{\overline\mu}_{t,x_t}, F(t,\cdot)\rangle dt\bm{\overline\mu}(dx,dq), \ \ a.s.
\end{align*}
Using Fubini's theorem and a change of variables, we may rewrite this as
\begin{align}
\int_{\C^d \times \V}\int_0^T h(t,x_t) F(t,x,q) dt\bm{\overline\mu}(dx,dq) &= \int_0^T \int_{\R^d} h(t,x)\langle \bm{\overline\mu}_{t,x}, F(t,\cdot)\rangle \mu_t(dx)dt, \ \ a.s. \label{pf:projection-semiMarkov1}
\end{align}
Applying this with $F(t,x,q) = \int_A L\varphi(t,x,\mu_t,a) \widehat{q}(t,q)(da)$, we may write \eqref{ass:le:projection-semiMarkov} as
\begin{align}
\langle\mu_t,\varphi\rangle = \langle\mu_0,\varphi\rangle + \int_0^t\int_{\R^d}\int_{\C^d\times\V} \int_AL\varphi(s,x,\mu_s,a)\,\widehat{q}(s,q)(da)\, \bm{\overline\mu}_{t,x}(d\tilde{x},dq)\,\mu_s(dx)ds, \ a.s. \label{pf:projection-semiMarkov2}
\end{align}

Next, recall that  $\FF^{\mu}=(\F^{\mu}_t)_{t \in [0,T]}$ is the filtration generated by the process $(\mu_t)_{t \in [0,T]}$. We may find (using Corollary \ref{le:conditional-marginal-meanmeasure-2}) a semi-Markov function $\Lambda^* : [0,T] \times \R^d \times \CP \rightarrow \P(A)$ 
such that, for every bounded measurable function $\psi : [0,T] \times \R^d \times \P(\R^d) \times A \rightarrow \R$ and every $(t,x) \in [0,T] \times \R^d$, we have
\begin{align}
\int_A \psi(t,x,\mu_t,\cdot)\,d\Lambda^*(t,x,\mu) &= \E\left[\left. \int_{\C^d\times\V}\int_A \psi(t,x,\mu_t,a)\widehat{q}(s,q)(da) \, \bm{\overline\mu}_{t,x}(d\tilde{x},dq) \right| \F^{\mu}_t\right]. \label{pf:projection-semiMarkov3}
\end{align}
Applying \eqref{pf:projection-semiMarkov3} with $\psi=L\varphi$, and using \eqref{pf:projection-semiMarkov2} and Lemma \ref{le:conditional-exp-derivatives}, we get
\begin{align*}
\langle\mu_t,\varphi\rangle = \langle\mu_0,\varphi\rangle + \int_0^t\int_{\R^d} \int_AL\varphi(s,x,\mu_s,a)\,\Lambda^*(t,x,\mu)
(da)\, \mu_s(dx)ds,
\end{align*}
for all $t \in [0,T]$, almost surely, for each $\varphi \in C_c^\infty(\R^d)$. 
This is exactly (a), once we interchange the order of the quantifiers ``almost surely" and ``for each $\varphi \in C_c^\infty(\R^d)$." This is easily justified by working with a countable dense family of such $\varphi$.

Finally, to prove (b), fix $\psi$, and simply use \eqref{pf:projection-semiMarkov1} and \eqref{pf:projection-semiMarkov3} along with Fubini's theorem:
\begin{align*}
\E&\left[\int_{\C^d \times \V}\int_0^T\int_A \psi(t,x_t,\mu_t,a)q_t(da)dt\,\bm{\overline\mu}(dx,dq)\right] \\
 &= \E\left[\int_0^T\int_{\R^d}\int_{\C^d\times\V}\int_A \psi(t,x,\mu_t,a)\,\widehat{q}(t,q)(da)\,\bm{\overline\mu}_{t,x}(d\tilde{x},dq)\,\mu_t(dx)\,dt\right] \\
 &= \E\left[\int_0^T\int_{\R^d}\int_A \psi(t,x,\mu_t,a)\,\Lambda^*(t,x,\mu)(da)\,\mu_t(dx)\,dt\right].
\end{align*}
\end{proof}

\subsection{Identification of limiting dynamics} \label{se:mainproof-limitingdynamics}
We next provide a first description of the dynamics of subsequential limit points of $\mu^{n_k}=(\mu^{n_k}_t)_{t \in [0,T]}$.

\begin{theorem} \label{th:limit-dynamics}
Suppose a subsequence $(\mu^{n_k}_t)_{t \in [0,T]}$ converges  in law in $\CP $ to $(\mu_t)_{t \in [0,T]}$. Then there exists a semi-Markov function $\Lambda^* : [0,T] \times \R^d \times \CP \rightarrow \P(A)$ such that, by extending the probability space if needed, we may construct continuous $d$-dimensional processes $X$ and $W$ such that:
\begin{enumerate}[(i)]
\item $X_0$, $W$, and $\mu$ are independent.
\item $W$ is a Brownian motion with respect to the complete filtration $\FF =(\F_t)_{t \in [0,T]}$ generated by the process $(X_0,\mu_t,W_t)_{t \in [0,T]}$.
\item $X$ is adapted with respect to the completion of $\FF$, with $X_0 \sim \lambda$.
\item The following SDE holds:
\[
dX_t = \int_A b(t,X_t,\mu_t,a)\Lambda^*(t,X_t,\mu)(da)dt + dW_t.
\]
\item For each $t$, it holds a.s.\ that $\mu_t = \L(X_t \, | \, \F^{\mu}_t)$, where $\F^{\mu}_t = \sigma(\mu_s : s \le t)$.
\end{enumerate}
Moreover,
\begin{align}
\lim_{k\rightarrow\infty}&\frac{1}{n_k}\sum_{i=1}^{n_k}J^{n_k}_i(\alpha^{n_k,1},\ldots,\alpha^{n_k,n_k}) \nonumber \\
	&= \E\left[ \int_0^T \int_Af(t,X_t,\mu_t,a)\Lambda^*(t,X_t,\mu)(da) dt  + g(X_T,\mu_T) \right]. \label{pf:limitvalue1}
\end{align}
\end{theorem}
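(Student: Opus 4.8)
The plan is to lift the $n$-player systems to the space of relaxed controls and then invoke the projection Lemma~\ref{le:projection-semiMarkov}. For each $n$ and $k$, let $q^{n,k}\in\V$ be the $\V$-valued random variable identified with the relaxed control $t\mapsto\delta_{\alpha^{n,k}(t,\bm{X}^n)}$, and form the extended empirical measure
\[
\bm{\overline\mu}^n := \frac1n\sum_{k=1}^n\delta_{(X^{n,k},\,q^{n,k})}\in\P(\C^d\times\V).
\]
Its $\C^d$-marginal is $\bm{\mu}^n$, tight by Lemma~\ref{le:tightness-main}, while its $\V$-marginal lives in the compact set $\P(\V)$; hence the mean measures of $\bm{\overline\mu}^n$ are tight in $\P(\C^d\times\V)$, and the criterion of \cite{sznitman1991topics} used in Lemma~\ref{le:tightness-main} shows $(\bm{\overline\mu}^n)$ is tight. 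Passing to a further subsequence of $(n_k)$, not relabelled, $\bm{\overline\mu}^{n_k}$ converges in law to some $\bm{\overline\mu}$, whose induced measure flow is (in law) the given $\mu$; by Skorokhod representation I may assume these convergences are almost sure on a common probability space, and that $\mu_0=\lambda$ a.s.

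The first real step is to derive the limiting Fokker--Planck equation. Fix $\varphi\in C^\infty_c(\R^d)$; It\^o's formula applied to $\varphi(X^{n,k}_t)$ and averaged over $k$ gives
\[
\langle\mu^n_t,\varphi\rangle-\langle\mu^n_0,\varphi\rangle-\int_{\C^d\times\V}\int_0^t\int_A L\varphi(s,x_s,\mu^n_s,a)\,q_s(da)\,ds\,\bm{\overline\mu}^n(dx,dq)=\frac1n\sum_{k=1}^n\int_0^t\nabla\varphi(X^{n,k}_s)\cdot dW^k_s.
\]
Since the $W^k$ are independent, the right-hand side is a martingale with quadratic variation at most $T\|\nabla\varphi\|_\infty^2/n$, hence tends to $0$ in $L^2$. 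On the left, $\mu^n_0\to\lambda$ a.s., and the map $\bm{\overline m}\mapsto\int_{\C^d\times\V}\int_0^t\int_A L\varphi(s,x_s,m_s,a)q_s(da)\,ds\,\bm{\overline m}(dx,dq)$ is bounded and continuous on $\P(\C^d\times\V)$ (routine, given the boundedness and joint continuity of $b$ from Assumption~\ref{assumption:A} and the compact support of $\varphi$). Passing to the limit along $n_k$ yields, for each fixed $t$ and $\varphi$, almost surely
\[
\langle\mu_t,\varphi\rangle=\langle\mu_0,\varphi\rangle+\int_{\C^d\times\V}\int_0^t\int_A L\varphi(s,x_s,\mu_s,a)\,q_s(da)\,ds\,\bm{\overline\mu}(dx,dq).
\]
Both sides being continuous in $t$, running over a countable family of test functions dense (in $C^2$-norm, with supports in fixed balls) in $C^\infty_c(\R^d)$ upgrades this to hold for all $t\in[0,T]$ and all $\varphi$ simultaneously, almost surely. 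This is precisely hypothesis~\eqref{ass:le:projection-semiMarkov}, so Lemma~\ref{le:projection-semiMarkov} furnishes a semi-Markov $\Lambda^*$ and, after enlarging the space, processes $X$ and $W$ for which properties (i)--(v) hold.

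It remains to prove~\eqref{pf:limitvalue1}. Writing the averaged payoff as an integral against $\bm{\overline\mu}^{n_k}$,
\[
\frac1{n_k}\sum_{i=1}^{n_k}J^{n_k}_i(\bm{\alpha}^{n_k})=\E\left[\int_{\C^d\times\V}\left(\int_0^T\int_A f(t,x_t,\mu^{n_k}_t,a)q_t(da)\,dt+g(x_T,\mu^{n_k}_T)\right)\bm{\overline\mu}^{n_k}(dx,dq)\right],
\]
and the integrand is a bounded continuous functional of $\bm{\overline m}$ (now using that $f$ and $g$ are bounded and jointly continuous), so the weak convergence $\bm{\overline\mu}^{n_k}\to\bm{\overline\mu}$ lets me pass to the limit. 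For the running part I apply Lemma~\ref{le:projection-semiMarkov}(b) with $\psi=f$, rewriting the limit as $\E[\int_0^T\int_{\R^d}\int_A f(t,x,\mu_t,a)\Lambda^*(t,x,\mu)(da)\,\mu_t(dx)\,dt]$; since $\mu_t$ and $\Lambda^*(t,\cdot,\mu)$ are $\F^\mu_t$-measurable and $\mu_t=\L(X_t\mid\F^\mu_t)$ by (v), the inner $x$-integral equals $\E[\int_A f(t,X_t,\mu_t,a)\Lambda^*(t,X_t,\mu)(da)\mid\F^\mu_t]$, and Fubini produces the running term of the right side of~\eqref{pf:limitvalue1}. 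For the terminal part, by the definition of $\mu_T$ as the $x_T$-marginal of $\bm{\overline\mu}$ and again $\mu_T=\L(X_T\mid\F^\mu_T)$ we get $\E[\int_{\C^d\times\V}g(x_T,\mu_T)\,\bm{\overline\mu}(dx,dq)]=\E[\int_{\R^d}g(x,\mu_T)\,\mu_T(dx)]=\E[g(X_T,\mu_T)]$, and adding the two terms gives~\eqref{pf:limitvalue1}.

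The genuinely delicate work — the measurable disintegration of $\bm{\overline\mu}$ and the construction of the nonanticipative $\Lambda^*$ with the stated compatibility and value-preserving properties — is entirely outsourced to Lemma~\ref{le:projection-semiMarkov} and its supporting appendix lemmas, so I expect the main obstacle at the level of the present argument to be merely the verification that the functionals used for the weak-limit passages are bounded and continuous (which follows from Assumption~\ref{assumption:A}) together with the conditional-law bookkeeping in the last paragraph.
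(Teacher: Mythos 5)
Your proposal is correct and follows essentially the same route as the paper: the same extended empirical measure on $\P(\C^d\times\V)$, the same It\^o/martingale argument with $O(1/n)$ quadratic variation to verify hypothesis \eqref{ass:le:projection-semiMarkov}, the same appeal to Lemma \ref{le:projection-semiMarkov} for $\Lambda^*$, $X$, $W$, and the same use of part (b) of that lemma plus the consistency condition to identify the limiting value in \eqref{pf:limitvalue1}.
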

\begin{proof} 
Let us view each control as a random element of $\V$, by defining
\[
\Lambda^{n,i}(dt,da) = dt\delta_{\alpha^{n,i}(t,\bm{X}^n)}(da).
\]
and define the extended empirical measure $\bm{\overline\mu}^n$, a $\P(\C^d \times \V)$-valued random variable, by
\[
\bm{\overline\mu}^n = \frac{1}{n}\sum_{k=1}^n\delta_{(X^{n,k},\Lambda^{n,k})}.
\]
Because the $\C^d$-marginal $\bm{\mu}^n$ is tight by Lemma \ref{le:tightness-main} and $\V$ is compact, the sequence of random measures $\bm{\overline\mu}^n$ is tight. We may then pass to a further subsequence and assume that $\bm{\overline\mu}^n$ converges in law to some random element $\bm{\overline\mu}$ of $\P(\C^d\times\V)$ whose $\C^d$-marginal is $\mu$.

{\ } \\
\noindent\textbf{Step 1:}
We first show that $\bm{\overline\mu}$ must satisfy the hypothesis \eqref{ass:le:projection-semiMarkov} of Lemma \ref{le:projection-semiMarkov}. Recall that $\bm{X}^n=(X^{n,1},\ldots,X^{n,n})$ is the vector of state processes; see \eqref{def:stateprocesses-limitsection}.
Begin by applying It\^o's formula to $\varphi(X^{n,k}_t)$ and averaging over $k=1,\ldots,n$ to get
\begin{align*}
d\langle \mu^n_t,\varphi\rangle &= \frac{1}{n}\sum_{k=1}^n L\varphi(t,X^{n,k}_t,\mu^n_t,\alpha^{n,k}(t,\bm{X}^n))dt + dM^{n,\varphi}_t \\
	&= \frac{1}{n}\sum_{k=1}^n \int_A L\varphi(t,X^{n,k}_t,\mu^n_t,a)\Lambda^{n,k}_t(da)dt + dM^{n,\varphi}_t,
\end{align*}
where we define the martingale
\[
M^{n,\varphi}_t = \frac{1}{n}\sum_{k=1}^n\nabla\varphi(X^{n,k}_s)\cdot dW^k_s.
\]
Notice that the quadratic variation of this martingale is
\begin{align}
[M^{n,\varphi}]_t = \frac{1}{n^2}\sum_{k=1}^n\int_0^t|\nabla\varphi(X^{n,k}_s)|^2ds \le \frac{\|\nabla\varphi\|_\infty^2}{n}. \label{pf:QVbound1}
\end{align}

For a measure $\bm{\overline{m}} \in \P(\C^d \times \V)$, let $m=(m_t)_{t \in [0,T]} \in \CP$ denote the associated measure flow, and define $F_t : \P(\C^d \times\V) \rightarrow \R$ by
\begin{align*}
F^\varphi_t(\bm{\overline{m}}) = \int_{\C^d\times\V}\left[\varphi(x_t)-\varphi(x_0) - \int_0^t\int_AL\varphi(s,x_s,m_s,a)q_s(da)ds \right] \bm{\overline{m}}(dx,dq).
\end{align*}
We may then write
\[
M^{n,\varphi}_t = F^\varphi_t(\bm{\overline\mu}^n),
\]
It can be shown that $F^\varphi_t$ is a bounded continuous function (see \cite[Appendix A]{lacker2015mean} for details). Because $\bm{\overline\mu}^n$ converges in law to $\bm{\overline\mu}$, we conclude from the continuous mapping theorem that $F^\varphi_t(\bm{\overline\mu^n})$ converges in law to $F^\varphi_t(\bm{\overline\mu})$  (with convergence understood in both cases to be along the same subsequence as before). But \eqref{pf:QVbound1} implies that $F^\varphi_t(\bm{\overline\mu}^n) = M^{n,\varphi}_t$ converges in probability to zero. Hence,
\begin{align*}
F^\varphi_t(\bm{\overline\mu}) = 0, \ \text{almost surely, for each } t \in [0,T], \ \varphi \in C^\infty_c(\R^d).
\end{align*}
For each $\bm{\overline{m}} \in \P(\C^d\times\V)$, it is clear that $\lim_n F^{\varphi_n}_{t_n}(\bm{\overline{m}}) = F^{\varphi}_{t}(\bm{\overline{m}})$ whenever $t_n \rightarrow t$ and $(\varphi_n,\nabla\varphi_n,\Delta\varphi_n) \rightarrow (\varphi,\nabla\varphi,\Delta\varphi)$ uniformly. Hence, working with a countable dense family, we may interchange the order of quantifiers and conclude that
\begin{align*}
F^\varphi_t(\bm{\overline{\mu}}) = 0, \text{ for each } t \in [0,T], \ \varphi \in C^\infty_c(\R^d), \ \text{almost surely}.
\end{align*}
This shows that $\bm{\overline\mu}$ satisfies \eqref{ass:le:projection-semiMarkov}.

{\ } \\
\noindent\textbf{Step 2.} We now construct the processes $X$ and $W$. Thanks to Step 1, we may apply Lemma \ref{le:projection-semiMarkov} to find a semi-Markov function $\Lambda^* : [0,T] \times \R^d \times \CP \rightarrow A$ such that (a), (b), and (c) of Lemma \ref{le:projection-semiMarkov} hold.

{\ } \\
\noindent\textbf{Step 3.} 
To complete the proof, we address the final claim about convergence of value. Notice that
\begin{align*}
J_n &:= \frac{1}{n}\sum_{i=1}^nJ^n_i(\alpha^{n,1},\ldots,\alpha^{n,n}) \\
	&= \frac{1}{n}\sum_{i=1}^n\E\left[\int_0^T f(t,X^{n,i}_t,\mu^n_t,\alpha^{n,i}(t,\bm{X}^n))dt + g(X^i_T,\mu^n_T)\right] \\
	&= \E\left[\int_{\C^d \times \V}\left(\int_0^T\int_A f(t,x_t,\mu^n_t,a)q_t(da)dt + g(x_T,\mu^n_T)\right)\bm{\overline\mu}^n(dx,dq)\right].
\end{align*}
Recall that any subsequence contains a further subsequence along which $\bm{\overline\mu}^n$ converges to some $\bm{\overline\mu}$. Along such a subsequence, by boundedness and continuity of $f$ and $g$, we find that $J_n$ converges to
\begin{align*}
\E\left[\int_{\C^d \times \V}\left(\int_0^T\int_A f(t,x_t,\mu_t,a)q_t(da)dt + g(x_T,\mu_T)\right)\bm{\overline\mu}(dx,dq)\right].
\end{align*}
We claim that this is equal to the right-hand side of \eqref{pf:limitvalue1}.
Recalling that $(\mu_t)_{t \in [0,T]}$ is the marginal flow associated with $\bm{\overline\mu}$ and also that $\mu_t = \L(X_t \, | \, \F^\mu_t)$ for each $t$, we may write the second term as
\begin{align*}
\E\left[\int_{\C^d \times \V}g(x_T,\mu_T)\bm{\overline\mu}(dx,dq)\right] &= \E\left[\int_{\R^d}g(x,\mu_T)\,\mu_T(dx)\right] = \E[g(X_T,\mu_T)].
\end{align*}
To handle the first term, we use part (b) of Lemma \ref{le:projection-semiMarkov} along with Fubini's theorem and the identity $\mu_t = \L(X_t \, | \, \F^\mu_t)$ to write
\begin{align*}
\E&\left[\int_{\C^d \times \V}\int_0^T\int_A f(t,x_t,\mu_t,a)q_t(da)dt\,\bm{\overline\mu}(dx,dq)\right] \\
	&= \E\left[\int_0^T\int_{\R^d}\int_A f(t,x,\mu_t,a)\,\Lambda^*(t,x,\mu)(da)\,\mu_t(dx)\,dt\right] \\
	&= \E\left[\int_0^T\int_{\R^d}\int_A f(t,X_t,\mu_t,a)\,\Lambda^*(t,X_t,\mu)(da)\,dt\right].
\end{align*}
This completes the proof.
\end{proof}

\subsection{Optimality} \label{se:1playerdeviations}

The analysis carried out so far will allow us to check all of the properties of Definition \ref{def:MFEsemimarkov} at the limit except for the optimality condition (5), and this section will complete this last task.
Using Theorem \ref{th:limit-dynamics}, we work with a fixed weak limit $\mu=(\mu_t)_{t \in [0,T]}$, and we abuse notation by relabeling the subsequence with the same notation, so that $\mu^n=(\mu^n_t)_{t \in [0,T]} \rightarrow \mu$ weakly in $\CP$. It is crucial to keep in mind that \textbf{for the rest of this section we are working with this particlar limit point and this particular convergent subsequence}.

By Theorem \ref{th:limit-dynamics}, we may assume that $\mu$ is defined on a complete filtered probability space $(\Omega,\F,\FF,\PP)$, which supports $d$-dimensional processes $X$ and $W$ which satisfy properties (i-v) of Theorem \ref{th:limit-dynamics} and equation \eqref{pf:limitvalue1}, for some semi-Markov function $\Lambda^* : [0,T] \times \R^d \times \CP \rightarrow \P(A)$. 
Throughout this section, the notation $(\Omega,\F,\FF,\PP)$ of this paragraph will stand.

Relative to this fixed random measure flow $\mu$, we define on $(\Omega,\F,\FF,\PP)$ the family of all possible alternative strategy choices. Let us write $\RC_{\mathrm{semi}}$ for the set of semi-Markov functions from $[0,T] \times \R^d \times \CP$ to $\P(A)$. For any $\Lambda \in \RC_{\mathrm{semi}}$, let $X[\Lambda]=(X_t[\Lambda])_{t \in [0,T]}$ denote the unique strong solution (see Lemmas \ref{le:ap:SDErandom-uniqueness} and \ref{le:ap:SDErandom-existence}) of the SDE\footnote{It is not important here that we are working with strong solutions, but it is notationally convenient to construct everything on the same probability space $(\Omega,\F,\FF,\PP)$.}
\begin{align}
dX_t[\Lambda] = \int_Ab(t,X_t[\Lambda],\mu_t,a)\Lambda(t,X_t[\Lambda],\mu)(da)dt + dW_t, \quad X_0[\Lambda] = X_0. \label{pf:optimality-SDE}
\end{align}
In this notation, note that $X[\Lambda^*]=X$.
Define
\begin{align*}
J(\Lambda) &:= \E\left[\int_0^T\int_Af(t,X_t[\Lambda],\mu_t,a)\Lambda(t,X_t[\Lambda],\mu)(da)dt + g(X_T[\Lambda],\mu_T)\right].
\end{align*}

The proof of Theorem \ref{th:mainlimit-relaxed} will be complete if we can show that
\begin{align}
\sup_{\Lambda \in \RC_{\mathrm{semi}}}J(\Lambda) = J(\Lambda^*). \label{pf:optimality-goal1}
\end{align}
We accomplish this in two steps. The first and more straightforward step is to reduce the supremum to a nicer subset of $\RC_{\mathrm{semi}}$. Precisely, we will show
\begin{align}
\sup_{\beta \in \A^c_{\mathrm{semi}}}J(\beta) = \sup_{\Lambda \in \RC_{\mathrm{semi}}}J(\Lambda), \label{pf:optimality-goal1-2}
\end{align}
where we define $\A^c_{\mathrm{semi}}$ to be the set of \emph{continuous} semi-Markov functions $\beta : [0,T] \times \R^d \times \CP \rightarrow A$, which we view as a subset of $\RC_{\mathrm{semi}}$ by means of the usual embedding $A \ni a \mapsto \delta_a \in \P(A)$.  Indeed, \eqref{pf:optimality-goal1-2} follows from:

\begin{lemma} \label{le:approx-optimizer-nice}
For any $\Lambda \in \RC_{\mathrm{semi}}$, there exists a sequence $\beta^n \in \A^c_{\mathrm{semi}}$ such that $(\mu,X[\Lambda^n])$ converges in law in to $(\mu,X[\Lambda])$ and $J(\beta^n) \rightarrow J(\Lambda)$.
\end{lemma}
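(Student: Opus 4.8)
The plan is a two-stage approximation: first replace the measure-valued control $\Lambda$ by a nearby \emph{continuous} relaxed semi-Markov control, and then convert the latter into a continuous $A$-valued (strict) semi-Markov control by a ``smooth chattering'' construction, checking at each stage that the state process (jointly with $\mu$) and the value converge. The convergence of the state processes will be read off from the weak-convergence/averaging scheme already used in Section~\ref{se:mainproof-limitingdynamics}, together with the pathwise uniqueness of \eqref{pf:optimality-SDE} from Lemma~\ref{le:ap:SDErandom-uniqueness}.

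\emph{Reduction to continuous relaxed controls.} Since $A$ is compact and $b,f,g$ are bounded and continuous, mollifying $\Lambda$ in the $(t,x)$ variables produces controls that are continuous in $(t,x)$ — note that a convolution of probability measures on $A$ is again a probability measure, so no convexity is needed here. To mollify in the path argument $m\in\CP$ without destroying the semi-Markov property, I would first replace $m$ by a nonanticipative finite-dimensional surrogate: for instance the piecewise-linear interpolation of $(\langle m_{t_j},\phi_\ell\rangle)_{j,\ell}$ through a dyadic time grid truncated at the current time $t$, for a fixed countable family $\{\phi_\ell\}$ separating the points of $\P(\R^d)$. Precomposing $\Lambda$ with this surrogate keeps it semi-Markov and makes it factor through a finite-dimensional space on each dyadic slab, where one can mollify. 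The resulting drift perturbation tends to $0$ in $L^1_{\mathrm{loc}}(dt\,dx)$, hence along the state process (whose time-$t$ law is absolutely continuous with respect to that of $X_0+W_t$), so by the standard Girsanov stability of \eqref{pf:optimality-SDE} one gets $(\mu,X[\Lambda^k])\to(\mu,X[\Lambda])$ in law, and by boundedness and continuity of $f,g$ also $J(\Lambda^k)\to J(\Lambda)$. It therefore suffices to treat a $\Lambda$ that is jointly continuous and semi-Markov.

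\emph{Smooth chattering.} Given such a $\Lambda$, I would build, for each $n$, a jointly continuous map $\Psi^n:[0,T]\times\P(A)\to A$ with the property that $\sup_{p\in\P(A)}d_{\V}\big(dt\,\delta_{\Psi^n(t,p)}(da),\,dt\,p(da)\big)\to0$ as $n\to\infty$ for a fixed metric $d_\V$ metrizing $\V$: on a partition of $[0,T]$ into fine cells one spends time near the points of a fine finite net of $A$ in the proportions assigned by $p$, interpolating along segments of $A$ (using convexity of $A$) over transition intervals of total length $o(1)$; compactness of $A$ and of $\P(A)$ allows the net and proportions to be chosen continuously in $p$ and yields the uniformity. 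Setting $\beta^n(t,x,m):=\Psi^n\big(t,\Lambda(t,x,m)\big)$ gives $\beta^n\in\A^c_{\mathrm{semi}}$.

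\emph{Passage to the limit and the main obstacle.} Writing $Q^n:=dt\,\delta_{\beta^n(t,X_t[\beta^n],\mu)}(da)\in\V$, the triples $(\mu,X[\beta^n],Q^n)$ are tight (boundedness of $b$ as in Lemma~\ref{le:tightness-main}, compactness of $\V$); every subsequential weak limit $(\mu,Y,Q)$ satisfies $dY_t=\int_Ab(t,Y_t,\mu_t,a)Q_t(da)dt+dW_t$ with $Y_0=X_0$ and $\mu_t=\L(Y_t\,|\,\F^\mu_t)$, exactly as in the proof of Theorem~\ref{th:limit-dynamics}. The decisive point is that the uniformity built into the gadget decouples $Q^n$ from the unknown path $X[\beta^n]$: one has $d_\V\big(Q^n,\,dt\,\Lambda(t,X_t[\beta^n],\mu)(da)\big)\to0$ for every realization, and then continuity of $\Lambda$ together with $(X[\beta^n],\mu)\to(Y,\mu)$ forces $Q=dt\,\Lambda(t,Y_t,\mu)(da)$. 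Hence $(\mu,Y)$ solves \eqref{pf:optimality-SDE} for the control $\Lambda$ with the given $W$ and initial datum, so $Y=X[\Lambda]$ by Lemma~\ref{le:ap:SDErandom-uniqueness}; the limit being unique, the whole sequence $(\mu,X[\beta^n],Q^n)$ converges, and feeding the bounded continuous functional $(m,y,q)\mapsto\int_0^T\!\int_A f(t,y_t,m_t,a)q_t(da)dt+g(y_T,m_T)$ through the limit gives $J(\beta^n)\to J(\Lambda)$; combining with the first stage by a diagonal argument proves the lemma in general. I expect the construction and the continuity/uniformity verification of the chattering gadget $\Psi^n$ to be the main obstacle, especially since (A.1) allows $A$ to be an infinite-dimensional compact convex set, so the finite-dimensionality one exploits must come entirely from compactness; the nonanticipative finite-dimensional reduction of the $\CP$-argument in the first stage is a secondary but fiddly technical point.
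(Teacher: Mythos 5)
Your two-stage architecture (continuous relaxed approximation, then conversion to continuous strict controls, with limits identified via tightness, the martingale problem and pathwise uniqueness from Lemma \ref{le:ap:SDErandom-uniqueness}) is reasonable, but Stage 1 as written has a genuine gap. You approximate the merely measurable $\Lambda$ by \emph{precomposing} it with a nonanticipative finite-dimensional surrogate of the path $m$ and then mollifying, and you claim the resulting drift perturbation tends to $0$ in $L^1_{\mathrm{loc}}$. Precomposition of a measurable function with an approximation of the identity does not converge in general: on $[0,1]$ take $f=1_S$ with $S$ the irrationals and $\pi_k$ the dyadic rounding map; $\pi_k\to\mathrm{id}$ uniformly, yet $f\circ\pi_k\equiv 0\neq f$ a.e. Since $\Lambda$ has no continuity in $m$ (nor in $(t,x)$), the implicit step ``$\Lambda(t,x,\pi_k(m))\to\Lambda(t,x,m)$'', on which your $L^1_{\mathrm{loc}}$ claim rests, can simply fail. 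The repair is to project rather than precompose: either take $\P(A)$-valued conditional expectations (mean measures) of $\Lambda(t,x,\cdot)$ with respect to the finite $\sigma$-fields generated by your surrogates and use martingale convergence, or do what the paper does — regard semi-Markov maps as measurable functions on the Polish probability space obtained by pushing a reference measure (built from $\L(\mu)$, Lebesgue in $t$, a Gaussian weight in $x$) through the stopped-path map $(t,x,m)\mapsto(t,x,m_{\cdot\wedge t})$, and invoke the fact that measurable maps into a compact convex set are a.e.\ limits of continuous ones; factoring through the stopped-path map preserves semi-Markovianity automatically. Relatedly, concluding $J(\Lambda^k)\to J(\Lambda)$ ``by boundedness and continuity of $f,g$'' is too quick: the reward functional contains the discontinuous control itself, so convergence in law of $(\mu,X[\Lambda^k])$ does not suffice; one needs convergence against bounded \emph{measurable} test functions (which the Girsanov-density argument of Lemma \ref{le:ap:approximation} does provide) combined with a generalized dominated convergence theorem, exactly as in the paper's Step 1.

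For comparison, the paper runs the two approximations in the opposite order: chattering first, applied pointwise in $(x,m)$ with no continuity requirement, and the a.e.-continuous approximation afterwards, so that only the standard chattering lemma plus the appendix stability lemma are needed. Your Stage 2 instead requires a jointly continuous, uniformly accurate chattering map $\Psi^n:[0,T]\times\P(A)\to A$; this is plausible and your limit identification is sound, but it is a nontrivial construction that must be carried out in detail, and the pathwise claim $d_{\V}\bigl(Q^n,\,dt\,\Lambda(t,X_t[\beta^n],\mu)(da)\bigr)\to 0$ needs uniformity over the equicontinuous family of $\P(A)$-valued paths $t\mapsto\Lambda(t,X_t[\beta^n](\omega),\mu(\omega))$ (the realization changes with $n$), not merely accuracy for each fixed measure $p$.
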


Lastly, for each ``nice" alternative control $\beta \in \A^c_{\mathrm{semi}}$, we show that $J(\beta)$ is the limit of the average value of some sequence of admissible $n$-player controls, which is accomplished using the following crucial proposition:

\begin{proposition} \label{pr:1playerdeviation}
Let $\beta \in \A^c_{\mathrm{semi}}$.
For each $n$ and each $k=1,\ldots,n$, define $\beta^{n,k} \in \A_n$ by
\begin{align}
\beta^{n,k}(t,\bm{x}) = \beta\Big(t,x^k_t,\frac{1}{n}\sum_{j=1}^n\delta_{x^j}\Big), \quad \text{for} \ \ \ t \in [0,T], \ \bm{x}=(x^1,\ldots,x^n) \in (\C^d)^n. \label{def:pr:1playerdev-specialformbeta}
\end{align}
Then (taking limits along the same subsequence described above)
\begin{align}
\lim_n \frac{1}{n}\sum_{k=1}^n J^{n}_k(\alpha^{n,1},\ldots,\alpha^{n,k-1},\beta^{n,k},\alpha^{n,k+1},\ldots,\alpha^{n,n}) = J(\beta). \label{goal:1playerdeviation}
\end{align}
\end{proposition}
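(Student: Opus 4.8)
\emph{Plan.} Set $\psi(x,m) := \int_0^T f(t,x_t,m_t,\beta(t,x_t,m))\,dt + g(x_T,m_T)$; this is a bounded and continuous functional on $\C^d \times \CP$, boundedness coming from Assumption \ref{assumption:A} and continuity from the joint continuity of $\beta \in \A^c_{\mathrm{semi}}$ together with dominated convergence. For each $n$ and $k \in \{1,\dots,n\}$ let $\bm{Y}^k$ and $\mu^{n,k}$ be the deviated state process and its empirical flow, exactly as in the proof sketch of Section \ref{se:proofsketch-optimality}. Since $\beta^{n,k}(t,\bm{Y}^k) = \beta(t,Y^{k,k}_t,\mu^{n,k})$ by \eqref{def:pr:1playerdev-specialformbeta}, the left side of \eqref{goal:1playerdeviation} equals $\int \psi\,d\bm{\Xi}^n$, where $\bm{\Xi}^n := \frac1n\sum_{k=1}^n \L(Y^{k,k},\mu^{n,k}) \in \P(\C^d\times\CP)$, while $J(\beta) = \int\psi\,d\L(X[\beta],\mu)$ with $X[\beta]$ the solution of \eqref{pf:optimality-SDE}. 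Hence it suffices to prove $\bm{\Xi}^n \to \L(X[\beta],\mu)$ weakly; since the candidate limit will be the same for every subsequence, it is enough to identify all weak limit points. Tightness of $(\bm{\Xi}^n)$ follows from tightness of its $\C^d$- and $\CP$-marginals, which holds, just as in Lemma \ref{le:tightness-main} via \cite[(2.5)]{sznitman1991topics}, from the uniform submartingale bounds available because all drifts are bounded by $\|b\|_\infty$. Fix a subsequence along which $\bm{\Xi}^n \to \bm{\Xi}$ and realize $\bm{\Xi}$ as the law of a pair $(\xi,\nu) \in \C^d\times\CP$.

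\emph{Identifying the common noise and the initial law.} Bring back the exponential martingales $\zeta^{n,k}$ of Section \ref{se:proofsketch-optimality}, so that $\L(Y^{k,k},\mu^{n,k}) = \QQ^{n,k}\circ(X^{n,k},\mu^n)^{-1}$, i.e.\ $\E[h(Y^{k,k},\mu^{n,k})] = \E[\zeta^{n,k}_T\,h(X^{n,k},\mu^n)]$ for bounded measurable $h$. Since the $W^k$ are independent and $b$ is bounded, $M^n_t := \frac1n\sum_k \zeta^{n,k}_t$ is a true martingale with $M^n_0 = 1$ and $\E[[M^n]_T] = O(1/n)$ (the cross-brackets vanish and $\sup_s \E[(\zeta^{n,k}_s)^2] \le e^{4\|b\|_\infty^2 T}$), so $M^n_T \to 1$ in $L^2$; testing $h(x,m) = \Phi(m)$ for bounded continuous $\Phi$ gives $\int \Phi(m)\,\bm{\Xi}^n(dx,dm) = \E[M^n_T\Phi(\mu^n)] \to \E[\Phi(\mu)]$, so the $\CP$-marginal of $\bm{\Xi}$ is $\L(\mu)$. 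Likewise $N^n_t := \frac1n\sum_k \zeta^{n,k}_t\, h_1(X^{n,k}_0)$ is a martingale with $N^n_0 = \frac1n\sum_k h_1(X^{n,k}_0) \to \langle\lambda,h_1\rangle$ a.s.\ and $\E[[N^n]_T] = O(1/n)$, whence $N^n_T \to \langle\lambda,h_1\rangle$ in $L^2$ and $\int h_1(x_0)\Phi(m)\,\bm{\Xi}^n(dx,dm) = \E[\Phi(\mu^n)\,N^n_T] \to \langle\lambda,h_1\rangle\,\E[\Phi(\mu)]$. Thus, under $\bm{\Xi}$, $\xi_0 \sim \lambda$, $\nu \stackrel{d}{=} \mu$, and $\xi_0$ is independent of $\nu$.

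\emph{Martingale problem and conclusion.} Fix $\varphi \in C_c^\infty(\R^d)$ and set $M^\varphi_t(x,m) := \varphi(x_t) - \varphi(x_0) - \int_0^t\big(b(s,x_s,m_s,\beta(s,x_s,m))\cdot\nabla\varphi(x_s) + \tfrac12\Delta\varphi(x_s)\big)ds$, which is bounded and continuous in $(x,m)$. Under $\QQ^{n,k}$ the process $X^{n,k}$ has drift $b(t,X^{n,k}_t,\mu^n_t,\beta(t,X^{n,k}_t,\mu^n))$ and is driven by a $\QQ^{n,k}$-Brownian motion, so It\^o's formula makes $t\mapsto M^\varphi_t(X^{n,k},\mu^n)$ a $\QQ^{n,k}$-martingale. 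For $0\le r<t$ and bounded continuous $H$ of $(x_u)_{u\le r}$ and $G$ of $(m_u)_{u\le r}$ — here the nonanticipativity of $G$ is essential, so that $H((X^{n,k}_u)_{u\le r})G((\mu^n_u)_{u\le r})$ is $\F^n_r$-measurable — this gives $\E^{\QQ^{n,k}}[(M^\varphi_t-M^\varphi_r)(X^{n,k},\mu^n)\,H\,G] = 0$, and averaging over $k$,
\[
\int (M^\varphi_t - M^\varphi_r)(x,m)\,H((x_u)_{u\le r})\,G((m_u)_{u\le r})\,\bm{\Xi}^n(dx,dm) = 0 .
\]
Letting $n\to\infty$ (all factors bounded and continuous), the identity passes to $\bm{\Xi}$, so $M^\varphi_\cdot(\xi,\nu)$ is a martingale for $\mathcal G_t := \sigma(\xi_s,\nu_s : s\le t)$, for every $\varphi\in C_c^\infty(\R^d)$. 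Combined with the previous step, this is precisely the weak formulation of the SDE \eqref{pf:optimality-SDE}; its solution is unique in law by the well-posedness theory for SDEs with adapted random coefficients of Appendix \ref{ap:SDErandomcoeff} (Lemma \ref{le:ap:SDErandom-uniqueness}), the frozen-$m$ drift $x\mapsto b(t,x,m_t,\beta(t,x,m))$ being bounded and measurable. Hence $\bm{\Xi} = \L(X[\beta],\mu)$, and integrating the bounded continuous $\psi$ yields \eqref{goal:1playerdeviation}.

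\emph{The main obstacle.} The crux is the final identification: extracting $\L(\xi,\nu) = \L(X[\beta],\mu)$ from the martingale property relative to the \emph{joint} filtration $\mathcal G$. This is exactly where the semi-Markov structure is indispensable — the prelimit identity displayed above holds only when the mean-field test function $G$ is nonanticipative — and where one must invoke the weak well-posedness for SDEs with adapted random coefficients of Appendix \ref{ap:SDErandomcoeff}. The remaining ingredients — tightness, the $L^2$-convergence of $M^n_T$ and $N^n_T$, and the passages to the limit — are routine given the boundedness of $b$; the only bookkeeping care required is to run everything along the single fixed subsequence on which $\mu^n\to\mu$.
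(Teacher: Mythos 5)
Your reduction and the prelimit machinery largely parallel the paper: the Girsanov densities $\zeta^{n,k}$, the observation that $\frac1n\sum_k\zeta^{n,k}_T\to 1$ in $L^2$ because the $\zeta^{n,k}$ are orthogonal exponential martingales with brackets of order $1/n$, the tightness argument, and the reduction to identifying the limit of $\bm{\Xi}^n=\frac1n\sum_k\L(Y^{k,k},\mu^{n,k})$ are all sound and in the spirit of Section \ref{se:proofsketch-optimality}. The gap is at the step you yourself flag as the crux. What you actually prove about a limit point $(\xi,\nu)$ of $\bm{\Xi}^n$ is that $B_t:=\xi_t-\xi_0-\int_0^t b(s,\xi_s,\nu_s,\beta(s,\xi_s,\nu))\,ds$ is a Brownian motion with respect to the joint filtration $\mathcal G_t=\sigma(\xi_s,\nu_s:s\le t)$, together with the marginal facts $\nu\stackrel{d}{=}\mu$, $\xi_0\sim\lambda$, $\xi_0\perp\nu$. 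This does not determine the joint law of $(\xi,\nu)$. The target $\L(X[\beta],\mu)$ is characterized by the conditional statement that, given the \emph{entire path} $\nu=m$, the law of $\xi$ is $P^m$; to extract this from the SDE one needs $B$ to be a Brownian motion conditionally on $\nu$, i.e.\ independence of the driving noise from the measure flow together with the compatibility (conditional independence) hypothesis that Lemma \ref{le:ap:SDErandom-uniqueness} explicitly assumes. A $\mathcal G$-Brownian motion need not have this property: nothing in your martingale problem (whose test functions $G$ are nonanticipative in $m$) prevents the future of $\nu$ from encoding information about the past of $B$, in which case the conditional law of $\xi$ given $\nu$ is not $P^\nu$ even though every condition you derived holds. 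So the appeal to Lemma \ref{le:ap:SDErandom-uniqueness} is unjustified: its hypotheses are precisely what is missing, and they do not follow from what you established.

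Nor can the gap be closed by simply allowing anticipative test functions $G(m)$ in the prelimit identity: under $\QQ^{n,k}$ the future of $\mu^{n}$ genuinely depends on the past of $X^{n,k}$, because all the other players' closed-loop controls react to it, so the stronger martingale property fails before the limit, and proving that this defect vanishes as $n\to\infty$ is essentially the whole difficulty of the proposition. This is exactly why the paper does not pass to the averaged law $\bm{\Xi}^n$ but instead tracks the \emph{random} empirical measure $\bm{R}^n=\frac1n\sum_k\delta_{(X^{n,k},\zeta^{n,k},W^k,\Lambda^{n,k})}$ under the original measure $\PP^n$: every limit $\bm R$ is shown to be a.s.\ supported on solutions of a martingale problem in which the measure flow $R^x$ is frozen (deterministic given the realization), and then, realization by realization, a martingale-measure representation plus Girsanov yields $\int h(x)\,y_T\,R(dx,dy,dw,dq)=\langle P^{R^x},h\rangle$, after which one averages over the randomness of $\bm R$ (whose induced flow has the law of $\mu$). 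Averaging the laws first, as you do, discards exactly the conditional (common-noise) structure that this argument exploits, and the joint-filtration martingale problem cannot recover it.
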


With Proposition \ref{pr:1playerdeviation} in hand, let us see how to complete the proof of Theorem \ref{th:mainlimit-relaxed}: \\

\noindent\textbf{Proof of Theorem \ref{th:mainlimit-relaxed}.}
Using \eqref{pf:optimality-goal1-2}, for an arbitrary $\delta > 0$ we may find $\beta \in \A^c_{\mathrm{semi}}$ such that
\[
\sup_{\Lambda \in \RC_{\mathrm{semi}}}J(\Lambda)  \le J(\beta) + \delta.
\]
To prove \eqref{pf:optimality-goal1} it now suffices to show that $J(\beta) \le J(\Lambda^*)$.
Recall \eqref{pf:limitvalue1} from Theorem \ref{th:limit-dynamics}, which says
\begin{align*}
J(\Lambda^*) &= \lim_n\frac{1}{n}\sum_{i=1}^{n}J^{n}_i(\alpha^{n,1},\ldots,\alpha^{n,n}),
\end{align*}
where the limit is taken along the appropriate subsequence. 
On the other hand, defining $\beta^{n,k}$ as in Proposition \ref{pr:1playerdeviation}, we have \eqref{goal:1playerdeviation}. Finally using the fact that $(\alpha^{n,1},\ldots,\alpha^{n,n})$ is a closed-loop $\epsilon_n$-Nash equilibrium with $\epsilon_n\rightarrow 0$, we conclude that, along the same convergent subsequence,
\begin{align*}
J(\beta) &= \lim_n \frac{1}{n}\sum_{k=1}^n J^{n}_k(\alpha^{n,1},\ldots,\alpha^{n,k-1},\beta^{n,k},\alpha^{n,k+1},\ldots,\alpha^{n,n}) \\
	&\le \lim_n\frac{1}{n}\sum_{k=1}^{n}J^{n}_k(\alpha^{n,1},\ldots,\alpha^{n,n}) + \epsilon_n \\
	&= J(\Lambda^*).
\end{align*}
The proof of Theorem \ref{th:mainlimit-relaxed} is thus complete.
\hfill\qedsymbol \\

\begin{remark}
It is clear from the proof that we do not need the full strength of the $\epsilon_n$-Nash equilibrium property. In fact, it suffices to assume merely that $\bm{\alpha}^n=(\alpha^{n,1},\ldots,\alpha^{n,n})$ satisfies the much weaker inequality
\[
\frac{1}{n}\sum_{k=1}^{n}J^{n}_k(\alpha^{n,1},\ldots,\alpha^{n,n}) + \epsilon_n \ge \sup_{\beta^1,\ldots,\beta^n \in \A_n}\frac{1}{n}\sum_{k=1}^n J^{n}_k(\alpha^{n,1},\ldots,\alpha^{n,k-1},\beta^k,\alpha^{n,k+1},\ldots,\alpha^{n,n}).
\]
\end{remark}

\noindent\textbf{Proof of Lemma \ref{le:approx-optimizer-nice}.}
{\ } \\
\noindent\textbf{Step 1.} Before constructing the approximations, we show how to derive the claimed limits. 
Suppose $\Lambda^n \in \RC_{\mathrm{semi}}$, and assume that it holds for almost every $x \in \R^d$ and $\L(\mu)$-almost every $m \in \CP$ that, for every bounded continuous function $\varphi : [0,T] \times \R^d \times A \rightarrow \R$,
\[
\lim_{n\rightarrow\infty}\int_0^T\int_A\varphi(t,x,a) \Lambda^n(t,x,m)(da)dxdt = \int_0^T\int_A\varphi(t,x,a) \Lambda^n(t,x,m)(da)dt.
\]
Consider the coefficients
\[
B_n(t,x,m) := \int_Ab(t,x,m_t,a)\Lambda^n(t,x,m)(da), \quad B(t,x,m) = \int_A b(t,x,m_t,a)\Lambda(t,x,m)(da).
\]
For continuous $\varphi : [0,T] \times \R^d \rightarrow \R^d$ with compact support, we have
\begin{align*}
\lim_{n\rightarrow\infty}\int_0^T\int_{\R^d}\bigl(B_n(t,x,m) - B(t,x,m)\bigr) \cdot\varphi(t,x)dxdt = 0,
\end{align*}
for $\L(\mu)$-almost every $m \in \CP$.
Using Lemma \ref{le:ap:approximation}, we conclude that $(\mu,X[\Lambda^n])$ converges in law to $(\mu,X[\Lambda])$. To conclude that $J(\Lambda^n) \rightarrow J(\Lambda)$ we would like to simply use the fact that $f$ and $g$ are bounded and continuous, but we must be careful about the fact that $\Lambda$ and $\Lambda^n$ may be discontinuous.
Begin by writing 
\begin{align*}
J(\Lambda^n) &= \E\left[F_n(\mu,X[\Lambda^n])\right],
\end{align*}
where we define $F_n : \CP \times \C^d \rightarrow \R$ by
\[
F_n(m,x) = \int_0^T\int_Af(t,x_t,m_t,a)\Lambda^n(t,x_t,m)(da)dt + g(x_T,m_T).
\]
Define $F(x,m)$ similarly, with $\Lambda$ in place of $\Lambda^n$, so that $J(\Lambda) = \E[F(\mu,X[\Lambda])]$. We know from Lemma \ref{le:ap:approximation} that $\E[h(\mu,X[\Lambda^n])] \rightarrow \E[h(\mu,X[\Lambda])]$ for every bounded measurable function $h : \CP \times \C^d \rightarrow \R$. On the other hand, we know by assumption that $F_n \rightarrow F$ pointwise. We may use a form of the dominated convergence theorem \cite[Proposition 11.4.18]{royden-analysis} to conclude that  $\E\left[F_n(\mu,X[\Lambda^n])\right] \rightarrow \E[F(\mu,X[\Lambda])]$.
{\ } \\

\noindent\textbf{Step 2.} Next, we construct the desired approximations.
Apply the well known ``chattering lemma" (see, e.g., \cite[Theorem 2.2]{elkaroui-partialobservations} or \cite[Theorem 4]{fleming1984stochastic}) to find a sequence  of semi-Markov functions $\beta^n : [0,T] \times \R^d \times \CP \rightarrow A$ such that 
\[
dt\delta_{\beta^n(t,x,m)}(da) \rightarrow dt\Lambda(t,x,m)(da)
\]
weakly (i.e., in $\V$) for each $(x,m)$. Hence, we may assume $\Lambda$ is already of the form $\Lambda(t,x,m) = \delta_{\beta(t,x,m)}$ for some semi-Markov function $\beta$.

To complete the proof we use the fact that, since $A$ is compact and convex, any measurable function from a Polish probability space into $A$ is the a.e.\ limit of continuous functions (see, e.g., \cite[Proposition C.1]{carmona-delarue-lacker}). By ``Polish probability space" we mean a Polish space $E$ equipped with a Borel probability measure. The only hurdle is that the Borel $\sigma$-field of the space $\Theta := [0,T] \times \R^d \times \CP$ is strictly larger than the one generated by semi-Markov functions, but this is not difficult to work around. Equip $\Theta$ with the probability measure $Q$ defined for Borel sets $S \subset \Theta$ by
\[
Q(S) = \frac{1}{T}\E\left[\int_0^T\int_{\R^d} 1_S(t,x,\mu_t)\Phi_d(x)\,dx\,dt \right],
\]
where $\Phi_d$ is the density of a standard $d$-dimensional Gaussian random variable. Define the map $\Pi : \Theta \rightarrow \Theta$ by
\begin{align*}
\Pi(t,x,m) = (t,x,m_{\cdot \wedge t}),
\end{align*}
where $m_{\cdot \wedge t}$ denotes the path which follows $m$ up to time $t$ and is constant thereafter.
Then $\Pi$ is continuous, and the image $\Pi(\Theta)$ is closed. Moreover, the $\sigma$-field generated by $\Theta$ is precisely the one generated by the semi-Markov functions, and so any semi-Markov function $F : \Theta \rightarrow A$ factorizes through $\Pi$, in the sense that $F = F \circ \Pi$. The space $\Pi(\Theta)$ is a Polish space with the induced topology. Hence, as mentioend above,  $\beta=\beta \circ \Pi$ is the $Q \circ \Pi^{-1}$-a.e.\ limit of a sequence of continuous functions $\tilde\beta^n : \Pi(\Theta) \rightarrow A$. Define $\beta^n : \Theta \rightarrow A$ by $\beta^n = \tilde\beta^n \circ \Pi$. Then, $\beta^n$ is continuous for each $n$, and $\beta^n \rightarrow \beta$ holds $Q$-a.e.
\hfill\qedsymbol

{\ } \\

\noindent\textbf{Proof of Proposition \ref{pr:1playerdeviation}.}
Recall that $\bm{X}^n$ solves the SDE \eqref{def:stateprocesses-limitsection}.
Define the state process
\[
\bm{Y}^{n,k} = (Y^{n,k,1},\ldots,Y^{n,k,n}) := \bm{X}^n[(\alpha^{n,1},\ldots,\alpha^{n,k-1},\beta^{n,k},\alpha^{n,k+1},\ldots,\alpha^{n,n})]
\]
Note that $\bm{Y}^{n,k}$ follows the dynamics
\begin{align*}
dY^{n,k,k}_t &=  b (t,Y^{n,k,k}_t,\mu^{n,k}_t, \beta(t,\bm{Y}^{n,k,k}_t,\mu^{n,k}))dt + dW^k_t, \\
dY^{n,k,i}_t &= b(t,Y^{n,k,i}_t,\mu^{n,k}_t,\alpha^{n,i}(t,\bm{Y}^{n,k}))dt + dW^i_t, \quad i \neq k, \\
\mu^{n,k}_t &= \frac{1}{n}\sum_{j=1}^n\delta_{Y^{n,k,j}}.
\end{align*}
Assume that $\bm{X}^n$ is defined on some filtered probability space $(\Omega^n,\F^n,\FF^n,\PP^n)$, and we of course assume that the Brownian motions $W^k$ from \eqref{def:stateprocesses-limitsection} are in fact $\FF^n$-Brownian motions. Note $\bm{Y}^n$ may live on a different probability space which, to avoid complicating notation, we will not give a name.
Recall from the second paragraph of Section \ref{se:1playerdeviations} that we are working throughout this proof with a given (relabeled) subsequence along which $\L(\mu^n)=\PP^n \circ (\mu^n)^{-1}$ converges in $\P(\CP)$ to $\L(\mu)$.

{\ } \\
\noindent\textbf{Step 1.}
It is convenient in this proof to work on a suitable canonical space, and the first step is simply to set up notation. Define an equivalent probability measure $\QQ^{n,k}$ on $(\Omega^n,\F^n,\FF^n)$ by setting
\begin{align*}
\frac{d\QQ^{n,k}}{d\PP^n} &= \exp\Bigg( \int_0^T  \Big(b(t,X^{n,k}_t,\mu^n_t,\beta(t,X^{n,k}_t,\mu^n)) - b(t,X^{n,k}_t,\mu^n_t,\alpha^{n,k}(t,\bm{X}^n)) \Big) \cdot dW^k_t \\
	&\quad\quad\quad - \frac12\int_0^T\Big|b(t,X^{n,k}_t,\mu^n_t,\beta(t,X^{n,k}_t,\mu^n)) - b(t,X^{n,k}_t,\mu^n_t,\alpha^{n,k}(t,\bm{X}^n)) \Big|^2dt \Bigg).
\end{align*}
By Girsanov's theorem and uniqueness of the SDEs, we have $\QQ^{n,k} \circ (\bm{X}^n)^{-1} = \L(\bm{Y}^{n,k})$, and thus $\QQ^{n,k} \circ (\mu^n,\bm{X}^n)^{-1} = \L(\mu^{n,k},\bm{Y}^{n,k})$. Note also that we may write $d\QQ^{n,k} / d\PP^n = \zeta^{n,k}_T$, where we define $\zeta^{n,k}$ as the unique solution of the SDE
\begin{align}
d\zeta^{n,k}_t &= \zeta^{n,k}_t \Xi^{n,k}_t \cdot dW^k_t, \quad \ \zeta^{n,k}_0=1, \nonumber \\ 
\Xi^{n,k}_t &:= b(t,X^{n,k}_t,\mu^n_t,\beta(t,X^{n,k}_t,\mu^n)) - b(t,X^{n,k}_t,\mu^n_t,\alpha^{n,k}(t,\bm{X}^n)) \label{pf:def:Xink}
\end{align}
We note for future use that boundedness of $b$ easily yields the estimate
\begin{align}
\sup_{n \in \N}\max_{k=1,\ldots,n}\E\left[\left|d\QQ^{n,k} / d\PP^n \right|^p\right] = \sup_{n \in \N}\max_{k=1,\ldots,n}\E\left[
|\zeta^{n,k}_T|^p\right] < \infty, \label{pf:optimality-RNbound1}
\end{align}
for any $p > 0$. Moreover, we may write
\begin{align}
\frac{1}{n}\sum_{k=1}^n & J^{n}_k(\alpha^{n,1},\ldots,\alpha^{n,k-1},\beta^{n,k},\alpha^{n,k+1},\ldots,\alpha^{n,n}) \nonumber \\
	&= \frac{1}{n}\sum_{k=1}^n \E\left[ \int_0^T f(t,Y^{n,k,k}_t,\mu^{n,k}_t, \beta(t,Y^{n,k,k}_t,\mu^{n,k}))dt + g(Y^{n,k,k}_T,\mu^{n,k}_T)\right]  \label{pf:optimality-Jexpression1-0} \\
	&= \frac{1}{n}\sum_{k=1}^n \E^{\QQ^{n,k}}\left[ \int_0^T f(t,X^{n,k}_t,\mu^n_t, \beta(t,X^{n,k}_t,\mu^n))dt + g(X^{n,k}_T,\mu^n_T)\right]   \nonumber \\
	&= \E^{\PP^n}\left[ \frac{1}{n}\sum_{k=1}^n \zeta^{n,k}_T \left(\int_0^T f(t,X^{n,k}_t,\mu^n_t, \beta(t,X^{n,k}_t,\mu^n))dt + g(X^{n,k}_T,\mu^n_T) \right)\right]. \label{pf:optimality-Jexpression1}
\end{align}
We would like to show that the measure $\frac{1}{n}\sum_{k=1}^n\L(Y^{n,k,k},\mu^{n,k})$ converges to $\L(X[\beta],\mu)$, along the same subsequence for which $\L(\mu^n)$ converges to $\L(\mu)$. Indeed, we could then pass to the limit directly in \eqref{pf:optimality-Jexpression1-0}. The change of measure allows us to transform the expression into one involving the original $\mu^n$ and the particles $X^{n,k}$, as well as the new auxiliary particles $\zeta^{n,k}$.
We will ultimately analyze the limiting behavior of the empirical measure of $(X^{n,k},\zeta^{n,k},W^k)_{k=1}^n$, as it is convenient to include the Brownian motion $W^k$ as well.

Precisely, we proceed as follows. Define the $\P(A)$-valued processes $\Lambda^{n,k}_t =  \delta_{\alpha^{n,k}(t,\bm{X}^n)}$, and view $\Lambda^{n,k}$ as a $\V$-valued random variable.
Consider the extended empirical measure
\begin{align*}
\bm{R}^n := \frac{1}{n}\sum_{k=1}^n \delta_{(X^{n,k},\zeta^{n,k},W^k,\Lambda^{n,k})},
\end{align*}
viewed as a random variable with values in $\P(\overline\Omega)$, where $\overline\Omega := \C^d \times \C^1_+ \times \C^d \times \V$. Here, $\C^1_+ := C([0,T];\R_+)$ is the space of nonnegative one-dimensional continuous paths.

{\ } \\

\noindent\textbf{Step 2.} 
We first show that the sequence $\{\PP^n \circ (\bm{R}^n)^{-1} : n \in \N\} \subset \P(\P(\overline\Omega))$ is tight. According to \cite[(2.5)]{sznitman1991topics}, it suffices to show that the sequence $\{M_n : n \in \N\} \subset \P(\overline\Omega)$ of mean measures is tight, where the mean measure $M_n$ is defined on Borel sets $S \subset \overline\Omega$ by
\begin{align}
M_n(S) = \E^{\PP^n}[\bm{R}^n(S)] = \frac{1}{n}\sum_{k=1}^n\PP^n\big((X^{n,k},\zeta^{n,k},W^k,\Lambda^{n,k}) \in S\big). \label{pf:optimality-meanmeasures}
\end{align}
To do this, it suffices to show that each marginal sequence is tight. Since $\V$ is compact, the $ \V$-marginal sequence is clearly tight. The third marginal of $M_n$ is precisely Wiener measure; this sequence is constant and therefore tight.
We saw in the proof of Lemma \ref{le:tightness-main} that the sequence of first marginals
\[
\frac{1}{n}\sum_{k=1}^n\PP^n \circ (X^{n,k})^{-1}
\]
is tight. Finally, we must check that the second marginal sequence
\[
\frac{1}{n}\sum_{k=1}^n\PP^n \circ (\zeta^{n,k})^{-1}
\]
is tight.
This is accomplished using Aldous' criterion for tightness \cite[Lemma 16.12]{kallenberg-foundations}. First, note that the estimate \eqref{pf:optimality-RNbound1} implies by Doob's inequality
\begin{align}
\sup_{n \in \N}\max_{k=1,\ldots,n}\E^{\PP^n}\left[\sup_{t \in [0,T]}|\zeta^{n,k}_t|^p\right] < \infty, \label{pf:optimality-RNbound2}
\end{align}
Recalling that $b$ is uniformly bounded, we have $|\Xi^{n,k}| \le 2\|b\|_\infty$, where we recall the notation $\Xi^{n,k}$ from \eqref{pf:def:Xink}.
For any $\delta > 0$ and any $[0,T-\delta]$-valued stopping time, It\^o's isometry yields
\begin{align*}
\E^{\PP^n} \left[|\zeta^{n,k}_{\tau+\delta}-\zeta^{n,k}_{\tau}|^2\right] &= \E^{\PP^n}\left[\left|\int_{\tau}^{\tau+\delta}\zeta^{n,k}_t\Xi^{n,k}_t \cdot dW^k_t\right|^2\right] = \E^{\PP^n}\left[\int_{\tau}^{\tau+\delta}|\zeta^k_t|^2 |\Xi^{n,k}_t|^2dt\right] \\
	&\le 4\delta \|b\|_\infty^2\E^{\PP^n}\left[\sup_{t \in [0,T]}|\zeta^k_t|^2\right].
\end{align*}
This converges to zero as $\delta \rightarrow 0$, uniformly in $n$, $k$, and $\tau$. This is enough to apply Aldous' criterion and conclude that the second marginal sequence of $M_n$ is tight, thus completing the proof that $\bm{R}^n$ is a tight sequence of $\P(\overline\Omega)$-valued random variables.

{\ } \\
\noindent\textbf{Step 3.} 
As a first step toward identifying the limit points of $\bm{R}^n$, by first showing that all limit points are supported on the set of solutions of a certain martingale problem. For the moment, fix $n \in \N$ and $k \in \{1,\ldots,n\}$.
For any $\varphi=\varphi(x,y,w) \in C^\infty_c(\R^d \times \R_+ \times \R^d)$, It\^o's formula yields
\begin{align*}
d\varphi(X^{n,k}_t,\zeta^{n,k}_t,W^k_t) &= \nabla_x\varphi(X^{n,k}_t,\zeta^{n,k}_t,W^k_t) \cdot b(t,X^{n,k}_t,\mu^n_t,\alpha^{n,k}(s,\bm{X}^n))dt + \frac12\Delta_x\varphi(X^{n,k}_t,\zeta^{n,k}_t,W^k_t)dt \\
	&\quad + \frac12\partial_{yy}\varphi(X^{n,k}_t,\zeta^{n,k}_t,W^k_t)|\zeta^{n,k}_t|^2|\Xi^{n,k}_t|^2dt + \frac12\Delta_w\varphi(X^{n,k}_t,\zeta^{n,k}_t,W^k_t)dt \\
	&\quad + \zeta^{n,k}_t(\nabla_x+\nabla_w)\partial_y\varphi(X^{n,k}_t,\zeta^{n,k}_t,W^k_t)\cdot \Xi^{n,k}_t dt \\ 
	&\quad + (\nabla_w\cdot \nabla_x)\varphi(X^{n,k}_t,\zeta^{n,k}_t,W^k_t) dt + \partial_y\varphi(X^{n,k}_t,\zeta^{n,k}_t,W^k_t)\zeta^{n,k}_t\Xi^{n,k}_t \cdot dW^k_t \\
	&\quad + \nabla_x\varphi(X^{n,k}_t,\zeta^{n,k}_t,W^k_t) \cdot dW^k_t + \nabla_w\varphi(X^{n,k}_t,\zeta^{n,k}_t,W^k_t) \cdot dW^k_t
\end{align*}
Here we write $(\nabla_w\cdot \nabla_x)$ for the operator $\sum_{i=1}^d\partial_{w_i}\partial_{x_i}$.
For $m \in \CP$, $t \in [0,T]$, and $\varphi \in C^\infty_c(\R^d \times \R_+ \times \R^d)$, 
define a random variable $M_t[m,\varphi] : \overline\Omega \rightarrow \R$ by
\begin{align*}
M_t[m,\varphi](x,y,w,q) &= \varphi(x_t,y_t,w_t) - \int_0^t\int_A \widehat{M}[m,\varphi](u,x_u,y_u,w_u,a)q_u(da)du,
\end{align*}
where, for $(t,x,y,w,a) \in [0,T] \times \R^d \times \R_+ \times \R^d \times A$, we define
\begin{align*}
\widehat{M}[m,\varphi](t,x,y,w,a) &=  \nabla_x\varphi(x,y,w) \cdot b(t,x,m_t,a) + \frac12\Delta_x\varphi(x,y,w) \\
	&\quad + \frac12\partial_{yy}\varphi(x,y,w)|y|^2\left|b(t,x,m_t,\beta(t,x,m)) - b(t,x,m_t,a)\right|^2 \\
	&\quad + y(\nabla_x+\nabla_w)\partial_y\varphi(x,y,w)\cdot \left[b(t,x,m_t,\beta(t,x,m)) - b(t,x,m_t,a )\right] \\
	&\quad + \frac12\Delta_w\varphi(x,y,w) + (\nabla_w\cdot \nabla_x)\varphi(x,y,w) .
\end{align*}
Under $\PP^n$, the above calculation shows that the process
\begin{align*}
M^{n,k,\varphi}_t := M_t[\mu^n,\varphi](X^{n,k},\zeta^{n,k},W^k,\Lambda^{n,k})
\end{align*}
is a martingale. Moreover, the cross-variation $[M^{n,k,\varphi},M^{n,j,\varphi}]$ vanishes for $j \neq k$.

To completely specify a martingale problem, we equip $\overline{\Omega}$ with a canonical filtration $\overline\FF=(\overline\F_t)_{t \in [0,T]}$. Precisely, this is defined by letting $\overline\F_t$ be the $\sigma$-field generated by the maps $\overline{\Omega} \ni (x,y,w,q) \mapsto (x_s,y_s,w_s,\widehat{q}(s,\cdot)) \in \R^d \times \R_+ \times \R^d \times \P(A)$, for $s \le t$, where $\widehat{q}$ is the version of the canonical $\P(A)$-valued process on $\V$ described in \eqref{def:predictablecanonical-V}.

For $s < t$ and any continuous $\overline\F_s$-measurable function $h : \overline\Omega \rightarrow \R$ bounded in absolute value by $1$, define $F[h,\varphi,s,t] : \P(\overline\Omega) \rightarrow \R$ by
\[
F[h,\varphi,s,t](\bm{R}) = |\langle \bm{R}, (M_t[R^x,\varphi] - M_s[R^x,\varphi])h\rangle|^2,
\]
where, for $\bm{R} \in \P(\overline\Omega) = \P(\C^d \times \C^1_+ \times \C^d \times \V)$, we write $R^x$ to denote the induced measure flow $R^x=(R^x_t)_{t \in [0,T]} \in \CP$ induced by the first $\C^d$-marginal of $\bm{R}$. That is, $R^x_t = \bm{R} \circ [(x,y,w,q) \mapsto x_t]^{-1}$. Because $b$ and $\beta$ are continuous by assumption, the map
\begin{align*}
\CP \times \overline\Omega \ni (m,x,y,w,q) \mapsto \int_0^t\int_A \widehat{M}[m,\varphi](u,x_u,y_u,w_u,a)q_u(da)du
\end{align*}
is continuous for each $t$ and $\varphi$ (see, e.g., \cite[Appendix A]{lacker2015mean} for details). We would immediately deduce that $F[h,\varphi,s,t]$ is continuous on $\P(\overline\Omega)$, except that $\widehat{M}$ is unbounded due to the multiplication by $|y|^2$. 
To deal with this, abbreviate $F=F[h,\varphi,s,t]$, and define for $r > 0$
\[
F^r[h,\varphi,s,t](\bm{R}) = |\langle \bm{R}, (M^r_t[R^x,\varphi] - M^r_s[R^x,\varphi])h\rangle|^2,
\]
where $M^r_t[m,\varphi](x,y,w,q) := M_t[m,\varphi](x,y \wedge r,w,q)$. Then $M^r[m,\varphi]$ is uniformly bounded for each $r$ and $\varphi$. Using \eqref{pf:optimality-RNbound2}, it is straightforward to check that
\begin{align}
\lim_{r\rightarrow\infty}\sup_{n \in \N}\E^{\PP^n}\left[|F[h,\varphi,s,t](\bm{R}^n) - F^r[h,\varphi,s,t](\bm{R}^n)|^2\right] = 0. \label{pf:optimality-boundedness1}
\end{align}
Note that $F^r[h,\varphi,s,t]$ is bounded and continuous on $\P(\overline\Omega)$. 

Now, recalling that the sequence $\bm{R}^n$ is tight by Step 2, we may suppose that it converges in law (along a subsequence) to some $\P(\overline\Omega)$-valued random variable $\bm{R}$. Use \eqref{pf:optimality-boundedness1} to conclude that $\PP^n \circ (F[h,\varphi,s,t](\bm{R}^n))^{-1}$ converges to $\L(F[h,\varphi,s,t](\bm{R}))$. Then, using Fatou's lemma, the fact that $M^{n,k,\varphi}$ and $M^{n,j,\varphi}$ define orthogonal martingales, and $|h| \le 1$, we find (taking limits along the same subsequence)
\begin{align*}
\E\left[F[h,\varphi,s,t](\bm{R})\right] &\le \liminf_n\E^{\PP^n}\left[F[h,\varphi,s,t](\bm{R}^n)\right] \\
	&= \liminf_n\E^{\PP^n}\left[\left|\frac{1}{n}\sum_{k=1}^n(M^{n,k,\varphi}_t - M^{n,k,\varphi}_s)h(X^{n,k},\zeta^{n,k},W^k,\Lambda^{n,k})\right|^2 \right] \\
	&= \liminf_n\frac{1}{n^2}\sum_{k=1}^n\E^{\PP^n}\left[\left|M^{n,k,\varphi}_t - M^{n,k,\varphi}_s\right|^2 h^2(X^{n,k},\zeta^{n,k},W^k,\Lambda^{n,k}) \right] \\
	&\le \liminf_n\frac{1}{n^2}\sum_{k=1}^n\E^{\PP^n}\left[\left|M^{n,k,\varphi}_t - M^{n,k,\varphi}_s\right|^2 \right].
\end{align*}
Finally, noting that
\begin{align*}
&\E^{\PP^n}\left[\left|M^{n,k,\varphi}_t - M^{n,k,\varphi}_s\right|^2 \right] \\
	&= \E^{\PP^n}\int_s^t\left|(\nabla_w + \nabla_x)\varphi(X^{n,k}_u,\zeta^{n,k}_u,W^k_u) + \partial_y\varphi(X^{n,k}_u,\zeta^{n,k}_u,W^k_u)\zeta^{n,k}_u\Xi^k_u\right|^2du,
\end{align*}
we use \eqref{pf:optimality-RNbound2} and boundedness of $b$ to get
\begin{align*}
\sup_{n \in \N} \max_{k=1,\ldots,n}\E^{\PP^n}\left[\left|M^{n,k,\varphi}_t - M^{n,k,\varphi}_s\right|^2 \right] < \infty.
\end{align*}
Hence,
\begin{align*}
\E\left[F[h,\varphi,s,t](\bm{R})\right] = 0.
\end{align*}
In  particular, $F[h,\varphi,s,t](\bm{R})=0$ a.s.\ for each $h,\varphi,s,t$.

By working with a countably dense family (as in the end of Step 1 of the proof of Theorem \ref{th:limit-dynamics}), we may switch the order of quantifiers to conclude that it holds with probability $1$ that, for all $(h,\varphi,s,t)$, $F[h,\varphi,s,t](\bm{R})=0$. Recalling the definition of $F[h,\varphi,s,t]$, this means that $\bm{R}$ is supported on the set $\mathfrak{L} \subset \P(\overline\Omega)$ consisting of those probability measures $R$ such that:
\begin{itemize}
\item  $(M_t[R^x,\varphi])_{t \in [0,T]}$ is an $R$-martingale, for each $\varphi \in C^\infty_c(\R^d \times \R_+ \times \R^d)$, where $R^x=(R^x_t)_{t \in [0,T]} \in \CP$ denotes the measure flow associated with the first marginal.
\item $R \circ [(x,y,w,q) \mapsto (x_0,y_0,w_0)]^{-1} = \lambda \times \delta_1 \times \delta_0$.
\end{itemize}

{\ } \\
\noindent\textbf{Step 4.} 
We now establish a key identity satisfied by the measures $R \in \mathfrak{L}$ identified in the previous step. For $m \in \CP$ let $P^m \in \P(\C^d)$ denote the law of the unique solution $X^m$ of the SDE
\[
dX^m_t = b(t,X^m_t,m_t,\beta(t,X^m_t,m)) dt + dB_t, \quad X^m_0 \sim \lambda.
\]
This defines a universally measurable map $\CP \ni m \mapsto P^m \in \P(\C^d)$, by Lemma \ref{le:Pe-adapted}. We claim that every $R \in \mathfrak{L}$ satisfies
\begin{align}
\int_{\overline\Omega}h(x)y_T \, R(dx,dy,dw,dq) = \langle P^{R^x},h \rangle, \label{pf:optimality-keyidentity1}
\end{align}
for bounded measurable functions $h$ on $\C^d$.

Fix $R \in \mathfrak{L}$. We can construct, on some filtered probability space $(\Omega,\F,\FF,\PP)$, an $\overline\Omega$-valued random variable $(X,\zeta,W,\Lambda)$ with law $R$ such that the process
\begin{align*}
\varphi(X_t,\zeta_t,W_t) - \int_0^t \int_A &\Bigg[\nabla_x\varphi(X_s,\zeta_s,W_s) \cdot b(s,X_s,R^x_s,a) + \frac12\Delta_x\varphi(X_s,\zeta_s,W_s) \\
	&\quad + \frac12\partial_{yy}\varphi(X_s,\zeta_s,W_s)|y|^2\left|b(s,X_s,R^x_s,\beta(s,X_s,R^x)) - b(s,X_s,R^x_s,a)\right|^2 \\ 
	&\quad + y(\nabla_x+\nabla_w)\partial_y\varphi(X_s,\zeta_s,W_s)\cdot \left[b(t,X_s,R^x_s,\beta(s,X_s,R^x) ) - b(s,X_s,R^x_s,a)\right] \\
	&\quad + \frac12\Delta_w\varphi(X_s,\zeta_s,W_s) + (\nabla_w \cdot \nabla_x)\varphi(X_s,\zeta_s,W_s)\Bigg]\Lambda_s(da)ds
\end{align*}
is a martingale for each $\varphi \in C_c^\infty(\R^d \times \R_+ \times \R^d)$.
This is a bit different from the usual martingale problem framework because of the integration with respect to $\Lambda_s(da)$, so standard theory does not immediately tell us how to represent $(X,\zeta,W)$ as the solution of an SDE. But the work of  El Karoui and M\'el\'eard \cite{elkarouimeleard-martingalemeasure} covers this situation by making use of the notion of \emph{martingale measures}, in the sense of Walsh \cite{walsh1986introduction}, and the reader is referred to either reference for precise definitions. According to \cite[Theorem IV-2]{elkarouimeleard-martingalemeasure}, by extending the probability space if needed, we may find a vector $M=(M^1,\ldots,M^d)$ of orthogonal martingale measures $M^i=M^i(da,dt)$ on $A \times [0,T]$, each with intensity measure $\Lambda_t(da )dt$, such that the following hold, for $t \in [0,T]$:
\begin{align*}
dX_t &= \int_A b(t,X_t,R^x_t,a)\Lambda_t(da)dt + dW_t \\
dW_t &= \int_A M(da,dt), \quad\quad \text{i.e., } \quad\quad W_t = \int_{A \times [0,t]} M(da,ds) = M(A \times [0,t]), \\
d\zeta_t &= \zeta_t dN_t,
\end{align*}
where we define the martingale $N$ by
\begin{align*}
N_t &= \int_{A \times [0,t]} \big(b(s,X_s,R^x_s,\beta(s,X_s,R^x) ) - b(s,X_s,R^x_s,a)\big) \cdot M(da,ds).
\end{align*}
The only fact we need to know about martingale measures in the following: For any bounded jointly functions $\varphi,\psi : [0,T] \times A \times \Omega \rightarrow \R^d$ (using the Borel $\sigma$-field on $A$ and the $\FF$-progressive $\sigma$-field on $[0,T] \times \Omega$), the processes $t \mapsto \int_{A \times [0,t]}\varphi(s,a) \cdot M(da,ds)$ and $t \mapsto \int_{A \times [0,t]}\psi(s,a) \cdot M(da,ds)$ are orthogonal martingales with covariation process $\int_0^t\int_A\varphi(s,a)\cdot \psi(s,a)\Lambda_s(da)ds$. In particular, using this and L\'evy's characterization, we deduce that $W$ is a Brownian motion.

Continuing to work on the same probability space $(\Omega,\F,\FF,\PP)$, define a change of measure by
\begin{align*}
\frac{d\QQ}{d\PP} := \zeta_T = \exp(N_T - \tfrac12[N]_T).
\end{align*}
By Girsanov's theorem (e.g., in the general form of \cite[Theorem III.39]{protter2005stochastic}), the process $B = W - [W,N]$ is a $\QQ$-Brownian motion, and we compute
\begin{align*}
B_t &= W_t - \int_0^t\int_A \big(b(s,X_s,R^x_s,\beta(s,X_s,R^x)) - b(s,X_s,R^x_s,a) \big) \Lambda_s(da)ds.
\end{align*}
Substitute this into the equation for $X$ to get
\begin{align}
dX_t &= b(t,X_t,R^x_t,\beta(t,X_t,R^x)) dt + dB_t, \label{pf:limitanalysis-branchpoint}
\end{align}
still with initial distribution $\QQ \circ X_0^{-1} = \PP \circ X_0^{-1} = \lambda$.

The SDE \eqref{pf:limitanalysis-branchpoint} has a unique in law solution, and its law is precisely $\QQ \circ X^{-1} = P^{R^x}$, where $P^m$ was defined for $m \in \CP$ at the beginning of this step. It then holds, for any bounded measurable $h : \C^d \rightarrow \R$, that
\begin{align*}
\int_{\overline\Omega}h(x)y_T \, R(dx,dy,dw,dq) = \E^{\PP}[h(X)\zeta_T] = \E^{\QQ}[h(X)] = \langle P^{R^x},h \rangle,
\end{align*}
which establishes \eqref{pf:optimality-keyidentity1}.

{\ } \\
\noindent\textbf{Step 5.} 
We are finally ready to take limits. Recalling from Step 2 that $\bm{R}^n$ is a tight sequence, let $\bm{R}$ denote any weak limit. From Step 3 we know that $\bm{R}$ belongs almost surely to  $\mathfrak{L}$.
Recalling the identifications of Step 1, we may pass to the limit along the same subsequence along which $\bm{R}^n$ converges in law to $\bm{R}$ to get, using \eqref{pf:optimality-keyidentity1},
\begin{align*}
\lim\frac{1}{n}\sum_{k=1}^n\E^{\PP^n}[\zeta^{n,k}_T h(\mu^n,X^{n,k})] &= \lim\E^{\PP^n}\left[ \int_{\overline\Omega}y_Th(\mu^n,x) \, \bm{R}^n(dx,dy,dw,dq) \right] \\
	&= \E\left[\int_{\overline\Omega}y_Th(R^x,x) \, \bm{R}(dx,dy,dw,dq)\right] \\
	&= \E\left[ \langle P^{R^x},h(R^x,\cdot) \rangle \right],
\end{align*}
for any bounded continuous function $h$ on $\CP \times \C^d$.
Recall that $\mu^n$ converges in law to $\mu$, which implies that $R^x \stackrel{d}{=} \mu$. Hence, 
\begin{align*}
\lim_{n\rightarrow\infty}\frac{1}{n}\sum_{k=1}^n\E^{\PP^n}[\zeta^{n,k}_Th(\mu^n,X^{n,k})] &=  \E\left[\langle P^{\mu},h(\mu,\cdot) \rangle \right],
\end{align*}
Recalling the notation from before the statement the Proposition, the process $X[\beta]$ solves the SDE 
\[
dX_t[\beta] = b(t,X_t[\beta],\mu_t,\beta(t,X_t[\beta],\mu))dt + dW_t,
\]
where $X_0 \sim \lambda$, $W$, and $\mu$ are independent. 
Lemma \ref{le:ap:SDErandom-uniqueness} ensures that the conditional law of $X[\beta]$ given $\mu$ is precisely $P^\mu$.
In particular, $\E\left[\langle P^{\mu},h(\mu,\cdot) \rangle \right] = \E\left[h(\mu,X[\beta])\right]$, and we have
\begin{align*}
\lim_{n\rightarrow\infty}\frac{1}{n}\sum_{k=1}^n\E^{\PP^n}[\zeta^{n,k}_Th(\mu^n,X^{n,k})] &=  \E\left[h(\mu,X[\beta]) \right].
\end{align*}
Finally, recalling that $f$, $g$, and $\beta$ are continuous by assumption, we may finally return to \eqref{pf:optimality-Jexpression1} from Step 1 to complete the proof:
\begin{align*}
\frac{1}{n}\sum_{k=1}^n & J^{n}_k(\alpha^{n,1},\ldots,\alpha^{n,k-1},\beta^{n,k},\alpha^{n,k+1},\ldots,\alpha^{n,n}) \\
	&= \E^{\PP^n}\left[ \frac{1}{n}\sum_{k=1}^n \zeta^{n,k}_T \left(\int_0^T f(t,X^{n,k}_t,\mu^n_t, \beta(t,X^{n,k}_t,\mu^n))dt + g(X^{n,k}_T,\mu^n_T) \right)\right] \\
	&= \E\left[\int_0^T f(t,X_t[\beta],\mu_t, \beta(t,X_t[\beta],\mu))dt + g(X_T[\beta],\mu_T)\right] \\
	&= J(\beta).
\end{align*}
\hfill\qedsymbol

\section{Closed-loop versus open-loop} \label{se:closedloopvsopenloop-proofs}

This section compares our notion of weak semi-Markov RMFE (Definition \ref{def:MFEsemimarkov}) with the notion of \emph{weak MFG solution} of \cite[Definition 3.1]{lacker2016general}, which itself is a specialization of \cite[Definition 3.1]{carmona-delarue-lacker} to the case without common noise. The relevance of the latter definition is that it characterizes the limits of $n$-player approximate equilibria in open-loop regime \cite[Theorem 3.4]{lacker2016general}. Our goal is to show that these two definitions are largely equivalent. To state the definition of a weak MFG solution, we first need a bit of notation.

Recall from Section \ref{se:relaxedcontrols} the definition of the space $\V$ of relaxed controls. Define $\X := \C^d \times \V \times \C^d$, and equip this space with the filtration $\FF^\X=(\F^\X_t)_{t \in [0,T]}$, where $\F^\X_t$ is the $\sigma$-field generated by the maps $\X \ni (w,q,x) \mapsto (w_s,q(S),x_s) \in \R^d \times \R \times \R^d$, where $s \le t$ and $S$ is a Borel subset of $[0,t] \times A$. As usual, we identify a $\P(A)$-valued process $\Lambda=(\Lambda_t)_{t \in [0,T]}$ with the random element of $\V$ given by $dt\Lambda_t(da)$. For a measure $\bm{\widetilde{m}} \in \P(\X)$, we write $\widetilde{m}^x=(\widetilde{m}^x_t)_{t \in [0,T]} \in \CP$ for the measure flow associated with the third marginal, i.e., $\widetilde{m}^x_t = \bm{\widetilde{m}} \circ [(w,q,x) \mapsto x_t]^{-1}$.

\begin{definition} \label{def:weakMFGsolution}
A \emph{weak MFG solution} is a tuple $(\Omega,\F,\FF,\PP,W,\bm{\widetilde\mu},\Lambda,X)$, where:
\begin{enumerate}
\item $(\Omega,\F,\FF,\PP)$ is a complete filtered probability space. Also, $W$ is an $\FF$-Brownian motion of dimension $d$, $X$ is an $\FF$-adapted $d$-dimensional process with $\PP \circ X_0^{-1}=\lambda$, and $\Lambda$ is a $\P(A)$-valued $\FF$-progressively measurable process. Lastly, $\bm{\widetilde\mu}$ is a $\P(\X)$-valued random variable such that $\bm{\widetilde\mu}(S)$ is $\F_t$-measurable whenever $S \in \F^\X_t$ and $t \in [0,T]$.
\item $\bm{\widetilde\mu}$, $X_0$, and $W$ are independent.
\item The state equation holds,
\begin{align*}
dX_t = \int_Ab(t,X_t,\widetilde{\mu}^x_t,a)\Lambda_t(da)dt + dW_t.
\end{align*}
\item The control $\Lambda$ is \emph{compatible}, in the sense that $\sigma(\Lambda_s : s \le t)$ is conditionally independent of $\F^{X_0,W,\bm{\widetilde\mu}}_T$ given $\F^{X_0,W,\bm{\widetilde\mu}}_t$, for each $t \in [0,T]$, where
\[
\F^{X_0,W,\bm{\widetilde\mu}}_t := \sigma(X_0, W_s, \bm{\widetilde\mu}(S) : s \le t, \, S \in \F^\X_t).
\]
\item The control $\Lambda$ is optimal, in the sense that if $(\Omega',\F',\FF',\PP',W',\bm{\widetilde\mu}',\Lambda',X')$ satisfies (1-4) and $\PP' \circ (\bm{\widetilde\mu}')^{-1} = \PP \circ \bm{\widetilde\mu}^{-1}$, then we have
\begin{align*}
\E^{\PP}&\left[\int_0^T\int_Af(t,X_t,\widetilde{\mu}^x_t,a)\Lambda_t(da)dt + g(X_T,\widetilde{\mu}^x_T)\right] \\
	&\ge \E^{\PP'}\left[\int_0^T\int_Af(t,X'_t,\widetilde\mu'^{x}_t,a)\Lambda'_t(da)dt + g(X'_T,\widetilde\mu'^{x}_T)\right].
\end{align*}
\item The consistency condition holds:  $\bm{\widetilde\mu} = \PP((W,\Lambda,X) \in \cdot \, | \, \bm{\widetilde\mu})$ a.s.
\end{enumerate}
We may abuse notation somewhat by referring to $\bm{\widetilde\mu}$ itself as a weak MFG solution. This is reasonable because we can recover the full joint law of $(\bm{\widetilde\mu},W,\Lambda,X)$ from that of $\bm{\widetilde\mu}$ by using the consistency condition (6).
\end{definition}

\begin{theorem} \label{th:weaksemiMarkov-to-weakMFGsolution}
Suppose $(\Omega,\F,\FF,\PP,W,\Lambda^*,X,\mu)$ is a weak semi-Markov RMFE. Let $\Lambda_t = \Lambda^*(t,X_t,\mu)$, and set $\bm{\widetilde\mu} = \PP((W,\Lambda,X) \in \cdot \, | \, \mu)$. Then $(\Omega,\F,\FF,\PP,W,\bm{\widetilde\mu},\Lambda,X)$ is a weak MFG solution.
\end{theorem}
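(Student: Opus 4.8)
The plan is to verify the six conditions of Definition \ref{def:weakMFGsolution} for the tuple $(\Omega,\F,\FF,\PP,W,\bm{\widetilde\mu},\Lambda,X)$, translating each of the six properties of the weak semi-Markov RMFE (Definition \ref{def:MFEsemimarkov}) into its counterpart. The underlying idea is that the random measure flow $\mu$ in the semi-Markov picture and the random measure $\bm{\widetilde\mu}$ on path space $\X=\C^d\times\V\times\C^d$ carry the same information: since $\Lambda_t=\Lambda^*(t,X_t,\mu)$ and $X$ is adapted to the filtration generated by $(X_0,W_s,\mu_s)_{s\le t}$ (Remark \ref{re:strongsolutions}), the triple $(W,\Lambda,X)$ is a measurable functional of $(X_0,W,\mu)$, and so $\bm{\widetilde\mu}=\PP((W,\Lambda,X)\in\cdot\,|\,\mu)$ is in fact $\sigma(\mu)$-measurable; moreover its third-marginal flow satisfies $\widetilde\mu^x=\mu$ a.s., because $\mu_t=\L(X_t\,|\,\F^\mu_t)=\L(X_t\,|\,\mu)$ (again Remark \ref{re:strongsolutions}, via Remark \ref{re:Xcompatiblewithmu}). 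This last identification is the linchpin: once $\widetilde\mu^x=\mu$, the state equation (3) of Definition \ref{def:weakMFGsolution} is literally the state equation (4) of Definition \ref{def:MFEsemimarkov}, and condition (6) of Definition \ref{def:weakMFGsolution} holds by the very definition of $\bm{\widetilde\mu}$.

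Concretely, I would proceed as follows. First, establish the measurability statements in (1): $W$, $X$, $\Lambda$ are as required by hypothesis, and one checks $\bm{\widetilde\mu}(S)$ is $\F_t$-measurable for $S\in\F^\X_t$ using that $\bm{\widetilde\mu}$ is $\sigma(\mu)$-measurable and $\mu$ is $\FF$-adapted and continuous, together with the nonanticipativity built into $\F^\X_t$ (only the restriction of paths to $[0,t]$ enters). Second, condition (2), the independence of $\bm{\widetilde\mu}$, $X_0$, $W$: since $\bm{\widetilde\mu}$ is $\sigma(\mu)$-measurable, this follows from property (3) of Definition \ref{def:MFEsemimarkov} (independence of $\mu$, $X_0$, $W$). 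Third, condition (3) follows by substituting $\widetilde\mu^x=\mu$ into property (4). Fourth, condition (4), compatibility of $\Lambda$: here one uses that $X$ solves the SDE strongly and is adapted to the filtration generated by $(X_0,W_s,\mu_s)_{s\le t}$, hence $\Lambda_s=\Lambda^*(s,X_s,\mu)$ is measurable with respect to $\F^{X_0,W,\bm{\widetilde\mu}}_s$; this makes the conditional independence in (4) of Definition \ref{def:weakMFGsolution} automatic (indeed $\sigma(\Lambda_s:s\le t)\subset\F^{X_0,W,\bm{\widetilde\mu}}_t$). Fifth, condition (6) is immediate. The genuine work is in condition (5), optimality.

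For the optimality condition (5) of Definition \ref{def:weakMFGsolution}, suppose $(\Omega',\F',\FF',\PP',W',\bm{\widetilde\mu}',\Lambda',X')$ satisfies (1)--(4) with $\PP'\circ(\bm{\widetilde\mu}')^{-1}=\PP\circ\bm{\widetilde\mu}^{-1}$. The strategy is to produce from $(\bm{\widetilde\mu}',\Lambda',X')$ a \emph{semi-Markov} control that can be fed into property (5) of Definition \ref{def:MFEsemimarkov}. Writing $\mu':=\widetilde\mu'^x$ (which has the same law as $\mu$, since the two random measures agree in law and the map to the third-marginal flow is deterministic), one uses the compatibility in (4) and a conditional-law/Markovian-projection argument in the spirit of Lemma \ref{le:projection-semiMarkov} to find a semi-Markov $\Lambda:[0,T]\times\R^d\times\CP\to\P(A)$ with $\int_A\psi(t,x,m_t,a)\Lambda(t,x,m)(da)=\E^{\PP'}[\int_A\psi(t,X'_t,\mu'_t,a)\Lambda'_t(da)\,|\,\F^{\mu'}_t,X'_t=x]$ for test functions $\psi$; applied to $\psi=b$ this does not change the law of $X'$, and applied to $\psi=f$ it can only increase the objective by a conditional-Jensen argument (or is preserved, using the convexity structure as in the proof of Proposition \ref{pr:RMFE-to-MFE}). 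One then solves the SDE \eqref{def:SDE-MFEsemimarkov} on $(\Omega,\F,\FF,\PP)$ with this $\Lambda$ and the \emph{original} $\mu$; because $X_0,W,\mu$ are independent there, and because the law of $(\mu,\Lambda)$ matches that of $(\mu',\Lambda)$ built from the primed system, the resulting objective value coincides with the primed objective, and property (5) of Definition \ref{def:MFEsemimarkov} gives the desired inequality. The main obstacle I expect is precisely this translation of an \emph{arbitrary compatible} competitor $(\bm{\widetilde\mu}',\Lambda',X')$ for the open-loop-style problem into a semi-Markov feedback competitor for the problem of Definition \ref{def:MFEsemimarkov} while controlling both the law of the state process and the value — this requires care with the measurable conditional-law construction (invoking the appendix lemmas cited in Lemma \ref{le:projection-semiMarkov}) and with checking that the independence structure $X_0\perp W\perp\mu$ is preserved so that the strong solution theory of Remark \ref{re:strongsolutions} applies.
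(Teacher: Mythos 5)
Your verification of conditions (1)--(4) and (6) follows the same route as the paper: the key facts are that $X$ (hence $\Lambda$) is adapted to the filtration generated by $(X_0,W_s,\mu_s)_{s\le t}$, that $\widetilde\mu^x=\mu$ a.s., and that consequently $\sigma(\bm{\widetilde\mu}(S):S\in\F^\X_t)=\F^\mu_t$ for each $t$ (the paper proves this two-sided filtration identity explicitly, using the independence of $X_0$, $W$, $\mu$ to write $\langle\bm{\widetilde\mu},h\rangle=\langle\lambda\times\W,\psi(\cdot,\cdot,\mu)\rangle$ for an $\F^{X_0,W,\mu}_t$-measurable representative $\psi$; your sketch gestures at this but this computation is what actually delivers the measurability in (1)). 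Up to that level of detail, this part is fine and essentially identical to the paper.

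The gap is in the optimality condition (5). The paper does \emph{not} project an arbitrary competitor directly: it first invokes the density result of Carmona--Delarue--Lacker (Lemma 3.11 there, Lemma 4.7 of the open-loop paper) to reduce (5) to the statement (5') in which the alternative control $\Lambda'$ lives on the \emph{same} space and is adapted to $\FF^{X_0,W,\bm{\widetilde\mu}}=\FF^{X_0,W,\mu}$. Only after this reduction does the conditional Markovian projection become routine: under $\PP(\cdot\,|\,\mu=m)$ the pair $(X_0,W)$ still has law $\lambda\times\W$, the SDE holds with the deterministic flow $m$, and $W$ is still a Brownian motion for the relevant filtration, so Theorem \ref{th:markovprojection} applies $m$-by-$m$ and the semi-Markov property of the projected kernel can be read off from progressive measurability of $\Lambda'(t,X_0,W,m)$. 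Your proposal skips this reduction and attempts to project a general \emph{compatible} competitor $(\Omega',\dots,\Lambda',X')$ by conditioning on $(\F^{\mu'}_t,X'_t)$. But for the Markovian projection to apply conditionally on $\mu'$ you must first know that, under $\PP'(\cdot\,|\,\mu')$, the process $X'$ is still an It\^o process whose martingale part is a Brownian motion with respect to a filtration making $\Lambda'$ progressive; this is not automatic, because $\mu'$ (equivalently $\bm{\widetilde\mu}'$) is only $\F'_T$-measurable and conditioning on terminal information can destroy the Brownian property. Establishing it is exactly what the compatibility hypothesis (4) must be used for, and is the content of the machinery the paper imports via the cited lemma; your text names this as ``the main obstacle'' but does not resolve it, so the optimality verification is incomplete as written. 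Two smaller points: the conditioning should in any case be phrased through $\bm{\widetilde\mu}'$ (which, a.s., is a measurable function of $\mu'$ because $\bm{\widetilde\mu}$ is, and the two have the same law), and the Jensen/convexity remark is unnecessary --- in the relaxed formulation the running reward is linear in the control measure, so the conditional mean-measure projection preserves the value exactly by the tower property, no Assumption \ref{assumption:B} being needed here.
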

\begin{proof}
First, define $\FF^{\bm{\widetilde\mu}}=(\F^{\bm{\widetilde\mu}}_t)_{t \in [0,T]}$ as the filtration generated by $\bm{\widetilde\mu}$, namely, $\F^{\bm{\widetilde\mu}}_t = \sigma(\bm{\widetilde\mu}(S) : S \in \F^\X_t)$. As usual, let $\F^\mu_t = \sigma(\mu_s : s \le t)$. We claim first that $\F^{\bm{\widetilde\mu}}_t=\F^\mu_t$ for each $t$. Recall from Remark \ref{re:Xcompatiblewithmu} that $\L(X_t \, | \, \mu) = \L(X_t \, | \, \F^\mu_t) = \mu_t$ a.s.\ for each $t$.
It follows immediately that $\F^\mu_t \subset \F^{\bm{\widetilde\mu}}_t$, because
\[
\mu_t = \PP(X_t \in \cdot \, | \, \mu) = \widetilde{\mu}^x_t, \ a.s.
\]
For the reverse, fix a bounded $\F^\X_t$-measurable function $h : \X \rightarrow \R$.
Note that $X$ is necessarily $\FF^{X_0,W,\mu}$-adapted by Lemma \ref{le:ap:SDErandom-uniqueness}, and thus so is $\Lambda$, where $\FF^{X_0,W,\mu} = (\F^{X_0,W,\mu}_t)_{t \in [0,T]}$ is defined by $\F^{X_0,W,\mu}_t=\sigma(X_0,W_s,\mu_s : s \le t)$. Hence, we may find a bounded $\F^{X_0,W,\mu}_t$-measurable random variable $\psi(X_0,W,\mu)$ such that $h(W,\Lambda,X) = \psi(X_0,W,\mu)$ a.s. Then,
\begin{align*}
\langle \bm{\widetilde\mu}, \, h\rangle &= \E[h(W,\Lambda,X) \, | \, \mu] = \E[\psi(X_0,W,\mu) \, | \, \mu] \\
	&= \langle \lambda \times \W, \, \psi(\cdot,\cdot,\mu)\rangle,
\end{align*}
where $\W$ denotes Wiener measure on $\C^d$, and the last identity follows from the independence of $X_0$, $W$, and $\mu$. Because $\psi(X_0,W,\mu)$ is $\F^{X_0,W,\mu}_t$-measurable, this shows that $\langle \bm{\widetilde\mu}, \, h\rangle$ is $\F^\mu_t$-measurable. Hence, $\F^\mu_t \supset \F^{\bm{\widetilde\mu}}_t$.

Properties (1-3) and (6) of Definition \ref{def:weakMFGsolution} are straightforward to check now that we have shown $\F^{\bm{\widetilde\mu}}_t=\F^\mu_t$ for each $t$. The compatibility property (4) follows easily from the fact that $\Lambda$ is $\FF^{X_0,W,\mu}=\FF^{X_0,W,\bm{\widetilde\mu}}$-adapted.

It remains to check property the optimality property (5). According to \cite[Lemma 3.11]{carmona-delarue-lacker} (see also \cite[Lemma 4.7]{lacker2016general}), it suffices to check (5) only for alternative controls $\Lambda'$ which are adapted to the filtration $\FF^{X_0,W,\bm{\widetilde\mu}}$, because such controls are dense in a joint distributional sense. Precisely, (5) is equivalent to the following:
\begin{enumerate}
\item[(5')] For each $\FF^{X_0,W,\bm{\widetilde\mu}}$-progressively measurable $\P(A)$-valued process $\Lambda'=(\Lambda'_t)_{t \in [0,T]}$, we have
\begin{align}
\E&\left[\int_0^T\int_Af(t,X_t,\widetilde{\mu}^x_t,a)\Lambda_t(da)dt + g(X_T,\widetilde{\mu}^x_T)\right] \nonumber \\
	&\ge \E\left[\int_0^T\int_Af(t,X'_t,\widetilde\mu^{x}_t,a)\Lambda'_t(da)dt + g(X'_T,\widetilde\mu^{x}_T)\right], \label{pf:openvsclosed1}
\end{align}
where $X'$ is the unique strong solution of the SDE
\[
dX'_t = \int_A b(t,X'_t,\widetilde{\mu}^x_t,a)\Lambda'_t(da)dt + dW_t, \quad X'_0=X_0.
\]
\end{enumerate}
Let $(X',\Lambda')$ be as in (5').
Recall that $\mu_t=\widetilde{\mu}^x_t$ for all $t \in [0,T]$, and so 
\eqref{pf:openvsclosed1} is equivalent to
\begin{align}
\E&\left[\int_0^T\int_Af(t,X_t,\mu_t,a)\Lambda^*(t,X_t,\mu)(da)dt + g(X_T,\mu_T)\right] \nonumber \\
	&\ge \E\left[\int_0^T\int_Af(t,X'_t,\mu_t,a)\Lambda'_t(da)dt + g(X'_T,\mu_T)\right]. \label{pf:openvsclosed2}
\end{align}
We showed also that $\F^\mu_t = \F^{\bm{\widetilde\mu}}_t$ for each $t \in [0,T]$, and thus $\F^{X_0,W,\bm{\widetilde\mu}}_t = \F^{X_0,W,\mu}_t := \sigma(X_0,W_s,\mu_s : s \le t)$.
Then $\Lambda'$ is $\FF^{X_0,W,\mu}$-progressively measurable, and we may write $\Lambda'_t= \Lambda'(t,X_0,W,\mu)$.

Because $\mu$ is a weak RMFE, we know that $\Lambda^*$ is optimal when compared to alternative \emph{semi-Markov} controls. To check that it is optimal over $\FF^{X_0,W,\bm{\widetilde\mu}}$ controls, we proceed by a projection argument reminiscent of those of Section \ref{se:comparisonofequilibria}. For $m \in \CP$, let $\PP^m = \PP( \cdot \, | \, \mu=m)$ denote a version of the regular conditional law given $\mu$. The statements in the rest of this paragraph hold for $\PP \circ \mu^{-1}$-almost every $m \in \CP$. Since $X_0$, $W$, and $\mu$ are independent, we have $\PP^m \circ (X_0,W)^{-1} = \PP \circ (X_0,W)^{-1} = \lambda \times \W$, where $\W$ denotes Wiener measure. Moreover, under $\PP^m$, the SDE still holds, which we may write as
\begin{align*}
dX'_t = \int_A b(t,X'_t,m_t,a)\Lambda'(t,X_0,W,m)(da)dt + dW_t.
\end{align*}
We wish to apply Theorem \ref{th:markovprojection} under this measure $\PP^m$. To do so, we first find a Borel measurable function $\widehat\Lambda : [0,T] \times \R^d \times \CP \rightarrow \P(A)$ such that
\begin{align}
\widehat\Lambda(t,X'_t,m) = \E^{\PP^m}[\Lambda'(t,X_0,W,m) \, | \, X'_t], \ \ \PP^m-a.s., \ \ \forall t \in [0,T], \label{pf:closedvsopenloop-hatlambdadef}
\end{align}
where these expectations are in the sense of mean measure; see Lemma \ref{le:conditional-marginal-meanmeasure}. The point of this definition is that the unique strong solution $X^m$ (on $(\Omega,\F,\FF,\PP)$) of the SDE
\[
dX^m_t = \int_A b(t,X^m_t,m_t,a)\widehat\Lambda(t,X^m_t,m)(da)dt + dW_t, \quad X^m_0=X_0,
\]
satisfies $\PP \circ (X^m_t)^{-1} = \PP^m \circ (X'_t)^{-1}$ for each $t \in [0,T]$, by Theorem \ref{th:markovprojection}.

At this point we would like to re-introduce the random measure flow by replacing $m$ by $\mu$ and treating $\widehat\Lambda(t,X'_t,\mu)$ as a semi-Markov control. For this to work, we must check that $\widehat\Lambda$ is not merely Borel measurable but rather semi-Markov. Note that $X'$ is a strong solution, so it is $\FF^{X_0,W,\mu}$-adapted, and we can write $X'_t=X'(t,X_0,W,\mu)$. We may then write \eqref{pf:closedvsopenloop-hatlambdadef} as
\begin{align*}
\widehat\Lambda(t,X'_t,m) = \E^{\PP^m}[\Lambda'(t,X_0,W,m) \, | \, X'(t,X_0,W,m)].
\end{align*}
Recall that $\PP^m \circ (X_0,W)^{-1} = \lambda \times \W$ and that $\Lambda'$ and $X'$ are progressive, which implies in particular that $\Lambda'(t,X_0,W,m) = \Lambda'(t,X_0,W,\widetilde{m})$ and $X'(t,X_0,W,m) = X'(t,X_0,W,\widetilde{m})$ a.s., whenever $t \in [0,T]$ and $m_s=\widetilde{m}_s$ for $s \in [0,t]$. From these facts we deduce that $\widehat\Lambda(t,X'_t,m) = \widehat\Lambda(t,X'_t,\widetilde{m})$ a.s., whenever $m_s=\widetilde{m}_s$ for $s \in [0,t]$.

Finally, returning to the unconditional measure $\PP$, define $\widehat{X}$ to be the unique strong solution (on $(\Omega,\F,\FF,\PP)$) of the SDE
\begin{align}
d\widehat{X}_t = \int_A b(t,\widehat{X}_t,\mu_t,a)\widehat\Lambda(t,\widehat{X}_t,\mu)(da)dt + dW_t, \quad \widehat{X}_0=X_0, \label{pf:closedvsopenloop-sde1}
\end{align}
and note that $\widehat{X}$ is adapted to $\FF^{X_0,W,\mu}$. Indeed, see Lemma \ref{le:ap:SDErandom-uniqueness} and \ref{le:ap:SDErandom-existence} for well-posedness of this SDE, despite the fact that $\widehat\Lambda$ may be discontinuous.
In addition, as we check carefully in the same two lemmas, the conditional law of $\widehat{X}$ given $\mu$ is precisely $\PP \circ (X^m)^{-1}$. In particular, we find
\begin{align*}
\PP^m \circ \widehat{X}_t^{-1} = \PP \circ (X_t^m)^{-1} = \PP^m \circ (X'_t)^{-1},
\end{align*}
for almost every $m$ and for each $t$. Equivalently, plugging in the random $\mu$, we have $\PP^\mu = \PP^\mu \circ (X'_t)^{-1}$ a.s. for each $t$. Using this and the definition of $\widehat\Lambda$, we finally use Fubini's theorem and the tower property of conditional expectation to get
\begin{align*}
 &\E\left[\int_0^T\int_Af(t,X'_t,\mu_t,a)\Lambda'_t(da)dt + g(X'_T,\mu_T)\right] \\
 &= \E\left[\int_0^T\int_Af(t,X'_t,\mu_t,a)\Lambda'(t,W,\mu)(da)dt + g(X'_T,\mu_T)\right] \\
 &= \E\left[\int_0^T\int_Af(t,X'_t,\mu_t,a)\widehat\Lambda(t,X'_t,\mu)(da)dt + g(X'_T,\mu_T)\right] \\
 &= \E\left[\int_0^T\int_Af(t,\widehat{X}_t,\mu_t,a)\widehat\Lambda(t,\widehat{X}_t,\mu)(da)dt + g(\widehat{X}_T,\mu_T)\right].
\end{align*}
Recalling the form of the SDE \eqref{pf:closedvsopenloop-sde1} for $\widehat{X}$, we may finally use the defining property (5) of a weak RMFE (Definition \ref{def:MFEsemimarkov}) to conclude that this expectation is dominated by
\[
\E\left[\int_0^T\int_Af(t,X_t,\mu_t,a)\Lambda^*(t,X_t,\mu)(da)dt + g(X_T,\mu_T)\right],
\]
which proves \eqref{pf:openvsclosed2}.
\end{proof}

\begin{theorem} \label{th:weakMFGsolution-to-weaksemiMarkov}
Suppose $\bm{\widetilde\mu}$ is a weak MFG solution. Then there exists a weak semi-Markov RMFE $\mu$ such that $\mu \stackrel{d}{=} \widetilde{\mu}^x$.
\end{theorem}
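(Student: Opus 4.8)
\textit{Proof proposal.} The plan is to take $\mu := \widetilde{\mu}^x$ as the mean field, to manufacture the semi-Markov control by \emph{projecting} the merely compatible control $\Lambda$ of the given weak MFG solution onto a semi-Markov function via Lemma~\ref{le:projection-semiMarkov}, and then to transfer optimality by recognizing semi-Markov deviations as legitimate competitors in the optimality clause of Definition~\ref{def:weakMFGsolution}. Throughout, let $(\Omega,\F,\FF,\PP,W,\bm{\widetilde\mu},\Lambda,X)$ be the weak MFG solution.

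First I would set $\bm{\overline\mu} := \PP((X,\Lambda) \in \cdot \mid \bm{\widetilde\mu})$, a $\P(\C^d \times \V)$-valued random variable whose induced measure flow is $\widetilde{\mu}^x =: \mu$ by the consistency condition (6) of Definition~\ref{def:weakMFGsolution}. Applying It\^o's formula to $\varphi(X_t)$ for $\varphi \in C^\infty_c(\R^d)$ and using the state equation (3), one has the semimartingale decomposition $\varphi(X_t) = \varphi(X_0) + \int_0^t\int_A L\varphi(s,X_s,\mu_s,a)\Lambda_s(da)\,ds + \int_0^t \nabla\varphi(X_s) \cdot dW_s$. Conditioning this identity on $\bm{\widetilde\mu}$ and invoking the independence property (2) together with the compatibility property (4) — which ensure that $W$ remains a Brownian motion and that the stochastic integral remains a martingale under $\PP(\cdot \mid \bm{\widetilde\mu})$ — the martingale term disappears and one obtains precisely the hypothesis \eqref{ass:le:projection-semiMarkov} of Lemma~\ref{le:projection-semiMarkov} (with the roles of $\mu$, $\bm{\overline\mu}$ there played by the present $\mu$, $\bm{\overline\mu}$). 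This conditioning step is the main technical point; it is the same type of passage between strong and weak formulations carried out in \cite{carmona-delarue-lacker,lacker2016general}, but one must be careful that compatibility is exactly what licenses dropping the martingale term.

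Next I would invoke Lemma~\ref{le:projection-semiMarkov}: it produces a semi-Markov function $\Lambda^* : [0,T] \times \R^d \times \CP \to \P(A)$ and, on a possibly enlarged probability space $(\Omega^*,\F^*,\FF^*,\PP^*)$, continuous processes $X^*$ and $W^*$ satisfying properties (i)--(v) of that lemma. These are verbatim properties (1)--(4) and (6) of Definition~\ref{def:MFEsemimarkov}, and since the enlargement does not alter the law of $\mu$, we have $\mu \stackrel{d}{=} \widetilde{\mu}^x$. Moreover, combining part (b) of Lemma~\ref{le:projection-semiMarkov} (once with $\psi = f$, once trivially with $g$) with the identity $\mu_t = \L(X^*_t \mid \F^\mu_t)$, semi-Markov measurability of $\Lambda^*$, and the definition $\bm{\overline\mu} = \L((X,\Lambda)\mid\bm{\widetilde\mu})$, one checks that $J(\Lambda^*) := \E^{\PP^*}\big[\int_0^T\int_A f(t,X^*_t,\mu_t,a)\Lambda^*(t,X^*_t,\mu)(da)\,dt + g(X^*_T,\mu_T)\big]$ equals the original objective value $\E^{\PP}\big[\int_0^T\int_A f(t,X_t,\mu_t,a)\Lambda_t(da)\,dt + g(X_T,\mu_T)\big]$; i.e., the projection preserves the value.

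It remains to check the optimality property (5) of Definition~\ref{def:MFEsemimarkov}, namely $J(\Lambda^*) \ge J(\Lambda')$ for every semi-Markov $\Lambda' : [0,T] \times \R^d \times \CP \to \P(A)$, where $J(\Lambda')$ refers to the state equation \eqref{pf:optimality-SDE} on the enlarged space. For this I would step back to the \emph{original} probability space: let $X'$ be the unique strong solution (well-posed by Lemma~\ref{le:ap:SDErandom-uniqueness}, since $X_0$, $W$, $\widetilde{\mu}^x$ are independent) of $dX'_t = \int_A b(t,X'_t,\widetilde{\mu}^x_t,a)\Lambda'(t,X'_t,\widetilde{\mu}^x)(da)\,dt + dW_t$ with $X'_0 = X_0$, and put $\Lambda'_t := \Lambda'(t,X'_t,\widetilde{\mu}^x)$. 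Since $X'$ is adapted to $\FF^{X_0,W,\bm{\widetilde\mu}}$, the process $\Lambda'$ is $\FF^{X_0,W,\bm{\widetilde\mu}}$-progressive, so the tuple $(\Omega,\F,\FF,\PP,W,\bm{\widetilde\mu},\Lambda',X')$ satisfies (1)--(4) of Definition~\ref{def:weakMFGsolution} with the same $\bm{\widetilde\mu}$ (hence trivially $\bm{\widetilde\mu}' \stackrel{d}{=} \bm{\widetilde\mu}$). The optimality clause (5) of the weak MFG solution then yields $\E^{\PP}\big[\int_0^T\int_A f(t,X_t,\widetilde{\mu}^x_t,a)\Lambda_t(da)\,dt + g(X_T,\widetilde{\mu}^x_T)\big] \ge \E^{\PP}\big[\int_0^T\int_A f(t,X'_t,\widetilde{\mu}^x_t,a)\Lambda'_t(da)\,dt + g(X'_T,\widetilde{\mu}^x_T)\big]$. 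The left-hand side equals $J(\Lambda^*)$ by the value-preservation established above; and, by uniqueness in law for \eqref{pf:optimality-SDE} together with $\widetilde{\mu}^x \stackrel{d}{=} \mu$, the pair $(X',\widetilde{\mu}^x)$ has the same law as $(X[\Lambda'],\mu)$, so the right-hand side equals $J(\Lambda')$. Hence $J(\Lambda^*) \ge J(\Lambda')$, and $(\Omega^*,\F^*,\FF^*,\PP^*,W^*,\Lambda^*,X^*,\mu)$ is a weak semi-Markov RMFE with $\mu \stackrel{d}{=} \widetilde{\mu}^x$, as desired. \qed
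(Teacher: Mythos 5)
Your proposal is correct and follows essentially the same route as the paper: your $\bm{\overline\mu}=\PP((X,\Lambda)\in\cdot\,|\,\bm{\widetilde\mu})$ coincides (via the consistency condition) with the paper's pushforward of $\bm{\widetilde\mu}$ under $(w,q,x)\mapsto(x,q)$, after which both arguments verify \eqref{ass:le:projection-semiMarkov} by It\^o's formula, invoke Lemma \ref{le:projection-semiMarkov} (parts (b) and (c)) for the dynamics, consistency, and value preservation, and transfer optimality by turning any semi-Markov competitor $\Lambda'$ into an $\FF^{X_0,W,\bm{\widetilde\mu}}$-progressive control via the strong solution $X'$ and appealing to property (5) of Definition \ref{def:weakMFGsolution} with the same $\bm{\widetilde\mu}$. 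Your extra care in passing between the original and enlarged spaces by equality of joint laws (Lemmas \ref{le:ap:SDErandom-uniqueness} and \ref{le:ap:SDErandom-existence}) is consistent with, and no weaker than, what the paper does implicitly.
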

\begin{proof}
Let $(\Omega,\F,\FF,\PP,W,\bm{\widetilde\mu},\Lambda,X)$ be a weak MFG solution.
Recalling that $\bm{\widetilde\mu}$ is a random measure on $\X= \C^d \times \V \times \C^d$, let $\bm{\overline\mu}$ denote the image under under the map $\C^d \times \V \times \C^d \ni (w,q,x) \mapsto (x,q) \in \C^d \times \V$.
It is straightforward to check using the properties of Definition \ref{def:weakMFGsolution} and It\^o's formula that $\bm{\overline\mu}$ satisfies the identity \eqref{ass:le:projection-semiMarkov}. Hence Lemma \ref{le:projection-semiMarkov} applies, in particular part (c), and (enlarging the probability space if necessary) we may define $\Lambda^*$ and $X^*$ as therein. It is immediate from Lemma \ref{le:projection-semiMarkov} to check that properties (1-4) and (6) of Definition \ref{def:MFEsemimarkov} are valid. It remains to check the optimality property (5).

First, from part (b) of Lemma \ref{le:projection-semiMarkov}, note that
\begin{align}
\E^{\PP}&\left[\int_0^T\int_Af(t,X_t,\widetilde{\mu}^x_t,a)\Lambda_t(da)dt + g(X_T,\widetilde{\mu}^x_T)\right] \nonumber \\
	&= \E^{\PP}\left[\int_0^T\int_Af(t,X^*_t,\mu_t,a)\Lambda^*(t,X^*_t,\mu)(da)dt + g(X^*_T,\mu_T)\right]. \label{pf:weakMFGsolution-to-weaksemiMarkov1}
\end{align}
Fix any semi-Markov function $\Lambda' : [0,T] \times \R^d \times \CP \rightarrow \P(A)$, and let $X'$ denote the unique strong solution (see Lemmas \ref{le:ap:SDErandom-uniqueness} and \ref{le:ap:SDErandom-existence}) of the SDE
\[
d X'_t = \int_A b(t, X'_t,\mu_t,a) \Lambda'(t, X'_t,\mu)(da)dt + dW_t, \quad X_0 \sim \lambda.
\]
Note that $X'$ is adapted to the complete filtration generated by the process $(X_0,W_t,\mu_t)_{t \in [0,T]}$. Define the $\P(A)$-valued process $\widetilde{\Lambda}_t = \Lambda'(t,X'_t,\mu)$. One checks easily that $(\Omega,\F,\FF,\PP,W,\bm{\widetilde\mu},\widetilde{\Lambda},X')$ satisfies properties (1-4) of Definition \ref{def:weakMFGsolution}. Hence, using property (5) therein along with \eqref{pf:weakMFGsolution-to-weaksemiMarkov1}, we find 
\begin{align*}
\E^{\PP}&\left[\int_0^T\int_Af(t,X^*_t,\mu_t,a)\Lambda^*(t,X^*_t,\mu)(da)dt + g(X^*_T,\mu_T)\right] \\
	&\ge \E^{\PP}\left[\int_0^T\int_Af(t,X'_t,\widetilde{\mu}^x_t,a)\widetilde{\Lambda}_t(da)dt + g(X'_T,\widetilde{\mu}^x_T)\right] \\
	&= \E^{\PP}\left[\int_0^T\int_Af(t,X'_t,\mu_t,a)\Lambda'(t,X_t,\mu)(da)dt + g(X'_T,\mu_T)\right].
\end{align*}
This is valid for any choice of $\Lambda'$, and we conclude that property (5) of Definition \ref{def:MFEsemimarkov} holds.
\end{proof}

We can now give a very concise proofs of Theorem \ref{th:openloopconverse} and Theorem \ref{th:uniqueness}, taking advantage of the two theorems above.
A direct and more illuminating proof of the latter is certainly possible, but the paper is already rather long.

\subsection{Proof of Theorem \ref{th:uniqueness}} \label{se:uniqueness-proof}

By \cite[Theorem 6.2]{carmona-delarue-lacker}, the assumptions of Theorem \ref{th:uniqueness} ensure uniqueness in law for weak MFG solutions in the sense of Definition \ref{def:weakMFGsolution}.
Because of Theorem \ref{th:weaksemiMarkov-to-weakMFGsolution}, this gives uniqueness in law for weak RMFE in the sense of Definition \ref{def:MFEsemimarkov}, and in particular uniqueness in law for weak MFE in the sense of Definition \ref{def:MFEsemimarkov-strict}. \hfill\qedsymbol

\subsection{Proof of Theorem \ref{th:openloopconverse}} 

It was shown in \cite[Theorem 3.4]{lacker2016general} that both claims are true if ``weak MFE" is replaced by ``weak MFG solution" in the statements. We saw in Theorem \ref{th:weakMFGsolution-to-weaksemiMarkov} that a weak MFG solution is a weak RMFE and in Proposition \ref{pr:RMFE-to-MFE} that a weak RMFE is a weak MFE under Assumptions \ref{assumption:A} and \ref{assumption:B}. \hfill\qedsymbol

\section{Constructing $n$-player equilibria from mean field equilibria} \label{se:examples}

This section continues the discussion of Section \ref{se:selectionoflimits} on the question of which weak MFE can arise as the limit of $n$-player (approximate) Nash equilibria. 
We begin in Section \ref{se:converse-strongMFE-proof} by proving Theorem \ref{th:converselimit-strongMFE-relaxed}, which states that every strong RMFE arises as the limit of $n$-player approximate equilibria.

The rest of the section is devoted to examples: We warm up in Section \ref{se:contrived} with some observations on the case where the game-theoretic aspect of the problem degenerates in the sense that $A$ is a singleton.
In this uncontrolled regime, we are simply left with the study of McKean-Vlasov limits,
which already reveals of some of the range of possible behaviors.

However, much richer behavior is possible when the game-theoretic aspect does not trivialize. Section \ref{se:ex:weakMFEconverse} discusses such an example, in which there exist weak MFE which are not mixtures of strong MFE.

\subsection{Proof of Theorem \ref{th:converselimit-strongMFE-relaxed}} \label{se:converse-strongMFE-proof}

Let $(m,\Lambda^*)$ be a strong RMFE, in the sense of Definition \ref{def:strongMFE}. Let $X^*$ denote the corresponding state process,
\begin{align}
dX^*_t = \int_Ab(t,X^*_t,m_t,a)\Lambda^*(t,X^*_t)(da)dt + dW_t, \quad X^*_0 \sim \lambda.  \label{pf:strongMFElimit0}
\end{align}
Now, for the $n$-player game, define $\Lambda^{n,i} \in \RCM_n$ by setting
\[
\Lambda^{n,i}(t,\bm{x}) = \Lambda^*(t,x_i), \quad \text{for} \quad \bm{x}=(x_1,\ldots,x_n) \in (\R^d)^n.
\]
Define
\begin{align*}
\epsilon_n := \sup_{\beta \in \RCM_n}J^n_1(\beta,\Lambda^{n,2},\ldots,\Lambda^{n,n}) - J^n_1(\Lambda^{n,1},\ldots,\Lambda^{n,n}).
\end{align*}
Note that $\epsilon_n \ge 0$, and by symmetry it holds for any $i \in \{1,\ldots,n\}$ that
\begin{align*}
\epsilon_n = \sup_{\beta \in \RCM_n}J^n_i(\Lambda^{n,1},\ldots,\Lambda^{n,i-1},\beta,\Lambda^{n,i+1},\ldots,\Lambda^{n,n}) - J^n_i(\Lambda^{n,1},\ldots,\Lambda^{n,n}).
\end{align*}
Hence, $\bm{\Lambda}^n=(\Lambda^{n,1},\ldots,\Lambda^{n,n})$ is an $\epsilon_n$-Nash equilibrium. Assumption \ref{assumption:C} lets us apply the result of \cite[Theorem 2.5(2)]{lacker2018strong} (or more specifically Remark 2.7 therein), a strong form of propagation of chaos, to conclude that $\mu^n \rightarrow m$ in law in $\CP$. Moreover, for any $t \in [0,T]$ and any bounded measurable (not necessarily continuous) function $\varphi : \R^d \rightarrow \R$, we have
\begin{align}
\int_{\R^d} \varphi\,d\mu^n_t[\bm{\Lambda}^n] \rightarrow \int_{\R^d} \varphi\,dm_t, \label{pf:strongMFElimit0-1}
\end{align}
in probability.

It remains to show that $\epsilon_n \rightarrow 0$. 
Fix arbitrarily a sequence $\beta^n \in \RCM_n$ such that 
\begin{align}
J^n_1(\beta^n,\Lambda^{n,2},\ldots,\Lambda^{n,n}) \ge \sup_{\beta \in \RCM_n}J^n_1(\beta,\Lambda^{n,2},\ldots,\Lambda^{n,n}) - \frac{1}{n}. \label{pf:strongMFElimit1}
\end{align}
Abbreviate $\bm{X}^n=(X^{n,1},\ldots,X^{n,n})=\bm{X}[\bm{\Lambda}^n]$ and $\mu^n=\mu^n[\bm{\Lambda}^n]$, as well as
\begin{align*}
\bm{Y}^n &= (Y^{n,1},\ldots,Y^{n,n}) = \bm{X}[(\beta^n,\Lambda^{n,2},\ldots,\Lambda^{n,n})], \\
\nu^n &= \mu^n[(\beta^n,\Lambda^{n,2},\ldots,\Lambda^{n,n})].
\end{align*}
In particular, the state process $\bm{X}^n$ follows the SDEs
\begin{align*}
dX^{n,i}_t &= \int_A b(t,X^{n,i}_t,\mu^n_t,a)\Lambda^*(t,X^{n,i}_t)(da)dt + dW^i_t, \quad \ \  \mu^n_t = \frac{1}{n}\sum_{k=1}^n\delta_{X^{n,k}_t},
\end{align*}
whereas $\bm{Y}^n$ follows the SDEs
\begin{align*}
dY^{n,1}_t &= \int_Ab(t,Y^{n,1}_t,\nu^{n}_t,a)\beta^n(t,\bm{Y}^{n})(da)dt + dW^1_t, \\
dY^{n,k}_t &= \int_Ab(t,Y^{n,k}_t,\nu^{n}_t,a)\Lambda^*(t,Y^{n,k}_t)(da)dt + dW^k_t, \quad i \neq 1, \\
\nu^n_t &= \frac{1}{n}\sum_{j=1}^n\delta_{Y^{n,j}_t}, 
\end{align*}
Suppose that $\bm{X}^n$ is defined on a filtered probability space $(\Omega^n,\F^n,\FF^n,\PP^n)$, where $W^k$ are of course assumed to be $\FF^n$-Brownian motions. (We will avoid giving a name to whatever probability space $\bm{Y}^n$ is defined on, which may be different.)
Define a probability measure $\QQ^n$ on $(\Omega^n,\F^n,\FF^n)$ by
\begin{align*}
\frac{d\QQ^n}{d\PP^n} &= \exp\Bigg( \int_0^T \int_Ab(t,X^{n,1}_t,\mu^n_t,a)(\beta^n(t,\bm{X}^n) - \Lambda^*(t,X^{n,1}_t))(da) dW^1_t \\
	&\quad\quad - \frac12\int_0^T\left|\int_Ab(t,X^{n,1}_t,\mu^n_t,a)(\beta^n(t,\bm{X}^n) - \Lambda^*(t,X^{n,1}_t))(da)\right|^2dt \Bigg),
\end{align*}
By Girsanov's theorem and uniqueness of the SDEs, we have $\QQ^n \circ (\bm{X}^n)^{-1} = \L(\bm{Y}^n)$.
Boundedness of $b$ implies that 
\begin{align*}
\sup_{n \in \N}\E^{\PP^n}\left[\left|\frac{d\QQ^n}{d\PP^n}\right|^p\right] < \infty,
\end{align*}
for all $p \ge 1$.  Hence, because $\mu^n$ converges in probability to $m$ under $\PP^n$ (in the sense that $\lim_{n\rightarrow\infty}\PP^n(\mu^n \notin U) = 0$ for any open neighborhood $U$ of $m$ in $\CP$), it also converges in probability to $m$ under $\QQ^n$. But $\QQ^n \circ (\mu^n)^{-1} = \L(\nu^n)$, and so $\nu^n \rightarrow m$ in probability.\footnote{For a metric space $(E,d)$, a point $e_0 \in E$, and a sequence $\xi_n$ of $E$-valued random variables, perhaps defined on different probability spaces, recall that $\L(\xi_n) \rightarrow \delta_e$ weakly if and only if $\xi_n \rightarrow e$ in probability, which means $\lim_{n\rightarrow\infty}\PP(d(\xi_n,e_0) > \epsilon) = 0$ for all $\epsilon > 0$.}

Now, view $(Y^{n,1},\beta^n(\cdot,\bm{Y^n}),W^1)$ as a random element of $\C^d \times \V \times \C^d$, where the space $\V$ of relaxed controls was defined in Section \ref{se:relaxedcontrols}. Recalling that $\V$ is compact, it is straightforward to check that this sequence is tight. Letting $(Y,\beta,W)$ denote any subsequential limit point, one readily checks using continuity of $b$ that $W$ is a Brownian motion with respect to the filtration $(\sigma(Y_s,\beta_s,W_s : s \le t))_{t \in [0,T]}$, that $Y_0 \sim \lambda$, and that the SDE holds,
\[
dY_t = \int_A b(t,Y_t,m_t,a)\beta_t(da)dt + dW_t.
\]
Use Lemma \ref{le:conditional-marginal-meanmeasure} to find a measurable function $\Lambda : [0,T] \times \R^d \rightarrow \P(A)$ such that
\begin{align*}
\Lambda(t,Y_t) = \E[\beta_t \, | \, Y_t], \ \ a.s., \ \ a.e. \ t,
\end{align*}
in the sense of mean measures. Apply Theorem \ref{th:markovprojection} to find that $Y_t\stackrel{d}{=} Z_t$ for all $t \in [0,T]$, where $Z$ is the unique strong solution of the SDE
\[
dZ_t = \int_A b(t,Z_t,m_t,a)\Lambda(t,Z_t)(da)dt + dW_t.
\]
Using the assumption that $f$ and $g$ are bounded and continuous, we conclude that, along the same convergent subsequence for which $(Y^{n,1},\beta^n(\cdot,\bm{Y^n}),W^1)$ converges to $(Y,\beta,W)$, we have
\begin{align*}
\lim_n \ &\E\left[\int_0^T\int_A f(t,Y^{n,1}_t,\nu^{n}_t,a)\beta^n(t,\bm{Y}^{n})(da)dt + g(Y^{n,1}_T,\nu^{n}_T)\right]  \\
	&= \E\left[\int_0^T\int_A f(t,Y_t,m_t,a)\beta_t(da)dt + g(Y_T,m_T)\right] \\
	&= \E\left[\int_0^T\int_A f(t,Y_t,m_t,a)\Lambda(t,Y_t)(da)dt + g(Y_T,m_T)\right] \\
	&= \E\left[\int_0^T\int_A f(t,Z_t,m_t,a)\Lambda(t,Z_t)(da)dt + g(Z_T,m_T)\right] \\
	&\le \E\left[\int_0^T\int_A f(t,X^*_t,m_t,a)\Lambda^*(t,X^*_t)(da)dt + g(X^*_T,m_T)\right],
\end{align*}
where the last inequality is from the optimality part of the assumption that $m$ is a strong MFE. This inequality holds for any convergent subsequence of the tight sequence $(Y^{n,1},\beta^n(t,\bm{Y^n}),W^1)$, and we conclude that 
\begin{align*}
\limsup_{n\rightarrow\infty}J^n_1(\beta^n,\Lambda^{n,2},\ldots,\Lambda^{n,n}) &= 
\limsup_{n\rightarrow\infty} \ \E\left[\int_0^T\int_A f(t,Y^{n,1}_t,\nu^{n}_t,a)\beta^n(t,\bm{Y}^{n})(da)dt + g(Y^{n,1}_T,\nu^{n}_T)\right]  \\
	&\le \E\left[\int_0^T\int_A f(t,X^*_t,m_t,a)\Lambda^*(t,X^*_t)(da)dt + g(X^*_T,m_T)\right].
\end{align*}

On the other hand, notice that the convergence $\mu^n \rightarrow m$ implies
\begin{align*}
\lim_{n\rightarrow\infty}J^n_1(\bm{\Lambda}^n) &= \lim_{n\rightarrow\infty}\E\left[\int_0^T\int_A f(t,X^{n,1}_t,\mu^n_t,a)\Lambda^*(t,X^{n,1}_t)(da)dt + g(X^{n,1}_T,\mu^n_T)\right] \\
	&= \lim_{n\rightarrow\infty}\E\int_0^T\int_{\R^d}\int_A f(t,x,\mu^n_t,a)\Lambda^*(t,x)(da)\mu^n_t(dx)dt + \E\int_{\R^d}g(x,\mu^n_T)\mu^n_T(dx) \\
	&= \int_0^T\int_{\R^d}\int_A f(t,x,m_t,a)\Lambda^*(t,x)(da)m_t(dx)dt + \int_{\R^d}g(x,m_T)m_T(dx) \\
	&= \E\left[\int_0^T\int_A f(t,X^*_t,m_t,a)\Lambda^*(t,X^*_t)(da)dt + g(X^*_T,m_T)\right],
\end{align*}
where the second line used symmetry and the third used \eqref{pf:strongMFElimit0-1} to deal with the fact that $\Lambda^*$ may be discontinuous.
Recalling the previous inequality and \eqref{pf:strongMFElimit1}, we conclude that $\epsilon_n \rightarrow 0$. \hfill\qedsymbol

\subsection{Uncontrolled models and ill-posed ODEs} \label{se:contrived}

Weak MFE are easy to construct by building degenerate control problems into ill-posed McKean-Vlasov equations or ODEs, as illustrated in this section. Suppose the drift function is the trivial
\begin{align*}
b(t,x,m,a) = B(\overline{m}),
\end{align*}
for some bounded continuous function $B : \R^d \rightarrow \R^d$, where we again denote by $\overline{m}$ the mean of a measure $m \in \P(\R^d)$, if it exists. The state process $(X^1,\ldots,X^n)$ of the $n$-player game are then un-controlled, and we do not even need to specify objective functions $(f,g)$ or an action space $A$.
The dimension $d$ is arbitrary. 
The state processes then evolve according to
\begin{align}
dX^i_t = B(\overline\mu^n_t)dt + dW^i_t, \quad \mu^n_t = \frac{1}{n}\sum_{k=1}^n\delta_{X^k_t}, \label{ex2:SDE}
\end{align}
with i.i.d.\ initial states given by $\lambda$.

This is the unique $n$-player equilibrium, and the above SDE system \eqref{ex2:SDE} is unique in law. But a broad range of $n\rightarrow\infty$ limiting behavior is possible here, and there are potentially multiple (weak) MFE. Averaging \eqref{ex2:SDE} over $i=1,\ldots,n$, the empirical mean is seen to follow
\begin{align*}
d\overline\mu^n_t = B(\overline\mu^n_t)dt + \frac{1}{\sqrt{n}}d\overline W_t,
\end{align*}
where $\overline W := \frac{1}{\sqrt{n}}\sum_{k=1}^nW^k$ is a Brownian motion. The sequence of real-valued processes $(\overline\mu^n_t)_{t \in [0,T]}$ is easily seen to be tight (using, e.g., Aldous' criterion for tightness \cite[Lemma 16.12]{kallenberg-foundations}), and it is straightforward to check that every weak limit is supported on the set $S_{\mathrm{ODE}} \subset C([0,T];\R)$ consisting of those functions $x=x(t)$ satisfying the integral equation
\begin{align}
x(t) = \overline{\lambda} + \int_0^tB(x(s))ds, \quad \forall t \in [0,T]. \label{ex2:ODE}
\end{align}
It can be checked that a $\P(\R^d)$-valued process $\mu=(\mu_t)_{t \in [0,T]}$ is a weak MFE if and only if $(\overline\mu_t)_{t \in [0,T]}$ belongs almost surely to $S_{\mathrm{ODE}}$ and $\mu_t$ is precisely 
\[
\mu_t = \mathcal{N}_d\left(\overline{\lambda} + \int_0^tB(\overline\mu_s)ds, \, tI\right),
\]
where $\mathcal{N}_d(m,\Sigma)$ denotes the $d$-dimensional Gaussian law with mean vector $m$ and covariance matrix $\Sigma$. In particular, weak MFE are parametrized by mixtures of solutions of the ODE \eqref{ex2:ODE}.

Of course, in some cases, such as if $B$ is Lipschitz, this ODE has a unique solution. In this case, there is a unique MFE, and the $n$-player equilibrium converges to it. But without uniqueness for \eqref{ex2:ODE}, anything could happen. The vanishing noise limit $n\rightarrow\infty$ may select one particular solution, or it may fail to converge at all. See \cite{bafico1982small,trevisan2013zero} for examples of this phenomenon.

\subsection{A game-theoretic example} \label{se:ex:weakMFEconverse}
We now turn to a more interesting example, in which the nonuniqueness of the MFE comes from the game-theoretic aspect rather than from ill-posed state process dynamics. 
In particular, this example admits many weak MFE which are not mixtures of strong MFE.
Consider the $d=1$-dimensional mean field game described by the coefficients
\begin{align*}
b(t,x,m,a) = a, \quad f \equiv 0, \quad g(x,m) = x\overline{m}, \quad A = [-1,1], \quad \lambda = \delta_0,
\end{align*}
where $\overline{m} = \int_\R y\,m(dy)$.
This example was analyzed in \cite[Section 3.3]{lacker2016general}. It was shown in Proposition 3.6 therein that there are precisely three strong MFE, $m^{-1}$, $m^0$, and $m^1$, defined by
\begin{align}
m^c_t = \L(ct + W_t), \quad\quad \text{for} \quad c \in \{-1,0,1\}. \label{ex:strongMFE-3}
\end{align}
On the other hand, there are infinitely many weak MFE, many of which are \emph{not mixtures} of these three strong MFE. In \cite[Proposition 3.7]{lacker2016general}, one such weak MFE was constructed explicitly, and we elaborate somewhat on this construction below. Note that \cite{lacker2016general} works with \emph{weak MFG solutions} in the sense of Definition \ref{def:weakMFGsolution} instead of our notion of \emph{weak semi-Markov MFE} (Definition \ref{def:MFEsemimarkov}), but we saw in Section \ref{se:closedloopvsopenloop-proofs} that the two are equivalent in a sense.

To construct a family of weak MFE, let $t_0 \in [0,T]$, and let $(\Omega,\F,\PP)$ be any probability space supporting a Brownian motion $W$ and an independent random variable $\gamma$ with $\PP(\gamma=1)=\PP(\gamma=-1)=1/2$.
Define a $\P(\R^d)$-valued process $\mu=(\mu_t)_{t \in [0,T]}$ by
\begin{align}
\mu_t = \begin{cases}
\L(W_t) &\text{if } t \le t_0 \\
\L(W_t + \gamma(t-t_0) \, | \, \gamma) &\text{if } t \in (t_0,T],
\end{cases}  \label{ex:mu-dynamics}
\end{align}
and note that the mean of $\mu_t$ is
\begin{align}
\overline{\mu}_t = \gamma(t - t_0)^+. \label{ex:overlinemu-dynamics}
\end{align}

Suppose $\FF=(\F_t)_{t \in [0,T]}$ is the complete filtration generated by the processes $(W,\mu)$. In particular, $\F_t = \sigma(W_s : s \le t)$ for $t \le t_0$, and $\F_t = \sigma(W_s,\gamma : s \le t)$ for $t \in (t_0,T]$. Define the state process
\[
dX^*_t = \gamma 1_{(t_0,T]}(t)dt + dW_t, \quad X^*_0=0,
\]
and define a control $\alpha^*_{t_0} : [0,T] \times \R \rightarrow \R$ by
\[
\alpha^*_{t_0}(t,x) = \mathrm{sgn}(x)1_{(t_0,T]}(t),
\]
where
\[
\mathrm{sgn}(x) := \begin{cases}
1 &\text{if } x > 0 \\
-1 &\text{if } x < 0 \\
0 &\text{if } x = 0.
\end{cases}
\]
Then $\gamma = \mathrm{sgn}(\overline\mu_t)$ for $t > t_0$, and we can rewrite the dynamics of $X^*$ as
\begin{align}
dX^*_t = \alpha^*_{t_0}(t,\overline{\mu}_t)dt + dW_t. \label{intro:example:MKVequation}
\end{align}

We claim that $(\Omega,\F,\FF,\PP,W,\alpha^*_{t_0},X^*,\mu)$ is a weak MFE in the sense of Definition \ref{def:MFEsemimarkov-strict}. 
To check that the consistency condition $\mu_t = \L(X^*_t \, | \, \F^\mu_t)$ holds, note first that the $\sigma$-field $\F^\mu_t := \sigma(\mu_s : s \le t)$ is trivial if $t \le t_0$ and is equal to $\sigma(\gamma)$ if $t \in (t_0,T]$. Hence,
\begin{align*}
\L(X^*_t \, | \, \F^\mu_t) &= \L\left( \left. W_t + \int_0^t\alpha^*_{t_0}(s,\overline{\mu}_s)ds \, \right| \, \F^\mu_t\right)  = \L\left( \left. W_t + \gamma(t-t_0)^+ \, \right| \, \F^\mu_t\right) = \mu_t.
\end{align*}
We must lastly check that the control $\alpha^*_{t_0}$ defined above is optimal.
Fix an alternative semi-Markov control $\alpha=\alpha(t,x,m)$, and define the state process 
\[
dX'_t = \alpha(t,X'_t,\mu)dt + dW_t, \quad X'_0 = 0.
\]
The corresponding reward, using the fact that $\mu$ and $W$ are independent, is
\begin{align*}
J(\alpha) &:= \E[X'_T\overline\mu_T] = \E\left[\int_0^T\alpha(t,X'_t,\mu)dt\right] = \E\left[\int_0^T\alpha(t,X'_t,\mu)\E[\overline\mu_T \, | \, \F_t]dt\right] \\
	&= \E\left[\int_0^T \alpha(t,X'_t,\mu)\mathrm{sgn}(\overline\mu_t)\,dt\right].
\end{align*}
Indeed, the last step follows from the independence of $W$ and $\mu$, which yields
\begin{align*}
\E[\overline\mu_T \, | \, \F_t] &= \begin{cases}
\overline\mu_T = \mathrm{sgn}(\overline\mu_T) = \mathrm{sgn}(\overline\mu_t) &\text{if } t \in (t_0,T] \\
\E[\overline\mu_T] = 0 &\text{if } t \le t_0
\end{cases} \\
	&= \alpha^*_{t_0}(t,\overline\mu_t).
\end{align*}
The optimizers of $J(\alpha)$ over $\alpha$ are precisely those $\alpha$ which satisfy
\[
\alpha(t,X'_t,\mu) = \alpha^*_{t_0}(t,\overline\mu_t), \quad \text{for } t \in (1,T].
\]
In particular, the control $\alpha^*_{t_0}$ itself above is optimal, and we conclude that $(\Omega,\F,\FF,\PP,W,\alpha^*_{t_0},X^*,\mu)$ is a weak semi-Markov MFE.

\begin{remark}
This example notably illustrates weak MFE which are not mixtures of strong MFE. Indeed, recall from \eqref{ex:strongMFE-3} that the three strong MFE are $m^{-1},m^0,m^1$. The weak MFE $\mu$ constructed above satisfies in particular $\PP(|\overline\mu_T| = T-t_0) = 1$. Hence, unless $t_0=0$ or $t_0=T$, this weak MFE is not a mixture of strong MFE.
\end{remark}

The McKean-Vlasov equation in \eqref{intro:example:MKVequation} is ill-posed (by design), which renders this example difficult to analyze.
Indeed, consider the set $S^*_{t_0}$ of $m \in \CP$ such that there exists a solution of
\[
dX_t = \alpha^*_{t_0}(t,\overline{m}_t)dt + dW_t, \quad X_0=0, \quad \L(X_t)=m_t, \  \ \forall t \in [0,T].
\]
Taking expectations, we find 
\begin{align}
d\overline{m}_t = \alpha^*_{t_0}(t,\overline{m}_t)dt, \quad \overline{m}_0=0. \label{intro:ODEillposed}
\end{align}
This is an ill-posed ODE, and its solutions (on the time interval $[0,T]$) are precisely the functions $\{H_s^\pm : s \in [t_0,T]\}$, where
\begin{align}
H_s^\pm(t) = \pm (t-s)^+, \label{def:Hpm-ODE}
\end{align}
noting that $H^{\pm}_T \equiv 0$. Note then that $S^*_{t_0}$ consists of precisely the measure flows of the form $(\L(W_t + H_s^{\pm}(t)))_{t \in [0,T]}$, for $s \in [t_0,T]$.

On the other hand, suppose we construct the natural $n$-particle system
\begin{align*}
dX^i_t = \alpha^*_{t_0}(t,\overline\mu^n_t)dt + dW^i_t, \quad X^i_0=0, \quad \mu^n_0 = \frac{1}{n}\sum_{k=1}^n\delta_{X^k_t}.
\end{align*}
Averaging over $i=1,\ldots,n$, we find that the empirical mean satisfies
\begin{align}
d\overline\mu^n_t = \alpha^*_{t_0}(t,\overline\mu^n_t)dt + \frac{1}{\sqrt{n}}d\overline{W}_t, \quad \overline\mu^n_t = 0, \label{ex:nparticle-meanflow}
\end{align}
where $\overline{W}_t = \frac{1}{\sqrt{n}}\sum_{k=1}^nW^k_t$ is a Brownian motion. One would expect that as $n\rightarrow\infty$ the limit points of $(\overline\mu^n_t)_{t \in [0,T]}$ are supported on solutions of the ODE \eqref{intro:ODEillposed}. But, in fact, this is a well understood example of the ``regularization by noise" phenomenon, and a particular mixture is picked out in the limit $n\rightarrow\infty$. Indeed, the law of $(\overline\mu^n_t)_{t \in [0,T]}$ converges to the mixture $\frac12 \delta_{H^+_{t_0}} + \frac12 \delta_{H^-_{t_0}}$; this was proven in \cite{trevisan2013zero} in the case $t_0=0$, and the extension to general $t_0$ is straightforward. In addition, one can deduce from this that the full measure flow $\mu^n$, not just its mean, converges in law in $\CP$ to $\mu$ defined in \eqref{ex:mu-dynamics}.

In light of this discussion, and after studying the proof of Theorem \ref{th:converselimit-strongMFE}, it is natural to guess that
\begin{align}
\alpha^{n,i}_{t_0}(t,\bm{x}) := \alpha^*_{t_0}\left(t,\frac{1}{n}\sum_{k=1}^nx_k\right) \label{def:ex:alpha-n,i}
\end{align}
defines an approximate (Markovian) Nash equilibrium for the $n$-player game, for any $t_0 \in [0,T]$. For $t_0=T$ this is true and follows from Theorem \ref{th:converselimit-strongMFE}, because the MFE $\mu_t=\L(W_t)$ is strong in this case. For general $t_0 \in [0,T)$ it is not as clear, and we have resolved only the $t_0=0$ case:

\begin{proposition} \label{pr:ex:weakMFEconverse}
Let $\bm{\alpha}^n=(\alpha^{n,1}_0,\ldots,\alpha^{n,n}_0)$, where $\alpha^{n,i}_0$ are defined as in \eqref{def:ex:alpha-n,i} with $t_0=0$. Then there exists $\epsilon_n \ge 0$ with $\epsilon_n \rightarrow 0$ such that $\bm{\alpha}^n$ is a Markovian $\epsilon_n$-Nash equilibrium for each $n$. Moreover, the law of the $\C^d$-valued random variable $(\overline{\mu}^n_t[\bm{\alpha}^n])_{t \in [0,T]}$ converges to $\frac12\delta_{H^+_0} + \frac12\delta_{H^-_0}$.
\end{proposition}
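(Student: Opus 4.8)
The plan is to verify the two assertions separately: the convergence of $\overline{\mu}^n[\bm{\alpha}^n]$ will be a direct consequence of the known vanishing-noise selection result, while the $\epsilon_n$-Nash property will be reduced to a stability estimate for this selection under a single-player deviation.

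\textbf{The measure flow.} Under $\bm{\alpha}^n$ every player has drift $\mathrm{sgn}(\overline{\mu}^n_t)$, so $X^{n,i}_t = S_n(t)+W^i_t$ with $S_n(t):=\int_0^t\mathrm{sgn}(\overline{\mu}^n_s)\,ds$, and averaging over $i$ shows $\overline{\mu}^n=\overline{\mu}^n[\bm{\alpha}^n]$ solves the scalar Peano equation \eqref{ex:nparticle-meanflow} with $t_0=0$, driven by the rescaled Brownian motion $\tfrac1{\sqrt n}\overline W$. Convergence of $(\overline{\mu}^n_t)_{t\in[0,T]}$ in law to $\tfrac12\delta_{H^+_0}+\tfrac12\delta_{H^-_0}$ is then exactly \cite{trevisan2013zero} (see also \cite{bafico1982small}), which settles the last assertion. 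Although $g(x,m)=x\overline m$ is unbounded, the bounded-drift estimate gives $\sup_n\max_k\E[\sup_t|X^{n,k}_t|^p]<\infty$ for all $p$ and $|\overline{\mu}^n_t|\le T+|\overline W_t|$, so all rewards below are finite and the moment manipulations are licit. By Skorokhod representation and dominated convergence $S_n(T)$ converges in law to $\gamma T$ for a symmetric $\pm1$ variable $\gamma$, whence $\E[S_n(T)^2]\to T^2$; using exchangeability of the $n$-particle system, $\E[W^1_TS_n(T)]=\E[\overline W_TS_n(T)]=O(n^{-1/2})$, and expanding $J^n_1(\bm{\alpha}^n)=\E[(S_n(T)+W^1_T)(S_n(T)+\overline W_T)]$ yields $J^n_1(\bm{\alpha}^n)\to T^2$.

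\textbf{Reduction of the Nash property.} Put $\epsilon_n:=\max_i\bigl(\sup_{\beta\in\AM_n}J^n_i(\dots,\beta,\dots)-J^n_i(\bm{\alpha}^n)\bigr)\ge0$, which by symmetry equals its $i=1$ value; we must show $\epsilon_n\to0$. Fix a deviation $\beta\in\AM_n$ of player $1$ and let $\bm{Y}^n$, $\overline\nu^n=\tfrac1n\sum_kY^{n,k}$ be the ensuing state process and empirical mean; since the other players keep drift $\mathrm{sgn}(\overline\nu^n_t)$ and $b$ is linear, $Y^{n,k}_T=\widetilde S_n+W^k_T$ with $\widetilde S_n:=\int_0^T\mathrm{sgn}(\overline\nu^n_s)\,ds\in[-T,T]$ and $Y^{n,1}_T=A^1_n+W^1_T$ with $A^1_n:=\int_0^T\beta(t,\bm{Y}^n_t)\,dt\in[-T,T]$. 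Expanding $J^n_1(\beta,\dots)=\E[Y^{n,1}_T\overline\nu^n_T]$ through $\overline\nu^n_T=\tfrac1nY^{n,1}_T+\tfrac{n-1}{n}\widetilde S_n+\tfrac1n\sum_{k\neq1}W^k_T$, all $\tfrac1n$-weighted contributions and the cross terms of $A^1_n$ or $W^1_T$ with $\sum_{k\neq1}W^k_T$ are $O(n^{-1/2})$ \emph{uniformly in $\beta$} (using $|A^1_n|\le T$ and independence of the $W^k$), so
\begin{align}
J^n_1(\beta,\dots)=\tfrac{n-1}{n}\E[A^1_n\widetilde S_n]+\tfrac{n-1}{n}\E[W^1_T\widetilde S_n]+o(1). \label{eq:pr-ex-expansion}
\end{align}
Here $\E[A^1_n\widetilde S_n]\le T^2$ pointwise; and since the deviated system has bounded Borel drift (Markovian in the current states) it has a strong solution by Veretennikov, so $\mathrm{sgn}(\overline\nu^n_s)$ is $\sigma(W^1_r,\dots,W^n_r:r\le s)$-measurable, giving $\E[W^1_T\mathrm{sgn}(\overline\nu^n_s)]=\E[W^1_s\mathrm{sgn}(\overline\nu^n_s)]$ and hence $\E[W^1_T\widetilde S_n]=\int_0^T\E[W^1_s\mathrm{sgn}(\overline\nu^n_s)]\,ds$. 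As $J^n_1(\bm{\alpha}^n)\to T^2$, it thus suffices to prove
\begin{align}
\lim_{n\to\infty}\ \sup_{\beta\in\AM_n}\ \Bigl|\int_0^T\E\bigl[W^1_s\,\mathrm{sgn}(\overline\nu^n_s[\beta])\bigr]\,ds\Bigr|=0. \label{eq:pr-ex-goal}
\end{align}

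\textbf{The crux: stability of the sign selection.} To prove \eqref{eq:pr-ex-goal} I would compare $\overline\nu^n=\overline\nu^n[\beta]$, on the common space carrying $W^1,\dots,W^n$, with the $W^1$-independent reference Peano process $\overline\nu^{n,\flat}$ solving $d\overline\nu^{n,\flat}_t=\tfrac{n-1}{n}\mathrm{sgn}(\overline\nu^{n,\flat}_t)\,dt+\tfrac{n-1}{n}\,d\overline W^{-1}_t$, $\overline\nu^{n,\flat}_0=0$, with $\overline W^{-1}:=\tfrac1{n-1}\sum_{k\neq1}W^k$. By independence $\int_0^T\E[W^1_s\mathrm{sgn}(\overline\nu^{n,\flat}_s)]\,ds=0$, so Cauchy--Schwarz and Fubini bound the left side of \eqref{eq:pr-ex-goal} by $2(T^2/2)^{1/2}\bigl(\E\operatorname{Leb}\{s\in[0,T]:\mathrm{sgn}(\overline\nu^n_s)\neq\mathrm{sgn}(\overline\nu^{n,\flat}_s)\}\bigr)^{1/2}$. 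Now $\overline\nu^n$ differs from $\overline\nu^{n,\flat}$ only through the drift perturbation $\tfrac1n\beta(t,\bm{Y}^n_t)$, bounded by $1/n$, and the noise perturbation $\tfrac1nW^1$, whose quadratic variation is an asymptotically vanishing fraction of that of the reference noise, so one expects both Peano processes to select the same sign with probability $\to1$, uniformly in $\beta$. Quantitatively, $\overline\nu^{n,\flat}$ becomes ``locked'' --- $|\overline\nu^{n,\flat}_t|$ exceeds the local noise scale and thereafter grows at rate $\approx\tfrac{n-1}{n}$ --- by a time $\tau_n\to0$ in probability, while on a high-probability event $\sup_{t\le T}|\overline\nu^n_t-\overline\nu^{n,\flat}_t|$ has order $(1+\sup_t|W^1_t|)/n$, which for $t\gtrsim\tau_n$ is dominated by $|\overline\nu^{n,\flat}_t|$; hence the signs can disagree only for $s$ of order $\tau_n$, and $\operatorname{Leb}\{s:\mathrm{sgn}(\overline\nu^n_s)\neq\mathrm{sgn}(\overline\nu^{n,\flat}_s)\}\le T$ always. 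Making this comparison rigorous --- controlling $\overline\nu^n-\overline\nu^{n,\flat}$ despite the \emph{expansive} nature of the $\mathrm{sgn}$ drift near the origin, and obtaining the lock-in estimate uniformly over all Markovian deviations $\beta$ --- is the main obstacle; the needed estimates should be extractable from the scaling analysis underlying \cite{trevisan2013zero,bafico1982small}. Granting \eqref{eq:pr-ex-goal}, \eqref{eq:pr-ex-expansion} gives $\limsup_n\sup_\beta J^n_1(\beta,\dots)\le T^2=\lim_nJ^n_1(\bm{\alpha}^n)$, so $\epsilon_n\to0$, which completes the proof.
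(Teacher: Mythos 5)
Your reduction is sound up to \eqref{eq:pr-ex-expansion}: the expansion of $J^n_1(\beta,\dots)$, the bound $\E[A^1_n\widetilde S_n]\le T^2$, and the identity $\E[W^1_T\widetilde S_n]=\int_0^T\E[W^1_s\,\mathrm{sgn}(\overline\nu^n_s)]\,ds$ are all fine, and this correctly isolates the real difficulty: showing that a single deviating player cannot correlate his own noise with the sign selected by the empirical mean. But at exactly that point the argument stops being a proof. The ``crux'' paragraph is a heuristic, and you acknowledge as much (``the needed estimates should be extractable from\dots''); nothing in the proposal actually establishes \eqref{eq:pr-ex-goal}. Worse, the intermediate claim that $\sup_{t\le T}|\overline\nu^n_t-\overline\nu^{n,\flat}_t|$ is of order $(1+\sup_t|W^1_t|)/n$ on a high-probability event is false as stated: the drift difference contains $\tfrac{n-1}{n}\bigl(\mathrm{sgn}(\overline\nu^n_t)-\mathrm{sgn}(\overline\nu^{n,\flat}_t)\bigr)$, and since $(\mathrm{sgn}(a)-\mathrm{sgn}(b))(a-b)\ge 0$ this term is \emph{expansive}; once the signs disagree the gap grows at rate $\approx 2$, so the $O(1/n)$ bound can only hold up to the first sign-disagreement time, which is precisely what you are trying to control. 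Turning the ``lock-in'' picture into a rigorous, deviation-uniform estimate (quantifying the probability that the reference Peano process is marginal at the decision scale, and ruling out a later flip caused by the $1/n$ drift and $\tfrac1n W^1$ noise perturbations) is the entire content of the proposition, and it is missing.

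The paper avoids this pathwise stability analysis altogether with two soft ideas you did not find. First, instead of a pathwise comparison it compares \emph{laws}: for a near-optimal deviation $\beta^n$, a Girsanov change of measure turns the deviated empirical mean into the undeviated Peano diffusion $dZ^n_t=\mathrm{sgn}(Z^n_t)dt+n^{-1/2}d\overline W_t$, with relative entropy at most $2T/n$; Pinsker's inequality then gives total-variation closeness, so the deviated mean still converges in law to $\tfrac12\delta_{H^+_0}+\tfrac12\delta_{H^-_0}$ (this is the rigorous version of ``one player cannot flip the selection''). Second, the cross term is killed at the limit, not in the prelimit: after proving tightness of $(X^n,\overline\nu^n,W^1,\beta^n)$ (with controls in $L^2([0,T];[-1,1])$ weak), any limit $(X,Y,W,\beta)$ satisfies $Y_t=t\,\mathrm{sgn}(Y_T)$, so $Y$ is measurable with respect to the germ field $\F_{0+}$ of the joint filtration, while $W$ is a Brownian motion for that filtration and hence independent of $\F_{0+}$; therefore $\E[Y_TW_T]=0$ and $\limsup_n J^n_1(\beta^n,\dots)\le T\,\E|Y_T|=T^2$, together with uniform integrability from the Gaussian moment bounds. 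You should either adopt this entropy-plus-germ-field route or supply a genuine quantitative lock-in/coupling estimate; as written, the key step is asserted rather than proved.
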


\begin{remark}
On the other hand, suppose instead that we take $\gamma$ to be $1$, $-1$, or $0$ with $\PP(\gamma =1)=\PP(\gamma=-1)=p < 1/2$ so that $\E\gamma =0$ and $\PP(\gamma =0) > 0$. Carrying out the exact same construction as above, we arrive at another weak MFE in which $(\mu,X^*)$ once again obeys the dynamics
\begin{align*}
dX^*_t &= \alpha^*_{t_0}(t,\overline\mu_t)dt + dW_t, \quad X^*_0 =0, \quad \mu_t = \L(X_t \, | \, \F^\mu_t), \ a.s., \ \forall t \in [0,T],
\end{align*}
and again with $\mu$ satisfying both \eqref{ex:overlinemu-dynamics} and \eqref{ex:mu-dynamics}. The point is that in this case the law of $(\overline\mu_t)_{t \in [0,T]}$ is given by the mixture $p \delta_{H^+_{t_0}} + p\delta_{H^-_{t_0}} + (1-2p) \delta_{0}$. This is not the mixture picked out in the limit from the $n$-particle system \eqref{ex:nparticle-meanflow}, in which we saw that the law of $\overline\mu^n$ converges to $\frac12 \delta_{H^+_{t_0}} + \frac12 \delta_{H^-_{t_0}}$. In this case, it is not clear if this particular weak MFE can arise as the limit of $n$-player approximate equilibria, but the naive construction certainly fails.
\end{remark}

\subsection{Proof of Proposition \ref{pr:ex:weakMFEconverse}}

Recall that our weak MFE $\mu$ satisfies $\overline\mu_t = \gamma t$, where $\PP(\gamma=1)=\PP(\gamma=-1)=1/2$.
The final claim of the Proposition, that the law of $\overline\mu^n[\bm{\alpha}^n]$ converges to $\frac12\delta_{H^+_0}+\frac12\delta_{H^-_0}$, was shown in \cite{trevisan2013zero} .

Define
\begin{align*}
\epsilon_n := \sup_{\beta \in \AM_n}J^n_1(\beta,\alpha_0^{n,2},\ldots,\alpha_0^{n,n}) - J^n_1(\bm{\alpha}^n).
\end{align*}
Note that $\epsilon_n \ge 0$, and by symmetry it holds for any $k \in \{1,\ldots,n\}$ that
\begin{align*}
\epsilon_n = \sup_{\beta \in \AM_n}J^n_k(\alpha_0^{n,1},\ldots,\alpha_0^{n,k-1},\beta,\alpha_0^{n,k+1},\ldots,\alpha_0^{n,n}) - J^n_k(\bm{\alpha}^n).
\end{align*}
Hence, $\bm{\alpha}^n=(\alpha_0^{n,1},\ldots,\alpha_0^{n,n})$ is an $\epsilon_n$-Nash equilibrium.
It remains to show that $\epsilon_n \rightarrow 0$.
A direct calculation, using symmetry and the fact that $|\overline\mu^n_t[\bm{\alpha}^n]| \rightarrow t$ in law, shows that 
\begin{align*}
J^n_1(\alpha_0^{n,1},\ldots,\alpha_0^{n,n}) &= \E[X_T^1[\bm{\alpha}^n]\overline\mu^n_T[\bm{\alpha}^n]] = \E[|\overline\mu^n_T[\bm{\alpha}^n]|^2] \rightarrow T^2
\end{align*}
as $n\rightarrow\infty$. Hence, to show that $\epsilon_n \rightarrow 0$, it suffices to show that
\begin{align}
\limsup_{n\rightarrow\infty} \sup_{\beta \in \AM_n}J^n_1(\beta,\alpha^{n,2},\ldots,\alpha^{n,n}) \le T^2. \label{pf:ex:weakMFEconverse1}
\end{align}

To this end, for each $n$ find $\beta^n \in \AM_n$ such that
\begin{align}
\sup_{\beta \in \AM_n}J^n_1(\beta,\alpha_0^{n,2},\ldots,\alpha_0^{n,n}) \le J^n_1(\beta^n,\alpha_0^{n,2},\ldots,\alpha_0^{n,n}) + \frac{1}{n}.\label{pf:ex:weakMFEconverse1-1}
\end{align}
Abbreviate $X^n = X^1[(\beta^n,\alpha_0^{n,2},\ldots,\alpha_0^{n,n})]$, $Y^n = \overline\mu^n[(\beta^n,\alpha_0^{n,2},\ldots,\alpha_0^{n,n})]$. Abuse notation by writing $\beta^n_t = \beta^n(t,\bm{X}_t[(\beta^n,\alpha_0^{n,2},\ldots,\alpha_0^{n,n})])$. Then 
\begin{align*}
dX^n_t &= \beta^n_tdt + dW^1_t, \\
dY^n_t &= \left(\frac{1}{n}\beta^n_t + \frac{n-1}{n}\mathrm{sgn}(Y^n_t) \right)dt + \frac{1}{n}\sum_{k=1}^ndW^k_t.
\end{align*}
Finally, we view $\beta^n$ as a random variable with values in $L^2_1 := L^2([0,T];[-1,1])$.
Equip $L^2_1$ with the subspace topology inherited from the weak topology of the Hilbert space $L^2([0,T];\R)$, and note that $L^2_1$ is then compact and metrizable.

\begin{lemma}
The sequence $(X^n,Y^n,W^1,\beta^n)$ of $\C \times \C \times \C \times L^2_1$-valued random variables is tight, and every weak limit $(X,Y,W,\beta)$ satisfies:
\begin{enumerate}[(i)]
\item $\L(Y) = \frac12\delta_{H^+_0} + \frac12\delta_{H^-_0}$, where $H^{\pm}_0$ are defined in \eqref{def:Hpm-ODE}.
\item The following equations hold, for $t \in [0,T]$:
\begin{align*}
X_t = \int_0^t\beta_sds + W_t, \quad\quad\quad\quad
Y_t = \int_0^t \mathrm{sgn}(Y_s) ds.
\end{align*}
\item $W$ is a Brownian motion with respect to the filtration $\FF=(\F_t)_{t \in [0,T]}$ defined by $\F_t = \sigma(X_s,Y_s,W_s,\beta_s : s \le t)$. 
\item $Y$ and $W$ are independent.
\end{enumerate}
\end{lemma}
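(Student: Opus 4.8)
The plan is to establish tightness in the product space $\C \times \C \times \C \times L^2_1$ componentwise and then identify the limit via a standard martingale argument, exactly paralleling the strategy in the proof of Proposition \ref{pr:1playerdeviation} and Theorem \ref{th:converselimit-strongMFE-relaxed}. First I would note that $L^2_1$ is compact, so the $\beta^n$-marginal is automatically tight; the $W^1$-marginal is constant (Wiener measure) hence tight; the $X^n$-marginal is tight because $X^n_t = \int_0^t\beta^n_s\,ds + W^1_t$ has uniformly bounded drift, so the Kolmogorov/Stroock--Varadhan tightness criterion applies (or simply: $X^n$ is an It\^o process with bounded drift and unit diffusion, as in Lemma \ref{le:tightness-main}). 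Likewise $Y^n_t = \int_0^t(\frac1n\beta^n_s + \frac{n-1}{n}\mathrm{sgn}(Y^n_s))\,ds + \overline W^n_t$ with $\overline W^n := \frac1{\sqrt n}\sum_k W^k$ a Brownian motion and drift bounded by $1$, so the same criterion gives tightness of $(Y^n)$. Joint tightness follows since each marginal is tight.

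Now fix a weak limit point $(X,Y,W,\beta)$ along a subsequence; by Skorokhod representation we may assume a.s.\ convergence on a common space. Claim (i) is precisely the content of \cite{trevisan2013zero} (the regularization-by-noise result quoted just before the statement of Proposition \ref{pr:ex:weakMFEconverse}): the law of $\overline\mu^n$ of the \emph{equilibrium} $\bm{\alpha}^n$ converges to $\frac12\delta_{H^+_0}+\frac12\delta_{H^-_0}$, and one must check that switching a single player's control from $\alpha^{n,1}_0$ to $\beta^n$ does not change this limit. This is handled by a Girsanov change of measure exactly as in Step 1 of the proof of Proposition \ref{pr:1playerdeviation}: the Radon--Nikodym density relating $\L(Y^n)$ to the law of the pure-equilibrium empirical mean has drift perturbation of order $1/n$ (only one of $n$ summands in the mean flow is affected), so its $L^p$ norms are bounded and the quadratic variation of the relevant martingale is $O(1/n)$; hence $Y^n$ and the equilibrium empirical mean have the same weak limit, namely $\frac12\delta_{H^+_0}+\frac12\delta_{H^-_0}$.

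For (ii), the identity $X_t = \int_0^t\beta_s\,ds + W_t$ passes to the limit because the map $(w,\beta)\mapsto (t\mapsto \int_0^t\beta_s\,ds + w_t)$ is continuous from $\C\times L^2_1$ to $\C$ (weak $L^2$ convergence integrates against the fixed indicator $1_{[0,t]}$). For $Y$, since $\L(Y)$ is supported on $\{H^+_0,H^-_0\}$ and both satisfy $H^\pm_0(t)=\int_0^t\mathrm{sgn}(H^\pm_0(s))\,ds$ (as $\mathrm{sgn}(\pm t)=\pm 1$ for $t>0$ and the integrand's value at $s=0$ is irrelevant to the integral), the equation $Y_t=\int_0^t\mathrm{sgn}(Y_s)\,ds$ holds a.s. For (iii), $W^1$ is an $\FF^n$-Brownian motion where $\FF^n$ is the filtration of $\bm{X}^n$, and $(X^n,Y^n,\beta^n)$ are adapted to it; passing to the limit, $W$ remains a Brownian motion with respect to $\FF=(\sigma(X_s,Y_s,W_s,\beta_s:s\le t))_{t}$ by the usual argument that $W_t - W_s$ is independent of $\F_s$ in the limit (test against bounded continuous $\F_s$-measurable functionals and use that $W^1$ has independent increments at the pre-limit level). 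Finally (iv): $W=W^1$ is independent of $\overline W^n$ asymptotically — more precisely, the pair $(W^1,\overline W^n)$ is jointly Gaussian with correlation $O(1/\sqrt n)$ (since $\overline W^n = \frac1{\sqrt n}W^1 + \frac1{\sqrt n}\sum_{k\ge 2}W^k$), hence converges in law to a pair of independent Brownian motions; since $Y$ is a measurable functional of $\overline W^n$ in the limit (it solves an SDE driven by $\overline W^n$, and by (i)--(ii) its law is determined), $Y$ and $W$ are independent.

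\textbf{Main obstacle.} The genuinely delicate point is (i) together with the independence in (iv): one must argue that the single-player deviation $\beta^n$ does not perturb the selection mechanism of \cite{trevisan2013zero}, which requires the Girsanov estimate (the $O(1/n)$ drift perturbation, uniform $L^p$ bounds on the density) combined with the fact that the limiting noise $\overline W$ driving $Y^n$ decouples from $W^1$ at rate $1/\sqrt n$. The rest is routine weak-convergence bookkeeping of the kind already carried out several times in the paper.
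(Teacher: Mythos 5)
Your tightness argument and your treatment of (i)--(iii) are essentially the paper's: tightness marginal by marginal (compactness of $L^2_1$, bounded drifts), identification of the $X$-equation from weak-$L^2$ continuity of $q\mapsto\int_0^t q_s\,ds$, and, for (i), a Girsanov comparison between $Y^n$ and the unperturbed mean-flow SDE $dZ^n_t=\mathrm{sgn}(Z^n_t)\,dt+n^{-1/2}\,d\overline W_t$, whose law converges to $\frac12\delta_{H^+_0}+\frac12\delta_{H^-_0}$ by \cite{trevisan2013zero}. Your estimate (drift perturbation $O(1/n)$ against noise of size $n^{-1/2}$, so the Girsanov exponent has quadratic variation $O(1/n)$) is the same bound the paper packages as a relative-entropy estimate plus Pinsker's inequality; either way the two laws merge in total variation, which is what (i) requires.

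The genuine gap is in (iv). You assert that $Y$ is ``a measurable functional of $\overline W^n$ in the limit'' and combine this with the asymptotic decorrelation of $(W^1,\overline W^n)$. Neither half of this stands. At the prelimit level $Y^n$ is not a functional of $\overline W^n$ alone (its drift contains $\tfrac1n\beta^n_t$, and $\beta^n$ depends on all the states, hence on $W^1$). More importantly, the noise driving $Y^n$ vanishes at rate $n^{-1/2}$, so there is no limiting SDE ``driven by $\overline W$''; the convergence provided by \cite{trevisan2013zero} is convergence in law only, and in this regularization-by-noise regime the limiting sign $\mathrm{sgn}(Y_T)$ is exactly the kind of extra randomness that need not be measurable with respect to any weak limit of the driving Brownian motion. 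Knowing that the law of $Y$ is determined (your parenthetical) does not make $Y$ a functional of the noise, so independence of $Y$ and $W$ does not follow from the Gaussian covariance computation. The paper closes (iv) by a different, softer argument you could adopt: by (iii), $W$ is a Brownian motion also for the right-continuous augmentation $\FF_+$ of $\FF$, hence $W$ is independent of $\F_{0+}$; on the other hand, (i) gives $Y_t=t\,\mathrm{sgn}(Y_T)$ a.s., so $\mathrm{sgn}(Y_T)=\lim_{t\downarrow0}Y_t/t$ and the whole path $Y$ is a.s.\ $\F_{0+}$-measurable. Independence of $Y$ and $W$ is then immediate.
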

\begin{proof}
Tightness follows from standard arguments. Let $(X,Y,W,\beta)$ denote any limit point. Clearly (iii) holds. We first check that (i) holds by showing that the law of $Y^n$ converges weakly to $\frac12\delta_{H^+_0} + \frac12\delta_{H^-_0}$. Suppose that $(X^n,Y^n,W^1,\ldots,^n,\beta^n)$ are defined on the filtered probability space probability space $(\Omega^n,\F^n,\FF^n,\PP^n)$. Define the Brownian motion $\overline{W}_t = \frac{1}{\sqrt{n}}\sum_{i=1}^nW^i_t$.
On this space, let $Z^n$ denote the unique strong solution of the SDE
\[
dZ^n_t = \mathrm{sgn}(Z^n_t) dt + \frac{1}{\sqrt{n}}d\overline{W}_t.
\]
We know from \cite{trevisan2013zero} that $\PP^n \circ (Z^n)^{-1} \rightarrow \frac12\delta_{H^+_0} + \frac12\delta_{H^-_0}$. 
Define an equivalent probability measure $\QQ^n$ by setting
\begin{align*}
\frac{d\QQ^n}{d\PP^n}  &= \exp\Bigg(\frac{1}{\sqrt{n}}\int_0^T\left(\beta^n_t - \mathrm{sgn}(Z^n_t) \right) d\overline{W}_t  - \frac{1}{2n}\int_0^T\left(\beta_t - \mathrm{sgn}(Z^n_t) \right)^2dt\Bigg).
\end{align*}
By Girsanov's theorem and uniqueness in law of the SDEs, we have $\QQ^n \circ (Z^n)^{-1} = \PP^n \circ (Y^n)^{-1}$. This yields the following bound on relative entropy:
\begin{align*}
\E^{\PP^n}\left[\frac{d\QQ^n}{d\PP^n} \log \frac{d\QQ^n}{d\PP^n}\right] &= -\E^{\QQ^n}\left[ \log \frac{d\PP^n}{d\QQ^n}\right] = \frac{1}{2n}\E^{\QQ^n}\int_0^T\left(\beta_t - \mathrm{sgn}(Z^n_t) \right)^2dt  \le \frac{2T}{n}.
\end{align*}
By Pinsker's inequality, the total variation norm of $\QQ^n - \PP^n$ converges to zero. Because $\PP^n \circ (Z^n)^{-1} \rightarrow \frac12\delta_{H^+_0} + \frac12\delta_{H^-_0}$, we conclude that also $\QQ^n \circ (Z^n)^{-1} \rightarrow \frac12\delta_{H^+_0} + \frac12\delta_{H^-_0}$. 
Recalling that $\QQ^n \circ (Z^n)^{-1} = \PP^n \circ (Y^n)^{-1}$, this completes the proof of (i).

With (i) now established, we prove (ii). It is clear that $X_t = \int_0^t\beta_sds + W_t$ holds, because $X^n_t = \int_0^t\beta^n_sds + W^n_t$ for each $n$ and because $L^2_1 \ni q \mapsto \int_0^t q_sds \in \R$ is (weakly) continuous for each $t$. Finally, note that (i) implies that $Y_t = \int_0^t\mathrm{sgn}(Y_s) ds$ for all $t$.

To check property (iii), note that the law of $W$ is clearly equal to Wiener measure, so we must only show that $W_t-W_s$ is independent of $\F_s$ for each $t > s \ge 0$. This argument is straightforward and thus omitted.

We finally show that (iv) follows from the other claims. Because $W$ is $\FF$-Brownian, it is also $\FF_+$-Brownian, where $\FF_+=(\F_{t+})_{t \in [0,T]}$ denotes the right-continuous augmentation, defined by $\F_t = \cap_{\epsilon > 0}\F_{t+\epsilon}$. In particular, $W$ is independent of $\F_{0+}$. Now, from (i) we may write $Y_t = t\,\mathrm{sgn}(Y_T)$ a.s., from which we conclude that the entire process $Y$ is a.s.-measurable with respect to $\F_{0+}$. Hence, $Y$ and $W$ are independent.
\end{proof}

With this Lemma in hand, we now complete the proof of Proposition \ref{pr:ex:weakMFEconverse}.
Working with a subsequence of $(X^n,Y^n,W^1,\beta^n)$ and its limit $(X,Y,W,\beta)$, we have
\begin{align*}
\lim_n  J^n_1(\beta^n,\alpha_0^{n,2},\ldots,\alpha_0^{n,n}) &= \lim_n \E[X^n_TY^n_T] \\
	&= \E[X_TY_T] = \E\left[Y_T\int_0^T\beta_tdt + Y_TW_T\right] = \E\left[Y_T\int_0^T\beta_tdt\right] \\
	&\le T\E|Y_T| = T^2,
\end{align*}
with the limit taken along the appropriate subsequence. Note that the second equality is valid in light of the simple estimate $\sup_{n \in \N}\E[|X^n_TY^n_T|^p] < \infty$ for any $p  > 1$, which provides the uniform integrability needed to pass to the limit. Finally, because this holds for each convergent subsequence, we conclude finally from
\begin{align*}
\limsup_{n\rightarrow\infty}J^n_1(\beta^n,\alpha_0^{n,2},\ldots,\alpha_0^{n,n}) \le T^2.
\end{align*}
Recalling  \eqref{pf:ex:weakMFEconverse1} and \eqref{pf:ex:weakMFEconverse1-1}, this completes the proof.\hfill\qedsymbol

\appendix

\section{SDEs with random coefficients} \label{ap:SDErandomcoeff}

This section develops some intuitively clear but somewhat delicate technical points regarding SDEs with random coefficients. 
It will be useful to write $\FF^E=(\F^E_t)_{t \in [0,T]}$ for the canonical filtration on the path space $C([0,T];E)$, defined for any Polish space $E$. 

For the rest of the section, fix a complete separable metric space $E$ (which in applications in this paper will be $E=\P(\R^d)$). As in Definition \ref{def:semiMarkovFunction}, let us say that a function $B : [0,T] \times \R^d \times \CE \rightarrow \R^d$ is semi-Markov if it is Borel measurable and satisfies $F(t,x,e)=F(t,x,e')$ whenever $(t,x) \in [0,T] \in \R^d$ and $e,e' \in \CE$ satisfy $e_s=e'_s$ for all $s \le t$.
Fix throughout the section one such semi-Markov function $B$, which we assume is bounded. Equip $\CE$ with the supremum distance.
We fix also a complete filtered probability space $(\Omega,\F,\FF,\PP)$ supporting a $d$-dimensional $\FF$-Brownian motion $W$ as well as an $\F_0$-measurable $\R^d$-valued random variable $\xi$ with law $\lambda$.

The goal of this section is to justify the following points:
\begin{enumerate}[(1)]
\item \textbf{Deterministic well-posedness:} For a deterministic $e \in \CE$, there is a unique strong solution of the SDE
\begin{align}
dX^e_t = B(t,X^e_t,e)dt + dW_t, \quad\quad X^e_0=\xi. \label{def:ap:SDE-deterministic}
\end{align}
Let $P^e \in \P(\C^d)$ denote its law. By ``strong solution" here we mean $X^e$ is adapted to the complete filtration generated by the process $(\xi,W_t)_{t \in [0,T]}$.
\item \textbf{Stochastic well-posedness:} If $\eta$ is a $\CE$-valued random variable with law $M$, independent of $(\xi,W)$, then there is a unique strong solution of the SDE
\begin{align}
dX_t = B(t,X_t,\eta)dt + dW_t, \quad\quad X_0=\xi. \label{def:ap:SDE-stochastic}
\end{align}
By ``strong solution" we mean $X$ is adapted to the complete filtration generated by the process $(\xi,W_t,\eta_t)_{t \in [0,T]}$.
\item \textbf{Consistency:} The map $\CE \ni e \mapsto P^e \in \P(\C^d)$ is universally measurable and, in the notation of part (2), provides a version of the conditional law of $X$ given $\eta$. That is, for each bounded measurable function $\varphi$ on $\CE \times \C^d$, we have
\[
\E[\varphi(\eta,X)] = \int_{\CE}M(de)\int_{\C^d}P^e(dx)\varphi(e,x).
\]
\item \textbf{Stability:} Given a uniformly bounded sequence of semi-Markov functions $B_n : [0,T] \times \R^d \times \CE \rightarrow \R^d$ satisfying $B_n(t,x,e) \rightarrow B(t,x,e)$ for $M$-a.e.\ $e$ and Lebesgue-a.e.\ $(t,x)$, we have
\[
\lim_{n\rightarrow\infty}\E[\varphi(\eta,X^n)] = \E[\varphi(\eta,X)]
\]
for each bounded measurable function $\varphi : \CE \times \C^d \rightarrow \R$, where $X^n$ is the unique strong solution of 
\begin{align}
dX^n_t = B_n(t,X^n_t,\eta)dt + dW_t, \quad\quad X^n_0=\xi. \label{def:ap:SDE-stochastic-n}
\end{align}
\item \textbf{Equivalence to forward equations:} Suppose a continuous $\P(\R^d)$-valued process $\mu=(\mu_t)_{t \in [0,T]}$ is a weak solution of the randomized Fokker-Planck equation associated to \eqref{def:ap:SDE-stochastic}. Precisely, suppose $\mu$ is adapted to the filtration generated by $\eta$, and it holds almost surely that, for all $t \in [0,T]$ and $\varphi \in C^\infty_c(\R^d)$,
\begin{align*}
\langle \mu_t,\varphi\rangle = \langle \lambda,\varphi\rangle + \int_0^t\left\langle \mu_s, \, B(s,\cdot,\eta) \cdot \nabla\varphi(\cdot) + \tfrac12\Delta\varphi(\cdot)\right\rangle ds.
\end{align*}
Then $\mu_t = \L(X_t \, | \, (\eta_s)_{s \le t})$ a.s., for each $t$, where $X$ is as in \eqref{def:ap:SDE-stochastic}.
\end{enumerate}
These results are applied in the text in the particular case $E=\P(\R^d)$, and with $\mu=\eta$ in step (5), but we find it clearer and perhaps useful on its own to work in this more general setting.

\subsection{Deterministic well-posedness} \label{se:ap:deterministic}
Part (1) of the program follows from the result of Veretennikov \cite{veretennikov1981strong} (see also \cite[Theorem 2.1]{krylov-rockner}). That is, for each $e \in \CE$ there exists a unique strong solution $X^e$ of the SDE \eqref{def:ap:SDE-deterministic}.
Let $P^e = \PP \circ (X^e)^{-1}$. In particular, pathwise uniqueness holds for this SDE, in the following sense: Suppose our probability space $(\Omega,\F,\FF,\PP)$ supports two continuous $\FF$-adapted processes $X^1,X^2$ which both satisfy
\[
dX^i_t = B(t,X^i_t,e)dt + dW_t, \quad X^i_0=\xi, \quad i=1,2,
\]
and also as usual the process $W$ is an $\FF$-Brownian motion independent of $\xi$. Then $X^1=X^2$ a.s., and the law of $X^1$ is precisely $P^e$.

We would like to be able to construct a version of $(t,\omega,e) \mapsto X^e_t(\omega)$ which is jointly measurable and which depends in an adapted fashion on $e$, but it is not clear how to do this. Uniqueness of the strong solution $X^e$ easily yields $\PP(X^e_s=X^{\tilde{e}}_s, \ \forall s \le t)=1$ whenever $t \in [0,T]$ and $e,\tilde{e} \in \CE$ satisfy $e_s=\tilde{e}_s$ for all $s \le t$. But the null set depends on $(t,e,\tilde{e})$, and we thus face a continuum of null sets. There is no continuity in $e$ to exploit, as we have made no continuity assumptions on $B$, and this is the main technical impediment to our program (1-5). Instead, we work with the law $P^e$ instead of the process $X^e$ itself.

In the following, let $\overline\FF^E=(\overline\F^E_t)_{t \in [0,T]}$ denote the universal completion of $\FF^E$. Precisely, if $\mathcal{N}^P$ denotes the set of $P$-null sets of the Borel $\sigma$-field on $\CE$, then
\[
\overline\F^E_t := \bigcap_{P \in \P(\CE)}\sigma(\F^E_t \cup \mathcal{N}^P).
\]

\begin{lemma} \label{le:Pe-adapted}
The map $\CE \ni e \mapsto P^e \in \P(\C^d)$ is universally measurable. Moreover, this map is \emph{adapted} in the sense that, for every $t \in [0,T]$ and every $S \in \F^{\R^d}_t$, the map $e \mapsto P^e(S)$ is $\overline\F^E_t$-measurable.
\end{lemma}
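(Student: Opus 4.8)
\textbf{Proof plan for Lemma \ref{le:Pe-adapted}.}

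The plan is to construct $P^e$ more explicitly via Girsanov's theorem, which will make both the measurability and the adaptedness transparent. First I would fix, once and for all, the probability space $(\C^d,\F^{\R^d}_T,\PP_0)$, where $\PP_0$ is the law of $\xi + W$, i.e.\ the law of a Brownian motion started from $\lambda$. On this canonical space let $x=(x_t)_{t\in[0,T]}$ denote the coordinate process, which is a $\PP_0$-Brownian motion with initial law $\lambda$. Since $B$ is bounded, for each $e \in \CE$ the process
\[
\zeta^e_t := \exp\left(\int_0^t B(s,x_s,e)\cdot dx_s - \frac12\int_0^t |B(s,x_s,e)|^2\,ds\right)
\]
is a genuine martingale (Novikov's condition holds trivially by boundedness), and we may define $P^e$ to be the measure with $dP^e/d\PP_0 = \zeta^e_T$ on $\F^{\R^d}_T$. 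By Girsanov's theorem, under $P^e$ the process $x$ solves the SDE $dx_t = B(t,x_t,e)\,dt + d\widetilde W_t$ with $x_0\sim\lambda$, for a $P^e$-Brownian motion $\widetilde W$; by the Veretennikov--Krylov--R\"ockner uniqueness recalled in Section \ref{se:ap:deterministic}, this law agrees with the one denoted $P^e$ there, so there is no conflict of notation.

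The key steps are then: (i) \emph{Joint measurability of $(t,\omega,e)\mapsto B(t,x_t(\omega),e)$}: this is immediate since $B$ is Borel and $(t,\omega,e)\mapsto(t,x_t(\omega),e)$ is jointly measurable. (ii) \emph{Measurability of $e\mapsto\zeta^e_t$}: the stochastic integral $\int_0^t B(s,x_s,e)\,dx_s$ can be taken to depend measurably on $e$ — e.g.\ as an $L^2(\PP_0)$-limit of Riemann-type sums $\sum B(t_{i-1},x_{t_{i-1}},e)\cdot(x_{t_i}-x_{t_{i-1}})$ over a refining sequence of partitions, each of which is manifestly jointly measurable in $(\omega,e)$; passing to an a.s.-convergent subsequence gives a version of $\int_0^t B(s,x_s,e)\cdot dx_s$ that is jointly measurable in $(\omega,e)$, hence so is $\zeta^e_t$. (iii) \emph{Measurability of $e\mapsto P^e(S)$}: for $S\in\F^{\R^d}_T$, $P^e(S) = \E^{\PP_0}[\mathbf 1_S\,\zeta^e_T]$, which by Fubini's theorem is measurable in $e$; since $S\mapsto P^e(S)$ and $e\mapsto P^e(S)$ are each measurable for fixed arguments, the map $e\mapsto P^e$ into $\P(\C^d)$ is Borel measurable (in particular universally measurable). (iv) \emph{Adaptedness}: fix $t\in[0,T]$ and $S\in\F^{\R^d}_t$. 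Because $B$ is semi-Markov, $B(s,x_s,e)$ for $s\le t$ depends on $e$ only through $(e_s)_{s\le t}$; hence the stochastic integral $\int_0^t B(s,x_s,e)\cdot dx_s$ and the finite-variation term, and therefore $\zeta^e_t$, are — as functions of $e$ — measurable with respect to $\F^E_t$ up to $\PP_0\!\times\!(\text{any }P)$-null modifications. Consequently $P^e(S) = \E^{\PP_0}[\mathbf 1_S\,\zeta^e_t]$ (using the martingale property and $S\in\F^{\R^d}_t$) is $\overline\F^E_t$-measurable in $e$: for any fixed $P\in\P(\CE)$ it agrees $P$-a.s.\ with an $\F^E_t$-measurable function, namely the one obtained by replacing $e$ with its restriction $(e_s)_{s\le t}$ in the (jointly measurable, hence $e$-measurable) formula for $\zeta^e_t$.

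The main obstacle is the measurability bookkeeping in step (ii)/(iv): a stochastic integral is only defined up to null sets, so one must be careful to produce a \emph{single} version of $\int_0^\cdot B(s,x_s,e)\cdot dx_s$ that is jointly measurable in $(\omega,e)$ and that, for each fixed $t$, factors through $(e_s)_{s\le t}$ modulo null sets. The clean way to handle this is to work with the explicit Riemann-sum approximations described above — each sum is literally a Borel function of $(\omega,e)$ that depends on $e$ only through $(e_s)_{s\le t}$ when evaluated up to time $t$ — and to fix an a.s.-convergent (in $\PP_0$, for each $e$; and one can further arrange convergence for $\PP_0\otimes\kappa$ for any fixed reference measure $\kappa$ on $\CE$) subsequence once and for all. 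Since the final claim only asserts Borel/universal measurability of $e\mapsto P^e$ and $\overline\F^E_t$-measurability of $e\mapsto P^e(S)$ — not joint measurability of the process $X^e$ itself, which as the text notes seems genuinely problematic — this level of care suffices, and no continuity in $e$ is needed.
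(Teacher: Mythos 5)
Your strategy is genuinely different from the paper's. The paper never touches stochastic integrals: it first reduces adaptedness to measurability via pathwise uniqueness (if $e$ and $\tilde e$ agree on $[0,t]$ then $X^e$ and $X^{\tilde e}$ agree on $[0,t]$ a.s., so $P^e=P^{\tilde e}$ on $\F^{\R^d}_t$), proves that $e\mapsto P^e$ is continuous when $B(t,x,\cdot)$ is continuous by means of the Stroock--Varadhan stability theorem, and then, for each fixed reference law $M$ on $\CE$, approximates $B$ by continuous drifts almost everywhere with respect to an $M$-dependent measure --- which is exactly why it only obtains universal measurability. Your Girsanov route, if completed, would in fact give more: Borel measurability of $e\mapsto P^e$, and $\F^E_t$-measurability (not merely $\overline\F^E_t$) of $e\mapsto P^e(S)$ for $S\in\F^{\R^d}_t$, since $P^e(S)=\E^{\PP_0}[1_S\,\zeta^e_t]$ and, by the semi-Markov property of $B$, $\zeta^e_t=\zeta^{e_{\cdot\wedge t}}_t$ a.s.\ for each fixed $e$, so the expectation factors exactly through the stopped path $e_{\cdot\wedge t}$.

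However, there is a genuine gap at precisely the step you identify as the crux. In the relevant generality $B$ is only Borel measurable in $(t,x)$ (in the applications $B(t,x,m)=\int_A b(t,x,m_t,a)\Lambda(t,x,m)(da)$ with $\Lambda$ merely measurable), and for such integrands the Riemann sums $\sum_i B(t_{i-1},x_{t_{i-1}},e)\cdot(x_{t_i}-x_{t_{i-1}})$ need not converge to the It\^o integral in $L^2(\PP_0)$ or in any other sense: the $L^2$ error is $\E\int_0^t|B(s,x_s,e)-B(s^n_-,x_{s^n_-},e)|^2ds$, and this requires some regularity in the time variable. Concretely, take $B(s,x,e)=1_K(s)\,v$ for a unit vector $v$ and a compact set $K\subset[0,T]$ of positive Lebesgue measure containing no dyadic rational (the complement of a small open neighborhood of the dyadics); with dyadic partitions every Riemann sum vanishes, while $\int_0^t 1_K(s)\,dx_s$ has variance $\mathrm{Leb}(K\cap[0,t])>0$. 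So your ``manifestly measurable'' approximants do not produce a version of the stochastic integral, and the joint measurability of $(\omega,e)\mapsto\zeta^e_t$ is not established. The gap is repairable by a standard detour: either replace $B$ first by the time-mollifications $B^{(n)}(s,x,e):=n\int_{(s-1/n)\vee 0}^{s}B(u,x,e)\,du$, which are jointly measurable, bounded, continuous in $s$, and converge to $B$ in $L^2(ds\otimes\PP_0)$ along $(s,x_s)$ for every fixed $e$ (Lebesgue differentiation plus absolute continuity of the law of $x_s$ for $s>0$), so that Riemann sums do converge and one can then pass to the limit with some care about $e$-dependent subsequences; or invoke directly a known result on jointly measurable versions of stochastic integrals depending measurably on a parameter (Stricker--Yor). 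With such a version in hand, the remaining steps of your argument --- the martingale identity $P^e(S)=\E^{\PP_0}[1_S\,\zeta^e_t]$ for $S\in\F^{\R^d}_t$, the identification with the paper's $P^e$ via weak uniqueness, and the factorization through $e_{\cdot\wedge t}$ --- are correct.
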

\begin{proof}
If $t \in [0,T]$, then uniqueness of the SDE ensures that if $e_s=\tilde{e}_s$ for $s \le t$ then $X^e_s = X^{\tilde{e}}_s$ for all $s \le t$, a.s. Hence, if $S \in \F^{\R^d}_t$, then $P^e(S) = \PP(X^e \in S) = \PP(X^{\tilde{e}} \in S) = P^{\tilde{e}}(S)$, and we deduce that the second claim will follow from the first.

First suppose that $B(t,x,e)$ is continuous in $e$ for each $(t,x)$. We claim that then $e \mapsto P^e$ is continuous. To see this, suppose $e^n \rightarrow e$ in $\CE$. It then holds for bounded continuous function $\varphi : [0,T] \times \R^d \rightarrow \R^d$ with compact support that
\begin{align*}
\lim_{n\rightarrow\infty}\int_0^T\int_{\R^d}B(t,x,e^n) \cdot \varphi(t,x)dxdt = \int_0^T\int_{\R^d}B(t,x,e) \cdot \varphi(t,x)dxdt.
\end{align*}
It follows from \cite[Theorem 11.3.3]{stroock-varadhan} that $P^{e^n} \rightarrow P^e$. 

We now address general $B$ by an approximation argument.
Fix a probability measure $M \in \P(\CE)$. Define the finite measure $Q$ on $[0,T] \times \R^d \times E$ by setting, for Borel sets $S$,
\[
Q(S) = \int_{\CE}\int_{\R^d}\int_0^T 1_S(t,x,e)\exp(-|x|^2)dtdxM(de).
\]
We may then find a sequence of continuous semi-Markov functions $B_n$ which converges $Q$-almost everywhere to $B$.
Define $P_n^e$ as the law of the corresponding SDE solution, i.e., $P^e_n = \PP \circ (X^{n,e})^{-1}$ where $X^{n,e}$ is given by
\[
dX^{n,e}_t = B_n(t,X^{n,e}_t,e)dt + dW_t, \quad X^{n,e}_0 = \xi.
\]
As argued in the previous paragraph, $e \mapsto P^e_n$ is continuous for each $n$. Moreover, it holds for $M$-almost every $e \in \CE$ that 
\begin{align*}
\lim_{n\rightarrow\infty}\int_0^T\int_{\R^d}B_n(t,x,e) \cdot \varphi(t,x)dxdt = \int_0^T\int_{\R^d}B(t,x,e) \cdot \varphi(t,x)dxdt
\end{align*}
for each bounded continuous function $\varphi : [0,T] \times \R^d \rightarrow \R^d$ with compact support.
It follows again from \cite[Theorem 11.3.3]{stroock-varadhan} that $P^e_n \rightarrow P^e$ for $M$-a.e.\ $e \in \CE$. Hence, the map $e \mapsto P^e$ agrees $M$-a.e.\ with a Borel measurable function, so it is measurable with respect to the $M$-completion of the Borel $\sigma$-field of $\CE$. As this holds for every choice of $M \in \P(\CE)$, the proof is complete.
\end{proof}

\subsection{Stochastic well-posedness}

We now turn to steps (2) and (3) of the program outlined at the beginning of the section, by proving weak existence and pathwise uniqueness for the SDE \eqref{def:ap:SDE-stochastic} and then identifying the law of the unique solution as $\L(\eta,X) = M(de)P^e(dx)$. As the SDE \eqref{def:ap:SDE-stochastic} has random coefficients, the original form of the Yamada-Watanabe theorem does not apply, and we instead use the generalization due to Jacod-M\'emin \cite{jacodmemin1981} to conclude, as usual, that weak existence and pathwise uniqueness are together equivalent to uniqueness in law and existence of a strong solution.
The first lemma checks that the SDE \eqref{def:ap:SDE-stochastic} is pathwise unique and identifies its law (rather, it satisfies \emph{very good pathwise uniqueness} in the language of \cite[Definition 2.24]{jacodmemin1981}).

\begin{lemma} \label{le:ap:SDErandom-uniqueness}
Let $M \in \P(\CE)$. Suppose our filtered probability space $(\Omega,\F,\FF,\PP)$ supports an $\FF$-adapted continuous $E$-valued process $\eta$ with law $M$, independent of $(\xi,W)$, as well as two $d$-dimensional $\FF$-adapted processes $(X^1,X^2)$ satisfying
\[
dX^i_t = B(t,X^i_t,\eta)dt + dW_t, \quad X^i_0 =\xi, \quad i=1,2.
\]
Define $\F^{\xi,\eta,W}_t := \sigma(\xi,\eta_s,W_s : s \le t)$, and assume that $(X^1_s,X^2_s)_{s \in [0,t]}$ is conditionally independent of $\F^{\xi,\eta,W}_T$ given $\F^{\xi,\eta,W}_t$, for each $t \in [0,T]$. 
Then $X^1=X^2$ a.s., and $\PP \circ (\eta,X^i)^{-1} = M(de)P^e(dx)$ for each $i=1,2$.
\end{lemma}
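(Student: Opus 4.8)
\textbf{Proof proposal for Lemma \ref{le:ap:SDErandom-uniqueness}.}

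The plan is to reduce the stochastic SDE to a family of deterministic SDEs by conditioning on $\eta$, and then invoke the deterministic pathwise uniqueness established via Veretennikov--Krylov--R\"ockner in Section \ref{se:ap:deterministic}. First I would introduce, for $M$-a.e.\ $e \in \CE$, a regular conditional probability $\PP^e := \PP(\,\cdot\mid \eta = e)$. The key observations to record are: (a) since $\eta$ is independent of $(\xi,W)$, under $\PP^e$ the pair $(\xi,W)$ still has its original law, i.e.\ $\xi \sim \lambda$ and $W$ is still a Brownian motion with respect to the appropriate filtration; (b) the conditional-independence hypothesis on $(X^1_s,X^2_s)_{s\le t}$ given $\F^{\xi,\eta,W}_t$ is exactly what guarantees that, under $\PP^e$, the process $W$ remains a martingale (hence Brownian motion) with respect to the filtration generated by $(\xi, W_s, X^1_s, X^2_s)_{s\le t}$ — this is the standard ``conditioning preserves the Brownian property'' argument, and it is where the hypothesis is used in an essential way. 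The main obstacle is the measurability bookkeeping here: one must check that the null set of ``bad'' $e$ (on which the conditional picture fails) is genuinely $M$-null, and that $e \mapsto \PP^e$ can be chosen so that the SDE $dX^i_t = B(t,X^i_t,e)\,dt + dW_t$ holds $\PP^e$-a.s.\ for $M$-a.e.\ $e$; this requires care because the SDE identity involves an uncountable family of times, handled as usual by continuity of both sides.

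Granting this, for $M$-a.e.\ $e$ the processes $X^1, X^2$ both solve, under $\PP^e$, the \emph{deterministic}-coefficient SDE
\[
dX^i_t = B(t,X^i_t,e)\,dt + dW_t, \quad X^i_0 = \xi, \quad i=1,2,
\]
with $W$ a $\PP^e$-Brownian motion independent of $\xi \sim \lambda$. By the pathwise uniqueness recorded in Section \ref{se:ap:deterministic} (a consequence of \cite{veretennikov1981strong,krylov-rockner}), we conclude $X^1 = X^2$ $\PP^e$-a.s.\ and $\PP^e \circ (X^i)^{-1} = P^e$ for each $i$. Integrating over $e$ against $M$ gives $\PP(X^1 = X^2) = 1$ and, for any bounded measurable $\varphi$ on $\CE \times \C^d$,
\[
\E[\varphi(\eta,X^i)] = \int_{\CE} M(de) \int_{\C^d} P^e(dx)\, \varphi(e,x),
\]
which is the claimed identity $\PP \circ (\eta,X^i)^{-1} = M(de)P^e(dx)$. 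Here I would invoke Lemma \ref{le:Pe-adapted} to know that $e \mapsto P^e$ is universally measurable, so the right-hand side makes sense.

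One technical point worth isolating as a sub-step: to carry out the conditioning argument cleanly it is convenient to assume without loss of generality that $(\Omega,\F)$ is a standard Borel space (e.g.\ pass to the canonical space of $(\xi, \eta, W, X^1, X^2)$), so that regular conditional probabilities $\PP^e$ exist. I expect the verification that $W$ is a $\PP^e$-Brownian motion with respect to the enlarged filtration — using L\'evy's characterization together with the conditional-independence assumption to check that $W$ and $|W|^2 - t\,\mathrm{Id}$ remain $\PP^e$-martingales for $M$-a.e.\ $e$ — to be the part demanding the most care, but it is a by-now routine manipulation (cf.\ the analogous arguments in \cite{jacodmemin1981} and in the treatment of the Yamada--Watanabe theorem with random coefficients). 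The remainder is a direct appeal to the deterministic theory plus Fubini.
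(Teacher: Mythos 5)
Your proposal is correct and follows essentially the same route as the paper: disintegrate with respect to $\eta$, use independence of $(\xi,W)$ and $\eta$ plus the conditional-independence hypothesis to show that $W$ remains a Brownian motion for the filtration enlarged by $(X^1,X^2)$ under $\PP(\cdot\mid\eta=e)$, then invoke the deterministic pathwise uniqueness of Section \ref{se:ap:deterministic} and integrate over $e$ (using Lemma \ref{le:Pe-adapted} for measurability of $e\mapsto P^e$). The only cosmetic difference is that you verify the conditional Brownian property via L\'evy's characterization, while the paper checks independence of the increments $(W_s-W_t)_{s\ge t}$ from the enlarged past directly; both hinge on the same conditional-independence computation.
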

\begin{proof}
By assumption, $\PP \circ \eta^{-1} = M$. Let us show that $e \mapsto P^e$ is a version of the conditional law $\PP(X^i \in \cdot \, | \, \eta=e)$. Define the regular conditional law $\CE \ni e \mapsto \QQ_e = \PP(\cdot \, | \, \eta=e) \in \P(\Omega)$. Because $W$, $\xi$, and $\eta$ are independent, we have $\QQ_e \circ (\xi,W)^{-1} = \PP \circ (\xi,W)^{-1}$ for $M$-a.e.\ $e$. Moreover, the SDE
\[
X^i_t = \xi + \int_0^tB(s,X^i_s,e)ds + W_t, \ \ \forall t \in [0,T],
\]
holds almost surely under $\QQ_e$, for $M$-a.e.\ $e$. 

We would like to conclude from pathwise uniqueness (see the first paragraph of Section \ref{se:ap:deterministic}) that $\QQ_e(X^1=X^2)=1$ and $\QQ_e \circ (X^i)^{-1} = P^e$ for $M$-a.e.\ $e$. To do so we need only to show that $W$ is an $\widetilde\FF$-Brownian motion under $\QQ_e$, for $M$-a.e.\ $e$, where $\widetilde\FF=(\widetilde\F_t)_{t \in [0,T]}$ denotes the filtration (on $\Omega$) generated by $X^1$, $X^2$, and $W$, i.e., $\widetilde\F_t = \sigma(X_s,W_s : s \le t)$. This amounts to proving that for each $t \in [0,T]$, each $\sigma(X^1_s,X^2_s : s \le t)$-measurable random variable $\varphi_t(X)$, each $\sigma(W_s : s \le t)$-measurable random variable $h_t(W)$, and each $\sigma(W_s - W_t : s \in [t,T])$-measurable random variable $h_{t+}(W)$, we have
\[
\E^{\QQ_e}[\varphi_t(X)h_t(W)h_{t+}(W)] = \E^{\QQ_e}[\varphi_t(X)h_t(W)]\E^{\QQ_e}[h_{t+}(W)].
\]
To prove this, notice that if $\varphi : \CE \rightarrow \R$ is any bounded measurable function, then (taking expectations under $\PP$)
\begin{align*}
\E[\varphi(\eta)\varphi_t(X)h_t(W)h_{t+}(W)] &= \E[\E[\varphi_t(X)|\F^{\xi,\eta,W}_T]\varphi(\eta)h_t(W)h_{t+}(W)] \\
	&= \E[\E[\varphi_t(X)|\F^{\xi,\eta,W}_t]\varphi(\eta)h_t(W)h_{t+}(W)] \\
	&= \E[\E[\varphi_t(X)|\F^{\xi,\eta,W}_t]\varphi(\eta)h_t(W)]\E[h_{t+}(W)] \\
	&= \E[\varphi(\eta)\varphi_t(X)h_t(W)]\E[h_{t+}(W)]
\end{align*}
Indeed, the second and final lines follow from the assumed conditional independence of $(X^1_s,X^2_s)_{s \le t}$ and $\F^{\xi,\eta,W}_T$ given $\F^{\xi,\eta,W}_t$, while the second to last identity follows from the fact that $(\xi,\eta,(W_s)_{s \le t})$ and $(W_s-W_t)_{s \ge t}$ are independent, which is an easy consequence of the independence of $\xi$, $\eta$, and $W$. We conclude that
\begin{align*}
\E\left[\left. \varphi_t(X)h_t(W)h_{t+}(W) \,  \right| \,  \eta \, \right]= \E\left[\left. \varphi_t(X)h_t(W) \, \right| \, \eta \, \right]\E[h_{t+}(W)], \ \ a.s.,
\end{align*}
which completes the proof.
\end{proof}

Now that we have checked pathwise uniqueness, we turn to the problem of existence. The following lemma shows that we can construct $(\eta,X)$ with law $M(de)P^e(dx)$ so that the SDE \eqref{def:ap:SDE-stochastic} does indeed hold, as well as the conditional independence property of Lemma \ref{le:ap:SDErandom-uniqueness}. This will be enough to deduce strong existence, using a form of the Yamada-Watanabe theorem \cite[Theorem 2.25]{jacodmemin1981}.

\begin{lemma} \label{le:ap:SDErandom-existence}
Let $M \in \P(\CE)$. Suppose our filtered probability space $(\Omega,\F,\FF,\PP)$ supports an $\FF$-adapted continuous $E$-valued process $\eta$ with law $M$, independent of $(\xi,W)$. Then there exists a continuous $\FF$-adapted process $X$ solving
\[
dX_t = B(t,X_t,\eta)dt + dW_t, \quad X_0 =\xi,
\]
such that $\PP \circ (\eta,X)^{-1} = M(de)P^e(dx)$.
In particular, $X$ is adapted to the complete filtration generated by the process $(\xi,\eta_t,W_t)_{t \in [0,T]}$.
\end{lemma}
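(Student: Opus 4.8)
The plan is to construct the solution $X$ by a \emph{conditioning} argument, exploiting the fact that conditionally on $\eta$ the noise $(\xi,W)$ is unchanged in law, and that the SDE with frozen path $e$ has a strong solution by Part (1). The only delicate point is that we cannot simply "plug in" $e=\eta$ because the strong solutions $X^e$ of \eqref{def:ap:SDE-deterministic} are each defined only up to an $e$-dependent null set, so there is no obvious jointly measurable version of $(t,\omega,e)\mapsto X^e_t(\omega)$. I would get around this exactly as in Lemma \ref{le:Pe-adapted}: work with the laws $P^e$, assemble the candidate joint law $M(de)P^e(dx)$, realize it on the canonical space, and \emph{then} verify it solves the SDE.

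First I would form the probability measure $\bm{\Pi} \in \P(\CE \times \C^d)$ defined by $\bm{\Pi}(de,dx) = M(de)\,P^e(dx)$; this is well-defined because $e \mapsto P^e$ is universally measurable by Lemma \ref{le:Pe-adapted}, and the disintegration is Borel after modifying on an $M$-null set. On a (possibly enlarged) probability space carrying the given $(\Omega,\F,\FF,\PP)$ with $\eta,\xi,W$, I would use a standard measurable-selection/transfer argument to construct a continuous process $X$ such that $\L(\eta,X)=\bm{\Pi}$ and such that, conditionally on $\eta=e$, the pair $(\xi,W,X)$ has the joint law of $(\xi,W,X^e)$ from \eqref{def:ap:SDE-deterministic}; concretely, since $X^e$ is a measurable functional $\Phi^e(\xi,W)$ of the driving data (being a strong solution), one selects a jointly measurable version $(e,\xi_0,w)\mapsto \Phi^e(\xi_0,w)$ off an $M$-null set of $e$'s and sets $X := \Phi^\eta(\xi,W)$; the independence of $\eta$ from $(\xi,W)$ guarantees that this $X$ has the desired conditional laws. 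Then for $M$-a.e.\ $e$ the identity $X_t = \xi + \int_0^t B(s,X_s,e)\,ds + W_t$ holds $\PP(\cdot\,|\,\eta=e)$-a.s., and integrating over $e$ against $M$ yields $dX_t = B(t,X_t,\eta)dt+dW_t$, $X_0=\xi$, $\PP$-a.s.

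Next I would check the conditional-independence hypothesis needed to invoke uniqueness and promote this weak solution to a strong one. Because $X=\Phi^\eta(\xi,W)$ with $\eta\perp(\xi,W)$ and $X^e$ adapted to the completed filtration generated by $(\xi,W_s)_{s\le t}$, the process $(X_s)_{s\le t}$ is $\F^{\xi,\eta,W}_t$-measurable; hence trivially $(X^1_s,X^2_s)_{s\le t}$ — here both equal $X$ — is conditionally independent of $\F^{\xi,\eta,W}_T$ given $\F^{\xi,\eta,W}_t$. Thus Lemma \ref{le:ap:SDErandom-uniqueness} applies and gives pathwise uniqueness together with the identification $\PP\circ(\eta,X)^{-1}=M(de)P^e(dx)$, which is the asserted law. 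Finally, pathwise uniqueness plus the just-constructed weak solution, via the Jacod–Mémin form of the Yamada–Watanabe theorem \cite[Theorem 2.25]{jacodmemin1981}, yields existence of a \emph{strong} solution, i.e.\ one adapted to the complete filtration generated by $(\xi,\eta_t,W_t)_{t\in[0,T]}$; by uniqueness the $X$ we built coincides with it a.s., so $X$ itself is strong.

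The main obstacle is the measurability bookkeeping in the construction step: one must produce $X$ with the right conditional law \emph{and} the right pathwise relation to $(\xi,W)$ simultaneously, despite having no continuity of $B$ in $e$ and only a universally measurable $e\mapsto P^e$. The clean way to handle this is to first prove the statement for continuous $B$ (where $e\mapsto X^e$ can be taken jointly measurable by a limiting/selection argument since $e\mapsto P^e$ is continuous), then pass to general bounded semi-Markov $B$ by the approximation scheme already set up in the proof of Lemma \ref{le:Pe-adapted} together with the Stability statement (Part (4)) — though to avoid circularity one should instead invoke, as above, the abstract Yamada–Watanabe theorem directly, using only weak existence (from the disintegration) and pathwise uniqueness (Lemma \ref{le:ap:SDErandom-uniqueness}), both of which hold for the general $B$ without any approximation.
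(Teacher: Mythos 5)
Your overall skeleton (assemble the candidate law $M(de)P^e(dx)$, get a weak solution, invoke pathwise uniqueness from Lemma \ref{le:ap:SDErandom-uniqueness}, and upgrade to a strong solution via the Jacod--M\'emin form of Yamada--Watanabe) is the same as the paper's. But the concrete construction you give has a genuine gap precisely at the point the paper flags as the main technical impediment. You propose to pick ``a jointly measurable version $(e,\xi_0,w)\mapsto \Phi^e(\xi_0,w)$'' of the strong solution maps and set $X:=\Phi^\eta(\xi,W)$. No such selection is available here: for each fixed $e$ the map $\Phi^e$ is defined only up to a $\PP$-null set, the null sets depend on $e$, and $B$ has no continuity in $e$ to glue them together; this is exactly the ``continuum of null sets'' obstruction stated after \eqref{def:ap:SDE-deterministic}, and it is why the paper works with the laws $P^e$ rather than the processes $X^e$. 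Moreover, if such a jointly measurable, suitably adapted family $\Phi^e$ existed, then $X=\Phi^\eta(\xi,W)$ would already be a strong solution and the entire Yamada--Watanabe detour would be unnecessary -- the existence of that selection is essentially equivalent to the conclusion of the lemma, so it cannot be invoked as a ``standard'' input.

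This matters for the second half of your argument as well: you verify the conditional-independence hypothesis of Lemma \ref{le:ap:SDErandom-uniqueness} (equivalently, that the solution measure is \emph{very good} in the sense needed for \cite[Theorem 2.25]{jacodmemin1981}) by asserting that $(X_s)_{s\le t}$ is $\F^{\xi,\eta,W}_t$-measurable, which again presupposes the flawed selection -- that adaptedness is the conclusion, not a hypothesis. Your fallback remark (``invoke the abstract Yamada--Watanabe theorem directly, using only weak existence from the disintegration and pathwise uniqueness'') is the right route, and it is the paper's route: realize $(\eta,X)$ on the canonical space $\CE\times\C^d$ under $\overline\PP(de,dx)=M(de)P^e(dx)$, define $W$ from the SDE, and check that $W$ is a Brownian motion independent of $(\xi,\eta)$. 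But Jacod--M\'emin requires more than bare weak existence plus pathwise uniqueness: one must verify the compatibility (very good solution) property for this canonical weak solution, i.e.\ that $(X_s)_{s\le t}$ is conditionally independent of $\F^{\xi,\eta,W}_T$ given $\F^{\xi,\eta,W}_t$. That verification is the bulk of the paper's proof, and it hinges on the \emph{adaptedness} of $e\mapsto P^e$ (the second assertion of Lemma \ref{le:Pe-adapted}), which lets one show that $e\mapsto\int_{\C^d}P^e(dx)\,\varphi_0(x_0)\varphi_t(x)h_t(W(e,x))$ is measurable with respect to the universally completed $\F^E_t$ and hence that conditioning on the past of $\eta$ factors correctly. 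Your proposal never supplies this verification independently of the selection argument, so as written the proof does not go through; filling it in with the computation the paper performs on the canonical space would repair it.
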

\begin{proof}
Following the strategy described above, we begin by building a weak solution. We work on the canonical space $\overline\Omega = \CE \times \C^d$. Let $(\eta,X)$ denote the canonical (coordinate) processes, and let $\overline\FF=(\overline\F_t)_{t \in [0,T]}$ denote the filtration they generate, which can be written as $\overline\F_t = \F^E_t \otimes \F^{\R^d}_t$.
Define $\overline\PP(de,dx) = M(de)P^e(dx)$.
For each $t \in [0,T]$, define $W_t : \overline\Omega \rightarrow \R$ by
\begin{align*}
W_t(e,x) = x_t - x_0 - \int_0^tB(s,x_s,e)ds,
\end{align*}
and define $\xi :=X_0$.
The process $W=(W_t)_{t \in [0,T]}$ is $\overline\FF$-progressively measurable with respect to the canonical filtration. Note that $W(e,\cdot)$ is a Brownian motion on $(\C^d,\FF^{\R^d},P^e)$, for each $e \in \CE$, by definition of $P^e$.  It follows easily that $W$ is an $\overline\FF$-Brownian motion under $\overline\PP$. Moreover, $\xi$, $\eta$, and $W$ are independent.
By construction, the SDE holds,
\[
dX_t = B(t,X_t,\eta)dt + dW_t, \quad\quad X_0=\xi.
\]

We will show that $(X_s)_{s \le t}$ is conditionally independent of $\F^{\xi,\eta,W}_T$ given $\F^{\xi,\eta,W}_t$ for each $t$, where $\F^{\xi,\eta,W}_t := \sigma(\xi,\eta_s,W_s : s \le t)$.
To prove this, fix $t \in [0,T]$ as well as random variables  $h_t(W)$, $h_{t+}(W)$, $\varphi_0(\xi)$, $\varphi_t(X)$, $\psi_t(\eta)$, and $\psi_T(\eta)$, measurable with respect to $(W_s)_{s \le t}$, $(W_s - W_t)_{s \in [t,T]}$, $\xi$, $(X_s)_{s \le t}$, $(\eta_s)_{s \le t}$, and $\eta$, respectively. Then, by definition of $\overline\PP$,
\begin{align}
\E^{\overline\PP} &\left[h_t(W) h_{t+}(W) \varphi_0(\xi)\varphi_t(X) \psi_t(\eta) \psi_T(\eta)\right] \nonumber \\
	&= \int_{\CE}M(de)\psi_t(e)\psi_T(e) \int_{\C^d}P^e(dx)\varphi_0(x_0)\varphi_t(x)h_t(W(e,x))h_{t+}(W(e,x)) \nonumber \\
	&= \langle \W, h_{t+}\rangle \int_{\CE}M(de)\psi_t(e)\psi_T(e) \int_{\C^d}P^e(dx)\varphi_0(x_0)\varphi_t(x)h_t(W(e,x)), \label{pf:ap:existence11}
\end{align} 
where $\W$ denotes Wiener measure on $\C^d$, and where the last line used the fact that $W(e,\cdot)$ is a Brownian motion on $(\C^d,\FF^{\R^d},P^e)$, mentioned above. Now, the independence of $\xi$, $\eta$, and $W$ easily implies
\begin{align*}
\E^{\overline\PP}[ \psi_T(\eta) \, | \, \F^{\xi,\eta,W}_t ] = \E^{\overline\PP}[ \psi_T(\eta) \, | \, \F^{\eta}_t ] =: \widetilde\psi_t(\eta).
\end{align*}
Moreover, because the function $\C^d \ni x \mapsto \varphi_0(x_0)\varphi_t(x)h_t(W(e,x))$ is $\F^{\R^d}_t$-measurable for each fixed $e$, the adaptedness of $e \mapsto P^e$ proven in Lemma \ref{le:Pe-adapted} implies that the function
\[
\CE \ni e \mapsto \int_{\C^d}P^e(dx)\varphi_0(x_0)\varphi_t(x)h_t(W(e,x))
\]
is $\overline\F^E_t$-measurable, and in particular it agrees $M$-a.e.\ with an $\F^E_t$-measurable function. Hence, we may conditionin on $\F^\eta_t$ on the right-hand side of \eqref{pf:ap:existence11} to get
\begin{align*}
\langle \W, h_{t+}\rangle \int_{\CE}M(de)\psi_t(e)\widetilde\psi_t(e) \int_{\C^d}P^e(dx)\varphi_0(x_0)\varphi_t(x)h_t(W(e,x)),
\end{align*} 
and we deduce from \eqref{pf:ap:existence11} that
\begin{align*}
\E^{\overline\PP}&\left[h_t(W) h_{t+}(W) \varphi_0(\xi)\varphi_t(X) \psi_t(\eta) \psi_T(\eta)\right] = \E^{\overline\PP}[h_{t+}(W)] \E^{\overline\PP}  \left[h_t(W) \varphi_0(\xi)\varphi_t(X) \psi_t(\eta) \widetilde\psi_t(\eta)\right].
\end{align*} 
Finally, recall that
\begin{align*}
\widetilde\psi_t(\eta)\E^{\overline\PP}[h_{t+}(W)] = \E^{\overline\PP}[ \psi_T(\eta) \, | \, \F^{\xi,\eta,W}_t ]\E^{\overline\PP}[ h_{t+}(W) \, | \, \F^{\xi,\eta,W}_t ],
\end{align*} 
which yields
\begin{align*}
\E^{\overline\PP} &\left[h_t(W) h_{t+}(W) \varphi_0(\xi)\varphi_t(X) \psi_t(\eta) \psi_T(\eta)\right] \\
	&= \E^{\overline\PP}  \left[h_t(W) \varphi_0(\xi)\varphi_t(X) \psi_t(\eta) \E^{\overline\PP}[ \psi_T(\eta) \, | \, \F^{\xi,\eta,W}_t ]\E^{\overline\PP}[ h_{t+}(W) \, | \, \F^{\xi,\eta,W}_t ]\right] \\
	&= \E^{\overline\PP}  \left[h_t(W)  \psi_t(\eta)\varphi_0(\xi) \E^{\overline\PP}[ \varphi_t(X) \, | \, \F^{\xi,\eta,W}_t]\E^{\overline\PP}[ \psi_T(\eta) \, | \, \F^{\xi,\eta,W}_t ]\E^{\overline\PP}[ h_{t+}(W) \, | \, \F^{\xi,\eta,W}_t ]\right].
\end{align*} 
This proves the desired conditional independence (and in fact a bit more).

Finally, we complete the proof in the manner announced before the statement of the lemma. It is well known (see, e.g., \cite[Theorem 3]{bremaud1978changes}) that the following are equivalent:
\begin{enumerate}
\item Every $\FF^{\xi,\eta,W}$-martingale is an $\FF^{\xi,\eta,W,X}$-martingale, where  $\FF^{\xi,\eta,W,X}=(\F^{\xi,\eta,W,X}_t)_{t \in [0,T]}$ is defined by $\F^{\xi,\eta,W,X}_t = \sigma(\xi,\eta_s,W_s,X_s : s \le t)$.
\item $(X_s)_{s \le t}$ is conditionally independent of $\F^{\xi,\eta,W}_T$ given $\F^{\xi,\eta,W}_t$ for each $t \in [0,T]$.
\end{enumerate}
This shows that our conditional independence property is in fact equivalent to the notion of \emph{very good solution measure} in \cite[Definition 1.7]{jacodmemin1981}. Thus, by \cite[Theorem 2.25]{jacodmemin1981}, we conclude that the solution measure is in fact strong, which means in our context that $X$ must be adapted with respect to the $\overline\PP$-completion of $\FF^{\xi,\eta,W}$.
\end{proof}

\subsection{Stability}

We turn next to part (4) of the outline from the beginning of the section. Suppose $B_n : [0,T] \times \R^d \times \CE \rightarrow \R^d$ is semi-Markov, for each $n$. Assume that $B_n$ are uniformly bounded and that $B_n(t,x,e) \rightarrow B(t,x,e)$, for $M$-a.e.\ $e \in \CE$ and Lebesgue-a.e.\ $(t,x) \in [0,T] \times \R^d$. Thanks to the work of the previous section, we may define $X^n$ as the unique strong solution of the SDE \eqref{def:ap:SDE-stochastic-n} corresponding to coefficient $B_n$. That is,
\[
dX^n_t = B_n(t,X^n_t,\eta)dt + dW_t, \quad X^n_0=\xi.
\]

\begin{lemma} \label{le:ap:approximation}
Let $M \in \P(\CE)$. Suppose our filtered probability space $(\Omega,\F,\FF,\PP)$ supports an $\FF$-adapted continuous $E$-valued process $\eta$ with law $M$, independent of $(\xi,W)$. Then, for every bounded measurable function $h : \CE \times \C^d \rightarrow \R$, we have
\begin{align}
\lim_{n\rightarrow\infty}\E[\varphi(\eta,X^n)] = \E[\varphi(\eta,X)] \label{def:ap:approximation}
\end{align}
In particular, $(\eta,X^n)$ converges in law to $(\eta,X)$.
\end{lemma}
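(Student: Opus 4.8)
The plan is to prove Lemma \ref{le:ap:approximation} by a change-of-measure / Girsanov argument that reduces the convergence for the random coefficients $B_n,B$ to the already-established consistency statement (part (3) of the outline) together with a dominated-convergence step along the Lebesgue-a.e.\ convergence hypothesis. First I would recall that by part (3) of the outline (established via Lemmas \ref{le:ap:SDErandom-uniqueness} and \ref{le:ap:SDErandom-existence}), we have $\L(\eta,X) = M(de)P^e(dx)$ and $\L(\eta,X^n) = M(de)P^e_n(dx)$, where $P^e$ (resp.\ $P^e_n$) is the law of the unique strong solution of the \emph{deterministic}-coefficient SDE \eqref{def:ap:SDE-deterministic} with coefficient $B$ (resp.\ $B_n$) frozen at $e$. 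Hence \eqref{def:ap:approximation} is equivalent to
\begin{align*}
\lim_{n\rightarrow\infty}\int_{\CE}M(de)\int_{\C^d}\varphi(e,x)\,P^e_n(dx) = \int_{\CE}M(de)\int_{\C^d}\varphi(e,x)\,P^e(dx).
\end{align*}
Since $\varphi$ is bounded and the outer integral is against the fixed probability measure $M$, by bounded convergence it suffices to show that for $M$-a.e.\ fixed $e \in \CE$ we have $\langle P^e_n,\varphi(e,\cdot)\rangle \to \langle P^e,\varphi(e,\cdot)\rangle$ — indeed, even weak convergence $P^e_n \Rightarrow P^e$ is not quite enough because $\varphi(e,\cdot)$ need only be measurable, so I would instead aim for convergence in total variation $\|P^e_n - P^e\|_{\mathrm{TV}} \to 0$ for $M$-a.e.\ $e$.

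The total variation convergence for a fixed $e$ is a classical Girsanov estimate. Fix $e$ in the full-$M$-measure set on which $B_n(t,x,e)\to B(t,x,e)$ for Lebesgue-a.e.\ $(t,x)$. Work on the canonical path space with $X^e$ the solution corresponding to $B$, so that $X^e_t = \xi + \int_0^t B(s,X^e_s,e)\,ds + W_t$, and define
\begin{align*}
\frac{dQ^e_n}{dP^e} = \exp\Bigl(\int_0^T \bigl(B_n(s,X^e_s,e)-B(s,X^e_s,e)\bigr)\cdot dW_s - \tfrac12\int_0^T |B_n(s,X^e_s,e)-B(s,X^e_s,e)|^2\,ds\Bigr),
\end{align*}
which is a genuine probability density because $B,B_n$ are uniformly bounded (Novikov's condition holds trivially). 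By Girsanov and uniqueness in law for the $B_n$-SDE (part (1)), $Q^e_n \circ (X^e)^{-1} = P^e_n$. By Pinsker's inequality, $\|P^e_n - P^e\|_{\mathrm{TV}}^2 \le 2\,H(Q^e_n \,\|\, P^e) = \E^{P^e}\bigl[\tfrac12\int_0^T |B_n(s,X^e_s,e)-B(s,X^e_s,e)|^2\,ds\bigr]$ (the relative entropy of $Q^e_n$ with respect to $P^e$ equals the expected quadratic variation term, since the stochastic-integral term has zero expectation). It remains to show this expectation tends to $0$. The integrand $|B_n - B|^2$ is bounded by a constant and tends to $0$ for Lebesgue-a.e.\ $(s,x)$; since the time-$s$ marginal of $X^e$ under $P^e$ has a density (being the law of $\xi$ plus a bounded-drift diffusion, absolutely continuous with respect to Lebesgue measure — one can cite e.g.\ the Girsanov comparison with Brownian motion, or Krylov's estimates), the set of $(s,x)$ where $B_n(s,x,e)\not\to B(s,x,e)$ is $\mu^e_s(dx)\,ds$-null, so by bounded convergence $\E^{P^e}\int_0^T|B_n-B|^2\,ds \to 0$.

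The main obstacle I expect is the measure-theoretic bookkeeping required to justify the bounded-convergence step at the outer level: one needs the map $(n,e)\mapsto \langle P^e_n,\varphi(e,\cdot)\rangle$ to be measurable in $e$ (so that the dominated convergence theorem applies over $(\CE,M)$), which follows from the universal measurability of $e\mapsto P^e$ and $e\mapsto P^e_n$ established in Lemma \ref{le:Pe-adapted}, plus measurability of $e\mapsto\varphi(e,\cdot)$ as a map into bounded measurable functions — this is where care is needed since $\varphi$ is only jointly measurable, and one should reduce first to $\varphi$ of the form $\psi(e)h(x)$ by a monotone-class argument, or simply invoke that $\langle P^e,\varphi(e,\cdot)\rangle$ is measurable because it is a disintegration of the measurable product measure $M(de)P^e(dx)$. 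A secondary (but genuinely routine) point is the absolute continuity of the marginals $\mu^e_s$ with respect to Lebesgue measure, needed to pass the Lebesgue-a.e.\ convergence hypothesis on $B_n\to B$ through the expectation; this is standard for bounded-drift SDEs driven by nondegenerate Brownian motion and can be quoted. Once these two points are dispatched, the final sentence of the lemma ($(\eta,X^n)\Rightarrow(\eta,X)$ in law) is immediate since \eqref{def:ap:approximation} holds in particular for bounded continuous $\varphi$.
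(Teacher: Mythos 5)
Your proposal is correct, but it takes a genuinely different route from the paper at the key step. Both proofs begin identically, using the consistency statement ($\L(\eta,X^n)=M(de)P^e_n(dx)$, $\L(\eta,X)=M(de)P^e(dx)$) to reduce the claim to convergence of $\langle P^e_n,\varphi(e,\cdot)\rangle$ for $M$-a.e.\ $e$, followed by dominated convergence in $e$. The paper then proves, for $M$-a.e.\ fixed $e$, weak convergence $P^e_n\rightarrow P^e$ via the Stroock--Varadhan stability theorem (the hypothesis $B_n\to B$ Lebesgue-a.e.\ feeds directly into the mollified-drift criterion, with no need for marginal densities), and upgrades from continuous to bounded measurable test functions by exhibiting the Girsanov densities $dP^e_n/d\W_\lambda$ and using their uniform $L^2(\W_\lambda)$ bound, hence weak $L^2$-precompactness. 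You instead go straight for total variation convergence $\|P^e_n-P^e\|_{\mathrm{TV}}\to 0$ by a direct change of measure from $P^e$ to $P^e_n$, Pinsker's inequality, and the observation that the time-$s$ marginals of $P^e$ are absolutely continuous with respect to Lebesgue measure for $s>0$ (so the Lebesgue-a.e.\ convergence of $B_n$ can be integrated against the occupation measure by bounded convergence). Your route is more quantitative (it yields a TV bound in terms of $\E^{P^e}\int_0^T|B_n-B|^2\,ds$) and dispenses with both the Stroock--Varadhan citation and the $L^2$-density compactness step, at the mild extra cost of the marginal-density fact, which you correctly flag and which indeed follows from $P^e\ll\W_\lambda$. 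One small correction: the identity ``relative entropy equals the expected quadratic-variation term because the stochastic integral has zero expectation'' is a computation of $H(P^e\,\|\,Q^e_n)=\E^{P^e}\bigl[\tfrac12\int_0^T|B_n-B|^2\,ds\bigr]$, not of $H(Q^e_n\,\|\,P^e)$ as you labelled it (under $Q^e_n$ the integral against $dW$ is not centered); since Pinsker applies with either ordering, this is only a relabelling and does not affect the argument. The measurability bookkeeping you raise is handled exactly as you suggest, via Lemma \ref{le:Pe-adapted} (or simply by bounding the difference by $\|\varphi\|_\infty\int\|P^e_n-P^e\|_{\mathrm{TV}}\,M(de)$).
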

\begin{proof}
From Lemmas \ref{le:ap:SDErandom-uniqueness} and \ref{le:ap:SDErandom-existence}, we know that $\PP \circ (\eta,X^n)^{-1} = M(de)P_n^e(dx)$, where we define $P_n^e := \PP \circ (X^{n,e})^{-1}$ as the law of the unique strong solution of the SDE
\[
dX^{n,e}_t = B_n(t,X^{n,e}_t,e)dt + dW_t, \quad\quad X^{n,e}_0=\xi.
\]
By assumption, for any bounded continuous function $\varphi : [0,T] \times \R^d \rightarrow \R^d$ with compact support, we have
\begin{align*}
\lim_{n\rightarrow\infty}\int_0^T\int_{\R^d}B_n(t,x,e) \cdot \varphi(t,x)dxdt = \int_0^T\int_{\R^d}B(t,x,e) \cdot \varphi(t,x)dxdt,
\end{align*}
for $M$-a.e.\ $e$. It follows from \cite[Theorem 11.3.3]{stroock-varadhan} that $P^e_n \rightarrow P^e$ for $M$-a.e.\ $e$. It follows immediately that $M(de)P^e_n(dx) \rightarrow M(de)P^e(dx)$ weakly. To prove that convergence holds for bounded measurable test functions, we need a bit more. Let $\W_\lambda \in \P(\C^d)$ denote  the law of a $d$ Brownian motion started from initial law $\lambda$. Then
\begin{align*}
\frac{dP^e_n}{d\W_\lambda}(w) = \exp\left( \int_0^T B_n(t,w_t,e) dw_t - \frac12\int_0^T|B_n(t,w_t,e)|^2dt\right).
\end{align*}
Because $B_n$ is uniformly bounded, it is straightforward to show that
\begin{align*}
\sup_{e \in \CE}\sup_{n \in \N}\int_{\C^d}\left|\frac{dP^e_n}{d\W_\lambda}\right|^2\,d\W_\lambda < \infty.
\end{align*}
This implies that the family $\{dP^e_n/d\W_\lambda : n \in \N, \, e \in \CE\}$ is precompact in $L^2(\W_\lambda)$ with the weak topology, and this is enough to let us upgrade the convergence. Indeed, we conclude that
\[
\lim_{n\rightarrow\infty}\int_{\C^d} h\,dP^e_n =\int_{\C^d} h\,dP^e, \quad \forall e \in \CE,
\]
not only for bounded continuous functions $h : \C^d \rightarrow \R$ but also for bounded measurable functions. Finally, if $h : \CE \times \C^d \rightarrow \R$ is bounded and measurable, we conclude from dominated convergence that
\[
\lim_{n\rightarrow\infty}\int_{\CE}\int_{\C^d} h(e,x)\,P^e_n(dx)\,M(de) = \int_{\CE}\int_{\C^d} h(e,x)\,P^e(dx)\,M(de).
\]
This is equivalent to the claimed \eqref{def:ap:approximation}.
\end{proof}

\subsection{Forward equations}
Let $E$ and $B$ be as in the previous section. Let $\lambda$ denote the law of the initial state $\xi$. Consider the problem of finding $(m_t)_{t \in [0,T]} \in \CP $ such that 
\begin{align}
\langle m_t,\varphi\rangle = \langle \lambda,\varphi\rangle + \int_0^t\langle m_s, \, B(s,\cdot,e) \cdot \nabla\varphi(\cdot) + \tfrac12\Delta\varphi(\cdot)\rangle ds, \label{def:ap:forwardEQ}
\end{align}
for all $t \in [0,T]$ and $\varphi \in C^\infty_c(\R^d)$.
This is nothing but the Fokker-Planck equation associated with the SDE \eqref{def:ap:SDE-deterministic}.
One solution is provided by the marginal flow $(P^e_t=\L(X^e_t))_{t \in [0,T]}$, and the following gives uniqueness.

\begin{lemma} \label{le:forwardeq}
Fix $e \in \CE$, and suppose $m \in \CP $ satisfies \eqref{def:ap:forwardEQ} for every $t \in [0,T]$ and $\varphi \in C^\infty_c(\R^d)$. Then $m_t = P^e_t$ for all $t \in [0,T]$.
\end{lemma}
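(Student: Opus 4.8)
\textbf{Proof proposal for Lemma~\ref{le:forwardeq}.}
The plan is to reduce this to the classical uniqueness theorem for linear Fokker--Planck equations with bounded (measurable) drift, which is available for instance via the superposition principle of Figalli/Trevisan or, more elementarily, via the duality with the backward (Kolmogorov) equation as in \cite{stroock-varadhan}. Fix $e \in \CE$. Once $e$ is fixed, the function $(t,x) \mapsto B(t,x,e)$ is a fixed bounded measurable $\R^d$-valued function on $[0,T] \times \R^d$, and \eqref{def:ap:forwardEQ} is exactly the statement that $m=(m_t)_{t \in [0,T]}$ is a weak (distributional) solution of $\partial_t m_t = \tfrac12\Delta m_t - \mathrm{div}(B(\cdot,e)m_t)$ with initial datum $\lambda$, in the measure sense. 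Since $P^e$ is the law of $X^e$ solving \eqref{def:ap:SDE-deterministic}, an application of It\^o's formula shows that the marginal flow $(P^e_t)_{t \in [0,T]}$ is also such a weak solution with the same initial datum. So the lemma amounts to: a weak solution of this equation with bounded drift and fixed initial datum $\lambda$ is unique, and therefore must coincide with $(P^e_t)$.

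The main step is therefore the uniqueness statement for the linear equation. I would invoke the superposition principle: any weakly continuous family $(m_t)$ of probability measures solving the Fokker--Planck equation \eqref{def:ap:forwardEQ} with $m_0 = \lambda$ can be represented as $m_t = \L(Y_t)$ for some solution $Y$ of the SDE $dY_t = B(t,Y_t,e)\,dt + dW_t$, $Y_0 \sim \lambda$, on some probability space. This is precisely the content of Trevisan's superposition principle (which applies here because $B(\cdot,e)$ is bounded, hence satisfies the requisite integrability, and $m_t$ is a flow of probability measures). But by the Veretennikov/Krylov--R\"ockner strong well-posedness recalled at the start of Section~\ref{se:ap:deterministic}, the SDE \eqref{def:ap:SDE-deterministic} has a unique-in-law solution, whose law is $P^e$. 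Hence $m_t = P^e_t$ for all $t$, which is the claim. (If one prefers to avoid the superposition principle, the same conclusion follows from the classical argument of \cite[Theorem~11.3.3 and its corollaries]{stroock-varadhan}: the martingale problem for the generator $L^e\varphi = B(\cdot,e)\cdot\nabla\varphi + \tfrac12\Delta\varphi$ is well-posed for bounded measurable drift, and well-posedness of the martingale problem is equivalent to uniqueness of solutions of the associated Fokker--Planck equation among flows of probability measures with a given initial condition.)

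One small technical point to address carefully: \eqref{def:ap:forwardEQ} only tests against $\varphi \in C^\infty_c(\R^d)$, and $m$ is only assumed continuous into $\P(\R^d)$, so I should note that this is enough regularity to apply the superposition principle (no moment bounds beyond the trivial mass-$1$ bound are needed because $B$ is bounded; the drift-integrability condition $\int_0^T \int_{\R^d} |B(t,x,e)| \, m_t(dx)\,dt < \infty$ holds trivially). I expect no real obstacle here beyond correctly citing the superposition/uniqueness result in the bounded-measurable-drift regime; the bulk of the conceptual work (strong well-posedness of the SDE) has already been imported from \cite{veretennikov1981strong,krylov-rockner} at the start of the section.
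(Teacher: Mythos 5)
Your proposal is correct and follows essentially the same route as the paper: the paper's proof also represents the Fokker--Planck solution as the marginal flow of a solution of the associated martingale problem via the superposition principle (citing Figalli and Trevisan), and then concludes from uniqueness of that martingale problem, which is exactly your combination of Trevisan's superposition principle with the Veretennikov/Krylov--R\"ockner well-posedness. Your remarks about the bounded drift supplying the needed integrability and the Stroock--Varadhan alternative are consistent with, and slightly more detailed than, the paper's two-line argument.
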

\begin{proof}
It is well known that the solution of a Fokker-Planck equation, in very general settings, can be represented as the marginal laws of a solution of the corresponding martingale problem. See, e.g., \cite[Theorem 2.6]{figalli2008existence} or \cite[Theorem 2.5]{trevisan2016well}. In our context, the martingale problem has a unique solution given by $P^e$, and the claim follows.
\end{proof}

\begin{lemma} \label{le:forwardeq-random}
Suppose our filtered probability space $(\Omega,\F,\FF,\PP)$ supports an $\FF$-adapted continuous $E$-valued process $\eta$, independent of $(\xi,W)$, as well as a continuous $\P(\R^d)$-valued process $\mu$ which is adapted to the filtration generated by $\eta$. Suppose it holds almost surely that, for all $t \in [0,T]$ and $\varphi \in C^\infty_c(\R^d)$,
\begin{align}
\langle \mu_t,\varphi\rangle = \langle \lambda,\varphi\rangle + \int_0^t\left\langle \mu_s, \, B(s,\cdot,\eta) \cdot \nabla\varphi(\cdot) + \tfrac12\Delta\varphi(\cdot)\right\rangle ds, \label{def:ap:forwardEQ-random}
\end{align}
Then $\mu_t = P^\eta_t$ for all $t \in [0,T]$ a.s. Moreover, we may find a continuous process $X$, adapted to the complete filtration generated by the process $(\xi,W_t,\eta_t)_{t \in [0,T]}$, such that
\begin{align}
dX_t = B(t,X_t,\eta)dt + dW_t, \quad\quad X_0=\xi, \label{def:ap:forwardEQ-SDE-random}
\end{align}
and also $\L(X_t \, | \, \eta) = \L(X_t \, | \, (\eta_s)_{s \le t}) = \mu_t$ a.s.\ for each $t$.
\end{lemma}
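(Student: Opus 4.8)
\textbf{Proof plan for Lemma \ref{le:forwardeq-random}.}
The plan is to reduce the stochastic statement to the deterministic one (Lemma \ref{le:forwardeq}) by conditioning on $\eta$, then to invoke the existence result Lemma \ref{le:ap:SDErandom-existence} to produce the process $X$ with the claimed conditional law. First I would condition on $\eta$: let $\CE \ni e \mapsto \QQ_e \in \P(\Omega)$ be a regular conditional law of $\PP$ given $\eta$. Since $\mu$ is adapted to the filtration generated by $\eta$, there is a Borel measurable map $\CE \ni e \mapsto m^e \in \CP$ with $m^e_s = m^e_{s\wedge t}$ depending only on $e_{\cdot\wedge t}$ up to time $t$ (i.e.\ a semi-Markov-type dependence), such that $\mu_t = m^\eta_t$ a.s.\ for each $t$. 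The almost sure identity \eqref{def:ap:forwardEQ-random}, after disintegration, gives that for $M$-a.e.\ $e \in \CE$, the deterministic flow $(m^e_t)_{t\in[0,T]}$ satisfies the deterministic Fokker-Planck equation \eqref{def:ap:forwardEQ} with coefficient $B(\cdot,\cdot,e)$ — here one has to be slightly careful to interchange the ``for all $t$ and $\varphi$'' quantifiers with ``$M$-a.e.\ $e$'' using continuity in $t$ and a countable dense family of test functions $\varphi$, exactly as is done in Step 1 of the proof of Theorem \ref{th:limit-dynamics}. Lemma \ref{le:forwardeq} then forces $m^e_t = P^e_t$ for all $t$, for $M$-a.e.\ $e$, and plugging back in $\eta$ yields $\mu_t = P^\eta_t$ for all $t$, a.s.

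For the second assertion, I would simply apply Lemma \ref{le:ap:SDErandom-existence}: it provides a continuous $\FF$-adapted process $X$ solving \eqref{def:ap:forwardEQ-SDE-random}, adapted to the complete filtration generated by $(\xi,W_t,\eta_t)_{t\in[0,T]}$, with $\PP \circ (\eta,X)^{-1} = M(de)P^e(dx)$. From this joint law, $\L(X_t \mid \eta) = P^\eta_t$ a.s.\ for each $t$. Combined with the first part, $\L(X_t \mid \eta) = \mu_t$ a.s. Finally, because $X$ is adapted to the completion of $\FF^{\xi,\eta,W}$ and $\mu_t$ is $\sigma(\eta_s : s\le t)$-measurable, one upgrades $\L(X_t \mid \eta)$ to $\L(X_t \mid (\eta_s)_{s\le t})$: indeed $X_t$ depends on $(\xi, (W_s)_{s\le t}, (\eta_s)_{s\le t})$ only (by adaptedness of the strong solution), and since $\xi$, $W$, $\eta$ are independent, conditioning on the full path of $\eta$ is the same as conditioning on $(\eta_s)_{s\le t}$ — this is the observation recorded in Remark \ref{re:Xcompatiblewithmu} and follows from the independence structure exactly as in the reverse-inclusion argument in the proof of Theorem \ref{th:weaksemiMarkov-to-weakMFGsolution}.

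The main obstacle I anticipate is the measurability bookkeeping in the disintegration step: one must produce the jointly measurable version $e \mapsto m^e$ respecting the nonanticipative (semi-Markov) structure, and then argue that the $\PP$-null set on which \eqref{def:ap:forwardEQ-random} fails disintegrates into an $M$-null set of ``bad'' $e$'s uniformly over all $t$ and $\varphi$. This is handled by first fixing a countable dense family $\{\varphi_j\} \subset C^\infty_c(\R^d)$ (dense together with first and second derivatives), noting that both sides of \eqref{def:ap:forwardEQ-random} are continuous in $t$, so the identity for all $(t,\varphi_j)$ is equivalent to the identity for all $(t,\varphi)$; then the exceptional $\PP$-null event is a countable union $\bigcup_j N_j$, and $\PP(N_j) = 0$ gives $\QQ_e(N_j) = 0$ for $M$-a.e.\ $e$, hence $\QQ_e\bigl(\bigcup_j N_j\bigr) = 0$ for $M$-a.e.\ $e$. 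Everything else is routine given Lemmas \ref{le:forwardeq}, \ref{le:ap:SDErandom-uniqueness}, and \ref{le:ap:SDErandom-existence}.
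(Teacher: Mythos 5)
Your proposal is correct and follows essentially the same route as the paper's proof: factor $\mu$ through $\eta$ as an adapted map, disintegrate to reduce \eqref{def:ap:forwardEQ-random} to the deterministic equation \eqref{def:ap:forwardEQ} for $M$-a.e.\ $e$ and apply Lemma \ref{le:forwardeq}, then invoke Lemmas \ref{le:ap:SDErandom-uniqueness} and \ref{le:ap:SDErandom-existence} to get $X$ with $\PP\circ(\eta,X)^{-1}=M(de)P^e(dx)$ and hence $\L(X_t\,|\,\eta)=P^\eta_t=\mu_t$. The only cosmetic difference is the last step, where the paper simply notes that $\E[\varphi(X_t)\,|\,\eta]=\langle\mu_t,\varphi\rangle$ is already $\sigma(\eta_s:s\le t)$-measurable and applies the tower property, whereas you argue via strong-solution adaptedness and independence; both are valid.
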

\begin{proof}
Let $M \in \P(\CE)$ denote the law of $\eta$.
As we assumed $\mu$ is adapted to the filtration of $\eta$, we may write $\mu = \widehat{\mu}(\eta)$ a.s., where $\widehat{\mu} : \CE \rightarrow \CP$ is an adapted map in the sense that $\widehat{\mu}^{-1}(S) \in \F^{\P(\R^d)}_t$ for each $S \in \F^E_t$ and each $t \in [0,T]$. Then because of \eqref{def:ap:forwardEQ-random}, for $M$-a.e.\ $e \in \CE$ and every $\varphi \in C^\infty_c(\R^d)$ and $t \in [0,T]$ it holds that
\begin{align*}
\langle \widehat{\mu}_t(e),\varphi\rangle = \langle \lambda,\varphi\rangle + \int_0^t\left\langle \widehat{\mu}_s(e), \, B(s,\cdot,e) \cdot \nabla\varphi(\cdot) + \tfrac12\Delta\varphi(\cdot)\right\rangle ds.
\end{align*}
From Lemma \ref{le:forwardeq} we conclude that $\widehat{\mu}_t(e) = P^e_t$ for each $t \in [0,T]$. As this holds for almost every $e$, we deduce the first claim: $\mu_t = P^\eta_t$ for all $t$, a.s.

Now, using Lemmas \ref{le:ap:SDErandom-uniqueness} and \ref{le:ap:SDErandom-existence}, we may safely define $X$ to be the unique strong solution of the SDE \eqref{def:ap:forwardEQ-SDE-random}, and we know that $\PP \circ (\eta,X)^{-1} = M(de)P^e(dx)$. This last identity is equivalent to $\L(X \, | \, \eta) = P^\eta$ a.s. Marginalizing at time $t$ and using the conclusion of the previous paragraph, we find $\L(X_t \, | \, \eta) = P^\eta_t = \mu_t$ for all $t$, a.s.

Lastly, to deduce that $\L(X_t \, | \, \eta) = \L(X_t \, | \, (\eta_s)_{s \le t})$, note that for any bounded measurable $\varphi : \R^d \rightarrow \R$ we have $\E[\varphi(X_t) \, | \, \eta ] = \langle \mu_t,\varphi\rangle$. As $\mu_t$ is $(\eta_s)_{s \le t}$-measurable, we may condition on $(\eta_s)_{s \le t}$ to get $\E[\varphi(X_t) \, | \, (\eta_s)_{s \le t} ] = \langle \mu_t,\varphi\rangle$.
\end{proof}

We finally note how Lemma \ref{le:forwardeq-random} specializes in the most important situation for this paper, where $E=\P(\R^d)$ and the processes $\eta$ and $\mu$ are identical. Assume now that $B : [0,T] \times \R^d \times \CP \rightarrow \R^d$ is a given bounded semi-Markov function (in the sense of Definition \ref{def:semiMarkovFunction}).

\begin{corollary} \label{co:forwardeq-random}
Suppose our filtered probability space $(\Omega,\F,\FF,\PP)$ supports an $\FF$-adapted continuous $\P(\R^d)$-valued process $\mu$, independent of $(\xi,W)$. Suppose it holds almost surely that, for all $t \in [0,T]$ and $\varphi \in C^\infty_c(\R^d)$,
\begin{align}
\langle \mu_t,\varphi\rangle = \langle \lambda,\varphi\rangle + \int_0^t\left\langle \mu_s, \, B(s,\cdot,\mu) \cdot \nabla\varphi(\cdot) + \tfrac12\Delta\varphi(\cdot)\right\rangle ds, \label{def:ap:forwardEQ-random-MVcase}
\end{align}
Then we may find a continuous process $X$, adapted to the complete filtration generated by the process $(\xi,W_t,\mu_t)_{t \in [0,T]}$, such that
\begin{align*}
dX_t = B(t,X_t,\mu)dt + dW_t, \quad\quad X_0=\xi,
\end{align*}
and also $\L(X_t \, | \, \mu) = \L(X_t \, | \, \F^\mu_t) = \mu_t$ a.s.\ for each $t$, where $\F^\mu_t = \sigma(\mu_s : s \le t)$.
\end{corollary}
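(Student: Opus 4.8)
The plan is to obtain this corollary as the $E=\P(\R^d)$, $\eta=\mu$ specialization of Lemma~\ref{le:forwardeq-random}, which has already done all the real work. Concretely, I would first set $E=\P(\R^d)$, so that $\CE=\CP$, and observe that the hypothesis that $B:[0,T]\times\R^d\times\CP\rightarrow\R^d$ is a bounded semi-Markov function is exactly the standing assumption on the coefficient in Appendix~\ref{ap:SDErandomcoeff}. Then I would apply Lemma~\ref{le:forwardeq-random} with the auxiliary process $\eta$ taken to be the given process $\mu$ itself, which plays simultaneously the role of ``$\eta$'' (the $E$-valued driving process) and of ``$\mu$'' (the $\P(\R^d)$-valued process) in that lemma.

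The only thing to check is that the hypotheses of Lemma~\ref{le:forwardeq-random} are met under this identification. The process $\mu$ is, by assumption here, a continuous $\FF$-adapted $\P(\R^d)$-valued process independent of $(\xi,W)$, which is precisely what is required of ``$\eta$''. It is trivially adapted to the filtration generated by $\eta=\mu$, which is what is required of ``$\mu$'' in the lemma. Finally, since $B(s,\cdot,\mu)=B(s,\cdot,\eta)$ when $\eta=\mu$, the assumed randomized Fokker--Planck equation \eqref{def:ap:forwardEQ-random-MVcase} coincides verbatim with \eqref{def:ap:forwardEQ-random}. Thus Lemma~\ref{le:forwardeq-random} applies and furnishes a continuous process $X$, adapted to the complete filtration generated by $(\xi,W_t,\eta_t)_{t\in[0,T]}=(\xi,W_t,\mu_t)_{t\in[0,T]}$, solving $dX_t=B(t,X_t,\eta)dt+dW_t=B(t,X_t,\mu)dt+dW_t$ with $X_0=\xi$, and satisfying $\L(X_t\,|\,\eta)=\L(X_t\,|\,(\eta_s)_{s\le t})=\mu_t$ a.s.\ for each $t$. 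Rewriting $\eta$ as $\mu$ and noting that $\sigma(\mu_s:s\le t)=\F^\mu_t$, this is exactly the claimed conclusion.

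I do not anticipate a genuine obstacle here; the content is entirely in Lemma~\ref{le:forwardeq-random} (and, behind it, Lemmas~\ref{le:ap:SDErandom-uniqueness}, \ref{le:ap:SDErandom-existence}, and \ref{le:forwardeq}). The mildly delicate point worth stating explicitly in the write-up is simply that the ``McKean--Vlasov'' appearance of $\mu$ in the coefficient causes no circularity: once the path of $\mu$ is fixed (which is legitimate since $\mu$ is an exogenously given process here, not something to be solved for), $B(t,\cdot,\mu)$ is just a fixed bounded semi-Markov coefficient and the general theory of the appendix applies directly.
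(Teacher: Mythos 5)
Your proposal is correct and is exactly the paper's own route: the paper states Corollary \ref{co:forwardeq-random} precisely as the specialization of Lemma \ref{le:forwardeq-random} to $E=\P(\R^d)$ with $\eta=\mu$, and your verification of the hypotheses (independence of $(\xi,W)$, trivial adaptedness of $\mu$ to its own filtration, and the identification of the two Fokker--Planck equations) is all that is needed.
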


\section{Joint measurability of regular conditional laws} \label{se:ap:jointmeasurability}

This section provides the details of a technical point used in various places in the paper, notably in the proof of Lemma \ref{le:projection-semiMarkov}. Therein, we wanted to define a regular conditional law in a way that is jointly measurable with respect to the underlying probability law.
The first lemma in this direction is likely known, but we include a proof. Recall that for a Polish space $E$ we always equip $\P(E)$ with the topology of weak convergence and the corresponding Borel $\sigma$-field.

\begin{lemma} \label{le:ap:measurableversion}
Let $E$ and $E'$ be Polish spaces, and let $\pi : E \rightarrow E'$ be continuous. Then there exists a measurable map $\Gamma : \P(E) \times E' \rightarrow \P(E)$ such that 
\[
\int_E F(x)h(\pi(x))\,m(dx) = \int_E\left(\int_EF\,d\Gamma(m,\pi(x))\right)h(\pi(x))\,m(dx),
\]
for all bounded measurable $F : E \rightarrow \R$ and $h : E' \rightarrow \R$.
\end{lemma}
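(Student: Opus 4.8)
\textbf{Proof plan for Lemma \ref{le:ap:measurableversion}.}

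The plan is to construct $\Gamma(m,\cdot)$ as a regular conditional probability for $m$ given the sub-$\sigma$-field $\pi^{-1}(\B(E'))$, and to verify that one can choose a jointly measurable version in $m$. First I would reduce to the case where $E'=E$ and $\pi$ is the identity restricted to a closed subset: replace $E$ by the graph $G = \{(x,\pi(x)) : x \in E\} \subset E \times E'$, which is closed hence Polish, and push $m$ forward to $\P(G)$; then $\pi$ becomes the (continuous) projection onto the second coordinate. So without loss of generality $\pi : E \to E'$ is a Borel map with closed graph, and what we want is a jointly measurable version of the disintegration of $m \in \P(E)$ with respect to $\nu := m \circ \pi^{-1} \in \P(E')$.

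The key steps, in order: (1) Fix a countable algebra $\{A_j\}_{j \in \N}$ of Borel subsets of $E$ generating $\B(E)$, and for each $j$ consider the bounded measurable map $(m,y) \mapsto \E_m[1_{A_j} \mid \pi = y]$, i.e.\ the Radon--Nikodym derivative of $y' \mapsto m(A_j \cap \pi^{-1}(\cdot))$ with respect to $\nu = m \circ \pi^{-1}$. The joint measurability of this object in $(m,y)$ is the technical heart: one obtains it by noting that $y \mapsto m(A_j \cap \pi^{-1}([\![y]\!]))$ built over a refining sequence of countable measurable partitions of $E'$ gives, via the martingale convergence theorem, a version of the conditional expectation that is a pointwise limit of functions each jointly Borel in $(m,y)$ (each partition-based average is a ratio of two maps $m \mapsto m(B)$, which are Borel on $\P(E)$, composed with the partition-cell assignment $y \mapsto$ cell, which is Borel on $E'$). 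The pointwise limit of jointly Borel functions is jointly Borel, and it agrees $\nu$-a.e.\ with the conditional expectation for each fixed $m$. (2) Assemble these into a candidate $\Gamma$: by a standard argument (Parthasarathy's construction of regular conditional probabilities on Polish spaces, e.g.\ \cite[Chapter V]{parthasarathy1967probability}, carried out uniformly in $m$) the finitely additive set functions $A_j \mapsto \E_m[1_{A_j}\mid \pi=y]$ extend, for $\nu$-a.e.\ $y$, to a genuine probability measure on $\B(E)$; on the exceptional set one sets $\Gamma(m,y)$ to an arbitrary fixed measure, say $\delta_{x_0}$ for a fixed $x_0$. Because the extension is performed through the same countable generating algebra for every $m$, the resulting map $(m,y) \mapsto \Gamma(m,y) \in \P(E)$ is measurable. (3) Verify the defining identity: for fixed $m$, $\Gamma(m,\cdot)$ is by construction a version of the regular conditional law of the identity on $E$ given $\pi$, so $\int_E F(x) h(\pi(x))\, m(dx) = \int_E \big(\int_E F\, d\Gamma(m,\pi(x))\big) h(\pi(x))\, m(dx)$ for indicators $F = 1_{A_j}$ and bounded measurable $h$; extend to all bounded measurable $F$ by a monotone class / Dynkin argument.

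The main obstacle is step (1): producing a version of the conditional expectation $\E_m[1_{A_j}\mid \pi = y]$ that is \emph{jointly} Borel in $(m,y)$, not merely Borel in $y$ for each fixed $m$. Pointwise-in-$y$ one has many choices of version, and they can depend nonmeasurably on $m$; the fix is to commit to one canonical version — the martingale-limit version along a fixed refining partition sequence — whose pre-limit stages are manifestly jointly Borel, and then invoke that $\nu$-a.e.\ (hence $m$-a.e.\ after composing with $\pi$) this canonical object is a valid conditional expectation. Everything downstream (the extension to a measure and the verification of the identity) is routine monotone-class bookkeeping. I would also remark that universal measurability, rather than Borel measurability, suffices for the applications in the paper, which slightly relaxes what must be proved, but the Borel construction above goes through without needing that relaxation.
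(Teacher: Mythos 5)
Your proposal is correct and rests on the same core mechanism as the paper's proof: commit to a canonical version of the conditional law built from a fixed refining sequence of partitions of $E'$, observe that each pre-limit stage is a ratio of maps $m \mapsto m(\,\cdot\, \cap \pi^{-1}(A^n_k))$ and $m \mapsto m(\pi^{-1}(A^n_k))$ and hence jointly Borel, invoke martingale convergence to get $m\circ\pi^{-1}$-a.e.\ correctness for each fixed $m$, and patch the exceptional set with a fixed $\delta_{x_0}$. The difference is in how the limit is realized as an element of $\P(E)$: the paper (following Neufeld--Nutz) keeps the partition-based objects $\Gamma_n(m,x')$ as genuine probability measures throughout and defines $\Gamma(m,x')$ as their weak limit, detected through a fixed countable family $(\varphi_k)$ of bounded continuous functions for which $m \mapsto (\langle m,\varphi_k\rangle)_k$ is a homeomorphism onto its image; this sidesteps any extension theorem, since the limit, when it exists in $\P(E)$, is automatically a probability measure. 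You instead take limits of the values on a countable generating algebra of subsets of $E$ and then extend to a countably additive measure \`a la Parthasarathy, uniformly in $m$. That route works, but the sentence ``because the extension is performed through the same countable algebra the map is measurable'' hides the one point you would need to spell out: the set of pairs $(m,y)$ on which the finitely additive set function does extend (equivalently, on which the countably many regularity/tightness conditions of the extension argument hold) must itself be jointly Borel, and on that set the values $\Gamma(m,y)(A_j)$ must generate the Borel $\sigma$-field of $\P(E)$ (they do, by a monotone class argument, since $\{A_j\}$ is a generating $\pi$-system). This is standard but not free; the paper's measure-level weak-limit formulation is precisely what lets it avoid this bookkeeping. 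Your preliminary reduction to the graph of $\pi$ is harmless but unnecessary: neither proof uses continuity of $\pi$ beyond Borel measurability.
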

\begin{proof}
To write this in a more probabilistic notation, let $X : E \rightarrow E$ denote the identity map. What we must find is a version of the regular conditional law $m( X \in \cdot \, | \, \pi(X) = x')$ which is jointly measurable as a function of $(x',m) \in E' \times \P(E)$.
We borrow a construction of \cite[Lemma 3.1]{neufeld2014measurability}. Because $E'$ is Polish, we may find a refining sequence of finite Borel partitions $(A^n_1,\ldots,A^n_n)$ of $E'$ such that $\cup_n\sigma(A^n_1,\ldots,A^n_n)$ generates the Borel $\sigma$-field. For each $n$, define $\Gamma_n : \P(E) \times E' \rightarrow \P(E)$ by
\begin{align*}
\Gamma_n(m,x')(\cdot) = \sum_{k=1}^n\frac{m(\cdot \cap \pi^{-1}(A^n_k))}{m(\pi^{-1}(A^n_k)} 1_{A^n_k}(x'),
\end{align*}
where we adopt the convention $0/0 := 0$. As $E$ is Polish, we may find a countable sequence $(\varphi_k)$ of bounded continuous functions such that $\P(E) \ni m \mapsto (\langle m,\varphi_k\rangle )_{k \in \N} \in \R^\N$ is a homeomorphism to its image. Because the $\sigma$-algebras $\sigma(A^n_1,\ldots,A^n_n)$ increase in $n$ by design, the supermartingale convergence theorem ensures that for each $m \in \P(E)$ the $\lim_n \int\varphi_k \, d\Gamma_n(m,x')$ exists for $m \circ \pi^{-1}$-almost every $x'$ and is a version of the conditional expectation $\E^m[\varphi_k(X) \, | \, \pi(X)=x']$. Now, fixing $x_0 \in E$ arbitrarily, we may set
\[
\Gamma(m,x') := \begin{cases}
\lim_n \Gamma_n(m,x') &\text{if the limit exists} \\
\delta_{x_0} &\text{otherwise,}
\end{cases}
\]
where the limit is in the sense of weak convergence. Then, with the help of the sequence $(\varphi_k)$ from above, we deduce that for each $m \in \P(E)$ the map $x' \mapsto \Gamma(m,x')$ is a version of the regular conditional law $m(X \in \cdot \, | \, \pi(X)=x')$. As $\Gamma_n$ is jointly measurable for each $n$, so too is $\Gamma$. 
\end{proof}

We now turn to the real purpose of this section. In the following, let $\Omega$ be a Polish space.
Let $E$ be a complete and separable metric space, and let $X=(X_t)_{t \in [0,T]}$ be a measurable process. By \emph{measurable} here we mean that the function $X : [0,T] \times \Omega \rightarrow E$ is jointly Borel-measurable. We show next how to construct a version of $P(\cdot \, | \, X_t=x)$ which is jointly measurable in $t$, $x$, and the underlying probability measure $P$. Let $\E^P[\cdot]$ denote expectation with respect to a probability measure $P \in \P(\Omega)$.

\begin{lemma} \label{le:measurableversion-marginalprojection}
There exists a jointly measurable function $\Gamma : [0,T] \times E \times \P(\Omega) \rightarrow \P(\Omega)$ such that, for every bounded measurable function $F : [0,T] \times \Omega \rightarrow \R$ and each $P \in \P(\Omega)$, we have
\[
\E^P[F(t,\cdot) \, | \, X_t] = \int_\Omega F(t,\cdot) \, d\Gamma(t,X_t,P), \ \ P-a.s., \ \ a.e. \ t \in [0,T].
\]
\end{lemma}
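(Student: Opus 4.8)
The plan is to reduce the statement to an application of Lemma \ref{le:ap:measurableversion} by a discretization in the time variable, followed by a limiting argument. The key point is that Lemma \ref{le:ap:measurableversion} already handles the joint measurability in the underlying law $P$ and in the value $x$ of a \emph{fixed} random variable, so the only genuinely new ingredient here is to deal with the extra parameter $t \in [0,T]$ in a way compatible with the fact that the conditioning random variable $X_t$ itself varies with $t$. First I would fix a refining sequence of finite Borel partitions of $[0,T]$, say $(I^n_1,\dots,I^n_n)$ with mesh tending to $0$, and choose representative points $t^n_j \in I^n_j$. On each block $I^n_j$ I would use the map $\pi = \pi^n_j : \Omega \to E$ given by $\omega \mapsto X_{t^n_j}(\omega)$, which is Borel measurable since $X$ is jointly measurable; applying Lemma \ref{le:ap:measurableversion} to this $\pi$ (after composing with a fixed Borel isomorphism of $\Omega$ with a Polish space on which we may pretend $\pi$ is continuous, or simply noting the proof of that lemma only used measurability of $\pi$) yields a jointly measurable $\Gamma^n_j : \P(\Omega) \times E \to \P(\Omega)$ which is a version of $P(\,\cdot \mid X_{t^n_j} = x)$. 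Then define $\Gamma^n(t,x,P) := \Gamma^n_j(P,x)$ for $t \in I^n_j$; this is jointly measurable in $(t,x,P)$.

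Next I would argue convergence. Fix a countable family $(\varphi_k)$ of bounded continuous functions on $\Omega$ that induces a homeomorphism $\P(\Omega) \hookrightarrow \R^{\mathbb N}$, as in the proof of Lemma \ref{le:ap:measurableversion}. For each $k$ and each $P$, the process $t \mapsto \E^P[\varphi_k(X_{t}) \mid \F^{X}_{[0,t]}]$-type quantities are not directly what we need; instead the right object is, for the discretized conditioning, the random variable $\E^P[\varphi_k(\cdot) \mid X_{t^n_{j(n,t)}}]$ evaluated along the partition containing $t$. Here $j(n,t)$ denotes the index with $t \in I^n_{j(n,t)}$. Because the partitions refine and the mesh goes to zero, and because $t \mapsto X_t$ is (for $P$-a.e.\ $\omega$, after passing to a measurable modification if needed — but $X$ is already jointly measurable so this is automatic in the $L^1(dt\otimes P)$ sense) such that $X_{t^n_{j(n,t)}} \to X_t$ in a suitable averaged sense, one shows that for a.e.\ $t$ and $P$-a.s., $\int \varphi_k \, d\Gamma^n(t, X_{t^n_{j(n,t)}}, P) \to \E^P[\varphi_k \mid X_t]$. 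The cleanest way to see this is via the $L^2(dt\otimes P)$ contraction property of conditional expectation together with a.e.-convergence of the conditioning $\sigma$-fields; alternatively, one uses that $\int_0^T \E^P|X_{t^n_{j(n,t)}} - X_t|\,dt \to 0$ by joint measurability and boundedness arguments (truncating $X$), so that along a subsequence the conditional expectations converge $dt\otimes P$-a.e. Then set
\[
\Gamma(t,x,P) := \begin{cases} \lim_n \Gamma^n(t,x,P) & \text{if the weak limit exists},\\ \delta_{\omega_0} & \text{otherwise},\end{cases}
\]
for a fixed basepoint $\omega_0 \in \Omega$, where the limit is in the sense of weak convergence tested against the $(\varphi_k)$. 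Joint measurability of $\Gamma$ follows because each $\Gamma^n$ is jointly measurable and the set where the limit exists is measurable, exactly as in the proof of Lemma \ref{le:ap:measurableversion}.

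The main obstacle I anticipate is the passage to the limit: one must ensure that the \emph{exceptional $dt$-null set} (of times $t$ where the discretized conditional expectations fail to converge to $\E^P[F(t,\cdot)\mid X_t]$) can be chosen \emph{independently of} the bounded test function $F(t,\cdot)$ — this is handled by working with the countable separating family $(\varphi_k)$ and a diagonal argument — and, more delicately, that the resulting object is genuinely a regular conditional law $\Gamma(t,X_t,P)$ rather than merely giving the right conditional expectations for each individual $\varphi_k$. This is where one invokes that $\{\Gamma^n(t,x,P)\}_n$ is tight (in fact each is a probability measure on the Polish space $\Omega$, and weak precompactness of $\{P(\cdot\mid X_t = x)\}$ along the martingale-type limit is automatic from the supermartingale convergence theorem applied to $\langle \cdot,\varphi_k\rangle$, just as in Lemma \ref{le:ap:measurableversion}). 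Once the limit $\Gamma(t,x,P)$ is identified as an honest probability measure for $(t,P)$-a.e.\ pair and shown to reproduce $\E^P[\varphi_k(\cdot)\mid X_t]$ for every $k$, a monotone class argument upgrades this to all bounded measurable $F : [0,T]\times\Omega\to\R$, giving the claimed identity $P$-a.s.\ for a.e.\ $t$. The verification that all the null sets involved are jointly measurable in $(t,P)$, so that one may redefine $\Gamma$ on them without destroying measurability, is routine but must be done carefully.
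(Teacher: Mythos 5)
Your reduction to Lemma \ref{le:ap:measurableversion} is the right instinct, but the time-discretization and passage to the limit is a genuine gap, and in fact the limiting claim at its core is false in the stated generality. The assertion that $\E^P[\varphi_k \,|\, X_{t^n_{j(n,t)}}] \to \E^P[\varphi_k \,|\, X_t]$ for a.e.\ $t$, $P$-a.s., has no justification: the $\sigma$-fields $\sigma(X_{t^n_j})$ attached to your refining time partitions are neither increasing nor decreasing in $n$, so no martingale or supermartingale convergence theorem applies (this monotonicity was exactly what made the construction in Lemma \ref{le:ap:measurableversion} work, where the conditioning map $\pi$ is fixed and only the partition of the target space refines). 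Moreover, conditional expectation is not continuous in the conditioning random variable: take $U$ uniform on $[0,1]$, a measurable $f:[0,T]\to\R$ vanishing exactly on a fat Cantor set $C$, and $X_t = f(t)U$. For $t \in C$ the $\sigma$-field $\sigma(X_t)$ is trivial, while $\sigma(X_s) = \sigma(U)$ whenever $f(s)\neq 0$; if the representative points $t^n_j$ avoid $C$ (nothing prevents this), then for every $t$ in the positive-measure set $C$ your approximants $\E^P[\varphi(U)\,|\,X_{t^n_{j(n,t)}}] = \varphi(U)$ do not converge to $\E^P[\varphi(U)\,|\,X_t] = \E^P[\varphi(U)]$. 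The fallback you suggest, $\int_0^T \E^P|X_{t^n_{j(n,t)}} - X_t|\,dt \to 0$, also fails for merely jointly measurable $X$: point sampling at fixed representative points is not an approximate identity on $L^1([0,T])$ (think of $1_\QQ$ sampled at rational points), and in any case $L^1$-closeness of the conditioning variables would still not give closeness of the conditional expectations.

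The paper avoids any limit in $t$ altogether: it applies Lemma \ref{le:ap:measurableversion} \emph{once}, on the enlarged space $\overline\Omega = [0,T]\times\Omega$ with the single map $\pi(t,\omega) = (t,X_t(\omega))$, associating to each $P$ the product law $U\times P$ where $U$ is uniform on $[0,T]$. The time variable is thus absorbed into the randomness, the resulting kernel $\overline\Gamma(U\times P,(t,x))$ is jointly measurable in $(t,x,P)$ by construction, and after checking that its $[0,T]$-marginal is $\delta_t$ for a.e.\ $t$ one simply marginalizes onto $\Omega$; the identification of the $\Omega$-marginal as a version of $P(\cdot\,|\,X_t)$ for a.e.\ $t$ follows from the disintegration identity (cf.\ \cite[Lemma 5.2]{brunick2013mimicking}). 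If you want to salvage your proof, this is the move to make: condition on $(t,X_t)$ jointly under $U\times P$ rather than on $X_t$ for each fixed or discretized $t$. (Your observation that Lemma \ref{le:ap:measurableversion} only needs $\pi$ measurable rather than continuous is correct and is indeed needed in this application as well.)
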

\begin{proof}
Consider the measurable space $\overline{\Omega} = [0,T] \times \Omega$ and $\overline{E} = [0,T] \times E$, and define $\pi : \overline{\Omega} \rightarrow \overline{E}$ by $\pi(t,\omega) = (t,X_t(\omega))$. Apply Lemma \ref{le:ap:measurableversion} to find a measurable function $\overline\Gamma : \P(\overline{\Omega}) \times \overline{E} \rightarrow \P(\overline{\Omega})$ such that for each $\overline{P} \in \P(\overline{\Omega})$ it holds that $\overline\Gamma(\overline{P} ,\cdot)$ is a version of the conditional law $\overline{P} (\cdot \, | \, \pi)$. Let $U$ denote the uniform probability measure on $[0,T]$. Then, for $P \in \P(\Omega)$ and bounded measurable functions $F : \overline{\Omega}  \rightarrow \R$ and $h : \overline{E} \rightarrow \R$, we have
\begin{align*}
\frac{1}{T}&\E^P\int_0^TF(t,\cdot)h(t,X_t)dt  \\
	&= \int_{\Omega}\int_0^TF(t,\omega)h(\pi(t,\omega))U(dt)P(d\omega) \\
	&= \int_{[0,T] \times \Omega} h(\pi(t,\omega)) \left(\int_{\overline{\Omega}} F\,d\overline\Gamma(U \times P,\pi(t,\omega))\right) (U \times P)(dt,d\omega) \\
	&= \int_{[0,T] \times E} h(t,x) \left(\int_{\overline{\Omega}} F\,d\overline\Gamma(U \times P,(t,x))\right) (U \times P) \circ \pi^{-1}(dt,dx) \\
	&= \int_{[0,T] \times E} h(t,x) \left(\int_{\overline{\Omega}} F\,d\Gamma(P,t,x)\right) (U \times P) \circ \pi^{-1}(dt,dx),
\end{align*}
where we define $\widetilde\Gamma : \P(\Omega) \times \overline{E} \rightarrow \P(\overline{\Omega})$ by setting $\widetilde\Gamma(P,t,x) := \overline\Gamma(U \times P, (t,x))$. Note next that
\[
\int_{[0,T] \times E} g(t,x) (P \times U) \circ \pi^{-1}(dt,dx) = \frac{1}{T}\E^P\int_0^Tg(t,X_t)dt,
\]
for any bounded measurable $g : \overline{E} \rightarrow \R$. Hence, the above becomes
\begin{align*}
\frac{1}{T}\E^P\int_0^TF(t,\cdot)h(t,X_t)dt &= \frac{1}{T}\E^P\int_0^Th(t,X_t) \left(\int_{\overline{\Omega}} F\,d\widetilde\Gamma(P,t,X_t)\right) dt.
\end{align*}
For a measure $\overline{P} \in \P(\overline\Omega) = \P([0,T] \times \Omega)$, let $\overline{P}^T$ and $\overline{P}^\Omega$ denote the $[0,T]$ and $\Omega$ marginals, respectively.
By choosing $F$ depending only on $t$, we find that $\widetilde\Gamma(P,t,X_t)^T = \delta_t$ a.s.\ for a.e.\ $t \in [0,T]$.
Finally, define $\Gamma : [0,T] \times E \times \P(\Omega) \rightarrow \P(\Omega)$ by marginalizing, e.g., setting
\[
\Gamma(t,x,P) := \widetilde\Gamma(P,t,x)^\Omega.
\]
Then
\[
\widetilde\Gamma(P,t,X_t) = \Gamma(t,X_t,P) \times \delta_t, \ \ a.s., \ \ a.e. \  t\in [0,T],
\]
and we find
\begin{align*}
\frac{1}{T}\E^P\int_0^TF(t,\cdot)h(t,X_t)dt &= \frac{1}{T}\E^P\int_0^Th(t,X_t) \left(\int_{\Omega} F(t,\cdot)\,d\Gamma(t,X_t,P)\right) dt.
\end{align*}
This is enough to complete the proof (see \cite[Lemma 5.2]{brunick2013mimicking}).
\end{proof}

\section{Conditional means of random measures} \label{se:ap:meanmeasures}

This section gives some details regarding one additional technical point, relevant in various applications of the Markovian projection Theorem \ref{th:markovprojection} in settings involving relaxed controls, which is to construct a measurable version of the conditional mean of a random measure.
This is formalized in the following lemma, stated in a setting abstract enough to allow for the various applications we have in mind.

\begin{lemma} \label{le:meanmeasures}
Suppose $\Gamma : E \mapsto \P(\Omega)$ is a measurable map. Suppose also that $K : \Omega \rightarrow \P(A)$ is measurable. Then there exists a measurable function $\Lambda : E \rightarrow \P(A)$ such that, for every bounded measurable function $\varphi : E \times A \rightarrow \R$ and every $x \in E$, we have
\begin{align*}
\int_A \varphi(x,a)\,\Lambda(x)(da) = \int_\Omega \left(\int_A \varphi(x,a)\,dK(\omega)(da)\right)\Gamma(x)(d\omega).
\end{align*}
\end{lemma}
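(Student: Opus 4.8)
\textbf{Plan for the proof of Lemma \ref{le:meanmeasures}.}
The statement asks for a measurable selection of conditional mean measures, and the natural approach is to build $\Lambda(x)$ first on a countable generating algebra of sets and then check it extends to a probability measure for \emph{every} $x \in E$, not just almost every $x$. First I would fix a countable algebra $\mathcal{G} = \{B_1, B_2, \dots\}$ of Borel subsets of $A$ that generates the Borel $\sigma$-field; since $A$ is a compact metric space, such a countable algebra exists (e.g. finite unions of members of a countable base). For each fixed $B \in \mathcal{G}$, the map $\omega \mapsto K(\omega)(B)$ is measurable and $[0,1]$-valued, so the composition
\[
x \longmapsto \int_\Omega K(\omega)(B)\,\Gamma(x)(d\omega)
\]
is measurable in $x$ by the usual argument (it is measurable since $\Gamma$ is a measurable map into $\P(\Omega)$ and $\omega \mapsto K(\omega)(B)$ is a fixed bounded measurable function, so integration against $\Gamma(x)$ depends measurably on $x$; this is the standard fact that $m \mapsto \langle m, f\rangle$ is measurable on $\P(\Omega)$ for fixed bounded measurable $f$). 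Call this function $\ell_B(x)$. One checks immediately that $\ell_\emptyset(x) = 0$, $\ell_A(x) = 1$, and $\ell_{B \cup B'}(x) = \ell_B(x) + \ell_{B'}(x)$ whenever $B, B' \in \mathcal{G}$ are disjoint, for \emph{every} $x$, because these identities hold pointwise inside the integral (each $K(\omega)$ is a genuine measure).

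The second step is to upgrade $\ell_\cdot(x)$, defined on the algebra $\mathcal{G}$, to a Borel probability measure $\Lambda(x)$ on $A$, uniformly in $x$. The obstacle here is countable additivity: finite additivity on an algebra does not automatically extend to a measure. The clean way around this is to use the compactness of $A$: I would instead realize each $K(\omega)$ through its integrals against a countable convergence-determining family of bounded continuous functions. Concretely, pick a sequence $(\varphi_k)_{k \in \N}$ of bounded continuous functions on $A$ such that $\P(A) \ni \nu \mapsto (\langle \nu, \varphi_k\rangle)_{k} \in \R^\N$ is a homeomorphism onto a (necessarily Borel) subset of $\R^\N$ — this exists since $\P(A)$ is compact metrizable. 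Then for each $k$ the map $x \mapsto c_k(x) := \int_\Omega \langle K(\omega), \varphi_k\rangle\,\Gamma(x)(d\omega)$ is measurable and bounded by $\|\varphi_k\|_\infty$. By the dominated convergence theorem and linearity, the vector $(c_k(x))_k$ lies in the closed convex hull of $\{(\langle \nu, \varphi_k\rangle)_k : \nu \in \P(A)\}$, which by compactness of $\P(A)$ and the homeomorphism equals $\{(\langle \nu, \varphi_k\rangle)_k : \nu \in \P(A)\}$ itself; hence there is a \emph{unique} $\Lambda(x) \in \P(A)$ with $\langle \Lambda(x), \varphi_k\rangle = c_k(x)$ for all $k$. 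Measurability of $x \mapsto \Lambda(x)$ follows because the inverse of the homeomorphism is measurable and each $c_k$ is measurable.

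The final step is to verify the desired identity for all bounded measurable $\varphi : E \times A \to \R$. By construction it holds when $\varphi(x,a) = \varphi_k(a)$, hence (by uniqueness of a measure determined by the $\varphi_k$) for $\varphi(x,a) = g(a)$ with $g$ any bounded measurable function on $A$, for every fixed $x$. Then for $\varphi(x,a) = h(x)g(a)$ with $h$ bounded measurable on $E$, multiply through by $h(x)$ — both sides are just scaled pointwise in $x$. A monotone class / functional monotone class argument in $\varphi$ then extends this to all bounded measurable $\varphi$ on the product $E \times A$: the collection of $\varphi$ for which the identity holds (for every $x$) is a vector space closed under bounded monotone limits and contains the products $h(x)g(a)$, which generate the product $\sigma$-field. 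I expect the only genuinely delicate point to be the uniform-in-$x$ passage from finite/linear data to an honest probability measure, and the remedy above — working with a countable convergence-determining family of continuous functions and exploiting compactness of $\P(A)$ to stay inside $\{(\langle\nu,\varphi_k\rangle)_k\}$ — handles it without any null-set issues, which is exactly what the applications of this lemma (where the identity is needed for every $x$) require.
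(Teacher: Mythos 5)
Your argument is correct, but it takes a genuinely different and more roundabout route than the paper's. The paper simply defines $\Lambda(x)(S) := \int_\Omega K(\omega)(S)\,\Gamma(x)(d\omega)$ for \emph{every} Borel set $S \subset A$ at once; the ``obstacle'' you worry about (extending a finitely additive set function from an algebra) never arises, because countable additivity of $\Lambda(x)$ is inherited directly from the measures $K(\omega)$ via monotone convergence applied under the outer integral. Measurability of $x \mapsto \Lambda(x)$ then follows from the standard fact (Bertsekas--Shreve, Corollary 7.29.1) that the Borel $\sigma$-field on $\P(A)$ is generated by the evaluation maps $m \mapsto \langle m,\psi\rangle$ for bounded measurable $\psi$, and the claimed identity is checked first for $\varphi(x,a)=1_S(a)$, then for $\varphi(x,a)=\psi(a)$, then for general $\varphi$, using that it holds pointwise in $x$. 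Your detour — embedding $\P(A)$ into $\R^\N$ via a countable convergence-determining family $(\varphi_k)$ and producing $\Lambda(x)$ as the point of $\P(A)$ matching the integrals $c_k(x)$ — does work, but the key existence step is really a barycenter statement (the barycenter of the pushforward $\Gamma(x)\circ K^{-1}$ on the compact convex set $\P(A)$ lies in that set), which needs a Hahn--Banach separation argument or approximation by finitely supported measures rather than just ``dominated convergence and linearity'' as you assert; and your subsequent extension from the $\varphi_k$ to bounded measurable $g$ implicitly re-proves that the right-hand side is integration against a genuine measure, i.e.\ the very fact the direct definition gives for free. What your approach buys is a self-contained construction that never invokes the Borel structure of $\P(A)$ with respect to measurable test functions, at the cost of using compactness of $A$ (and of $\P(A)$), which the paper's more elementary argument does not need. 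The final monotone-class extension to general $\varphi(x,a)$, done pointwise in $x$, matches the paper and is fine.
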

\begin{proof}
We use the following well known fact: For any Polish space $E$, the Borel $\sigma$-field on $\P(E)$ coincides with the $\sigma$-field generated by the collection of maps $\P(E) \ni m \mapsto \int \varphi \,dm \in \R$, where $\varphi$ ranges over bounded Borel-measurable real-valued functions of $E$. (See \cite[Corollary 7.29.1]{bertsekasshreve}.)
Define $\Lambda(x)(S)$ for Borel sets $S \subset A$ and $x \in E$ by setting
\begin{align*}
\Lambda(x)(S) = \int_\Omega  K(\omega)(S) \,\Gamma(x)(d\omega).
\end{align*}
For each $x \in E$, it is clear that $\Lambda(x)(\cdot)$ is a probability measure on $A$. On the other hand, for each Borel set $S \subset A$, the map $\omega \mapsto K(\omega)(S)$ is Borel measurable in light of the above fact, and thus so is  $x \mapsto \Lambda(x)(B)$. We conclude that $\Lambda$ defines a measurable map from $E$ to $\P(A)$, and the claimed identity holds whenever $\varphi(x,a) = 1_S(a)$ for a Borel set $S \subset A$. It is straightforward to extend this to any $\varphi$ of the form $\varphi(x,a)=\psi(a)$, for $\psi : A \rightarrow \R$ bounded and measurable. Because the identity holds pointwise, for each $x \in E$, we can then extend to general $\varphi=\varphi(x,a)$.
\end{proof}

One of the main purposes of the abstract considerations of Sections \ref{se:ap:jointmeasurability} and \ref{se:ap:meanmeasures} is the following:

\begin{lemma} \label{le:conditional-marginal-meanmeasure}
Suppose we are given, on some Polish probability space $(\Omega,\F,\PP)$, a measurable $\P(A)$-valued process $\beta=(\beta_t)_{t \in [0,T]}$ as well as a measurable $E$-valued process $X=(X_t)_{t \in [0,T]}$.
Then there exists a jointly measurable function $\widehat\beta : [0,T] \times E \rightarrow \P(A)$ such that, for each bounded measurable function $\varphi : [0,T] \times E \times A \rightarrow \R$, we have
\[
\int_A \varphi(t,X_t,a) \, \widehat\beta(t,X_t)(da) = \E\left[\left. \int_A \varphi(t,X_t,a) \, \beta_t(da)  \, \right| \, X_t \right], \ \ a.s., \ \ a.e. \ t \in [0,T].
\]
We may write this with the suggestive \emph{mean measure} notation,
\[
\widehat\beta(t,X_t) = \E[\beta_t \, | \, X_t].
\]
\end{lemma}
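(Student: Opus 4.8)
The plan is to combine the two abstract measurability lemmas of Appendix~\ref{se:ap:jointmeasurability} and~\ref{se:ap:meanmeasures}, essentially composing a regular-conditional-law construction with a conditional-mean-measure construction. First I would apply Lemma~\ref{le:measurableversion-marginalprojection} with the given process $X=(X_t)_{t\in[0,T]}$ to obtain a jointly measurable map $\Gamma : [0,T]\times E \times \P(\Omega)\rightarrow\P(\Omega)$ such that, for every bounded measurable $F:[0,T]\times\Omega\rightarrow\R$,
\[
\E^\PP[F(t,\cdot)\,|\,X_t] = \int_\Omega F(t,\cdot)\,d\Gamma(t,X_t,\PP), \ \ \PP\text{-a.s.},\ \text{a.e. }t\in[0,T].
\]
Since the underlying measure $\PP$ is fixed in the statement of Lemma~\ref{le:conditional-marginal-meanmeasure}, I would just freeze $P=\PP$ and write $\Gamma_t(x) := \Gamma(t,x,\PP)$, a jointly measurable map $[0,T]\times E\rightarrow\P(\Omega)$.

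Next, for each fixed $t$ I would apply Lemma~\ref{le:meanmeasures} with the map $\Gamma_t:E\rightarrow\P(\Omega)$ and the kernel $K_t:\Omega\rightarrow\P(A)$ given by $K_t(\omega) := \beta_t(\omega)$ (which is measurable in $\omega$ because $\beta$ is a measurable $\P(A)$-valued process). This produces, for each $t$, a measurable function $x\mapsto\widehat\beta(t,x)$ from $E$ to $\P(A)$ satisfying
\[
\int_A\varphi(t,x,a)\,\widehat\beta(t,x)(da) = \int_\Omega\Bigl(\int_A\varphi(t,x,a)\,\beta_t(\omega)(da)\Bigr)\Gamma_t(x)(d\omega)
\]
for all $x\in E$ and all bounded measurable $\varphi$. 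The key step I expect to require care is \emph{joint} measurability of $(t,x)\mapsto\widehat\beta(t,x)$: Lemma~\ref{le:meanmeasures} is stated pointwise in a single parameter, so I would need to re-run its proof in the parametrized setting, i.e.\ over the product space $[0,T]\times E$ with $\Gamma(t,x)$ depending measurably on both $t$ and $x$. Concretely, defining $\widehat\beta(t,x)(S) := \int_\Omega \beta_t(\omega)(S)\,\Gamma_t(x)(d\omega)$ for Borel $S\subset A$, the integrand $(t,\omega)\mapsto\beta_t(\omega)(S)$ is jointly measurable (from the measurability of $\beta$ and the characterization of the Borel $\sigma$-field on $\P(A)$ via $m\mapsto m(S)$), and $(t,x)\mapsto\Gamma_t(x)$ is jointly measurable by the previous paragraph, so a Fubini/monotone-class argument gives joint measurability of $(t,x)\mapsto\widehat\beta(t,x)(S)$, hence of $(t,x)\mapsto\widehat\beta(t,x)\in\P(A)$ by the same Borel $\sigma$-field characterization (\cite[Corollary 7.29.1]{bertsekasshreve}).

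Finally I would verify the claimed identity. Fix a bounded measurable $\varphi:[0,T]\times E\times A\rightarrow\R$, and for $t\in[0,T]$ apply the parametrized Lemma~\ref{le:meanmeasures} conclusion with the evaluation point $x=X_t$ and $F(t,\omega):=\int_A\varphi(t,X_t,a)\,\beta_t(\omega)(da)$ (more precisely, $F(t,\cdot)$ with the $X_t$ inside treated as the value of the first argument of $\Gamma_t$, which is legitimate because $\Gamma_t(X_t)=\PP(\cdot\,|\,X_t)$). This gives
\[
\int_A\varphi(t,X_t,a)\,\widehat\beta(t,X_t)(da) = \E\Bigl[\int_A\varphi(t,X_t,a)\,\beta_t(da)\,\Big|\,X_t\Bigr],\ \ \PP\text{-a.s.},\ \text{a.e. }t,
\]
which is exactly the assertion, and then I would record the mean-measure shorthand $\widehat\beta(t,X_t)=\E[\beta_t\,|\,X_t]$. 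The only genuine obstacle is the bookkeeping in the parametrized version of Lemma~\ref{le:meanmeasures}; the rest is routine application of the already-established appendix lemmas.
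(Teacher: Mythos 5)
Your argument is correct and is essentially the paper's proof: the paper likewise obtains the jointly measurable conditional law $\Gamma(t,x)=\PP(\cdot\,|\,X_t=x)$ from Lemma \ref{le:measurableversion-marginalprojection} and then invokes Lemma \ref{le:meanmeasures}. The only (cosmetic) difference is that you re-run the proof of Lemma \ref{le:meanmeasures} in parametrized form, whereas it can be applied verbatim after replacing $E$ by $[0,T]\times E$ and $\Omega$ by $[0,T]\times\Omega$ with kernel $K(t,\omega)=\beta_t(\omega)$, which gives the joint measurability you were worried about for free.
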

\begin{proof}
By Lemma \ref{le:measurableversion-marginalprojection}, a version of the regular conditional law $\Gamma(t,x) = \PP(\cdot \, | \, X_t=x) \in \P(\Omega)$ can be constructed which is jointly measurable in $(t,x)$. Then simply apply Lemma \ref{le:meanmeasures}.
\end{proof}

Lastly, in the proof of Lemma \ref{le:projection-semiMarkov}, we need the following:

\begin{lemma} \label{le:conditional-marginal-meanmeasure-2}
Suppose we are given, on some Polish probability space $(\Omega,\F,\PP)$, a jointly measurable function $\beta : [0,T] \times \R^d \times \Omega \rightarrow \P(A)$, as well as a continuous $E$-valued process $\eta=(\eta_t)_{t \in [0,T]}$. Let $\F^\eta_t=\sigma(\eta_s : s \le t)$. Then there exists a semi-Markov function $\Lambda : [0,T] \times \R^d \times \CE \rightarrow \P(A)$ such that
\begin{align*}
\Lambda(t,X_t,\eta) = \E[ \beta(t,X_t,\cdot) \, | \, \F^\eta_t], \ \ a.s., \text{ for each } t \in [0,T].
\end{align*}
That is, for each bounded measurable function $\varphi : [0,T] \times \R^d \times \CE \times A \rightarrow \R$, we have
\begin{align*}
\int_A \varphi(t,X_t,\eta,\cdot) \,d\Lambda(t,X_t,\eta) = \E\left[\left. \int_A\varphi(t,X_t,\eta,\cdot) \,d\beta(t,X_t,\cdot) \, \right| \, \F^\eta_t\right], \ \ a.s., \text{ for each } t \in [0,T].
\end{align*}
\end{lemma}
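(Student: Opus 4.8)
The plan is to reduce this statement to the two abstract measurability results already developed, namely Lemma \ref{le:meanmeasures} and Lemma \ref{le:measurableversion-marginalprojection}, taking care of the semi-Markov requirement by working on a suitably ``frozen'' path space. First I would handle the semi-Markov issue up front: given $\eta$ continuous, define the truncation map $\tau_t : \CE \to \CE$ by $\tau_t(e) = e_{\cdot \wedge t}$, which is continuous, and observe that $\F^\eta_t = \sigma(\tau_t(\eta))$. So computing $\E[\,\cdot\, | \, \F^\eta_t]$ is the same as computing $\E[\,\cdot\, | \, \tau_t(\eta)]$, and any such conditional expectation is automatically a function of $\tau_t(\eta)$, hence (as a function of $\eta$) satisfies the nonanticipativity condition required of a semi-Markov function. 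The remaining work is to produce a \emph{jointly measurable in $(t,x,e)$} version of this conditional expectation.

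The core construction proceeds as follows. Set $\Omega' := \Omega$ and consider the $\P(A)$-valued kernel obtained by integrating $\beta$ against a jointly measurable regular conditional law. Concretely, apply Lemma \ref{le:measurableversion-marginalprojection} with the measurable process in question taken to be $t \mapsto (X_t, \tau_t(\eta)) \in E \times \CE$ (note $\CE$ is Polish, so this is a legitimate target space for the lemma): this yields a jointly measurable map $\Gamma : [0,T] \times (E \times \CE) \times \P(\Omega) \to \P(\Omega)$ giving a version of $\PP(\cdot \, | \, (X_t,\tau_t(\eta)))$ valid for a.e.\ $t$, $\PP$-a.s. Then apply Lemma \ref{le:meanmeasures} with ``$E$'' there replaced by $[0,T] \times E \times \CE$, with the base measure map being $(t,x,e) \mapsto \Gamma(t,(x,e),\PP)$ (measurable since $\PP$ is fixed), and with the $\P(A)$-valued kernel $K(\omega) = \beta(t,x,\omega)$ — but this $K$ depends on $(t,x)$, so I would instead fold $(t,x)$ into the $\varphi$-variable and run Lemma \ref{le:meanmeasures} with the kernel $\omega \mapsto \beta$ treated pointwise in the extra coordinates; more cleanly, define directly
\[
\Lambda(t,x,e)(S) := \int_\Omega \beta(t,x,\omega)(S) \, \Gamma(t,(x,e),\PP)(d\omega)
\]
for Borel $S \subset A$, which is jointly measurable in $(t,x,e)$ by the Borel-$\sigma$-field fact cited in Lemma \ref{le:meanmeasures}, and defines for each $(t,x,e)$ a probability measure on $A$. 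Finally, $\Lambda$ factors through $\tau_t$ in its $e$-argument because $\Gamma(t,(x,e),\PP)$ does (the conditioning variable $\tau_t(\eta)$ already only sees $e_{\cdot \wedge t}$), so after relabeling $\Lambda(t,x,e) := \Lambda(t,x,\tau_t(e))$ we obtain a genuine semi-Markov function in the sense of Definition \ref{def:semiMarkovFunction} (interpreted over $\CE$ as in Appendix \ref{ap:SDErandomcoeff}).

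To verify the defining identity, fix bounded measurable $\varphi : [0,T] \times \R^d \times \CE \times A \to \R$. By the defining property of $\Gamma$ from Lemma \ref{le:measurableversion-marginalprojection}, for a.e.\ $t$ and $\PP$-a.s.,
\[
\E\!\left[\left.\textstyle\int_A \varphi(t,X_t,\eta,a)\,\beta(t,X_t,\cdot)(da)\,\right|\,(X_t,\tau_t(\eta))\right] = \int_\Omega \textstyle\int_A \varphi(t,X_t,\eta,a)\,\beta(t,X_t,\omega)(da)\,\Gamma(t,(X_t,\tau_t(\eta)),\PP)(d\omega),
\]
where I have used that $\varphi(t,X_t,\eta,\cdot)$ and $\varphi(t,X_t,\tau_t(\eta),\cdot)$ agree (the conditioning is on $\tau_t(\eta)$, and $\eta$ restricted to $[0,t]$ equals $\tau_t(\eta)$), and the right-hand side is exactly $\int_A \varphi(t,X_t,\eta,a)\,\Lambda(t,X_t,\eta)(da)$ by construction of $\Lambda$. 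Since conditioning on $(X_t,\tau_t(\eta))$ and then on $\F^\eta_t = \sigma(\tau_t(\eta))$ yields $\E[\,\cdot\,|\,\F^\eta_t]$ by the tower property (and $\Lambda(t,X_t,\eta)$ is $\sigma(X_t,\F^\eta_t)$-measurable, which is all that is needed), the claimed identity follows; interchanging the ``a.s./a.e.\ $t$'' quantifiers with the choice of $\varphi$ is handled as usual by restricting to a countable measure-determining family. The main obstacle I anticipate is purely bookkeeping: making sure the various ``for a.e.\ $t$, a.s.'' qualifiers from Lemma \ref{le:measurableversion-marginalprojection} propagate correctly and that the joint measurability in $(t,x,e)$ is not lost when one composes the regular conditional law $\Gamma$ with the kernel $\beta$ — but both of these are exactly the situations Lemmas \ref{le:measurableversion-marginalprojection} and \ref{le:meanmeasures} were designed to address, so no genuinely new difficulty arises.
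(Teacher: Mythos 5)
Your overall skeleton is the paper's: use the stopped-path map $e \mapsto e_{\cdot \wedge t}$ to secure the semi-Markov property, obtain a jointly measurable regular conditional law from Lemma \ref{le:measurableversion-marginalprojection}, and then integrate $\beta$ against it as in Lemma \ref{le:meanmeasures}. The gap is in \emph{which} $\sigma$-field you condition on. You apply Lemma \ref{le:measurableversion-marginalprojection} to the process $t \mapsto (X_t,\eta_{\cdot\wedge t})$, so your kernel $\Gamma(t,(x,e),\PP)$ is a version of $\PP(\cdot \mid X_t,\F^\eta_t)$, and the identity your construction actually delivers is
\[
\int_A \varphi(t,X_t,\eta,a)\,\Lambda(t,X_t,\eta)(da) \;=\; \E\Bigl[\int_A \varphi(t,X_t,\eta,a)\,\beta(t,X_t,\cdot)(da) \,\Big|\, \sigma(X_t)\vee\F^\eta_t\Bigr],
\]
not the one in the statement, where the conditioning is on $\F^\eta_t$ alone. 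The tower-property remark at the end does not repair this: projecting both sides onto $\F^\eta_t$ only yields equality of the further conditional expectations, not the claimed a.s.\ identity, and in general $\E[\beta(t,x,\cdot)\mid X_t=x,\,\F^\eta_t] \neq \E[\beta(t,x,\cdot)\mid\F^\eta_t]$. (Note also that the right-hand side of the lemma's identity is $\F^\eta_t$-measurable, so a construction whose natural output is only $\sigma(X_t)\vee\F^\eta_t$-measurable cannot match it.)

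The way the lemma is actually used, in \eqref{pf:projection-semiMarkov3} inside the proof of Lemma \ref{le:projection-semiMarkov}, makes the intended reading clear: $(t,x)$ are free deterministic parameters and the conditioning is on $\F^\mu_t$ only — there is no process $X$ available at that stage, so conditioning jointly on ``$X_t$'' is not even meaningful there; the $X_t$ in the statement should be read as ``substitute $x=X_t$ after conditioning.'' The paper's proof therefore applies Lemma \ref{le:measurableversion-marginalprojection} with the process therein taken to be the stopped process $(\eta_{\cdot\wedge t})_{t\in[0,T]}$ alone, producing a progressively measurable version $\Gamma(t,\eta_{\cdot\wedge t})$ of $\PP(\cdot\mid\F^\eta_t)$, and then sets $\Lambda(t,x,e) := \int_\Omega \beta(t,x,\omega)\,\Gamma(t,e_{\cdot\wedge t})(d\omega)$, i.e.\ the mean-measure construction of Lemma \ref{le:meanmeasures} with $(t,x)$ folded into the parameter space. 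For each fixed $(t,x)$ this $\Lambda(t,x,\eta)$ is $\F^\eta_t$-measurable and is a version of $\E[\beta(t,x,\cdot)\mid\F^\eta_t]$, which is exactly what is needed; your truncation/relabeling step for the semi-Markov property and the countable-family bookkeeping are fine once this change is made.
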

\begin{proof}
This will follow immediately from Lemma \ref{le:meanmeasures} once we can construct a version of the conditional law $\PP(\cdot \, | \, \F^\eta_t)$ which is progressively measurable in $(t,\eta)$. In fact, noting that $\F^\eta_t = \sigma(\eta_{\cdot \wedge t})$, this follows from Lemma \ref{le:measurableversion-marginalprojection} applied with $X=(X_t)_{t \in [0,T]}$ therein given by the stopped process $(\eta_{\cdot \wedge t})_{t \in [0,T]}$.
\end{proof}

\bibliographystyle{amsplain}
\bibliography{closed-loop-bib}

\end{document}